\documentclass[11pt]{article}

\usepackage[USenglish]{babel}
\usepackage[T1]{fontenc}
\usepackage[ansinew]{inputenc}
\usepackage{graphicx}
\usepackage{amsmath}

\usepackage{amsfonts}
\usepackage{amssymb}
\usepackage{dsfont}
\usepackage{enumitem} 
\usepackage{tikz}
\usetikzlibrary{calc} 

\usepackage{color}
\usepackage{mathrsfs}
\usepackage{fullpage}

\usepackage{amsthm}
\definecolor{darkred}{rgb}{0.6,0,0.2}
\definecolor{darkblue}{rgb}{0,0.3,0.7}
\usepackage[colorlinks,linkcolor=darkblue,citecolor=darkred]{hyperref}
\usepackage[capitalize,nameinlink,noabbrev]{cleveref}

\crefname{equation}{}{}
\crefname{enumi}{}{}

\DeclareMathOperator{\Tr}{Tr}

\DeclareMathOperator{\Ima}{Im}

\DeclareMathOperator{\spec}{spec}
\DeclareMathOperator{\cc}{cc}

\DeclareMathOperator{\RR}{\mathbb{R}}
\DeclareMathOperator{\CC}{\mathbb{C}}
\DeclareMathOperator{\NN}{\mathbb{N}}
\DeclareMathOperator{\Mat}{\mathrm{M}}

\newcommand{\ii}{\mathrm{i}}
\newcommand{\ee}{\mathrm{e}}
\newcommand{\abs}[1]{\left| #1 \right|}
\newcommand{\absN}[1]{| #1 |}
\newcommand{\absn}[1]{\bigl| #1 \bigr|} 
\newcommand{\event}[1]{\big\{ #1 \big\}}
\newcommand{\absb}[1]{\bigl| #1 \bigr|}

\newcommand{\absbb}[1]{\biggl| #1 \biggr|}
\newcommand{\norm}[1]{\left\| #1 \right\|}
\newcommand{\normbb}[1]{\biggl\| #1 \biggr\|}
\newcommand{\R}{\mathbb{R}}
\newcommand{\C}{\mathbb{C}}
\newcommand{\N}{\mathbb{N}}
\newcommand{\E}{\mathbb{E}}

\newcommand{\ccc}{\mathfrak{c}}
\newcommand{\patx}{\boldsymbol{x}}
\newcommand{\paty}{\boldsymbol{y}}
\newcommand{\patchi}{\boldsymbol{\chi}}

\newcommand{\eps}{\varepsilon}
\DeclareMathOperator{\re}{Re}
\newcommand{\im}{\Ima} 

\renewcommand{\Im}{\Ima}

\theoremstyle{plain}
\newtheorem{thhm}{Theorem}[section]
\newtheorem{thm}[thhm]{Theorem}
\newtheorem{lemma}{Lemma}[section]
\newtheorem{prop}{Proposition}[section]

\newtheorem{defi}{Definition}[section]
\newtheorem{corollary}{Corollary}[section]

\newtheorem{remark}{Remark}[section]
\newtheorem*{lemma*}{Lemma}
\theoremstyle{remark}

\numberwithin{equation}{section}

\newcommand*\barr[1]{\overline{#1}}
\newcommand*\bset[1]{[#1]}

\newcommand*\ubarr[1]{{\widehat{#1}}}

\newcommand{\nc}{\normalcolor}

\makeatletter
\def\blfootnote{\xdef\@thefnmark{}\@footnotetext}
\makeatother

\begin{document}
	\blfootnote{Date: \today}
	\blfootnote{Keywords:  sparse random matrix, non-Hermitian random matrix, Brown measure, eigenvector delocalization} 
	\blfootnote{MSC2020 Subject Classifications: 46L54, 60B20, 15B52.}

	\title{\textbf{Critical Erd{\H o}s-R\'enyi digraph: \\ all eigenvectors away from zero are delocalized}}
	\author{Johannes Alt\thanks{Institute for Applied Mathematics, University of Bonn. 
	Email: \href{mailto:johannes.alt@iam.uni-bonn.de}{johannes.alt@iam.uni-bonn.de}} 
		\hspace*{0.5cm} \and \hspace*{0.5cm} Sarah Timhadjelt\thanks{Institut Fourier, Universit\'e Grenoble Alpes. 
		Email: \href{mailto:sarah.timhadjelt@univ-grenoble-alpes.fr}{sarah.timhadjelt@univ-grenoble-alpes.fr}}
	} 
	\date{}
	
	\maketitle
	
	\vspace*{-0.3cm}

	\begin{abstract}
	We consider the adjacency matrix of the directed Erd{\H o}s-R\'enyi graph. As long as the expected degree is larger than the logarithm of the number of vertices, the graph is connected, we show that all eigenvectors are completely delocalized. 
	Below this critical scale, we prove eigenvector delocalization if 
	the corresponding eigenvalue is away from zero. 
	This contrasts the \emph{undirected} or Hermitian setting, where large eigenvalues have localized eigenvectors \cite{alt2021delocalization}.
	Our results also hold for sparse random matrices with independent entries, which can be viewed as weighted Erd{\H o}s-R\'enyi digraphs. 
	\end{abstract}

	\tableofcontents 
	
	\addtocontents{toc}{\protect\setcounter{tocdepth}{1}} 
	
	\section{Introduction}

Adjacency matrices of random graphs provide a natural framework for probing the persistence of 
spectral 
phenomena characteristic of random matrices. 
In particular, a number of spectral transitions have been rigorously established for the adjacency matrix of the (undirected) Erd{\H o}s-R\'enyi graph. This is a random graph model on $N$ vertices, where each edge of the complete graph is kept with probability $d/N$ independently of the other edges. Here, $d \in (0,N)$ is an $N$-dependent parameter. 
The spectral transitions mentioned above are steered by the expected degree $d$, or more precisely, how it scales with $N$.  

\blfootnote{We refer to \cref{sec:notation} for the definitions of the relations $\asymp$ and $\ll$ as 
well as to \cref{eq:def_vector_norms} for the definitions of the $\ell^\infty$-norm and the $\ell^2$-norm.} 

We focus on the eigenvector structure and denote by $H$ the normalized adjacency matrix, which is divided by $\sqrt{d}$. 
If $d \asymp N$ then $H$ is \emph{dense}, i.e.\ it has order $N^2$ many nonzero entries, and a Wigner matrix\footnote{They are Hermitian $N\times N$ random matrices with i.i.d.\ entries, up to the symmetry constraint, whose moments are all finite uniformly in $N$.}. 
Every eigenvector of a Wigner matrix is \emph{completely delocalized}, i.e.\ its mass is approximately uniformly distributed among its components \cite{MR3916110,MR2871147,MR3068390,MR2481753}. Formally, 
$\mathbf{u} \in \C^N$ is called completely delocalized if $\norm{\mathbf{u}}^2_\infty \leq N^{-1 + \delta} \norm{\mathbf{u}}^2_2$ for any constant $\delta >0$. Since the $\ell^2$-norm and 
the $\ell^\infty$-norm of any $\mathbf{u}\in \C^N$ satisfy 
\begin{equation} \label{eq:inequality_vector_norms} 
 N^{-1} \norm{\mathbf{u}}_2^2 \leq \norm{\mathbf{u}}_\infty^2 \leq \norm{\mathbf{u}}_2^2, 
\end{equation} 
completely delocalized vectors have approximately the smallest $\ell^\infty$-norm. 
Delocalized eigenvectors arise from the interplay of a large number of matrix entries and 
therefore contain a large amount of ``randomness''. Therefore, they provide  
strong evidence that the underlying microscopic eigenvalue process follows the universal random-matrix statistics.

If $d \ll N$ then $H$ becomes \emph{sparse}, that is, most of its entries vanish, 
which leads to a different structure for some eigenvectors of $H$, when $d$ is sufficiently small. 
We now describe the known results about the eigenvectors of $H$, their structure depends on $d$ and the location of the corresponding eigenvalue. 
\nc 	
	All\footnote{For the purpose of this overview, we ignore the eigenvalue of $H$ near $\sqrt{d}$ which is induced by the nontrivial expectation of the entries of $H$. For $d \gg \sqrt{\log N}$, it is well separated from the other eigenvalues of $H$.} eigenvectors of $H$ are completely delocalized if $d \geq (b_* + \eps) \log N$ 
	with $b_* = (2 \log 2 -1)^{-1}$ and any $\eps>0$ \cite{MR2964770,MR3098073,MR4021251,alt2021delocalization,ADK_deloc_everywhere}.  
	For $d \leq (b_* - \eps) \log N$, eigenvalues outside $[-2-o(1), 2 + o(1)]$ emerge  \cite{ADK_outliers,MR4234995}. For $d \ll \log N$, their emergence has already been shown in \cite{MR3945756} while for $d \gg \log N$ the largest and smallest 
	eigenvalue converge to 2 and -2, respectively, \cite{MR2964770,BBK_spec_radii}. 
	In a region near the spectral edge, each eigenvector with eigenvalue outside $[-2-o(1), 2 + o(1)]$ is localized in the sense that that the mass of the eigenvector is exponentially decaying in the graph distance around a unique vertex \cite{alt2023poisson,ADK_localized}. 
 	Localized vectors have a large $\ell^\infty$-norm, relative to their $\ell^2$-norm, and approximately saturate the upper bound 
in \cref{eq:inequality_vector_norms}. 
	For all other eigenvectors outside $[-2-o(1), 2 +o(1)]$, delocalization was excluded in \cite{alt2021delocalization}. 
 In contrast, eigenvector delocalization persists in $[-2 + o(1), -o(1)]\cup [o(1), 2 - o(1)]$
 as long as $d \gg \sqrt{\log N}$ \cite{alt2021delocalization}. 
 We refer to \cite[Figure~1.1]{ADK_localized} for an illustration of the different spectral phases of $H$. 

Here, we study the directed Erd{\H o}s-R\'enyi random graph $\mathbb{G}=\mathbb{G}(N,d/N)$, or Erd{\H o}s-R\'enyi digraph for short, which arises by allowing for directed edges in the above definition of the Erd{\H o}s-R\'enyi graph.  
Since all entries of the adjacency matrix $M$ of $\mathbb{G}$ are independent,  
 $M$ is non-Hermitian and even non-normal random matrix. Such random matrices are generally less well understood than their Hermitian counterparts. 
 The eigenvalues of $M$ are complex and follow the famous circular law, the uniform measure on the unit disc, when $N$ and $d$ tend to infinity. See \cite{girko1985circular,bai1997,MR2663633,MR2575411,MR2722794,MR2735731} in the dense and  \cite{MR2977992,BasakRudelson2019,RudelsonTikhomirov2019} in the sparse case.  
 Here and in the following, we divide $M$ by $\sqrt{d}$ as in the undirected setting. 
The eigenvalues of $M$ actually concentrate close to the unit disc in the complex plane apart from one eigenvalue of multiplicity one near $\sqrt{d}$ \cite{MR4206685,MR4649432}; we also refer to \cref{prop:spectrum_M} below and the references mentioned thereafter. 

	We investigate whether the eigenvectors of $M$ are delocalized or whether localized eigenvectors can occur.
	For random matrices with independent, centred entries of uniformly finite moments, 
	eigenvector delocalization was proved in \cite{MR3405592} and in 
	\cite{AEKinhomogeneous}. Up to the nonzero expectation of its entries, $M$ is of this form when $d\asymp N$. 
	As long as $d \geq N^\delta$ for any constant $\delta >0$, all eigenvectors of $M$ 
	are completely delocalized \cite{HeDigraph}. 
	Our main result is the delocalization of all eigenvectors of $M$ down to the critical scale $
	d \asymp \log N$, when $\mathbb{G}$ looses its connectivity and isolated vertices 
	as well as leaves emerge, and the subcritical regime $d \ll \log N$. 
	Owing to the emergence of leaves and other ``small'' graphs attached to the giant 
	component of $\mathbb{G}$, delocalization cannot hold if the eigenvalue is close to zero and $d < \log N$; see \cref{rem:leaves} below. 
	Our results hold, in particular, for eigenvectors with eigenvalues near the edge $\abs{w} \approx 1$ of the circular law. However, they do not cover the eigenvector with eigenvalue near $\sqrt{d}$. We prove eigenvector delocalization for general sparse matrices $M$ with independent entries of identical variance, which can be interpreted as adjacency matrices of Erd{\H o}s-R\'enyi digraphs
	with independent edge weights. 
	
		While the Erd{\H o}s-R\'enyi graph commonly serves as a model for undirected real-life networks, 
	the Erd{\H o}s-R\'enyi digraph or sparse matrices $M$ with independent entries are natural models when directionality is crucial, e.g.\ in 
	food webs or neural networks. 
	As a specific example, such $M$ can be viewed as the weight matrix of a (sparse) neural network, encoding the presence and strengths of synaptic connections between neurons.

	Our results show that, for the Erd{\H o}s-R\'enyi digraph or sparse random matrices with independent entries,  
	all eigenvectors are delocalized down to the subcritical scale -- away from zero when $d < \log N$. 
This contrasts with the delocalization-localization transition in the Hermitian setting on the critical scale $d \asymp \log N$ 
and provides a new instance of the phenomenon that spectral characteristics of random matrices are more robust in the non-Hermitian setting.

	Despite the absence of localized eigenvectors in the non-Hermitian setting, the analysis of eigenvector delocalization is more involved than in the Hermitian case. 
	Since $X$ has independent entries, $X$ is non-normal and, therefore, it is more challenging to extract information about eigenvectors from auxiliary objects like the resolvent. 
	This is commonly circumvented via Girko's Hermitization trick \cite{girko1985circular}, which relates (left and right) eigenvectors of $X$ to kernel vectors of 
	\[ H_w = \begin{pmatrix} 0 & (M-w) \\ (M-w)^* \end{pmatrix} \]
	when $w \in \C$ is the corresponding eigenvalue of $X$. 
	The delocalization of such kernel vectors can be deduced by controlling the diagonal entries of the resolvent $G=(H_w - \ii \eta)^{-1}$ 
on the spectral scale $\eta = N^{-1 + \delta}$ just above the typical eigenvalue spacing of $H_w$. 
To our knowledge, this strategy was first used in \cite{AEKinhomogeneous} to obtain delocalization for non-Hermitian matrices.

	To control the diagonal resolvent entries, we use a $2\times 2$-Schur complement formula which 
	respects the block structure of $H_w$ to derive from self-consistent equations, which are approximately satisfied by the diagonal resolvent entries $G_{xx}$.  
	The exact solution $v_x$ of the self-consistent equation for $G_{xx}$ depends on the in- and outdegree of the corresponding vertex $x$. A similar ansatz was used in the undirected, critical case in  \cite{alt2021delocalization}. 
	Here, the extra difficulty is that $v_x$ can diverge, even in the more stable 
bulk spectrum, i.e. away from the spectral edge and the origin, when the indegree is small 
and the outdegree is large or vice versa. The corresponding approximants in the undirected case are bounded in the bulk. 
The function $v_x$ induce a family of measures on the complex plane, which contain as special cases the 
 circular law and the directed Kesten-McKay law, i.e.\ the Brown measure of the sum of $k$ free Haar unitaries for certain in- and outdegrees; see \cref{rem:v_beta_measures} below.

	\section{Main results} \label{sec:main_results}
	
	We study the eigenvector behaviour of sparse non-Hermitian random matrices, which generalize 
	the adjacency matrix of the directed Erd\H{o}s-R{\'e}nyi graph\footnote{The precise definition of the directed Erd\H{o}s-R{\'e}nyi graph is given before \cref{cor:delocalization_unconditional} below.} with $N$ vertices and expected degree $d$.  More precisely, we consider matrices of the form 
	\begin{equation}\label{eq:def_M_X_plus_f}
		M= X + f (\mathbf{e}\mathbf{e}^{\ast} - 1/N), 
	\end{equation}
	where $N \in \N$, $ f\in \CC $ and $\mathbf{e} = \frac{1}{\sqrt{N}}(1,\cdots, 1)^{\ast}$. 
	Here and throughout the paper, scalars appearing in matrix identities such as $1/N$ in \cref{eq:def_M_X_plus_f} are interpreted as the corresponding multiple of the identity matrix. 
	Our assumptions on the random matrix $X$ are summarised in the next definition. 
	We always write $[N] = \{ 1, \ldots, N\}\subset \N $ for the first $N$ positive integer. 
	\begin{defi}\label{defiX}
		We call $X = (X_{xy})_{x, y \in [N]} \in \mathrm{M}_N(\CC)$ a \emph{sparse (centred) non-Hermitian random matrix} 
		with sparsity parameter $d \in (0,N/2)$ if the following holds. 
		\begin{enumerate}[label=(\alph*)]
			\item $(X_{xy})_{x, y \in [N]}$ is a family of independent, centred random variables, in particular, $\mathbb{E}[X_{xy}]= 0$ for all $x$, $y \in [N]$,
			\item  \label{item:variance_off_diag} $\E[\abs{X_{xy}}^2] = N^{-1}$ for all $x \neq y \in [N]$,
			\item  \label{item:variance_diag} 
			 there is a constant $K>0$ such that  $\E[\abs{X_{xx}}^2] \leq K N^{-1}$ for all $x \in [N]$, 
			\item \label{item:def_X_decay} 
			there is a constant $K>0$ such that $\abs{X_{xy}} \leq Kd^{-1/2}$ almost surely for all $x$, $y \in [N]$.
		\end{enumerate}
	\end{defi}
	
	 We note that the condition \cref{defiX} \cref{item:def_X_decay} is equivalent to $\E \abs{X_{xy}}^k \leq \frac{C}{Nd^{(k-2)/2}}$ for all $k\in \NN$ and $x$, $y \in [N]$ due to \cref{item:variance_off_diag} and \cref{item:variance_diag}.

	Throughout the paper, we use the following notion for events, whose complement occurs with probability smaller than any inverse polynomial power in $N$. 
	
	\begin{defi}[Very high probability event]\label{def:very_high_probab} 
		Let $\Omega$ and $\Xi$ be events that depends on $N \in \NN$ and $\nu >0$. We say that $\Omega$ holds \emph{with very high probability} if for each $\nu>0$, there is $\mathcal C \equiv \mathcal C_\nu$ such that 
		\[ 
		\mathbb{P}(\Omega_{N,\nu}) \geq 1 - \mathcal C_\nu N^{-\nu} 
		\] 
		for all $N \in \NN$.  We say that $\Omega$ holds \textup{with very high probability on the event} $\Xi$ if the event $\Omega \cup \Xi^{c}$ holds with very high probability.
	\end{defi} 
	Throughout the paper, we use the convention that calligraphic letter such as $\mathcal C$ or $\mathcal D$ can tacitly depend on the exponent $\nu$ from \cref{def:very_high_probab} in statements which hold with very high probability. 
	
	\subsection{Eigenvector delocalization for $M$} 
	
	With a matrix $X \in \C^{N\times N}$, we associated the \emph{normalized degrees} 
	\begin{equation}\label{eq:def_beta_i_x} 
		\beta^1_x:= \sum_{y \in [N]\setminus\{x\}} |X_{xy}|^2 \qquad \text{and} \qquad 	\beta^2_x:= \sum_{y\in [N] \setminus \{ x\}} |X_{yx}|^2
	\end{equation}
	for $x \in [N]=\{1, \ldots, N\}$. 
	They should be thought of as (normalized) in- and out-degree of $x$ when $X$ is the (weighted) adjacency matrix of a graph with vertex set $[N]$. 
	
	For any vector $\mathbf u=(u_x)_{x \in [N]} \in \C^N$, we define its $\ell^2$-norm $\norm{\mathbf u}_2$ and its $\ell^\infty$-norm $\norm{\mathbf u}_\infty$ through 
	\begin{equation} \label{eq:def_vector_norms} 
		\norm{\mathbf u}_2 := \biggl(\sum_{x \in [N]} \abs{u_x}^2\biggr)^{1/2}, \qquad \qquad \norm{\mathbf u}_\infty := \max_{x \in [N]} \abs{u_x}. 
	\end{equation} 
	Clearly, $N^{-1} \norm{\mathbf u}_2^2 \leq \norm{\mathbf u}_\infty^2 \leq \norm{\mathbf u}_2^2$ for all $\mathbf u \in \C^N$. 
	The next theorem states that eigenvectors of $M$ almost saturate the first bound.

	 A vector $\mathbf u \neq 0$ is called \emph{completely delocalized} if for each $\delta>0$, $\norm{\mathbf u}^2_\infty \leq N^{-1 + \delta}\norm{\mathbf u}_2^2$ with very high probability. 
	 
	Before we state the next theorem about the eigenvectors of $M$, we briefly describe its spectrum. If $1 \le d \le N^{\delta}$ then with probability at least $1 - o(1)$, all eigenvalues of $M$ are contained in the disc $\mathrm{D}_{1 + Cd^{-1/2}}$ apart from one eigenvalue $\lambda$ near $f$. We refer to \cref{prop:spectrum_M} and the explanations thereafter for details and references. 
	 The eigenvalue $\lambda$ has multiplicity one and we denote by $\mathbf{u}_0$ an 
	 associate eigenvector, i.e.\ 
	 \begin{equation} \label{eq:eigenvector_u_0} 
	 M \mathbf{u}_0 = \lambda \mathbf{u}_0 \qquad \qquad \lambda = f + o(1). 
	 \end{equation} 
	 
	 With the definitions  
	 	\begin{equation} \label{eq:def_d_hash} 
	d_\mathrm{l} :=  (\log N)^{1/2}, 
	\qquad d_\mathrm{b} :=  (\log N)^{12/13}, \qquad 
	d_\mathrm{e} :=  (\log N)^{24/25}, 
	\end{equation} 
	 we consider the three regimes for the expected degree $d$ given by $\# \in \{ \mathrm{l}, \mathrm{b}, \mathrm{e}\}$ and  
	\begin{equation}\label{eq:regimed}
	\mathcal D \,  d_\# \leq  d \le N^{\delta/4}  
	\end{equation}
for some constant $\mathcal D \geq 1$.  
In the following theorem, $d_\mathrm{l}$, $d_\mathrm{b}$ and $d_\mathrm{e}$ define the lower bounds on $d$ under the lower bound assumption on $\beta_x^i$, in the bulk spectrum and near the spectral edge, respectively. 

		\begin{thm}[Eigenvector delocalization]\label{thm:condideloc}
	 Let $\delta \in (0,1)$ be fixed. Let $M$ be as in \cref{eq:def_M_X_plus_f} with $X$ as in \cref{defiX} and $\abs{f} \leq N^{\delta/6}$. Then the following holds. 
		\begin{enumerate}[label=(\roman*)]
			\item \label{item:thm_cond_deloc_small_eigenvec}
			 If $d$ satisfies \cref{eq:regimed} with $\# = \mathrm{l}$ then  all eigenvectors of $M$ with eigenvalues in $\{ w \in \CC \colon \abs{w} \le 1-\delta\}$ are completely delocalized with very high probability on the event $\{ \beta_{x}^i \ge \delta \text{ for all } x,i \}$. 
			\item 
			 If $d$ satisfies \cref{eq:regimed} with $\# = \mathrm{b}$ then all eigenvectors of $M$ with eigenvalues in $\{ w \in \CC \colon \delta \leq \abs{w} \leq 1- \delta\}$ are completely delocalized with very high probability. 
		\item\label{item:thm_cond_deloc_large_eigenvec} If $d$ satisfies \cref{eq:regimed} with $\# = \mathrm{e}$ and $|f| \ge 2 $ with very high probability, all normalized eigenvectors $\mathbf{u}$  with eigenvalue in $\{w \in \CC \colon \delta \leq \abs{w} \leq 2\}$ are completely delocalized.  
		\end{enumerate}
		Throughout this theorem, eigenvectors refers to the left and right eigenvectors of $M$. 
	\end{thm}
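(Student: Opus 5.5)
We will use Girko's Hermitization, as announced in the introduction. For $w\in\C$ set
\[ H_w = \begin{pmatrix} 0 & M-w \\ (M-w)^* & 0 \end{pmatrix}, \qquad G(\eta) := (H_w-\ii\eta)^{-1}, \quad \eta>0. \]
If $M\mathbf u=w\mathbf u$ with $\norm{\mathbf u}_2=1$, then $(0,\mathbf u)$ lies in $\ker H_w$. Likewise a left eigenvector $\mathbf v$ yields $(\mathbf v,0)\in\ker H_w$.

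\textbf{Step 1 (reduction to resolvent bounds).} Let $\mathbf z$ be a unit kernel vector of $H_w$. The spectral decomposition gives, for every coordinate $a$ of the $2N$-dimensional space,
\[ \absn{z_a}^2 \le \eta\, \Im G_{aa}(\ii\eta). \]
Hence it suffices to prove
\[ \max_a \absn{G_{aa}(\ii\eta)} \le N^{\delta/2} \qquad \text{for } \eta = N^{-1+\delta/2}, \]
with very high probability and simultaneously for all $w$ in the relevant spectral domain.

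\textbf{Step 2 (uniformity in $w$).} We obtain uniformity by a union bound over an $N^{-3}$-net of $w$ in the bounded domain. This is combined with Lipschitz continuity of $w\mapsto G(\ii\eta)$, whose constant is bounded by $\eta^{-2}$.

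\textbf{Step 3 (reduction from $M$ to $X$).} The rank-one perturbation $f\mathbf e\mathbf e^*$ is handled by a resolvent/Schur identity for a rank-two perturbation of $H_w$. This step uses that $w$ stays away from $\lambda\approx f$. In case (iii) this holds because $\abs f\ge 2$ and $\abs w\le 2$, together with \cref{prop:spectrum_M}. In cases (i),(ii) the term $f/N$ is negligible, and the $\mathbf e\mathbf e^*$ part either produces a controlled correction via the isotropic entries $\langle \mathbf e, G\mathbf e\rangle$, or is absorbed into interlacing-type arguments.

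\textbf{Step 4 (main estimate: local law for the diagonal entries of $G$).} For each vertex $x$ we take the $2\times 2$ block of $G$ indexed by $(x,\bar x)$. The $2\times2$ Schur complement formula expresses this block through $-w$, $X_{xx}$ and quadratic forms such as
\[ \sum_{y,y'} X_{xy}\, G^{(x)}_{\bar y \bar y'}\, \overline{X_{x y'}} . \]
By large-deviation bounds for sparse vectors, which are valid since $\abs{X_{xy}}\le Kd^{-1/2}$, these quadratic forms concentrate around $\beta^1_x\, \langle G^{(x)}\rangle_2$ and $\beta^2_x\, \langle G^{(x)}\rangle_1$. The error is of order $d^{-1/2}$ times logarithmic factors.

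This yields an approximate self-consistent equation whose exact solution $v_x=v(\beta^1_x,\beta^2_x,w,\eta)$ depends on the degrees. Averaging over $x$ gives a closed equation for the averaged quantities $\langle G\rangle_i$. Comparing it with the circular-law $2\times2$ Dyson equation (the $\beta\equiv1$ case) and using its stability for $\abs w\le1-\delta$ (bulk) and near $\abs w\approx1$ (edge), we get $\langle G\rangle_i\approx m_i$. A bootstrap in $\eta$, from $\eta\sim1$ down to $N^{-1+\delta/2}$, propagates these bounds. Finally $G_{xx}\approx v_x$ entrywise, and we must show $\abs{v_x}\lesssim N^{\delta/2}$.

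\textbf{Main obstacle: $v_x$ can blow up when one normalized degree is small and the other large.} In case (i) the assumption $\beta^i_x\ge\delta$ gives $\abs{v_x}\lesssim 1/\delta$ directly. This leaves only the generic stability analysis, which tolerates $d\gtrsim\sqrt{\log N}$.

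In cases (ii),(iii) we must exploit $\abs w\ge\delta$. When $\beta^1_x$ is tiny, the row of $M-w$ is dominated by $-w$, which keeps the Schur complement invertible unless $\beta^2_x$ is simultaneously large. Large deviations for $\beta^i_x$ (Poisson-type tails at scale $d$) bound how extreme the degree pairs are. From these tails, together with the explicit form of $v$, we would derive a polylogarithmic bound on $\abs{v_x}$ for all $x$. The loss is a power of $\log N/d$; the exponents $12/13$ and $24/25$ should come out of balancing this loss against the stability constants, which are worse at the edge.

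Uniform bounds on the $G^{(x)}$ and on the averages are needed to avoid the circular dependence between a divergent $v_x$ and the error terms. For this, the bootstrap should track a weighted norm such as $\max_x \abs{G_{xx}-v_x}/(1+\abs{v_x})$ rather than plain differences. This weighted bootstrap is where I expect the bulk of the work to lie.
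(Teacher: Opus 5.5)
There is a genuine gap in Step 4, and it is exactly where the theorem is hard. You assert that the diagonal part $\sum_y \abs{X_{xy}}^2 G^{[x]}_{\ubarr{y}\ubarr{y}}$ of the quadratic form concentrates around $\beta^1_x$ times the average of the $G^{[x]}_{\ubarr{y}\ubarr{y}}$. You claim this uniformly in $x$, with error $d^{-1/2}$ times logarithms. Conditionally on $H^{[x]}$, very high probability requires $r\asymp\log N$ moments. Then \cref{eq:sumaiXi2} only gives an error of order $\Gamma\bigl(\frac{\log N}{d}\vee\sqrt{\log N/d}\bigr)$, which is $\gtrsim 1$ because $d_\mathrm{l}$, $d_\mathrm{b}$, $d_\mathrm{e}$ are all $\ll\log N$.

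This is not a weakness of the inequality. $G_{\ubarr{y}\ubarr{y}}\approx\ii v_{\ubarr{\beta}_y}$ genuinely varies by order one with the degrees of $y$, and $x$ has only about $d$ neighbours. So the local average equals $\ii\beta^1_x v$ up to a small error only if all but a few neighbours of every $x$ satisfy $G_{\ubarr{y}\ubarr{y}}\approx\ii v$. That has to be proved, and it is not a large-deviation statement. Averaging the per-vertex equations over all $x$ and using stability of the $\beta\equiv 1$ equation is the supercritical argument of \cref{thm:locallaw3}, which only works for $d\gg\log N$ (the paper uses it for $d\ge(\log N)^{3/2}$). In particular, your claim that case (i) tolerates $d\gtrsim\sqrt{\log N}$ does not follow from your own error bound.

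The missing idea is the typical-vertex analysis. Call $x$ typical if $\Phi^i_x$ and $\Psi^i_x$ are at most $\varphi$, with $\varphi^3\asymp\Gamma^5\log N/d^2$. These measure how far its degrees and local averages $\sum_y\abs{X_{xy}}^2G^{[x]}_{\ubarr{y}\ubarr{y}}$, $\sum_y\abs{X_{yx}}^2G^{[x]}_{yy}$ deviate from their global counterparts. One proves (\cref{prop:mostverticesandneigharetypical}) that, with very high probability, only about $N\ee^{-c\varphi^2 d/\Gamma^2}$ vertices are atypical and every $x$ has atypical-neighbour mass $O(\varphi\Gamma^{-3})$. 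This needs a decoupling argument: the probability that a deterministic set $T$ consists of atypical vertices is at most about $\ee^{-c\varphi^2 d\abs{T}/\Gamma^2}$. That bound is obtained by removing the rows and columns indexed by $T$ and $\ubarr{T}$ while keeping $G^{[T]}$ bounded (\cref{lem:roughboundsGT}). One then derives the cubic only for the average $s$ over typical vertices and gets $G_{xx}\approx\ii v$ for typical $x$. Only afterwards does one get $G_{xx}\approx\ii v_{\beta_x}$ for every $x$, because most neighbours are typical.

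The thresholds come out of this. With $\Gamma=1$, the condition $\varphi\ll 1$ gives $\sqrt{\log N}$. With $\Gamma=1+\log N/d$, the conditions $\Gamma^2\varphi\ll1$ (bulk) and $\Gamma^2\varphi^{1/3}\ll 1$ give $(\log N)^{12/13}$ and $(\log N)^{24/25}$; the edge condition is weaker because stability there only gives $\psi\le\varphi^{1/3}$. By contrast, the blow-up of $v_x$ that you single out is the easy part: for $\abs{w}\ge\delta$, $v_\beta\le(\eta+\beta^2)/\abs{w}^2\lesssim\Gamma$ by \cref{lem:roughboundonbetaix}.

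Finally, Step 3 is unnecessary and, as stated, unsupported. \cref{prop:spectrum_M} holds only with probability $1-CN^{-\epsilon}$. Nothing keeps $w$ away from $f$ in cases (i), (ii) or when $\abs{f}=2$. The isotropic and interlacing alternatives are not justified. The paper simply keeps the $f$-terms inside the error terms $Y^i_x$, $Z^i_x$ of \cref{lem:ASCEq}. It bounds them via the Ward identity, using $\abs{f}\le N^{\delta/6}$ and $\eta\ge N^{-1+\delta}$.
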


	The previous theorem is proved in \cref{sec:proof_delocalization} below. 
	We highlight that in \cref{thm:condideloc} \cref{item:thm_cond_deloc_large_eigenvec} all eigenvalues apart from one have modulus at most $1 + Cd^{-1/2}$ 
	with probability at least $1-o(1)$, see \cref{eq:eigenvector_u_0} and \cref{prop:spectrum_M} below. 
	
	In the next result, we verify the conditions on $\beta_x^i$ from \cref{thm:condideloc} for some choices of $X$ and in some regimes of $d$.  
	To that end, for $d \in (0,N/2)$, we denote by $\mathbb{G} \equiv \mathbb{G}(N,d/N)$ the directed Erd{\H o}s-R\'enyi graph with vertex set $[N]$ and edge probability $d/N$. That is, each edge of the complete directed graph on $[N]$ is kept with probability $d/N$ independently from all other edges. Formulated differently, the adjacency matrix $\mathrm{Adj}(\mathbb G)$ of $\mathbb{G}$ has i.i.d.\ Bernoulli-distributed entries with success probability $d/N$.  For all $z \in \C$ and $r > 0$ we denote closed discs in the complex plane by 
	\begin{equation*}
		\mathrm{D}_{r}:= \{ w \in \C \colon \abs{w} \le r\}, ~~ \text{and} ~~ \mathrm{D}_r(z) :=\{ w \in \C \colon  \abs{w-z} \le r\}.
	\end{equation*}

	\begin{corollary}[Delocalization in Erd{\H o}s-R\'enyi digraph] \label{cor:delocalization_unconditional} 
	Let $\delta>0$ be a small enough constant and $(1+ \delta) \log N \le d \le N^{\delta/4}$. 
 Then, with probability at least $1-o(1)$, all eigenvectors of $\mathbb{G}(N,d/N)$ are completely delocalized except for the one associated with the eigenvalue of largest modulus.
	\end{corollary}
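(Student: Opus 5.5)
We reduce \cref{cor:delocalization_unconditional} to \cref{thm:condideloc} \cref{item:thm_cond_deloc_small_eigenvec} and \cref{item:thm_cond_deloc_large_eigenvec}. Write $A = \mathrm{Adj}(\mathbb G)$ and normalize by $\sqrt{d(1-d/N)}$. Set
\[ X := \frac{A - \frac dN \mathbf{1}\mathbf{1}^*}{\sqrt{d(1-d/N)}}, \qquad f := \frac{\sqrt d}{\sqrt{1-d/N}} . \]
Since $\frac dN \mathbf{1}\mathbf{1}^* = d\,\mathbf e \mathbf e^*$, we get
\[ \frac{A}{\sqrt{d(1-d/N)}} = X + f\,\mathbf e\mathbf e^* = M + f/N , \]
with $M$ as in \cref{eq:def_M_X_plus_f}. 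Thus $A$ and $M$ have the same eigenvectors, and their eigenvalues differ by the deterministic shift $f/N\cdot\sqrt{d(1-d/N)} = O(d/N)$, which is negligible. The entries of $X$ are independent and centred, with variance exactly $N^{-1}$ (diagonal entries included), and satisfy $\abs{X_{xy}} \le 2 d^{-1/2}$. Hence $X$ satisfies \cref{defiX}. Moreover $2 \le \abs{f} \approx \sqrt d \le N^{\delta/8} \le N^{\delta/6}$, and $d \ge (1+\delta)\log N$ lies above both $\mathcal D d_\mathrm{l}$ and $\mathcal D d_\mathrm{e}$.

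Next we locate the spectrum. By \cref{prop:spectrum_M}, with probability $1-o(1)$ all eigenvalues of $M$ except the one near $f$ lie in $\mathrm{D}_{1+Cd^{-1/2}} \subset \mathrm{D}_2$. The exceptional eigenvalue is the eigenvalue of largest modulus and is excluded from the statement. We split the remaining eigenvalues into $\{\abs w \le 1-\delta'\}$ and $\{\delta' \le \abs w \le 2\}$ for a fixed small $\delta'$. We apply \cref{thm:condideloc} with $\delta'$ in place of $\delta$, which is harmless since complete delocalization is a statement for every exponent and the theorem is applied for each fixed parameter. Part \cref{item:thm_cond_deloc_large_eigenvec} then handles the eigenvectors with $\abs w\ge\delta'$ with very high probability. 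The hypothesis $\abs f\ge 2$ holds deterministically.

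For $\abs w \le 1-\delta'$ we use part \cref{item:thm_cond_deloc_small_eigenvec}, which requires showing that $\{\beta^i_x \ge \delta' \text{ for all } x,i\}$ has probability $1-o(1)$. Here $\beta^1_x = (d(1-d/N))^{-1}\sum_{y\ne x}\abs{A_{xy}-d/N}^2$, which equals $D_x/d$ up to an error $O(d/N)$, where $D_x \sim \mathrm{Bin}(N-1,d/N)$ is the out-degree; $\beta^2_x$ is analogous with the in-degree. \textbf{This is the key step.} A Chernoff bound gives
\[ \mathbb P(D_x \le \delta' d) \le \exp\bigl(-d\,h(\delta')\bigr), \qquad h(a) = a\log a - a + 1 . \]
Since $h(\delta')\to 1$ as $\delta'\to0$, we may choose $\delta'$ small enough that $(1+\delta)h(\delta') > 1$. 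Then $d\,h(\delta') \ge (1+\epsilon)\log N$ for some $\epsilon>0$, and a union bound over the $2N$ choices of $(x,i)$ gives failure probability $O(N^{-\epsilon}) = o(1)$. This is exactly where the hypothesis $d\ge(1+\delta)\log N$ is needed: at $d=\log N$, degree-zero vertices appear. Combining the three events, each of probability $1-o(1)$, completes the proof. Left eigenvectors are handled by the same argument, since the theorem covers them too.
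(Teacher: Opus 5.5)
Your proposal follows the paper's proof. You write the rescaled adjacency matrix in the form \cref{eq:def_M_X_plus_f}, isolate the outlier near $f$ with \cref{prop:spectrum_M}, and cover $\{\delta\le\abs w\le 2\}$ by part \cref{item:thm_cond_deloc_large_eigenvec} of \cref{thm:condideloc}. The small eigenvalues go to part \cref{item:thm_cond_deloc_small_eigenvec}, after showing $\min_{x,i}\beta^i_x\ge\delta'$ with probability $1-o(1)$ by a lower-tail bound on the degrees and a union bound. The differences are small improvements. Your direct Chernoff bound gives the same rate $\ee^{-d h(\delta')}$ that the paper obtains from a Poisson approximation \cite[Lemma~D.3]{alt2023poisson}. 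Your normalization by $\sqrt{d(1-d/N)}$, with the shift $f/N$ absorbed, makes $X$ exactly centred with off-diagonal variance exactly $N^{-1}$; the paper's choice $\mathrm{Adj}(\mathbb G)/\sqrt d$, $f=\sqrt d$, achieves this only up to negligible corrections.

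One justification needs repair, though the paper's proof skips the same point. Substituting $\delta'$ for $\delta$ in \cref{thm:condideloc} is not harmless just because complete delocalization concerns every exponent. The same parameter also imposes $d\le N^{\delta'/4}$ and $\abs f\le N^{\delta'/6}$. Moreover you are forced to take $\delta'<\delta$, since $(1+\delta)h(\delta)=1-\delta^2+(1+\delta)\delta\log\delta<1$. So the range $N^{\delta'/4}<d\le N^{\delta/4}$ is not covered by the theorem as stated.

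The fix is a case distinction.
\begin{itemize}
\item Apply part \cref{item:thm_cond_deloc_large_eigenvec} with $\delta$ itself; this already covers $\{\delta\le\abs w\le 2\}$.
\item If $d\le(\log N)^{3/2}$, then $d\le N^{\delta'/4}$ and $\abs f\le N^{\delta'/6}$ for large $N$. Your argument with $\delta'$ then covers $\{\abs w\le 1-\delta'\}\supset\{\abs w\le\delta\}$.
\item If $d\ge(\log N)^{3/2}$, then \cref{eq:beta_bounded} (or your Chernoff bound with $a=1/2$) gives $\beta^i_x\ge 1/2\ge\delta$ for all $x,i$ with very high probability. So part \cref{item:thm_cond_deloc_small_eigenvec} applies with $\delta$ itself.
\end{itemize}
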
 

	The proof of \cref{cor:delocalization_unconditional} is presented in \cref{sec:proof_cor_deloc_ER_graph} below.  
	If $d \geq N^\delta$ for some constant $\delta \in (0,1)$ then delocalization of all eigenvectors of $M$ away from $f$ has been shown in \cite[Theorem~1.2 (ii)]{HeDigraph}. 
	
	\begin{remark}[Optimality of $d \geq \log N$] \label{rem:leaves}
	The lower bound $d \geq (1 + \delta) \log N$ is optimal for the delocalization of all (left and right) eigenvectors of $\mathrm{Adj}(\mathbb{G})$ with $\mathbb{G} = \mathbb{G}(N,d/N)$. 
	For $d \leq (1 - \delta) \log N$, the giant component of $\mathbb{G}$ has leaves with high probability. They induce standard basis vectors as left or right eigenvector, depending on the direction of the edge connecting the leaf and the giant component, with eigenvalue zero.  Therefore, eigenvector delocalization breaks drastically, even for the giant component, 
	for $d < \log N$. This transition occurs at the same threshold as in the undirected setting \cite{alt2021delocalization,ADK_deloc_everywhere}.
	\end{remark} 
	
	\subsection{Local law for Hermitization}

	The results in the previous subsection will follow from a local law for the Hermitization $H(w)$ of the matrix 
	$M$ from \cref{eq:def_M_X_plus_f}, which we introduce in \cref{hermitizationofM} below.
	The local law asserts that the resolvent of $H(w)$ is well-approximated by a matrix 
	which only depends the normalized degrees $\beta_x^i$ from \cref{eq:def_beta_i_x} and the spectral parameters on all spectral scales slightly above the typical eigenvalue spacing. 
	
	We first introduce some notation for the statement of the local law.  
	The \emph{Hermitization} of $M$ is the Hermitian matrix 
	\begin{equation}\label{hermitizationofM}
		H=H(w) :=\begin{pmatrix} 
			0                       &       M-w \\
			
			M^{\ast}- \overline{w} & 0
		\end{pmatrix}  \in \Mat_{2N}(\CC) 
	\end{equation}
	with the complex parameter $w \in\C$. 
	For the following definitions, let $w \in \C$ and $\eta >0$.
	We denote the corresponding Green function or resolvent of $H(w)$ at $\ii \eta$ by 
	\begin{equation}\label{Greenfunction}
		G=G(w,\ii\eta) = (H(w)- \ii \eta)^{-1}, 
	\end{equation}
	where the inverse exists as $H(w)$ is Hermitian. 
	For the normalized trace of $G(w,\ii \eta)$, we write 
	\begin{equation} \label{eq:def_g} 
		g(w,\ii \eta) = \frac{1}{2N} \Tr G(w,\ii \eta),  
	\end{equation} 
	which is also the Stieltjes transform at $\ii \eta$ of the empirical spectral measure $(2N)^{-1} \sum_{\lambda \in \mathrm{spec}H(w)}\delta_\lambda$ of $H(w)$.  
	We denote by $v(w,\eta)$ the unique positive solution of 
	\begin{equation} \label{eq:v} 
		\frac{1}{v(w,\eta)}= \eta + v(w,\eta) + \frac{\abs{w}^2}{\eta + v(w,\eta)}. 
	\end{equation} 
	For $\beta= (\beta^1,\beta^2) \in[0,\infty)^2$ and $w$, $\eta$ as above, we define  
	\begin{align}  
		v_\beta(w,\eta) & := \frac{\eta + \beta^2 v(w,\eta)}{\abs{w}^2 + (\eta + \beta^1 v(w,\eta))(\eta + \beta^2 v(w,\eta))},  
		\label{eq:def_v_beta}\\ 
		u_\beta(w,\eta) & := \frac{1}{\abs{w}^2 + (\eta + \beta^1 v(w,\eta))(\eta + \beta^2 v(w,\eta))}. 
		\label{eq:def_u_beta} 
	\end{align} 
	We recall the definitions of $\beta_x^1$ and $\beta_x^2$ from \cref{eq:def_beta_i_x} for $x \in [N]$ and define the vectors 
	\[ \beta_x:= (\beta^1_x,\beta^2_x), \quad \text{and} \quad   \ubarr{\beta}_x := (\beta^2_x,\beta^1_x). \]
To unify the notation in the next theorem, we introduce the indicator functions 
\begin{equation} \label{eq:def_chi} 
	\chi_{\mathrm{l}}:= \mathds{1}\left( \beta_{x}^i \ge \delta~\text{for all}~ x\in [N],\, i =1,2\right), \qquad \chi_\mathrm{b} := \chi_\mathrm{e} := 1, 
	\end{equation} 
	as well as the spectral domains 
	\begin{equation} \label{eq:def_spectral} 
	\mathrm{S}_{\mathrm{l}} := \mathrm{D}_{1-\delta}, \qquad \qquad 
	\mathrm{S}_{\mathrm{b}} := 	\mathrm{D}_{1-\delta}\setminus \mathrm{D}_\delta, \qquad \qquad 
	\mathrm{S}_\mathrm{e} := \mathrm{D}_2\setminus \mathrm{D}_\delta 
	\end{equation} 
	for some constant $\delta \in (0,1)$. 
	
In the following theorem, we use the notations $\ll$ and $o(1)$, which are defined in \cref{sec:notation} below. 	

		\begin{thm}[Conditional Local Law for $H(w)$]\label{thm:locallaw_H}
		Let $ 0 < \delta \le 1/2$ be fixed and $X$ be a sparse matrix as in  \cref{defiX}. 
		Let $\# \in \{\mathrm{l}, \mathrm{b}, \mathrm{e}\}$.    
		We define $M$ as in \cref{eq:def_M_X_plus_f} for some (deterministic) $f$ such that $0 \le \abs{f} \le N^{\delta/6}$, and define $G$ and $g$ as in \cref{Greenfunction} and \cref{eq:def_g} respectively. 
		If $d$ satisfies $d_\# \ll d \leq N^{\delta/4}$ then, with very high probability, for all $w \in \mathrm{S}_\#$ and all $ N^{-1 + \delta} \le \eta \le 1$ we have
		\begin{equation}
			\chi_\# \max_{x \in [N]} \Big( \absb{G_{xx}(w,\ii \eta) - \ii v_{\beta_x}(w,\eta)} +
			\absb{G_{\ubarr{x}\ubarr{x}}(w,\ii \eta) - \ii v_{\ubarr{\beta}_x}(w,\eta)}
			\Big) =o(1) ,
		\end{equation}
		\begin{equation}
			\chi_\# \max_{x \in [N]} \Big(
			\absb{G_{x\ubarr{x}}(w,\ii\eta) + w u_{\beta_x}(w,\eta)} 
			+ \absb{G_{\ubarr{x}x}(w,\ii\eta) + \overline{w} u_{{\beta}_x}(w,\eta)} \Big)  =o(1),
		\end{equation}
		\begin{equation}
			\chi_\#  \max_{\stackrel{x,y \in [2N]}{x \neq y \neq \ubarr{x}}}\absb{G_{xy}(w, \ii \eta)}  =o(1),
		\end{equation}
		\begin{equation}
			\chi_\#  \absb{g(w,\ii \eta) - \ii v(w,\eta)}  =o(1).
		\end{equation}
	\end{thm}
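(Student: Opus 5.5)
We plan to prove the local law by the standard route for sparse Hermitizations: Schur complement self-consistent equations, a stability analysis of these equations around the degree-dependent approximants $v_{\beta_x}$, $u_{\beta_x}$, and a bootstrap in $\eta$ from $\eta=1$ down to $\eta=N^{-1+\delta}$. We first remove the rank-one perturbation $f(\mathbf e\mathbf e^*-1/N)$. Via the resolvent identity, the resolvent of the Hermitization of $X$ is a finite-rank perturbation, and $f\mathbf e\mathbf e^*$ changes diagonal entries by terms controlled by $\abs{f}\,\abs{\langle \mathbf e, G \mathbf{e}_x\rangle}$-type quantities. These are $o(1)$ once we show isotropic-type smallness of $\mathbf e$-projections of rows of $G$; this is where $\abs{f}\le N^{\delta/6}$ and $d\le N^{\delta/4}$ enter. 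The shift $-f/N$ is absorbed into $w$. So it suffices to treat $M=X$.

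\textbf{Self-consistent equations.} For fixed $x$, we apply the $2\times2$ Schur complement to the block indexed by $\{x,\ubarr x\}$:
\[
\begin{pmatrix} G_{xx} & G_{x\ubarr x}\\ G_{\ubarr x x} & G_{\ubarr x\ubarr x}\end{pmatrix}^{-1}
=\begin{pmatrix} -\ii\eta - \sum_{y,z} X_{xy}G^{(x)}_{\ubarr y\ubarr z}\overline{X_{xz}} & -w+\ldots \\ -\overline w+\ldots & -\ii\eta-\sum_{y,z}\overline{X_{yx}}G^{(x)}_{yz}X_{zx}\end{pmatrix},
\]
where $G^{(x)}$ is the resolvent with rows and columns $x,\ubarr x$ removed. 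We then use large deviation bounds for sparse quadratic forms, with entries bounded by $Kd^{-1/2}$. The diagonal parts give $\sum_y \abs{X_{xy}}^2 G^{(x)}_{\ubarr y\ubarr y}$. The off-diagonal parts are small provided $\max_{y\ne z}\abs{G^{(x)}_{yz}}$ is small and $\operatorname{Im}$-type Ward bounds hold. The errors are of size $d^{-1/2}$ times powers of $\log N$, plus the size of $\max_y\abs{G^{(x)}_{yy}-\ii v_{\beta_y}}$ weighted by $\abs{X_{xy}}^2$. The key averaging step replaces $\sum_y\abs{X_{xy}}^2 G_{\ubarr y\ubarr y}$ by $\beta^1_x\cdot\ii v$. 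Since the $X_{xy}$ are independent of $G^{(x)}$, this sum concentrates around $N^{-1}\sum_y G^{(x)}_{\ubarr y \ubarr y}$, which is close to the averaged $g$. Substituting $\ii v$ and inverting the $2\times 2$ matrix reproduces exactly the formulas for $v_{\beta_x}$, $v_{\ubarr\beta_x}$ and $-w u_{\beta_x}$. Averaging these over $x$ and using the law of large numbers for the degree distribution yields the equation for $v$ itself, with $v$ from the defining equation of $v(w,\eta)$. Off-diagonal entries $G_{xy}$ are handled by the usual two-index Schur expansion, $G_{xy}=-G_{xx}\sum X G^{(x)}\ldots$, giving $o(1)$ under the a priori bounds.

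\textbf{Stability and bootstrap.} We then show that the averaged equation for $g$ is stable: its linearization at $v$ has a bounded inverse on $\mathrm S_\#$ uniformly in $\eta\in[N^{-1+\delta},1]$. In the bulk and for $\abs w\le 1-\delta$ this is a bounded-inverse computation. Near the edge $\abs w\approx 1$ stability degenerates like $\eta^{1/3}$-type behaviour, which is why a stronger lower bound on $d$ ($d_{\mathrm e}$) is needed there. The bootstrap runs as follows. At $\eta=1$ all bounds are trivial. We pass from $\eta$ to $\eta-N^{-3}$ by Lipschitz continuity, use a union bound over a grid in $(w,\eta)$, and improve the a priori bound $o(1)$ to a strictly smaller one.

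\textbf{Main obstacle.} We expect the main obstacle to be that $v_{\beta_x}$ is \emph{not} uniformly bounded. When $\beta_x^1$ is small and $\beta^2_x$ is large (or vice versa), the denominator $\abs w^2+(\eta+\beta^1v)(\eta+\beta^2v)$ can be of order $\abs w^2$. This is harmless only when $\abs w\ge\delta$, or on the event $\chi_{\mathrm l}$ where $\beta\ge\delta$. Thus errors in the Schur complement get amplified at atypical vertices, and these feed into neighbours' equations through $\sum_y\abs{X_{xy}}^2G_{yy}$. To handle this, we would track two classes of vertices: \emph{typical} ones, whose degrees $\beta^i_x$ are within $o(1)$ of $1$, and \emph{atypical} ones. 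We would show that with very high probability every vertex has at most a bounded number (of order $\log N/d$-dependent, but $o(d)$) of atypical neighbours. Their contribution to $\sum_y\abs{X_{xy}}^2G_{yy}$ is then at most $(\#\text{atypical nbrs})\cdot d^{-1}\max\abs{G_{yy}}$. Combining this with a crude bound $\abs{G_{yy}}\lesssim \abs w^{-1}+$ error, it is $o(1)$ exactly when $d\gg d_\#$. The exponents $12/13$ and $24/25$ would come out of optimizing this counting against the stability constants, and we would fix them at that stage.
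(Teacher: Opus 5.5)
\textbf{There is a genuine gap.} Your architecture ($2\times2$ Schur complement, degree-dependent approximants, typical versus atypical vertices, cubic stability, bootstrap in $\eta$) is the paper's. However, the step that makes the self-consistent equations close is missing, and what you put in its place fails in the regime of the theorem.

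You assert that, because $(X_{xy})_y$ is independent of $G^{(x)}$, the sum $\sum_y|X_{xy}|^2G^{(x)}_{\ubarr y\ubarr y}$ concentrates around $N^{-1}\sum_yG^{(x)}_{\ubarr y\ubarr y}$. Uniformly in $x$ and with very high probability, the sparse large deviation bounds give fluctuations of order $\Gamma(\log N/d)^{1/2}$, which is small only if $d\gg\log N$. That is what the paper does for $d\ge(\log N)^{3/2}$ in \cref{lem:sumXabGa_eq_sumGa}. Here, though, $d$ may be just above $d_{\mathrm l}$, $d_{\mathrm b}$ or $d_{\mathrm e}$. For a single $x$ the failure probability is then only of order $\ee^{-c\varphi^2d/\Gamma^2}$, so a large, random, $G$-dependent set of vertices violates it.

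The target is also wrong. For $x$ with $\beta^1_x$ far from $1$ the sum is close to $\ii\beta^1_xv$, not to $g$. The factor $\beta^1_x$ comes from most neighbours of $x$ having $G_{yy}\approx\ii v$, not from concentration.

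Your fix in the last paragraph, typicality via degrees alone, does not close this. Knowing $G_{yy}\approx\ii v$ at a degree-typical $y$ requires the same at most neighbours of $y$, which is circular. Pushing an a priori bound $\epsilon$ on $\Lambda$ through $\sum_y|X_{xy}|^2(\cdot)$ and the $2\times2$ inversion only returns an error of order $\epsilon$, so the bootstrap hypothesis is never improved. You need an equation whose error is independent of the bootstrap hypothesis, valid on a set of vertices that is provably large.

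\textbf{How the paper closes it.} It builds the local average into typicality: $x\in\mathcal T_\varphi$ iff $|\Phi^i_x|,|\Psi^i_x|\le\varphi$, where $\Psi^1_x=\sum_y(|X_{xy}|^2-1/N)G^{[x]}_{\ubarr y\ubarr y}$, and similarly $\Psi^2_x$. The argument then runs as follows.
\begin{enumerate}
\item For typical $x$, the local sums equal $s$ (the average of $G_{yy}$, $G_{\ubarr y\ubarr y}$ over typical $y$) up to $O(\varphi)$. This uses the trace identity between the blocks and $\Gamma|\mathcal T_\varphi^c|/N\lesssim\varphi$.
\item Hence $s^3+2\ii\eta s^2+(1-\eta^2-|w|^2)s+\ii\eta=O(\varphi)$. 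Cubic stability, run continuously in $\eta$, gives $|s-\ii v|\lesssim\psi$ and then $|G_{xx}-\ii v|\lesssim\psi$ at typical $x$.
\item Only then is an arbitrary $x$ treated. From $\sum_{y\notin\mathcal T_\varphi}|X_{xy}|^2\lesssim\varphi\Gamma^{-3}$ one gets $\sum_y|X_{xy}|^2G^{[x]}_{\ubarr y\ubarr y}=\ii\beta^1_xv+O(\beta^1_x\psi+\varphi\Gamma^{-3})$, and the $2\times2$ inversion gives error $\Gamma^2\psi$.
\end{enumerate}

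\textbf{What your proposal lacks.} Because this typicality depends on $G$, proving that most vertices, and most neighbours of every vertex, are typical is the real work. It needs the decoupled sets $\mathcal T^{[T]}_\varphi$, conditional independence given $H^{[T]}$, and $\max|G^{[T]}_{xy}|\le2\mathcal C\Gamma$ after removing $|T|\le\mathfrak c d/\Gamma^2$ pairs $\{x,\ubarr x\}$ (\cref{lem:roughboundsGT}). This counting forces $\varphi^3\asymp\Gamma^5\log N/d^2$. With $\Gamma=1+\log N/d$, the conditions $\Gamma^2\varphi=o(1)$ and $\Gamma^2\varphi^{1/3}=o(1)$, and with $\Gamma=1$ the condition $\varphi=o(1)$, are exactly $d\gg d_{\mathrm b}$, $d_{\mathrm e}$ and $d_{\mathrm l}$.

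Your heuristic ``$o(d)$ atypical neighbours times $d^{-1}\max|G_{yy}|$'' is $o(1)$ for every $d\to\infty$, so it cannot produce these thresholds. Also, $\max|G_{yy}|$ is of order $\Gamma/|w|^2$, not $|w|^{-1}$.

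\textbf{The reduction to $f=0$.} Your preliminary reduction relies on an isotropic bound on $\langle\mathbf e,G\mathbf e_x\rangle$ that you do not have; Ward only gives $(\Ima G_{xx}/\eta)^{1/2}$. It is also unnecessary. Keep $M_{xa}=X_{xa}+f/N$ in the Schur complement: every $f$-term then carries a factor $1/N$ and is negligible by the Ward identity, $|f|\le N^{\delta/6}$ and $\eta\ge N^{-1+\delta}$.
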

	
		For $d \leq (\log N)^{3/2}$, the previous theorem follows directly from \cref{thm:locallaw_edge} below, which has precise error estimates and holds as long as  $d \geq \mathcal D \, d_\#$. Its proof takes up most of this work and is presented at the end of \cref{section:proof_local_law_H} below. 
	For $(\log N)^{3/2}\leq d \leq N^\delta$, we automatically have $\chi_\# \equiv 1$ with very high probability. Therefore, the statement and proof of the local law for $H(w)$ are simpler and presented in \cref{thm:locallaw3} below. After the statement of \cref{lem:sumXabGa_eq_sumGa}, we deduce \cref{thm:locallaw_H} in this regime.

		\begin{remark}[Special cases of $v_{\beta}$] \label{rem:v_beta_measures} 
	With $v$ from \cref{eq:v}, the integral 
	\begin{equation} \label{eq:relation_v_circular_law} 
	-\frac{1}{2\pi} \int_0^\infty \Delta_w v(w,\eta) \, \mathrm{d} \eta
	\end{equation} 
     gives the density of the circular law, i.e.\ the uniform probability measure on the unit disc in the complex plane. This obviously corresponds to $\beta^1=\beta^2 = 1$. Analogously, for $\beta^1 = \beta^2 = \frac{k}{k-1}$
	and a fixed $k\in \N$, 
	\cref{eq:relation_v_circular_law} with $v$ replaced by $v_\beta$ yields the Brown measure of the sum of $k$ free Haar unitaries, 
	also called oriented Kesten-McKay law, which coincides with the Brown measure of the directed 
	$k$-regular tree. For general $\beta^1$ and $\beta^2$, the analogous construction yields a much richer family of probability measures on $\C$. 
	\end{remark}

	\textbf{Acknowledgements.}
	The authors would like to thank Charles Bordenave and Antti Knowles for insightful discussions about \cite{BBK_spec_radii}.
	The authors gratefully acknowledge funding from the Deutsche Forschungsgemeinschaft (DFG, German Research Foundation) under Germany's Excellence Strategy - GZ 2047/1, project-id 390685813. The second author also gratefully acknowledges funding from the ANR DYNACQUS, Grenoble ANR-24-CE40-5714-02.
	
	\subsection{Notations}
	\label{sec:notation}

Let $a$ and $b$ be positive quantities which may depend on $N$. 
We write $a \lesssim b$ if there is a constant $C>0$ independent of $N$ such that $a \leq C b$ for all $N \in \N$. If $a$ and $b$ depend on other parameters, we will specify if $C$ is uniform for certain ranges of these parameters. 
	We write $a \gtrsim b$ if $b \lesssim a$ and $a \asymp b$ if $a\lesssim b$ and $b \lesssim a$. We write $a = o(1)$ if $a \to 0$ as $N \to \infty$. We write $a \ll b$ if $a/b =o(1)$.
	
	For each $n \in \N$, we write $[n] := \{1, \ldots, n\}$ for the set of the first $n$ positive integers.

	We denote the complex upper half-plane by $\CC_+ := \{ z \in \CC\colon \Ima(z)>0\}$ and for a positive integer $N$, 
the algebra of $N$-dimensional matrices by $\mathrm{M}_N (\CC)$. For matrices $M$, 
	$\norm{M}$ denotes the operator norm induced by the $\ell^2$-norm or Euclidean norm on the underlying spaces, see \cref{eq:def_vector_norms}. 
	For an $N\times N$-matrix $M$, we denote by $\|M\|_{\infty}:= \max_{i,j \in [N]} |M_{ij}|$ the entrywise maximum norm of $M$. 
	In identities involving matrices, scalars are interpreted as multiples of the identity matrix. 

	Throughout the paper, we follow the convention that calligraphic letter like $\mathcal C$ or $\mathcal D$ can tacitly depend on the exponent $\nu$ from \cref{def:very_high_probab} in statements which hold with very high probability.

\section{Preliminaries} 

	\subsection{Approximate self-consistent equations and error terms}\label{subsection:Approx_sc_eq} 
	
	In this section, we derive the approximate self-consistent equations for the entries of the resolvent $G$ of the Hermitization $H(w)$, see \cref{Greenfunction} and \cref{hermitizationofM}, respectively. Moreover, we estimate a first set of error terms in these equations. 
	
	To formulate these results, we now introduce some notation used throughout the remainder of this work. 
	Let $Q=(Q_{xy})_{x,y \in [N]} \in \mathrm{M}_N(\CC)$ be a matrix. For each $T \subset [N]$, we define the matrix 
	\begin{equation}\label{eq:def_M_(_T_)} 
		Q^{(T)}:= (Q_{xy}\mathds{1}\left(\{x,y\} \cap T = \emptyset\right))_{x,y\in [N]} \in \mathrm{M}_{N}(\CC). 
	\end{equation}
	It corresponds to the minor of $Q$, where all rows and columns of $Q$ with index in $T$ are removed. 
	If $x \in [N]$ and $T= \{x\}$, we set $Q^{(x)} := Q^{(\{x\})}$. Moreover,  we set $Q^{(Tx)}:=Q^{(T\cup \{x\})}$ for each $T \subset [N]$ and $x \in [N] \setminus T$.

	Let $T \subset [N]$. For the Hermitization $H(w)$ from \cref{hermitizationofM} and $w \in \C$, 
	we define the matrix 
	\begin{equation}\label{hermitizationofM(T)}
		H^{\bset{T}}:= H^{\bset{T}}(w) :=\left(
		\begin{array}{ccc}
			0                       &       M^{(T)}-w \\
			
			(M^{(T)})^{\ast}- \overline{w}  & 0
		\end{array}
		\right).
	\end{equation}
	Since $H(w)^{[T]}$ is Hermitian, $H(w)^{[T]}-z$ is invertible for each $z \in \C_+$ and we denote the 
	corresponding resolvent by 
	\begin{equation}\label{eq:GreenfunctionM(T)}
		G^{\bset{T}}=G^{\bset{T}}(w,z) = (H^{\bset{T}}(w)- z)^{-1}.
	\end{equation}
	Furthermore, we use the following notion for the indices in $[2N]$. For $x \in [2N]$ and $T \subset [N]$, we set
	\begin{equation*}
		\ubarr{x}:= x+N \mod 2N, \qquad \qquad \ubarr{T}:= \{ x + N:~ x \in T\}.
	\end{equation*}
	
	For a subset $T \subset [N]$, we write for a summation over the elements of $[N]$, which are not in $T$, 
	\begin{equation*}
		\sum_{x}^{(T)} \,\, := \sum_{x \in [N]\backslash T}.
	\end{equation*}
In the next lemma, we derive self-consistent equations, which couple the diagonal entries of $G$ and the diagonal entries of its off-diagonal $N \times N$-blocks.  
	
	\begin{lemma}[Approximate self-consistent equations]\label{lem:ASCEq}
		For all $x\in[N]$, $w \in \CC$ and $z \in \CC_+$ we have
		\begin{align*}
			1 &= -z G_{xx} - \bigg(\sum_{y}^{(x)} |X_{xy}|^{2}G^{\bset{x}}_{\ubarr{y}\ubarr{y}}\bigg) G_{xx} - w G_{ \ubarr{x}x}+ Y^1_{x} G_{xx} + Z^{1}_{x}G_{ \ubarr{x}x}\\
			0 &= -z G_{x\ubarr{x}} - \bigg(\sum_{y}^{(x)} |X_{xy}|^{2}G^{\bset{x}}_{\ubarr{y}\ubarr{y}}\bigg) G_{x\ubarr{x}} - w G_{ \ubarr{x}\ubarr{x}} + Y^1_{x} G_{x\ubarr{x}} + Z^{1}_{x}G_{ \ubarr{x}\ubarr{x}}\\
			0 &=  -z G_{\ubarr{x}x} - \bigg(\sum_{y}^{(x)} |X_{yx}|^{2}G^{\bset{x}}_{yy}\bigg) G_{\ubarr{x}x} -\barr{w} G_{xx} + Y^2_{x} G_{\ubarr{x}x} + Z^{2}_{x}G_{ xx}\\
			1 &=  -z G_{\ubarr{x}\ubarr{x}} - \bigg(\sum_{y}^{(x)} |X_{yx}|^{2}G^{\bset{x}}_{yy}\bigg) G_{\ubarr{x}\ubarr{x}} -\barr{w} G_{x\ubarr{x}} + Y^2_{x} G_{\ubarr{x}\ubarr{x}} + Z^{2}_{x}G_{ x\ubarr{x}}
		\end{align*}
		where the error terms are given by
		\begin{align*}
			Y_{x}^1 &= - \sum_{a \neq b}^{(x)} X_{xa} G^{\bset{x}}_{\ubarr{a}\ubarr{b}} \barr{X}_{xb}  - \sum_{a,b}^{(x)} \biggl(\frac{f}{N}\left[X_{xa} G_{\ubarr{a}\ubarr{b}}^{\bset{x}} + G_{\ubarr{a}\ubarr{b}}^{\bset{x}} \barr{X}_{xb} \right] + \frac{f^2}{N^2}G_{\ubarr{a}\ubarr{b}}^{\bset{x}}\biggr) ,\\
			Z_x^1 &=X_{xx} - \sum_{a,b}^{(x)} \left(X_{xa} + \frac{f}{N} \right) G_{\ubarr{a}b}^{\bset{x}} \left(X_{bx} + \frac{f}{N}\right)\\
			Y_{x}^2 &= - \sum_{a\neq b}^{(x)} \barr{X}_{ax} G_{ab}^{\bset{x}}X_{bx} - \sum_{a,b}^{(x)} \biggl(\frac{f}{N}\left[\barr{X}_{ax} G_{ab}^{\bset{x}} + G_{ab}^{\bset{x}} X_{bx} \right] + \frac{f^2}{N^2}G_{ab}^{\bset{x}}\biggr) \\
			Z_x^2 &= \barr{X}_{xx} - \sum_{a,b}^{(x)}\left(\barr{X}_{ax} + \frac{f}{N} \right) G_{a\ubarr{b}}^{\bset{x}} \left(\barr{X}_{xb} + \frac{f}{N}\right).
		\end{align*}
	\end{lemma}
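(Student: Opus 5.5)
The identities are exact, and I will derive them from the $2\times 2$ block Schur complement formula applied to the index pair $\{x,\ubarr{x}\}$. Let $\Lambda\subset[2N]$ be the index set $\{x,\ubarr{x}\}$ and $\Lambda^c$ its complement. For $H(w)-z$ we have
\[
\begin{pmatrix} G_{xx} & G_{x\ubarr{x}}\\ G_{\ubarr{x}x} & G_{\ubarr{x}\ubarr{x}}\end{pmatrix}^{-1}
= \begin{pmatrix} -z & M_{xx}-w\\ \barr{M}_{xx}-\barr{w} & -z\end{pmatrix} - H_{\Lambda\Lambda^c}\,\bigl(H_{\Lambda^c\Lambda^c}-z\bigr)^{-1} H_{\Lambda^c\Lambda}.
\]
The first step is to identify $(H_{\Lambda^c\Lambda^c}-z)^{-1}$ with the restriction of $G^{\bset{x}}$ to $\Lambda^c$. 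Indeed, $H^{\bset{x}}$ is block diagonal with respect to $\Lambda\cup\Lambda^c$ (its $\Lambda$ block is $\bigl(\begin{smallmatrix}0&-w\\-\barr{w}&0\end{smallmatrix}\bigr)$), and on $\Lambda^c$ it coincides with $H$. The row $H_{x,\cdot}$ restricted to $\Lambda^c$ has entries $M_{xa}$ at positions $\ubarr{a}$ for $a\neq x$. The row $H_{\ubarr{x},\cdot}$ has entries $\barr{M}_{ax}$ at positions $a$. The same holds for the columns with adjoints.

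Next I compute the four entries of the self-energy $\Sigma := H_{\Lambda\Lambda^c}G^{\bset{x}}H_{\Lambda^c\Lambda}$:
\[
\Sigma_{xx}=\sum_{a,b}^{(x)} M_{xa}G^{\bset{x}}_{\ubarr{a}\ubarr{b}}\barr{M}_{xb},\quad
\Sigma_{x\ubarr{x}}=\sum_{a,b}^{(x)} M_{xa}G^{\bset{x}}_{\ubarr{a}b}M_{bx},
\]
and analogously $\Sigma_{\ubarr{x}x}$ and $\Sigma_{\ubarr{x}\ubarr{x}}$, with $\barr{M}_{ax}$ on the left. For $a\neq b$, $M_{ab}=X_{ab}+f/N$, and $M_{xx}=X_{xx}+f(1/N-1/N)=X_{xx}$. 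Now expand $\Sigma_{xx}$. The diagonal $a=b$ part of $X_{xa}G\barr{X}_{xa}$ gives $\sum_y^{(x)}|X_{xy}|^2G^{\bset{x}}_{\ubarr{y}\ubarr{y}}$. The remaining pieces (off-diagonal $X$-terms, the cross terms with $f/N$, and the $f^2/N^2$ term) give exactly $-Y_x^1$. A small point to check is the sign and conjugation convention for $f$: the formula for $Y^1_x$ uses $f$ rather than $\barr f$, so I will follow the paper's convention, presumably $f$ real or a typo-level matter. The off-diagonal block entry is simply $M_{xx}-w-\Sigma_{x\ubarr{x}} = -w+Z^1_x$.

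Thus the inverse of the $2\times2$ block equals
\[
\begin{pmatrix} -z-S_1+Y^1_x & -w+Z^1_x\\ -\barr{w}+Z^2_x & -z-S_2+Y^2_x\end{pmatrix},
\]
with $S_1=\sum_y^{(x)}|X_{xy}|^2G^{\bset{x}}_{\ubarr{y}\ubarr{y}}$ and $S_2=\sum_y^{(x)}|X_{yx}|^2G^{\bset{x}}_{yy}$. Multiplying this matrix on the right of the $2\times 2$ block of $G$ yields the identity matrix, and reading off the four entries gives exactly the four claimed equations. Row one gives the first two equations, with $G_{\ubarr{x}x}$ and $G_{\ubarr{x}\ubarr{x}}$ multiplying $(-w+Z^1_x)$. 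Row two gives the last two.

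The only real obstacle is bookkeeping. This means checking that the block inverse is well defined, which holds since $\Im z>0$ and $H$ and $H^{\bset{x}}$ are Hermitian. It also means tracking the index placement in $G^{\bset{x}}_{\ubarr{a}b}$ versus $G^{\bset{x}}_{a\ubarr{b}}$ and the complex conjugates in $Z^2_x$.
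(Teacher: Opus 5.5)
Your proposal is correct and is the paper's argument: the Schur complement for the index pair $\{x,\ubarr{x}\}$, using that $H^{\bset{x}}$ is block diagonal so that $G^{\bset{x}}$ restricted to the complement equals $(H_{\Lambda^c\Lambda^c}-z)^{-1}$, and then multiplying the $2\times 2$ block of $G$ from the left by its inverse, which is what your row-by-row read-off actually does despite the phrase ``on the right''. Your conjugation remark is also accurate: for complex $f$ the expansion produces $\barr{f}$ and $\abs{f}^2$ in place of $f$ and $f^2$ in some terms of $Y^1_x$, $Y^2_x$ and $Z^2_x$, so the displayed formulas hold verbatim for real $f$ (as in the applications, e.g.\ $f=\sqrt{d}$), and the later estimates only use $\abs{f}$.
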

	
	\begin{proof}
		We fix $x \in [N]$. Introducing the matrices 
		\begin{align*}
			A_{x} &= \left( \begin{array}{cc}-z                            & M_{xx} -w          \\
				\overline{M}_{xx}-\overline{w}& -z  \end{array}\right),  \\
			B_{ x} & = \left(\begin{array}{cc}0\cdots\cdots\cdots\cdots\cdots\cdots \cdots \cdots \cdots 0 & M_{x1}~ M_{x2}\cdots M_{x x-1}~ M_{x x+1} \cdots M_{x N} \\ \overline{M}_{1x} ~ \overline{M}_{2x} \cdots \barr{M}_{x-1 x} ~ \barr{M}_{x+1 x} \cdots \barr{M}_{Nx} & 0 \cdots\cdots\cdots\cdots\cdots\cdots \cdots \cdots \cdots 0  \end{array}\right)
		\end{align*}
		we note that  up to relabelling the rows and columns,  
		\begin{equation*} 
			G = \begin{pmatrix} A_{ x} & B_{ x}  \\ (B_{x})^* &  H^{[x]} -z  \end{pmatrix}^{-1}. 
		\end{equation*} 
		Therefore, the Schur's complement formula  
		yields for the upper left $2\times 2$-block 
		\[ \begin{pmatrix} G_{xx} & G_{x\ubarr{x}} \\ G_{\ubarr{x}x} & G_{\ubarr{x}\ubarr{x}} 
		\end{pmatrix} = \big(A_x - B_x G^{[x]} (B_x)^*\big)^{-1}. \] 
		We multiply this from the left by $A_x - B_x G^{[x]} (B_x)^*$, which completes the proof of the lemma.
	\end{proof}
	
	To study typical vertices in the next section, we need to analyse the resolvent after removing a number of rows and columns, which diverges with $N$, from the full Hermitization $H$.
	We show in the next lemma that the resolvent entries remain bounded after such procedure if the resolvent entries of the full matrix $H$ are bounded. 	
	We introduce a possibly $N$-dependent parameter $ \Gamma $ and a corresponding indicator function
	\begin{equation}\label{eq:theta}
		1\le \Gamma \le 1 + \frac{\log N}{d}, ~~\text{and} ~~\theta := \mathds{1}\left(\max_{x,y\in [2N]} \abs{G_{xy}}\le \mathcal C \Gamma\right) .
	\end{equation}
The proofs of the two following lemmas are given in \cref{subsection:proofs_approx_sc}.
	\begin{lemma}[Upper bound on resolvent entries] \label{lem:roughboundsGT}
	 Let $\delta \in (0,1)$ and $\Gamma$ be as in \cref{eq:theta}. 
		Then there exists a constant $\mathfrak{c}= \mathfrak{c}(\nu,\delta) >0$ such that for each $ \frac{\Gamma^2}{64 \mathfrak c} \leq d \leq N^{\delta/2}$, for all deterministic $T\subset [N]$ satisfying $\abs{T} \le \mathfrak{c}d / \Gamma^2$, we have 
		\begin{equation}\label{eq:boundGTGTu}
			\theta \max_{x,y \notin T \cup \ubarr{T}} \absb{G^{\bset{T}}_{xy}} \le 2  \mathcal C  \Gamma
		\end{equation}
		with very high probability uniformly for $w \in \C$ and $z \in \C_+$ satisfying $\Im z \geq N^{-1+\delta}$. 
		Furthermore, under the same assumptions on $T$ and for any $u \in [N] \setminus T$, with very high probability,  we have 
		\begin{equation}\label{eq:GTandGTuclose}
			\theta \max_{x,y \notin T \cup \ubarr{T} \cup \{ u, \ubarr{u}\}} \absb{G_{xy}^{\bset{Tu}} - G_{xy}^{\bset{T}}} \le \mathcal{C} \frac{\Gamma^{3}}{d}.
		\end{equation}
	\end{lemma}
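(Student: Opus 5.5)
We prove the lemma by induction on $\abs{T}$, removing one index at a time. The tool is the rank-two resolvent identity that expresses $G^{[Tu]}$ through $G^{[T]}$; rank two because removing $u$ deletes both row/column $u$ and row/column $\ubarr u$ of the Hermitization. Set $T_0=\emptyset$ and $T_k=T_{k-1}\cup\{u_k\}$. It suffices to prove \cref{eq:GTandGTuclose} for each pair $(T_{k-1},u_k)$ with $\abs{T_{k-1}}<\mathfrak c d/\Gamma^2$, under the inductive hypothesis that $\theta\max\absn{G^{[T_{k-1}]}_{xy}}\le 2\mathcal C\Gamma$. Summing the increments then gives
\[\absn{G^{[T]}_{xy}-G_{xy}}\le \abs{T}\,\mathcal C\Gamma^3/d\le \mathfrak c\,\mathcal C\,\Gamma.\]
For $\mathfrak c$ small this yields $\absn{G^{[T]}_{xy}}\le \mathcal C\Gamma+\mathfrak c\mathcal C\Gamma\le 2\mathcal C\Gamma$, which closes the induction. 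Since $T$ is deterministic and there are at most $(2N)^2$ index pairs, a union bound keeps everything with very high probability. Uniformity in $w$ and $z$ follows from a grid argument: resolvent entries are Lipschitz with constant $\eta^{-2}\le N^2$ in both $w$ and $z$, and only $\abs{w}\lesssim N^{\delta/6}+\norm{X}$ matters, the regime of larger $\abs{w}$ being trivial.

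For a single step, fix $T$ and $u\notin T$. By the Schur complement formula, as in the proof of \cref{lem:ASCEq} but now applied to $H^{[T]}$, for $x,y\notin T\cup\ubarr T\cup\{u,\ubarr u\}$ we have
\[ G^{[T]}_{xy}-G^{[Tu]}_{xy}=\sum_{a,b\in\{u,\ubarr u\}} G^{[T]}_{xa}\,\bigl((G^{[T]})_{\{u,\ubarr u\}}\bigr)^{-1}_{ab}\,G^{[T]}_{by}.\]
Equivalently, with $B_u$ as in that proof (restricted to $T^c$),
\[ G^{[T]}_{xy}-G^{[Tu]}_{xy}=\bigl(G^{[Tu]}B_u^*\,\mathcal G\, B_u G^{[Tu]}\bigr)_{xy},\]
where $\mathcal G$ is the $2\times 2$ block of $G^{[T]}$ at $\{u,\ubarr u\}$. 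I will use this second form, because it involves only $G^{[Tu]}$, which is independent of row and column $u$ of $X$.

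The entries $(B_uG^{[Tu]})_{\cdot y}$ are sums of the form $\sum_a^{(Tu)} M_{ua}G^{[Tu]}_{\ubarr a y}$. By a large deviation bound for sparse vectors (the moment condition $\E\abs{X}^k\le C/(Nd^{(k-2)/2})$ in a Bernstein-type inequality), each such sum is at most
\[\mathcal C\Bigl(\bigl(\tfrac1N\textstyle\sum_a\absn{G^{[Tu]}_{\ubarr a y}}^2\bigr)^{1/2}+\tfrac{\log N}{\sqrt d}\max_a\absn{G^{[Tu]}_{\ubarr ay}}\Bigr).\]
The $f/N$ terms contribute at most $N^{\delta/6}\cdot N^{-1/2}\norm{G}$, which is negligible. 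The Ward identity gives $\frac1N\sum_a\absn{G_{ay}}^2=\Im G_{yy}/(N\eta)\le N^{-\delta}\cdot 2\mathcal C\Gamma$, so the dominant contribution is
\[\frac{\log N}{\sqrt d}\,\Gamma\lesssim \Gamma\sqrt{\Gamma/ d}\cdot\sqrt{\log N}.\]
Here I would refine by using $\Gamma\ge \log N/d$ style bookkeeping, i.e.\ $\log N/\sqrt d\le \Gamma\sqrt d$. The natural target is a bound $\lesssim\Gamma^{3/2}/\sqrt d$ on each factor, so that the product of two factors with $\mathcal G=O(\Gamma)$ gives $\Gamma^3/d\cdot\Gamma$.

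This exponent count is the main obstacle. It requires the sharper estimate $\absn{\sum_a M_{ua}G_{\ubarr ay}}\lesssim \Gamma\sqrt{\Gamma/d}$, which should come from the truncated large deviation bound. The bound $\max_a\absn{M_{ua}}\le K/\sqrt d$ together with the number of nonzero entries, at most of order $d+\log N\lesssim d\Gamma$, gives
\[\sum_a\absn{M_{ua}}^2\absn{G_{\ubarr a y}}^2\lesssim\Gamma\cdot\Gamma^2.\]
A Cauchy–Schwarz or $\ell^2$-concentration bound then yields the required $\Gamma^{3/2}$-type factor. I would have to tune this so that the extra $1/d$ survives; if it does not, I would instead use the first form of the identity with $\mathcal G^{-1}$, bounded via $\mathcal G^{-1}=A_u-B_uG^{[Tu]}B_u^*$, whose entries are $O(\Gamma)$ since the $\beta_u^i$ are of order $\Gamma$ with very high probability. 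A further difficulty is that the bound on $G^{[Tu]}$ is circular; I would resolve it by a continuity (bootstrap) argument in $\eta$, decreasing from $\eta=1$ where $\norm{G^{[Tu]}}\le1$, along a polynomial grid.
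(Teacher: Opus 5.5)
Your overall structure is the paper's. You peel off one index at a time, use the rank-two identity $G^{\bset{T}}_{xy}-G^{\bset{Tu}}_{xy}=(G^{\bset{Tu}}B_u^*\mathcal G B_uG^{\bset{Tu}})_{xy}$ (this is the $\mathrm R_2$-term of \cref{eq:A32b}), and sum the increments. The single step does not close, however, and the first reason is the concentration estimate. A Bernstein bound with $\max_a|X_{ua}|\le Kd^{-1/2}$ only gives $|\sum_a X_{ua}G^{\bset{Tu}}_{\ubarr a y}|\lesssim\gamma\sqrt{\log N}+\psi\log N$. Your increment is then of order $\Gamma^3(\log N)^2/d$, which neither proves \cref{eq:GTandGTuclose} nor sums to $O(\mathfrak c\Gamma)$ over $\mathfrak c d/\Gamma^2$ steps.

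Your proposed repairs do not fix this. Counting nonzero entries is meaningless for general $X$ as in \cref{defiX}. Cauchy--Schwarz over the support loses a factor $\sqrt d$. Aiming for $\Gamma^{3/2}/\sqrt d$ per factor would give $\Gamma^4/d$ anyway.

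What you need is the sparse large deviation bound \cref{lemma:dev1}. Take $\psi=\max_a|G^{\bset{Tu}}_{\ubarr a y}|/\sqrt d$; by the Ward identity, $\gamma^2\le \max|G^{\bset{Tu}}|/(N\Im z)$. Then $\psi/\gamma\ge (N\Im z/d)^{1/2}\ge N^{\delta/4}$, precisely because $d\le N^{\delta/2}$, and the sum is at most $\mathcal C\psi$ with no logarithmic loss. This is a Bennett-type effect: the variance part is polynomially smaller than the sup part. Each factor is thus $O(\Gamma/\sqrt d)$, and with $\mathcal G=O(\Gamma)$ the increment is $O(\Gamma^3/d)$ directly. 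Similarly, the $f/N$ terms need Cauchy--Schwarz plus the Ward identity, which gives $N^{-\delta/3}\sqrt\Gamma$; the operator norm bound $\|G\|\le N^{1-\delta}$ is far too weak.

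The second gap is the circularity: bounding the outer factors requires $\max|G^{\bset{Tu}}|$. A continuity argument in $\eta$ is not available here. The hypothesis $\theta=1$, and hence your induction hypothesis on $G^{\bset{T}}$, is assumed only at the single parameter $z$. A bound on $|G_{xy}(\ii\eta)|$ does not propagate to larger $\eta'$, so the bootstrap has no input at the intermediate grid points.

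The paper breaks the circularity with the first-order identity \cref{eq:32a}, whose main term is $\sum_a^{(Tu)}G^{\bset{Tu}}_{x\ubarr a}\barr X_{au}G^{\bset{T}}_{uy}$. It applies \cref{lemma:dev1} conditionally on $H^{\bset{Tu}}$. This is legitimate because $\Gamma_{k+1}:=1\vee\max|G^{\bset{Tu}}|$ is $H^{\bset{Tu}}$-measurable and independent of row and column $u$ of $X$. The result is $\Gamma_{k+1}\le\Gamma_k+\mathcal C\Gamma_k\Gamma_{k+1}/\sqrt d$.

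This inequality is linear in $\Gamma_{k+1}$ with coefficient $\mathcal C\Gamma_k/\sqrt d\le 1/2$, using $\Gamma_k\le 2\Gamma$ and $d\ge\Gamma^2/(64\mathfrak c)$. So $\Gamma_{k+1}\le 2\Gamma_k$ follows without any continuity argument. Only then does the paper use the second-order identity \cref{eq:A32b}, which gives $\Gamma_{k+1}\le\Gamma_k+\mathcal C\Gamma_k\Gamma_{k+1}^2/d\le\Gamma_k(1+16\mathcal C\Gamma^2/d)$. Your fallback through $\mathcal G^{-1}=A_u-B_uG^{\bset{Tu}}B_u^*$ is circular in the same way.
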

	 
The proof of the previous lemma, crucially rely on suitable resolvent identities, which we derive in \cref{sec:resolvent_identities}, in order to follow a similar proof strategy as in the analogous result \cite[Lemma~4.14]{alt2021delocalization} in the undirected setup. These identities are designed to remove two corresponding 
	rows and columns at once to keep the special structure of the Hermitization.

	We now estimate the error terms in the self-consistent equations from \cref{lem:ASCEq}
	as well as the offdiagonal entries of the resolvent. 

	\begin{lemma}[Error estimates] \label{lem:roughboundsrest}
		Let $\delta \in (0,1)$,  $\Gamma$ be as in \cref{eq:theta} and $\mathfrak c$ as in \cref{lem:roughboundsGT}.  
		If $\frac{\Gamma^2}{32\mathfrak c} \leq d \leq N^{\delta/8}$ then 
		\begin{align}
		\theta \max_{x \in [N],\, i\in \{1,2\}} \left|Y^{i}_{x}\right|  \le \mathcal{C}  \frac{\Gamma}{d}, \qquad 
		\theta \max_{x \in [N],\, i\in \{1,2\}} \left|Z^{i}_{x}\right| & \le \mathcal{C} \biggl( \frac{\Gamma}{d} + \frac{1}{\sqrt{d}} \biggr) \label{eq:boundrest}\\
			\theta \max_{\substack{x,y \in [2N],\\y\notin \{x,\ubarr{x}\}}} \left|G_{xy}\right| &\le  \mathcal{C}  \frac{\Gamma^2}{\sqrt{d}} \label{eq:roughboundGd-1/2}\\
			\theta \max_{\substack{x,y \in [2N],\\ \{x,y\}\cap \{a,\ubarr{a}\} =\emptyset }} \bigl|G_{xy} - G_{xy}^{\bset{a}}\bigr| &\le  \mathcal{C}\frac{\Gamma^3}{d} \label{eq:roughdifferenceGGTd-1}
		\end{align}
		with very high probability
		uniformly for all $w \in \C$, $a \in [N]$ and $z \in \C_+$ satisfying $\Im z \geq N^{-1+\delta}$. 
	\end{lemma}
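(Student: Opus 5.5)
The proof rests on the independence of the $x$-th row and column of $X$ from $G^{[x]}$. Conditioned on $G^{[x]}$, every error term is a linear or quadratic form in independent, centred, bounded variables with deterministic coefficients. These forms are controlled by large deviation estimates for sparse vectors.

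We fix $x$ and work on the event from \cref{lem:roughboundsGT} with $T=\{x\}$ and with $T=\{x,a\}$. There, all entries of $G^{[x]}$ and $G^{[xa]}$ are bounded by $2\mathcal C\Gamma$ on $\theta=1$. Since $\theta$ depends on $G$ rather than on $G^{[x]}$, we replace it by the indicator $\theta^{(x)}$ that $\max|G^{[x]}_{ab}|\le 2\mathcal C\Gamma$. By \cref{lem:roughboundsGT}, $\theta\le\theta^{(x)}$ with very high probability, and $\theta^{(x)}$ is independent of row and column $x$.

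\textbf{Bounding $Y^i_x$ and $Z^i_x$.} We use the standard sparse large deviation bounds for $X_{xa}$ with $|X_{xa}|\le Kd^{-1/2}$ and $\E|X_{xa}|^2=1/N$:
\[ \Big|\sum_a X_{xa}b_a\Big|\lesssim \frac{\log N}{\sqrt d}\max|b_a| + \Big(\frac1N\sum|b_a|^2\Big)^{1/2}, \qquad \Big|\sum_{a\ne b} X_{xa}B_{ab}\barr X_{xb}\Big|\lesssim \frac{\log N}{d}\max|B_{ab}| + \Big(\frac1{N^2}\sum|B_{ab}|^2\Big)^{1/2}\log N. \]
These follow from moment computations, e.g.\ as in \cite{alt2021delocalization}, and hold with very high probability. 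The prefactor should match $\Gamma$, so we use $\log N/d\lesssim\Gamma$.

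The main obstacle appears here. A crude bound gives $\max|B|\cdot\log N/d\sim \Gamma\log N/d$, which loses the needed factor. To avoid this, we follow the undirected argument and use the Ward identity $\sum_b|G^{[x]}_{ab}|^2=\Im G^{[x]}_{aa}/\eta$. Combined with $\eta\ge N^{-1+\delta}$, this makes the Hilbert–Schmidt parts $N^{-\delta/2}$-small.

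For the sup-norm part, we use a sharper moment bound. The $p$-th moment of a sparse quadratic form with bounded coefficients is controlled by $\max|B|\cdot(C p/d)$ times a combinatorial factor involving $\E|X|^{k}\le C/(Nd^{(k-2)/2})$. Choosing $p\asymp \log N$ and using $d\ge \Gamma^2/(32\mathfrak c)$ absorbs the logarithm into $\Gamma$, since $\Gamma\ge \log N/d$ up to constants when $d\lesssim\log N$.

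The terms involving $f/N$ are negligible: $|f|\le N^{\delta/6}$, the sums have $N$ or $N^2$ terms, and Ward gives $\sum_{a,b}|G_{ab}|\le N^{3/2}\eta^{-1/2}$. The diagonal term $X_{xx}$, together with the linear parts of $Z^i$, gives the contribution $d^{-1/2}$. The quadratic part $\sum X_{xa}G_{\ubarr a b}X_{bx}$ involves independent row and column entries, including the pairs $a=b$. Their expectation vanishes because $\E X_{xa}X_{ax}=0$ by independence and centring. Its fluctuation is bounded like $Y$.

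\textbf{Off-diagonal entries and \cref{eq:roughdifferenceGGTd-1}.} Next we bound the off-diagonal entries. Using the Schur complement again, for $y\notin\{x,\ubarr x\}$ we have
\[ G_{xy} = -\big(G_{xx},G_{x\ubarr x}\big)\,B_x G^{[x]}\,e_y \]
(restricted to the appropriate block). This expresses $G_{xy}$ as $O(\Gamma)$ times linear forms $\sum_a X_{xa}G^{[x]}_{\ubarr a y}$. The linear large deviation bound gives $\Gamma\cdot \Gamma/\sqrt d$, up to the Ward term. This yields \cref{eq:roughboundGd-1/2}.

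Finally, \cref{eq:roughdifferenceGGTd-1} is the statement \cref{eq:GTandGTuclose} with $T=\emptyset$, $u=a$. It follows directly from \cref{lem:roughboundsGT}, or by the resolvent identity $G-G^{[a]}$ expressed through $G_{x\cdot a}$-type entries. Each such entry is a product of two off-diagonal factors of size $\Gamma^2/\sqrt d$, divided by $\Gamma$, which gives $\Gamma^3/d$.

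Union bounds over $x,a\in[N]$, together with a net in $w$ and $z$ using Lipschitz continuity (with polynomial constant for $\eta\ge N^{-1+\delta}$), give uniformity.
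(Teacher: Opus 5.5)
\textbf{Genuine gap: the $\log N$ loss in the main concentration step is never removed.} It sits exactly where you locate the main obstacle. With the large deviation bounds you display, the quadratic form $\sum_{a\neq b}X_{xa}G^{\bset{x}}_{\ubarr a\ubarr b}\barr X_{xb}$ is only bounded by $\Gamma\log N/d$, and so is the $X_{xa}G^{\bset{x}}_{\ubarr a b}X_{bx}$ part of $Z^i_x$. The linear forms $\sum_b X_{xb}G^{\bset{x}}_{\ubarr b y}$ used for \eqref{eq:roughboundGd-1/2} are only bounded by $\Gamma\log N/\sqrt d$. So your sentence that the linear bound ``gives $\Gamma\cdot\Gamma/\sqrt d$'' contradicts your own displayed inequality.

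Your proposed repair does not work, for two reasons.
\begin{itemize}
\item \eqref{eq:theta} only gives the \emph{upper} bound $\Gamma\le 1+\log N/d$, and the lemma must hold for $\Gamma=1$ (this is $\Gamma_{\mathrm l}$). So $\Gamma\gtrsim\log N/d$ is false in general.
\item Even when $\Gamma\asymp\log N/d$, replacing $\log N/d$ by $\Gamma$ turns $\Gamma\log N/d$ into $\Gamma^2$, not into $\Gamma/d$. The factor $\log N$ is never removed.
\end{itemize}
The loss is fatal, not cosmetic. For $\Gamma=1$ and $d\asymp(\log N)^{1/2}$, the target $\Gamma/d$ tends to zero while $\log N/d$ diverges.

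\textbf{The missing idea.} The polynomial smallness of the Hilbert--Schmidt part, which you correctly get from the Ward identity, is exactly what removes the logarithm from the sup-norm part. The two parts cannot be bounded separately and added.

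The paper uses the sparse moment bounds of \cref{prop:lardev} (from \cite{MR4021251}). There the $r$-th moment is at most $\bigl(\frac{Cr}{1+(\log(\psi/\gamma))_+}\vee C\bigr)^k(\gamma\vee\psi)$, where:
\begin{itemize}
\item $k=1$ for linear forms and $k=2$ for bilinear forms;
\item $\psi=\max|a_i|/\sqrt d$, respectively $\max|b_{ij}|/d$;
\item $\gamma$ is the normalized $\ell^2$-norm of the coefficients (of rows and columns in the bilinear case).
\end{itemize}
The Ward identity, $\Im z\ge N^{-1+\delta}$ and $d\le N^{\delta/8}$ give $\psi/\gamma\ge N^{\delta/4}$. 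Hence for $r\asymp\nu\log N$ the prefactor is $O_{\nu,\delta}(1)$, and Markov's inequality gives $\mathcal C\psi$ with no logarithm (\cref{lemma:dev1}).
\begin{itemize}
\item With $\psi=\mathcal C\Gamma/d$ and $\gamma^2=\mathcal C\Gamma/(N\Im z)$, this yields $\mathcal C\Gamma/d$ for $Y^i_x$ and for the bilinear part of $Z^i_x$.
\item With $\psi=\mathcal C\Gamma/\sqrt d$ and $\gamma^2=\Gamma/(N\Im z)$ in the representation \eqref{eq:A29A}, it yields $\mathcal C\Gamma^2/\sqrt d$.
\end{itemize}

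\textbf{What is fine.} The rest of your outline agrees with the paper: decoupling $\theta$ through $G^{\bset{x}}$, the vanishing mean of $X_{xa}X_{ax}$, the $d^{-1/2}$ contribution from $X_{xx}$, and \eqref{eq:roughdifferenceGGTd-1} as the case $T=\emptyset$ of \eqref{eq:GTandGTuclose}.

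\textbf{A smaller point.} The $\frac fN$-terms that are linear in $X$, such as $\frac fN\sum_{a,b}X_{xa}G^{\bset{x}}_{\ubarr a\ubarr b}$, are not controlled by counting terms and Ward alone; that approach gives a polynomially large bound. They also need the linear concentration estimate, which there has polynomial room to spare.
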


	We end this section with two auxiliary results which will be essential later. 
	The first one describes a special symmetry of the resolvent $G$ which originates of the block structure of the Hermitization. 
	The second one is a general upper bound on the normalised degrees $\beta_x^1$ and $\beta_x^2$ from \cref{eq:def_beta_i_x}. 
	
	\begin{lemma}[Trace identity of resolvent blocks]\label{lem:traceg1=g2}
		Let $0< d < N/2$, $z \in \C_+$ and $w \in \CC$. 
		Then
		\begin{equation*}
			\sum_{y\in [N]} G_{yy} = \sum_{y\in [N]} G_{\ubarr{y} \ubarr{y}}.
		\end{equation*}
	\end{lemma}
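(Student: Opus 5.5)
The identity follows from the block structure of $H(w)$ together with the singular value decomposition of $A := M-w$. Write $z \in \C_+$ and
\[ H(w) - z = \begin{pmatrix} -z & A \\ A^* & -z \end{pmatrix}. \]
The plan is to compute the two diagonal $N\times N$ blocks of $G = (H(w)-z)^{-1}$ explicitly and compare their traces.

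\textbf{Step 1: the diagonal blocks.} Since $z\neq 0$, the block $-z$ is invertible, and the Schur complement formula identifies the upper-left block of $G$ as
\[ \bigl(-z - A(-z)^{-1}A^*\bigr)^{-1} = \bigl(-z + z^{-1}AA^*\bigr)^{-1} = z\,(AA^* - z^2)^{-1}. \]
By symmetry, the lower-right block is $z\,(A^*A - z^2)^{-1}$. Both inverses exist because $z\in\C_+$ gives $z^2 \notin [0,\infty)$, while $AA^*$ and $A^*A$ are positive semidefinite.

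\textbf{Step 2: comparing traces.} The matrices $AA^*$ and $A^*A$ are $N\times N$ and have the same eigenvalues with multiplicity, namely the squared singular values $\sigma_1^2,\dots,\sigma_N^2$ of $A$. This follows from the singular value decomposition, or from $\det(AA^*-\lambda) = \det(A^*A-\lambda)$. Hence
\[ \sum_{y\in[N]} G_{yy} = \sum_{i=1}^N \frac{z}{\sigma_i^2 - z^2} = \sum_{y\in[N]} G_{\ubarr y\ubarr y}. \]

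There is no real obstacle here. The only point needing care is that the blocks are square and of equal size. This is what makes the spectra of $AA^*$ and $A^*A$ coincide exactly, rather than only up to zero eigenvalues. Alternatively, one can skip the block computation and use the spectral symmetry directly. The map $J = \mathrm{diag}(1,-1)$ satisfies $J H J = -H$, so the eigenvalues of $H$ come in pairs $\pm\sigma_i$. The corresponding eigenvectors $(u_i, \pm v_i)/\sqrt2$ put weight $1/2$ on each block, and summing gives the same identity.
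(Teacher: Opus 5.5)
Your proposal is correct and follows essentially the same route as the paper. Both use the Schur complement to identify the diagonal blocks of $G$ as $z(AA^*-z^2)^{-1}$ and $z(A^*A-z^2)^{-1}$ with $A=M-w$, and then equate their traces via the fact that $AA^*$ and $A^*A$ have the same spectrum with multiplicities.
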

	
	\begin{proof}
		We give the proof for $T= \emptyset$. The general case follows by replacing $H(w)$ by $H(w)^{\bset{T}}$. For $z \in \C_+$, we deduce from the Schur complement formula that 
		\begin{align*}
			G 	= \begin{pmatrix} z((M-w)(M-w)^*-z^2)^{-1} & ((M-w)(M-w)^*-z^2)^{-1}(M-w) \\ 
			(M-w)^*((M-w)(M-w)^*-z^2)^{-1} 
			& z((M-w)^* (M-w)-z^2)^{-1} 
			\end{pmatrix} . 
		\end{align*}
		Since $\left(M - w \right)\left(M - w \right)^\ast$ and $\left(M - w \right)^\ast\left(M - w \right)$ have the same spectrum with the same multiplicities, the matrices in the upper left and lower right $N\times N$ blocks of $G$ have the same spectrum with the same multiplicities.
		Therefore, the traces of these blocks coincide, i.e.\ $\sum_{y \in [N]} G_{yy}  = \sum_{y \in [N]} G_{\ubarr{y}\ubarr{y}}$. 
	\end{proof}
	
	\begin{lemma}[Bound on normalized degrees] \label{lem:roughboundonbetaix}
	Let $1 \leq d \leq N/2$. Then 
		\begin{equation*}
			\beta_{x}^i \le \mathcal C \biggl( 1 + \frac{\log N}{d}\biggr)
		\end{equation*}
		for all $x \in [N]$ and $i \in \{1,2\}$ with very high probability. 
	\end{lemma}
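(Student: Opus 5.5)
Since $\beta_x^1 = \sum_{y \neq x} |X_{xy}|^2$ is a sum of $N-1$ independent nonnegative random variables, we prove the bound with a Bennett/Bernstein-type exponential moment estimate and then take a union bound over $x \in [N]$ and $i \in \{1,2\}$. The case $i=2$ is identical after exchanging rows and columns, so we treat only $i=1$ and a fixed $x$.

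Set $\xi_y := d\,|X_{xy}|^2$. By \cref{defiX} \cref{item:def_X_decay} we have $0 \le \xi_y \le K^2$ almost surely, and by \cref{item:variance_off_diag} we have $\E \xi_y = d/N$. For $t \ge 0$, convexity of $s \mapsto \ee^{ts}$ on $[0,K^2]$ gives
\[
\E \ee^{t\xi_y} \le 1 + \frac{\E\xi_y}{K^2}\bigl(\ee^{tK^2}-1\bigr) \le \exp\Bigl(\frac{d}{NK^2}(\ee^{tK^2}-1)\Bigr).
\]
By independence, $\E \exp(t d \beta_x^1) \le \exp\bigl(\tfrac{d}{K^2}(\ee^{tK^2}-1)\bigr)$. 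Choosing $t = 1/K^2$ and applying Markov's inequality yields
\[
\mathbb P\bigl(d\beta_x^1 \ge s\bigr) \le \exp\bigl( (\ee-1) d/K^2 - s/K^2\bigr).
\]

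Next, take $s = \mathcal C (d + \log N)$ with $\mathcal C \ge (\ee-1) + K^2(\nu+2)$. The exponent is then at most $-(\nu+2)\log N$, so the probability is at most $N^{-\nu-2}$. Dividing by $d$, the event $d\beta_x^1 \ge s$ is exactly $\beta_x^1 \ge \mathcal C(1 + \log N/d)$. A union bound over the $2N$ pairs $(x,i)$ gives a total failure probability of at most $2N^{-\nu-1} \le N^{-\nu}$, which is the claim with very high probability in the sense of \cref{def:very_high_probab}. The constant $\mathcal C$ depends on $\nu$ and $K$, consistent with the paper's convention for calligraphic constants.

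No step is a serious obstacle. The only point needing care is to use the almost sure bound $|X_{xy}|\le K d^{-1/2}$, rather than only moment bounds, so that the exponential moment can be taken at the scale $t\asymp 1$ after rescaling by $d$. This scaling is what produces the $\log N/d$ term when $d \ll \log N$.
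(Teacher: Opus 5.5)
Your argument is correct, but it takes a different route from the paper. The paper does not compute an exponential moment. It centres the sum, writing $\beta_x^1-\frac{N-1}{N}=\sum_y^{(x)}(\abs{X_{xy}}^2-\E\abs{X_{xy}}^2)$, and applies Markov's inequality to the $r$-th moment. That moment is bounded by the imported concentration estimate \eqref{eq:sumaiXi2} of \cref{prop:lardev}, with $r=\nu d(1+\frac{\log N}{d})=\nu(d+\log N)$ and $\mathcal C=4\nu\mathrm{e}+1$. The hypothesis $d\le N/2$ is used there only to bound the factor $1+2d/N$ by $2$, and the union bound over $x,i$ is left implicit.

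Your Chernoff bound, obtained from the chord inequality for $s\mapsto \mathrm{e}^{ts}$ on $[0,K^2]$, is the exponential-moment counterpart of that high-moment choice. Taking $t\asymp K^{-2}$ after rescaling by $d$ plays the role of $r\asymp d+\log N$.

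Your version has several advantages. It is self-contained and elementary, and it needs neither $d\ge 1$ nor $d\le N/2$. It tracks the dependence on $K$ explicitly, whereas \eqref{eq:sumaiXi2} is stated under the normalisation $\E\abs{Z}^k\le N^{-1}d^{-(k-2)/2}$, which for $K>1$ needs a harmless rescaling of $d$ by $K^2$. It also makes the union bound explicit.

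The paper's version reuses a single tool that handles weighted sums $\sum_y a_y(\abs{X_{xy}}^2-\E\abs{X_{xy}}^2)$ with complex weights and gives two-sided bounds. That is exactly what is needed later for $\Phi_x^i$, $\Psi_x^i$ and \cref{lem:sumXabGa_eq_sumGa}. Your convexity argument relies on the summands being nonnegative and yields only an upper tail. That suffices here, but it would not transfer to those later estimates.
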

	\begin{proof}
		We fix $x \in [N]$ and bound $\beta_x^1$. The same argument with $X_{xy}$ replaced by $X_{yx}$ yields the estimate for $\beta_x^2$. 
		Since $\beta_{x}^1 - \frac{N-1}{N} = \sum_{y}^{(x)}(\abs{X_{xy}}^2 - \mathbb{E}\abs{X_{xy}}^2)$, we conclude from Markov's inequality and \cref{eq:sumaiXi2} that
		\begin{align*}
			\mathbb{P}\biggl( \beta_x^1  \ge \mathcal{C}\left(1+ \frac{\log N}{d} \right)\biggr) &\le \biggl((\mathcal{C}-1) \biggl( 1 + \frac{\log N}{d} \biggr)\biggr)^{-r} \mathbb{E}\biggl(\sum_{y}^{(x)} \bigl(\abs{X_{xy}}^2 - \mathbb{E}\abs{X_{xy}}^2\bigr) \biggr)^r \\
			&\le  \biggl(\frac{4}{\mathcal{C}-1} \biggl( 1 + \frac{\log N}{d} \biggr)^{-1} \biggl( \frac{r}{d} \vee \sqrt{\frac{r}{d}} \biggr)\biggr)^r
		\end{align*}
		for each $r \in 2\N$ and $\mathcal C >0$. 
		We choose $r = \nu d (1 + \frac{\log N}{d})$ for $\nu \geq 1$ and $\mathcal C =4 \nu \ee + 1$, which completes the proof. 
	\end{proof}

	\subsection{Typical vertices} 
	\label{sec:typical_vertices} 
	
	We now introduce a notion of typical vertices for the sparse non-Hermitian matrices from \cref{defiX}.	These matrices generalise the adjacency matrix of the directed Erd{\H o}s-R\'enyi graph.

\begin{defi}[Typical vertices]\label{def:typicalvertices}
		For $\varphi>0$, the set $\mathcal{T}_{\varphi}$ of \textup{typical vertices} is defined by 
			\begin{equation*}
			\mathcal{T}_{\varphi} := \Bigl\{ x \in [N] :~\max_{i \in \{1,2\}}\absb{\Phi^i_x}\vee \absb{\Psi^i_x }\le \varphi\Bigr\}, 
		\end{equation*}
		where for $x \in [N]$ we wrote
		\begin{equation*}
			\Phi^1_x:= \sum_{y}^{(x)}\bigg(|X_{xy}|^2 - \frac{1}{N}\bigg), \qquad \Psi^1_x:=  \sum_{y}^{(x)}\bigg(|X_{xy}|^2 - \frac{1}{N}\bigg) G^{\bset{x}}_{\ubarr{y}\ubarr{y}},
		\end{equation*}
		\begin{equation*}
			\Phi^2_x:= \sum_{y}^{(x)}\bigg(|X_{yx}|^2 - \frac{1}{N}\bigg),\qquad \Psi^2_x:=  \sum_{y}^{(x)}\bigg(|X_{yx}|^2 - \frac{1}{N}\bigg) G^{\bset{x}}_{yy}.
		\end{equation*}
	\end{defi}
 
A vertex is \emph{typical} according to the previous definition if its normalized in- and outdegree deviate from its expectation 1 less than the error parameter $\varphi$. This is encoded in the conditions on $\Phi^1_x$ and $\Phi^2_x$. Moreover, we require the \emph{local} average $\sum_{y}^{(x)} \abs{X_{xy}}^2 G_{\ubarr{y}\ubarr{y}}^{[x]}$ of the resolvent entries over neighbours of $x$ 
to be near the \emph{global} average $\frac{1}{N}\sum_{y}^{(x)} G_{\ubarr{y}\ubarr{y}}^{[x]}$ and analogously for $\sum_{y}^{(x)} \abs{X_{yx}}^2 G_{yy}^{[x]}$. 
This notion of typical vertices is motivated by \cite[Definition~4.6]{alt2021delocalization}. Note that this notion depends on the spectral parameter $w \in \C$ of the non-Hermitian matrix $X$ and the spectral parameter $z \in \C_+$ of the Hermitization $H$, see \cref{hermitizationofM} and \cref{Greenfunction}.  

The following \cref{prop:mostverticesandneigharetypical} is the main result of this section 
and states that most vertices are typical and most neighbours of each vertex are typical with very high probability. This will be crucial to prove the local law for the Hermitization in \cref{section:proof_local_law_H} below. 
For the present section, we restrict to 
\begin{equation}\label{eq:regimedlog2}
 \frac{\Gamma^2}{64 \mathfrak c} \leq  d \le \frac{(\log N)^2}{(\log \log N)^3 }
\end{equation}
where $\Gamma$ is introduced by \eqref{eq:theta} and $\ccc>0$ is the constant given by \cref{lem:roughboundsGT}. The upper bound is only needed in \cref{lem:controlatypicalvertices} below. 
In fact, in \cref{lem:sumXabGa_eq_sumGa} below, we will see that all vertices are typical at least for $d \geq (\log N)^{3/2}$. 

\nc

	\begin{prop}[Number of typical vertices] \label{prop:mostverticesandneigharetypical}
Let $\delta \in (0,1)$. Let $\Gamma$ and $d$ satisfy \cref{eq:theta} and \eqref{eq:regimedlog2}. Let $\theta$ be defined as in \eqref{eq:theta}. Then there exists a constant $\tilde{\ccc}>0$ depending only on $\nu$ and $\delta$ such that with the definition 
   \begin{equation}\label{eq:new_varphi}
 \varphi^3  = \tilde{\ccc} \frac{\Gamma^5 \log N}{d^2}
		 \end{equation}
  the following holds uniformly for $w \in \C$ and $z \in \C$ with $\Im z \geq N^{-1 + \delta}$  with very high probability on the event $\{\theta =1\}$.
		\begin{enumerate}[label=(\roman*)]
			\item \label{mostverticestypical} Most vertices are typical:
			\begin{equation*}
				\abs{\mathcal{T}_{\varphi}^c} \le \ee^{\frac{\varphi^2d}{2^{14} (\ee\Gamma)^2}}  + 4N \ee^{-\frac{\varphi^2d}{2^{13} (\ee\Gamma)^2}} .
			\end{equation*}
			\item \label{mostneighboursaretypical} Most neighbours of each vertex are typical:
			\begin{equation*}
				\Biggl(\sum_{y\in \mathcal{T}^c_{\varphi}}^{(x)} \abs{X_{xy}}^2\Biggr) \vee \Biggl(\sum_{y\in \mathcal{T}^c_{\varphi}}^{(x)} \abs{X_{yx}}^2 \Biggr)\le 2 \ccc \varphi\Gamma^{-3} + 5\mathcal{C}d^4\ee^{-\frac{\varphi^2d}{2^{14} (\ee\Gamma)^2}}
			\end{equation*}
			uniformly for $x \in [N]$.  
		\end{enumerate}
	\end{prop}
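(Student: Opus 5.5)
We follow the strategy of \cite[Proposition~4.8]{alt2021delocalization}, adapted to the block structure of the Hermitization. The main idea is to decouple the quantities $\Phi^i_x$, $\Psi^i_x$ from the randomness in other rows and columns by removing a set $T$ of vertices. Then we use the concentration of sums of independent random variables together with the rough bounds from \cref{lem:roughboundsGT}.

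\textbf{Step 1 (concentration for a fixed vertex).} Fix $x$. Conditionally on $H^{[x]}$, the variables $|X_{xy}|^2 - 1/N$, $y \neq x$, are independent, centred, bounded by $K^2/d$, and have variance at most $C/(Nd)$. On $\{\theta=1\}$, \cref{lem:roughboundsGT} with $T=\{x\}$ gives $|G^{[x]}_{\ubarr{y}\ubarr{y}}| \le 2\mathcal C\Gamma$. Bennett's or Bernstein's inequality, or the moment bound \cref{eq:sumaiXi2}, then yields
\[ \mathbb P(|\Phi^i_x|\vee|\Psi^i_x| > \varphi) \le 4\exp\bigl(-\varphi^2 d/(2^{13}(\ee\Gamma)^2)\bigr), \]
with the factor $\ee$ coming from the choice of moment $r$. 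Summing over $x$ controls $\mathbb E|\mathcal T^c_\varphi|$, which explains the term $4N\ee^{-\dots}$. It does not yet give a very high probability bound.

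\textbf{Step 2 (high probability for the count, part (i)).} To upgrade the expectation bound we need approximate independence of the events $\{x\notin\mathcal T_\varphi\}$ across different $x$. For a deterministic set $T$ with $|T| = k \le \ccc d/\Gamma^2$, we replace $\Psi^i_x$ for $x\in T$ by $\Psi^{i,(T)}_x$, defined with $G^{[T]}$ and with sums restricted to $y\notin T$. By \cref{eq:GTandGTuclose} iterated $k$ times, together with $\sum_{y\in T}|X_{xy}|^2 \lesssim k/d$, the error of this replacement is at most $\mathcal C k\Gamma^3/d \le \varphi/2$. The modified quantities for different $x\in T$ depend on disjoint families of entries and are conditionally independent given $H^{[T]}$. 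Hence
\[ \mathbb P(T\subset\mathcal T_\varphi^c) \le \bigl(4\ee^{-\varphi^2 d/(2^{13}(\ee\Gamma)^2)}\bigr)^{k}, \]
after shrinking $\varphi$ to $\varphi/2$ in Step 1. A union bound over $\binom Nk$ sets gives $\mathbb P(|\mathcal T^c_\varphi|\ge k)\le (4N\ee^{-\dots})^k/k!$. We choose $k$ of order $\nu\log N/(\varphi^2d/\Gamma^2)$, which the choice \cref{eq:new_varphi} of $\varphi$ makes compatible with $k\le \ccc d/\Gamma^2$; this is where $\varphi^3\asymp\Gamma^5\log N/d^2$ comes from. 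Splitting according to whether $4N\ee^{-\dots}$ exceeds $\ee^{\varphi^2 d/(2^{14}(\ee\Gamma)^2)}$ produces the two terms in (i).

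\textbf{Step 3 (neighbours, part (ii)).} Fix $x$, and write $\sum_{y\in\mathcal T^c_\varphi}|X_{xy}|^2 \le \frac{K^2}{d}\,\#\{y\in\mathcal T^c_\varphi: X_{xy}\neq0\}$. For this count we repeat Step 2, now also removing $x$, i.e.\ working with $T\cup\{x\}$, so that the typicality of the $y$'s is decoupled from row and column $x$. The events $\{X_{xy}\ne0\}$ are then independent of the modified events, and with $k \approx \ccc\varphi d/\Gamma^3$ neighbours we obtain the bound $2\ccc\varphi\Gamma^{-3}$. The second term $5\mathcal Cd^4\ee^{-\dots}$ absorbs the case where few neighbours exist but each contributes a large weight, and it comes from a union bound involving the $\lesssim d^{4}$ moment factors. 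The upper bound in \cref{eq:regimedlog2} enters through \cref{lem:controlatypicalvertices}.

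\textbf{Main obstacle.} The main obstacle is the decoupling in Steps 2 and 3. Removing $x$ simultaneously deletes row $x$ and column $\ubarr{x}$ of $H$, and the replacement errors must be kept below $\varphi$ uniformly over $|T|$ up to $\ccc d/\Gamma^2$. This requires the block-wise resolvent identities behind \cref{lem:roughboundsGT}, the tuning of $\varphi$ against $|T|$ via \cref{eq:new_varphi}, and a net argument to get uniformity in $w,z$.
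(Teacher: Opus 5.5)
\textbf{Step 2 does not prove (i).} Your union bound only controls $\mathbb P(|\mathcal T_\varphi^c|\ge k)$ at the decoupling size $k\asymp \varphi d/\Gamma^3$. Write $a:=\varphi^2 d/(2^{13}(\ee\Gamma)^2)$. With the choice \cref{eq:new_varphi} one has $a k\asymp \nu\log N$ and $k\to\infty$, so $a\ll \log N$. Then $4N\ee^{-a}=N^{1-o(1)}\gg k$, and your bound $(4N\ee^{-a})^k/k!$ is $\gg 1$, so it proves nothing. This is expected: (i) itself allows $|\mathcal T^c_\varphi|$ of order $N\ee^{-a}$, and for the Erd\H{o}s--R\'enyi digraph degree fluctuations alone produce that many atypical vertices. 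No ``splitting'' repairs this. The threshold $t=\ee^{\varphi^2 d/(2^{14}(\ee\Gamma)^2)}+4N\ee^{-\varphi^2 d/(2^{13}(\ee\Gamma)^2)}$ in (i) is far beyond the set sizes $|T|\le \ccc d/\Gamma^2$ at which decoupling is possible.

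There is a second problem. For a fixed $T$, the decoupled estimate (\cref{lemma:deterministicsetaretypical}) carries an additive error $\mathcal C N^{-\nu}$, because $\theta\le\theta^{[T]}$ and \cref{eq:accroissementPhiPsiT} hold only with very high probability for each fixed $T$. You dropped this error. Summed over $\binom Nk\approx N^k$ sets with $k\to\infty$, it is not small.

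The missing ingredient is a high-moment Markov inequality at level $t$. The paper takes $\ell=\ccc\varphi d/\Gamma^3$ and bounds $t^{-\ell}\,\mathbb E\bigl[\theta\,|V\cap\mathcal T^c_{\varphi/2}|^\ell\bigr]$. It expands over $\ell$-tuples grouped by partitions, applies the decoupled bound to the set of distinct indices, and obtains $t^{-\ell}\bigl[(\ell+4|V|\ee^{-2a})^\ell+\mathcal CN^{-2\nu}(\ell+4|V|)^\ell\bigr]$. Since $t\ge 4|V|\ee^{-a}$, dividing by $t^\ell$ absorbs the $|V|^\ell$ combinatorics at cost $\ee^{a\ell}=N^{\nu}$. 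This same factor controls the additive error, and this is the real role of the choice of $\varphi$. A factorial-moment version of your argument, $\mathbb P(|S|\ge t)\le \mathbb E\binom{|S|}{k}/\binom tk$, would also work. Plain union bounds over subsets are valid only for small $V$ with $|V|\le\frac14\ee^{a}$, as in (ii) of \cref{lem:controlatypicalvertices}, where $|V|^k\le N^\nu$.

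A smaller point: the increment of $\Psi^i_x$ is $\mathcal C(\Gamma+\beta^i_x\Gamma^3)/d$, and $\beta^i_x$ is not bounded on the atypical event. Your error $\mathcal Ck\Gamma^3/d$ therefore needs the split $\{|\Phi^i_x|>\varphi/2\}\cup\{|\Phi^i_x|\le\varphi/2,\,|\Psi^i_x|>\varphi/2\}$, using $\beta^i_x\le 2$ on the second event.

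\textbf{Step 3 fails for general $X$.} The bound $\sum_{y\in\mathcal T^c_\varphi}|X_{xy}|^2\le \frac{K^2}{d}\,\#\{y\in\mathcal T^c_\varphi: X_{xy}\neq 0\}$ is only useful for Bernoulli-type entries. \cref{defiX} allows entries that are nonzero but tiny, for example $X_{xy}=\pm N^{-1/2}$ for all $y$. Then every vertex is a ``neighbour'', your count becomes $|\mathcal T^c_\varphi|\approx N\ee^{-a}$, and the resulting bound is useless. Moreover, a union bound over $k$-subsets of the neighbourhood needs the neighbourhood to have size at most $\frac14\ee^{a}$.

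The paper instead decomposes dyadically in the weight, setting $V_k=\{y\neq x: |X_{xy}|^2\ge d^{-k-2}\}$. Three facts make this work: $|V_k|\le \mathcal C d^{k+3}$ by \cref{lem:roughboundonbetaix}; $V_k$ is independent of $H^{[x]}$; and $\mathcal T^c_\varphi\setminus\{x\}\subset(\mathcal T^{[x]}_{\varphi/2})^c$ by \cref{lem:boundthetabsetxvarphi}.
\begin{itemize}
\item For levels $k$ with $\mathcal C d^{k+3}\lesssim \ee^{a}$, the small-set bound gives $|V_k\cap(\mathcal T^{[x]}_{\varphi/2})^c|\le \ccc\varphi d/\Gamma^3$. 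With weights $d^{-k-1}$ these levels sum to $2\ccc\varphi\Gamma^{-3}$.
\item For the remaining levels the global bound (i) is used. There $d^{-k-1}<\mathcal C d^2\ee^{-a}$, and these levels produce the term $5\mathcal C d^4\ee^{-\varphi^2 d/(2^{14}(\ee\Gamma)^2)}$.
\end{itemize}
So that second term comes from many neighbours of small weight, not from ``few neighbours with large weight'' as you suggest.
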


		The previous proposition is the analogue of \cite[Proposition 4.8]{alt2021delocalization} 
		in the setting of non-Hermitian sparse random matrices. The proof of \cref{prop:mostverticesandneigharetypical} is presented in \cref{subsection:proofs_typical_vert}. In fact, it follows an analogous  strategy as the proof of \cite[Proposition~4.8]{alt2021delocalization} in   \cite[Section~4.2]{alt2021delocalization}.
	
	\subsection{Properties of $v$ and $v_\beta$}\label{section:properties_v_v_beta}
	
	In this section, we state and prove a number of estimates on $v$ and $v_\beta$ 
	defined in \cref{eq:v} and \cref{eq:def_v_beta}, respectively. 
	They will be used throughout the remainder of this work. 
	 To formulate these estimates, we use the notations introduced in \cref{sec:notation}.  
	 The implicit constants in $\lesssim$ and $\asymp$ will be allowed to depend on two constants, $\delta$ and $L$.

	\begin{lemma}[Properties of $v$]\label{lem:profv}
		Let $v$ be the unique solution of \cref{eq:v}. Let $\delta \in (0,1)$ and $L>0$ be constants.
		Then, for all $w\in \CC$ and $\eta \in (0 , \infty)$, we have
		\begin{equation}\label{eq:v_leq_1}
			v(w,\eta) \le \min \biggl\{1, \frac{1}{\eta}\biggr\}.  
		\end{equation}
		Uniformly for all $\eta \in (0,L]$ and $w \in \mathrm{D}_2$, we have 
		\begin{equation} \label{eq:v_full_scaling} 
			v(w,\eta) \asymp_L \begin{cases} (1 - \abs{w}^2)^{1/2} + \eta^{1/3} & \text{ if } \abs{w} \leq 1, \\ 
				\frac{\eta}{\abs{w}^2 - 1 + \eta^{2/3}} & \text{ if } \abs{w} \geq 1. \end{cases}
		\end{equation}
		In particular, for all $\eta \in (0, L]$ and $w \in \mathrm{D}_2$, we have
		\begin{equation}\label{eq:vgeeta}
			v \gtrsim_{L} \eta.
		\end{equation}
		Finally, for all $\eta \in (0,L]$ and $w \in \mathrm{D}_{1-\delta}$, we have 
		\begin{equation}\label{eq:v_asymp_1} 
			v(w,\eta) \gtrsim_{\delta,L} 1. 
		\end{equation}   
	\end{lemma}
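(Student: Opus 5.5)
We work directly with the defining cubic relation. Write $v=v(w,\eta)$ and set $\tau := \abs{w}^2$. Multiplying \cref{eq:v} by $v(\eta+v)$ gives
\begin{equation*}
\eta + v = v(\eta+v)^2 + \tau v,
\end{equation*}
a cubic in $v$. First we check existence and uniqueness of the positive root. Consider $F(v) := \frac{1}{v} - \eta - v - \frac{\tau}{\eta+v}$ on $(0,\infty)$. We have $F(0+) = +\infty$ and $F(\infty) = -\infty$. Moreover $F$ is strictly decreasing, since $1/v$ decreases, $-v$ decreases, and $-\tau/(\eta+v)$ increases, so this last term needs care. However, $\frac{d}{dv}\bigl(\frac{1}{v} - \frac{\tau}{\eta+v}\bigr) = -\frac{1}{v^2} + \frac{\tau}{(\eta+v)^2}$, which is not necessarily negative. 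Instead we use the cubic form directly: $P(v) := v(\eta+v)^2 + (\tau-1)v - \eta$. On positive roots, $P'(v) = (\eta+v)^2 + 2v(\eta+v) + \tau - 1$. Substituting $\tau - 1 = \frac{\eta}{v} - (\eta+v)^2$ gives $P'(v) = 2v(\eta+v) + \frac{\eta}{v} > 0$. Hence every positive root is simple with positive slope, and since $P(0) = -\eta < 0$, there is exactly one positive root.

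\emph{Upper bound \cref{eq:v_leq_1}.} From \cref{eq:v}, $\frac{1}{v} \ge \eta + v \ge \eta$, which gives $v \le 1/\eta$. Also $\frac{1}{v} \ge v$, so $v \le 1$.

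\emph{Scaling \cref{eq:v_full_scaling}.} We use the cubic in the form
\begin{equation*}
v\bigl((\eta+v)^2 - (1-\tau)\bigr) = \eta .
\end{equation*}
Since $v\le 1$, $\eta\le L$ and $\tau\le 4$, all quantities are bounded. We split into two cases.

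For $\abs{w}\le 1$, put $a := 1-\tau \in [0,1]$.
\begin{itemize}
\item Since the left side is positive, $(\eta+v)^2 > a$, so $\eta + v > \sqrt{a}$.
\item If $\sqrt{a} \lesssim \eta^{1/3}$, suppose $v \le c\,\eta^{1/3}$ for a small constant $c$. Then, using $\eta \lesssim \eta^{1/3}$ for $\eta \le L$, the left side is at most $c\,\eta^{1/3}\cdot C\eta^{2/3} < \eta$, a contradiction. So $v \gtrsim \eta^{1/3}$.
\item If $\sqrt{a} \gg \eta^{1/3}$, write $(\eta+v)^2 - a = \eta/v$. Assuming $v \le c\sqrt{a}$ forces $\eta + v > \sqrt{a}$, hence $\eta \gtrsim \sqrt{a}$, contradicting $\eta \lesssim \eta^{1/3} \ll \sqrt{a}$ in the relevant range of $\eta$ and $a$.
\item For the upper bound, the left side is at least $v\bigl(v^2 - a\bigr)$ once $v \ge 2\sqrt{a}$, which exceeds $v^3/2$. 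Thus $v \lesssim \sqrt{a} + \eta^{1/3}$.
\end{itemize}

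For $\abs{w}\ge 1$, put $b := \tau - 1 \ge 0$, so that $v\bigl((\eta+v)^2 + b\bigr) = \eta$. Since $v \le 1$ and $\eta \le L$, the factor $(\eta+v)^2 + b$ is of order $(\eta+v)^2 + b$.
\begin{itemize}
\item If $b \gtrsim \eta^{2/3}$, a fixed-point argument gives $v \asymp \eta/b$, and then $v \lesssim \eta^{1/3} \lesssim \sqrt{b}$, so the $(\eta+v)^2$ term is dominated.
\item If $b \lesssim \eta^{2/3}$, balancing gives $v \asymp \eta^{1/3}$, which agrees with $\eta/\eta^{2/3}$.
\end{itemize}
In both subcases, the $\eta$ inside $(\eta+v)^2$ is harmless because $\eta \lesssim \eta^{1/3}$.

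\emph{Consequences.} \cref{eq:vgeeta} follows from the scaling: for $\abs{w}\le 1$ we have $v \gtrsim \eta^{1/3} \gtrsim_L \eta$, and for $\abs{w}\ge 1$ the denominator is $\lesssim 1$. \cref{eq:v_asymp_1} follows from the first case, because $a \ge 1-(1-\delta)^2 \gtrsim_\delta 1$.

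The main obstacle is making the two-sided comparison in the transition regime $a \asymp \eta^{2/3}$ (respectively $b \asymp \eta^{2/3}$) uniform. We handle it by running the case contradictions above with constants $c$ chosen depending only on $L$, so every implicit constant depends only on $L$ and $\delta$.
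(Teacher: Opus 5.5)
Your argument is correct, but it is organised differently from the paper's.

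\textbf{The paper's route.} The key step is to extract $\eta \lesssim_L v$ first, directly from positivity of the terms in the cubic. For $\abs{w}\le 1$ one has $\eta \le v(\eta+v)^2$, and for $\abs{w}\ge 1$ one has $\eta = v(\eta+v)^2 + (\abs{w}^2-1)v$. In both cases $(\eta+v)^2$ and $\abs{w}^2-1$ are $O_L(1)$, using $v\le 1$ and $w\in \mathrm{D}_2$. Once $\eta\lesssim v$ is known, $(\eta+v)^2\asymp v^2$, and the cubic collapses to the two-term comparisons $\eta + (1-\abs{w}^2)v \asymp v^3$ and $\eta \asymp v^3 + (\abs{w}^2-1)v$. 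The scaling is then read off by deciding which summand dominates. So the bound $v\gtrsim_L\eta$ is an input to the scaling rather than a corollary of it.

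\textbf{Your route.} You keep the $\eta$ inside $(\eta+v)^2$ and neutralise it via $\eta\le L^{2/3}\eta^{1/3}$. You prove each one-sided bound by a separate contradiction with case splits, and obtain $v\gtrsim\eta$ only at the end. This is longer but equally rigorous. It also buys something the paper omits: your computation $P'(v) = 2v(\eta+v)+\eta/v>0$ at every positive root justifies existence and uniqueness of the positive solution, which the paper takes for granted.

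\textbf{The $\abs{w}\ge 1$ case.} Here ``a fixed-point argument'' is vaguer than necessary, and as phrased it reads slightly circular: you deduce $v\lesssim\eta^{1/3}$ from $v\asymp\eta/b$. Instead, note directly that $v^3 \le v(\eta+v)^2\le \eta$ and $vb\le\eta$. Hence $v\le \min\{\eta^{1/3},\eta/b\}\le 2\eta/(b+\eta^{2/3})$. Conversely, $v\le\eta^{1/3}$ and $\eta\le L^{2/3}\eta^{1/3}$ give $(\eta+v)^2\lesssim_L \eta^{2/3}$, so $\eta\lesssim_L v(b+\eta^{2/3})$. This covers both of your subcases at once.

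\textbf{Minor edits.} The sentence saying that $(\eta+v)^2+b$ ``is of order $(\eta+v)^2+b$'' is vacuous; you presumably mean it is $O_L(1)$. The abandoned monotonicity attempt with $F$ should be deleted.
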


	The previous lemma is well-known in the literature about (local) circular laws,   see e.g.\ \cite{BYY2014,AEKinhomogeneous,HeDigraph}. 
	For the convenience of the reader, we present the short proof in \cref{subsection:proof_properties_v}.

	\begin{lemma}[Properties of $v_\beta$] \label{lem:properties_v_beta} 
		Let $\beta = (\beta_1, \beta_2) \in [0,\infty)^2$ and let $v_\beta$ be defined as in \eqref{eq:def_v_beta}. 
		Then, for all $\eta \in (0, \infty)$, $w \in \C$ and $\beta=(\beta^1,\beta^2) \in [0,\infty)^2$, we have 
		\begin{align}\label{eq:lipschitz_v_beta}
			|v-v_{\beta}| & \leq v_\beta|\beta^{1} -1| + |\beta^{2} -1 |, \\ 
			v_\beta & \leq \eta^{-1}.  \label{eq:v_beta_leq_eta_inverse} 
		\end{align} 
		Moreover, if $L >0$ and $\delta \in (0,1)$ are fixed, then for all $\eta \in (0,L]$ and $w \in \mathrm{D}_{1-\delta}$, we have 
		\begin{equation}\label{eq:encadrementmi}
			v_\beta \asymp_{\delta,L} \frac{\eta + \beta^{2}}{\abs{w}^2 + (\eta + \beta^1)(\eta + \beta^{2})} \leq \frac{1 + \beta^{2}}{\abs{w}^2 + \beta^{1} \beta^{2}}  .  
		\end{equation}
		
	\end{lemma}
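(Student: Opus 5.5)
The plan is to treat $v$ as the special case $\beta=(1,1)$ of $v_\beta$ and to compare the two through their common structure. Dividing \cref{eq:v} by nothing more than rearranging gives $1/v = \bigl((\eta+v)^2+\abs{w}^2\bigr)/(\eta+v)$, so $v = v_{(1,1)}$. The same rearrangement gives the identity
\[ 1 - v(\eta+v) = \frac{v\abs{w}^2}{\eta+v} \ge 0, \]
which I will use repeatedly. Throughout I write $D_\beta := \abs{w}^2+(\eta+\beta^1 v)(\eta+\beta^2 v)$, so that $v_\beta D_\beta = \eta+\beta^2 v$.

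\emph{Proof of \cref{eq:lipschitz_v_beta}.} I will compute $(v_\beta - v)D_\beta$ exactly.
\begin{enumerate}[label=(\arabic*)]
\item Use $v\abs{w}^2 = \eta+v-v(\eta+v)^2$ to write $vD_\beta = \eta + v + v\bigl[(\eta+\beta^1v)(\eta+\beta^2v)-(\eta+v)^2\bigr]$.
\item Split the bracket by adding and subtracting $(\eta+v)(\eta+\beta^2v)$. This gives $(\beta^1-1)v(\eta+\beta^2 v)+(\beta^2-1)v(\eta+v)$.
\item Combining the two previous steps,
\[ (v_\beta - v)D_\beta = (\beta^2-1)\,v\bigl(1-v(\eta+v)\bigr) - (\beta^1-1)\,v^2(\eta+\beta^2 v). \]
\item Divide by $D_\beta$. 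The second term becomes $(\beta^1-1)v^2 v_\beta$, which is at most $v_\beta\abs{\beta^1-1}$ in modulus because $v\le 1$ by \cref{eq:v_leq_1}.
\item For the first term, the identity above gives $v\bigl(1-v(\eta+v)\bigr) = v^2\abs{w}^2/(\eta+v) \le v\abs{w}^2 \le \abs{w}^2 \le D_\beta$, again using $v\le1$. Hence the first term contributes at most $\abs{\beta^2-1}$.
\end{enumerate}

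\emph{Proof of \cref{eq:v_beta_leq_eta_inverse}.} Drop $\abs{w}^2$ from $D_\beta$. This gives $v_\beta \le (\eta+\beta^2 v)/\bigl((\eta+\beta^1v)(\eta+\beta^2v)\bigr) = (\eta+\beta^1 v)^{-1}\le \eta^{-1}$.

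\emph{Proof of \cref{eq:encadrementmi}.} For $w\in\mathrm{D}_{1-\delta}$ and $\eta\le L$, \cref{eq:v_leq_1} and \cref{eq:v_asymp_1} give $v\asymp_{\delta,L}1$. Consequently $\eta+\beta^i v\asymp \eta+\beta^i$ for $i=1,2$, with constants uniform in $\beta$. Substituting these into the numerator and denominator of \cref{eq:def_v_beta} yields the two-sided comparison; the $\abs{w}^2$ term is unaffected.

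For the final inequality I set $h(t) := (t+\beta^2)/\bigl(\abs{w}^2+(t+\beta^1)(t+\beta^2)\bigr)$ and bound it separately for $t\le1$ and $t>1$.
\begin{itemize}
\item If $t\le 1$, the numerator is at most $1+\beta^2$ and the denominator is at least $\abs{w}^2+\beta^1\beta^2$, so $h(t)\le (1+\beta^2)/(\abs{w}^2+\beta^1\beta^2)$ directly.
\item If $t>1$, drop $\abs{w}^2$ to get $h(t)\le 1/(t+\beta^1)\le 1/(1+\beta^1)$. Since $\abs{w}\le1$, we have $(1+\beta^1)(1+\beta^2)\ge 1+\beta^1\beta^2 \ge \abs{w}^2+\beta^1\beta^2$, which gives the same bound.
\end{itemize}

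The only non-routine step is the exact algebraic identity in step (3) of the first part. The key to making the $\abs{\beta^2-1}$ term bounded without a factor $v_\beta$ is to recognise that $1-v(\eta+v)$ equals $v\abs{w}^2/(\eta+v)$, which is controlled by $D_\beta$. The rest is bookkeeping.
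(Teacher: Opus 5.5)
Your proposal is correct and follows the paper's proof. Your step (3) identity is exactly the paper's formula $v - v_\beta = v_\beta v^2(\beta^1-1) + \frac{v_\beta v^2 |w|^2}{(\eta+v)(\eta+\beta^2 v)}(1-\beta^2)$, which the paper reaches slightly faster by subtracting $1/v = \eta + v + |w|^2/(\eta+v)$ from $1/v_\beta = \beta^1 v + \eta + |w|^2/(\beta^2 v+\eta)$; it is bounded the same way via $v\le 1$ and $D_\beta \ge |w|^2$, and your remaining bounds also match the paper, with your split into $t\le 1$ and $t>1$ filling in the detail the paper leaves implicit for the final inequality in \eqref{eq:encadrementmi}.
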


		For the next lemma, we recall  the definition of $u_{(\beta^1,\beta^2)}$ for $\beta^1$, $\beta^2 \in [0,\infty)$ from \cref{eq:def_u_beta}. 
		For future reference, we note that 
	\begin{equation}\label{eq:def_u_beta2}
		u_{(\beta^1,\beta^2)} = \frac{v_{(\beta^1,\beta^2)}}{\eta + \beta^2 v} = \frac{v_{(\beta^2,\beta^1)}}{\eta + \beta^1 v} = u_{{(\beta^2,\beta^1)}}.
	\end{equation}	
	
\begin{proof}
		From \eqref{eq:def_v_beta}, we conclude that 
		\begin{equation}\label{eq:inverse_v_beta}
			\frac{1}{v_\beta} =\frac{(\beta^1 v + \eta)(\beta^{2}v + \eta) + |w|^{2}}{\beta^{2}v + \eta}= \beta^1 v + \eta + \frac{|w|^2}{\beta^{2}v + \eta}.
		\end{equation}
		From this, we directly conclude \eqref{eq:v_beta_leq_eta_inverse}. 
		Moreover, \eqref{eq:inverse_v_beta} implies 
		\[ v - v_\beta = v v_\beta \bigg( \frac{1}{v_\beta} - \frac{1}{v} \bigg) = v_{\beta} v^2 (\beta^1-1) + 
		\frac{v_\beta v^2\abs{w}^2}{(\eta + v)(\eta + \beta^2 v)}(1-\beta^2). 
		\] 
		Hence, \eqref{eq:lipschitz_v_beta} follows as
		$v\leq 1$ for all $w \in \C$ and $\eta>0$ due to \eqref{eq:v_leq_1}, which also implies 
		\[ 
		\frac{v_\beta v^2 |w|^2}{(v+\eta)\left( \beta^{2}v + \eta \right)} \le \frac{|w|^2}{|w|^2 +  \left(\beta^{1}v+\eta \right) \left(\beta^{2}v+\eta \right)} \le 1. 
		\] 
		Fix $\delta \in (0,1)$ and $L>0$. 
		Since $v\asymp_{\delta,L} 1$ on $\mathrm{D}_{1-\delta}$ by \cref{lem:profv}, the first relation in \eqref{eq:encadrementmi} follows from \eqref{eq:def_v_beta}. The upper bound in \eqref{eq:encadrementmi} is then a 
		consequence of $\eta \in (0,L]$. 
	\end{proof}

	\begin{lemma}[Lipschitz-continuity] \label{lem:v_beta_u_beta_Lipschitz} 
		Let $w \in \C$. Then, for any $\beta \in [0,\infty)^2$, the function $\eta \mapsto v_\beta(w,\eta)$ is Lipschitz-continous on $[N^{-1}, \infty)$ with constant $N^2$. Moreover, if $\beta \in [0,N)^2$ then $\eta \mapsto u_\beta(w,\eta)$ is 
		Lipschitz-continuous on $[N^{-1},\infty)$ with constant $N^6$. 
	\end{lemma}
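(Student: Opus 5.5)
The plan is to bound the derivatives in $\eta$ directly, on $[N^{-1},\infty)$, using explicit a priori bounds on $v$ and $\partial_\eta v$. First I would treat $v$ itself. Differentiating the defining relation \cref{eq:v} implicitly gives a formula for $\partial_\eta v$. A cleaner route: write \cref{eq:v} as $F(v,\eta)=0$ with
\[ F(v,\eta) = 1 - v(\eta+v) - \frac{\abs{w}^2 v}{\eta+v}. \]
Then $\partial_v F = -(\eta+2v) - \abs{w}^2\eta/(\eta+v)^2 <0$, so $\abs{\partial_v F} \geq \eta$. Also $\abs{\partial_\eta F} = v + \abs{w}^2 v/(\eta+v)^2 \le v + \abs{w}^2/(\eta+v)$.

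For the numerator I use \cref{eq:v}: it gives $\abs{w}^2/(\eta+v) \le 1/v$, hence $\abs{w}^2 v/(\eta+v)^2 \le 1/(\eta+v)\le \eta^{-1}$. Together with $v\le 1$ from \cref{eq:v_leq_1} this yields
\[ \abs{\partial_\eta v} \le \frac{1+\eta^{-1}}{\eta} \le 2\eta^{-2} \le 2N^2 \quad\text{for } \eta\ge N^{-1}, \]
at least when $\eta\le1$. For $\eta\ge1$ the bound is $\le 2$. Smoothness of $v$ in $\eta$ follows from the implicit function theorem, since $\partial_vF\neq0$ and the positive solution is unique.

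Next I would handle $v_\beta$. By \cref{eq:inverse_v_beta}, $1/v_\beta = \beta^1 v + \eta + \abs{w}^2/(\beta^2 v+\eta)$, so $\partial_\eta v_\beta = -v_\beta^2\,\partial_\eta(1/v_\beta)$. Here
\[ \partial_\eta(1/v_\beta) = \beta^1 v' + 1 - \frac{\abs{w}^2(\beta^2 v'+1)}{(\beta^2v+\eta)^2}. \]
The awkward $\beta$-dependence should be absorbed by the prefactor $v_\beta^2$, using the bounds $v_\beta\le 1/(\beta^1 v+\eta)$ and $v_\beta \le (\beta^2 v+\eta)/\abs{w}^2$, which hold termwise since each summand of $1/v_\beta$ is bounded by the whole.

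Then I would bound each term:
\begin{itemize}
\item $v_\beta^2\beta^1\abs{v'} \le \frac{\beta^1 \abs{v'}}{(\beta^1 v+\eta)^2}$. Using $\beta^1 \le (\beta^1 v+\eta)/v$ and $v\gtrsim \eta$, this is at most $\abs{v'}/(v\eta)$.
\item $v_\beta^2 \le \eta^{-2}$.
\item $v_\beta^2\abs{w}^2(\beta^2\abs{v'}+1)/(\beta^2v+\eta)^2 \le \abs{w}^{-2}(\beta^2\abs{v'}+1)$ by the second bound on $v_\beta$. Alternatively, split as $v_\beta\cdot v_\beta\abs{w}^2/(\beta^2 v+\eta)^2$ and use $v_\beta\abs{w}^2/(\beta^2v+\eta)\le 1$ to get $\le \eta^{-1}(\beta^2\abs{v'}+1)/(\beta^2v+\eta)$, and again $\beta^2/(\beta^2 v+\eta)\le 1/v$.
\end{itemize}

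This is the main obstacle: keeping the bound uniform in $\beta$ and $w$, and in particular getting the crude lower bound on $v$. For that I would use \cref{eq:v} directly rather than \cref{eq:vgeeta}, which is only local. From $1/v \le \eta+1+\abs{w}^2/\eta$ we get $v\ge \eta/(\eta^2+\eta+\abs{w}^2)$. This is fine for bounded $w$ but poor for large $\abs{w}$. So for large $\abs{w}$ I would instead use the third term bound via $\abs{w}^{-2}$, and handle the first term through a sharper $v'$ estimate, $\abs{v'}\lesssim v/\eta$, obtained from the implicit formula with $\abs{\partial_vF}\ge \eta+ v$-type bounds. Collecting powers of $\eta^{-1}\le N$ should give a constant $N^2$, possibly after sharpening $\abs{v'}\le v\cdot(\ldots)$.

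Finally, for $u_\beta = v_\beta/(\eta+\beta^2 v)$ by \cref{eq:def_u_beta2}, I would use the quotient rule. The needed bounds are $1/(\eta+\beta^2v)\le\eta^{-1}\le N$, $v_\beta\le N$, $\abs{\partial_\eta v_\beta}\le N^2$, and $\abs{\partial_\eta(\eta+\beta^2 v)}\le 1+N\cdot 2N^2$ using $\beta^2<N$. This gives
\[ \abs{u_\beta'} \le N\cdot N^2 + N\cdot N^2(1+2N^3)/\ldots \]
which, with constants absorbed for $N$ large, is at most $N^6$.
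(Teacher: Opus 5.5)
\textbf{Comparison with the paper.} You differentiate \eqref{eq:v} and \eqref{eq:inverse_v_beta} implicitly. The paper needs no computation at all. By \eqref{eq:v_beta_m_beta} and \cref{lem:m_1_Stieltjes}, $\ii v_\beta(w,\eta)=m_\beta(w,\ii\eta)$ is the Stieltjes transform of a probability measure. \cref{lem:Stieltjes_proper} then gives $\abs{v_\beta(w,\eta')-v_\beta(w,\eta)}\le(\eta'-\eta)/\eta^2\le N^2(\eta'-\eta)$, uniformly in $\beta$ and $w$. The bound for $u_\beta$ follows from \eqref{eq:def_u_beta}, the same bound for $v$, and $\beta^i\le N$.

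\textbf{The gap.} As written, your argument does not close at exactly the step you flag as the main obstacle.
\begin{itemize}
\item Uniformity over $\beta\in[0,\infty)^2$ and $w\in\C$ hinges on $\abs{\partial_\eta v}\lesssim v/\eta$, which you only assert.
\item The justification you indicate (your bound $\abs{\partial_\eta F}\le v+1/(\eta+v)$ against $\abs{\partial_vF}\ge\eta+2v$) does not give it. For large $\abs{w}$ one has $v\approx\eta/(\eta^2+\abs{w}^2)\ll\eta$, while that ratio is still of order $\eta^{-2}$. So your first term $\abs{\partial_\eta v}/(v\eta)$ grows like $\abs{w}^2$.
\item The fallback $v\gtrsim\eta$ from \eqref{eq:vgeeta} only holds for $w\in\mathrm{D}_2$ and $\eta\le L$, not for all $w\in\C$.
\item Bounding term by term yields about $4N^2$ for $v_\beta$. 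With $\abs{\partial_\eta v}\le 2N^2$ it yields about $2N^6$ for $u_\beta$. A multiplicative constant cannot be ``absorbed for $N$ large'' into the stated $N^2$ or $N^6$.
\end{itemize}

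\textbf{The fix.} Both issues disappear if you match the $\abs{w}^2$-terms. Since $\partial_\eta F=-v+\abs{w}^2v/(\eta+v)^2$ and $\abs{\partial_vF}=\eta+2v+\abs{w}^2\eta/(\eta+v)^2$,
\[
\abs{\partial_\eta F}\le v\Big(1+\frac{\abs{w}^2}{(\eta+v)^2}\Big),\qquad \abs{\partial_vF}\ge \eta\Big(1+\frac{\abs{w}^2}{(\eta+v)^2}\Big).
\]
Hence $\abs{\partial_\eta v}\le v/\eta$ for all $w\in\C$ and $\eta>0$, with no lower bound on $v$ needed.

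Then keep the terms together rather than splitting them. Set $a=\beta^1v+\eta$ and $b=\beta^2v+\eta$. Then $\abs{\partial_\eta a}\le a/\eta$ and $\abs{\partial_\eta b}\le b/\eta$, so $\abs{\partial_\eta(a+\abs{w}^2/b)}\le(a+\abs{w}^2/b)/\eta$. This means $\abs{\partial_\eta v_\beta}\le v_\beta/\eta\le\eta^{-2}\le N^2$, using \eqref{eq:v_beta_leq_eta_inverse}. Likewise $\abs{\partial_\eta u_\beta}\le u_\beta^2\cdot 2ab/\eta\le 2u_\beta/\eta\le 2\eta^{-3}$, since $u_\beta\le 1/(ab)\le\eta^{-2}$. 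This last bound does not even use $\beta\in[0,N)^2$.

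\textbf{What each route buys.} Once repaired, your approach is an elementary, self-contained alternative to the Nevanlinna representation. It also gives the slightly sharper estimate $\abs{\partial_\eta v_\beta}\le v_\beta/\eta$, which is what the Stieltjes representation encodes. The paper's route is shorter and gets the uniformity in $\beta$ and $w$ for free from positivity of the measure.
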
 
	
	\begin{proof} 
	For $v_\beta$, this follows from \cref{eq:v_beta_m_beta}, \cref{lem:m_1_Stieltjes} 
	and \cref{lem:Stieltjes_proper}.  Then the Lipschitz-continuity of $u_\beta$ is 
	a consequence of \cref{eq:def_u_beta}, the $N^2$-Lipschitz-continuity of $v$ and 
	$\max\{\beta^1,\beta^2\} \leq N$. 
	\end{proof}

		\section{Local law for Hermitization}  \label{section:proof_local_law_H} 
		
	In this section, we prove a version of \cref{thm:locallaw_H} with stronger error estimates in the bulk spectrum. 
	
	To simplify its formulation, we first define some notation. We introduce the control parameters
	\begin{equation}\label{eq:def_Lambda}
		\Lambda = \Lambda(w,\eta) = \Lambda_{\mathrm{d}} \vee \Lambda_{\mathrm{do}} \vee \Lambda_{\mathrm{o}}, 
	\end{equation}
whose constituents are defined through 
	\begin{align*}
		\Lambda_{\mathrm{d}} &= \max_{x \in [N]} \Big( \absb{G_{xx}(w,\ii \eta) - \ii v_{\beta_x}(w,\eta)} +
		\absb{G_{\ubarr{x}\ubarr{x}}(w,\ii \eta) - \ii v_{\ubarr{\beta}_x}(w,\eta)}
		\Big),\\
		\Lambda_{\mathrm{do}} &= \max_{x \in [N]} \Big( 
		\absb{G_{x\ubarr{x}}(w,\ii\eta) + w u_{\beta_x}(w,\eta)} 
		+ \absb{G_{\ubarr{x}x}(w,\ii\eta) + \overline{w} u_{{\beta}_x}(w,\eta)} \Big), \\
		\Lambda_{\mathrm{o}} &=\max_{\stackrel{x,y \in [2N]}{x \neq y \neq \ubarr{x}}}\absb{G_{xy}(w, \ii \eta)}.
	\end{align*}
	Here, $\Lambda_{\mathrm{d}}$ controls the behaviour of the diagonal entries of $G$, $\Lambda_{\mathrm{do}}$ the one of the diagonal entries of its off-diagonal blocks (cf.\  \eqref{hermitizationofM} and \eqref{Greenfunction}) and $\Lambda_{\mathrm{o}}$ the one of the remaining, i.e.
	the generic off-diagonal, entries.
	
	 The next theorem covers three cases in parallel, the one with a lower bound on $\beta_x^i$, 
	the bulk regime and the edge regime. 
	To unify the notation, we recall the definitions of $\chi_\#$ and $\mathrm{S}_\#$ from \cref{eq:def_chi} and \cref{eq:def_spectral}, respectively.
 Correspondingly, we define 
	\[ 
	\Gamma_{\mathrm{b}} :=\Gamma_{\mathrm{e}} := 1 + \frac{\log N}{d},~~\text{and}~~ \Gamma_{\mathrm{l}} :=1.
	\] 
	With these choices of $\Gamma$, we set $\varphi$ as in \cref{eq:new_varphi} and, for $w \in \C$ and $\eta >0$, set 
	\begin{equation}\label{eq:def_up_bound} 
	\psi_w(\eta) := \min \biggl\{  {\varphi}^{1/3}, \frac{ {\varphi}}{v(w,\eta)^2 + \frac{\eta}{v(w,\eta)}}\biggr\}, 
	\end{equation}
	where $v$ is as in \cref{eq:v}. Finally, we recall $d_\#$ from \cref{eq:def_d_hash}  
	and work under the assumption that 
	\begin{equation}\label{eq:regimedlogN2_and_logNalpha}
	\mathcal D d_\# \le d \le \frac{(\log N)^{2}}{\mathcal D (\log \log N)^3} 
	\end{equation}
	for some large enough constant $\mathcal D\geq 1$. 
	In particular, we have $\mathcal C \Gamma_\#^2\psi\leq 1/2$ on $\mathrm{S}_\#$ if 
	$\mathcal D \geq 1$ in \cref{eq:regimedlogN2_and_logNalpha} is chosen large enough.

	\begin{thm}[Conditional local law]\label{thm:locallaw_edge}
	 Fix $\# \in \{ \mathrm{l}, \mathrm{b}, \mathrm{e}\}$. Let $d$ satisfy \eqref{eq:regimedlogN2_and_logNalpha}. 
		Fix $ 0 < \delta \le 1/2$ and $L \ge 1$. Let $X$ be a sparse matrix as in \cref{defiX}, define $M$ as in \eqref{eq:def_M_X_plus_f} with some deterministic $f\in \CC$ satisfying $0 \le \abs{f} \le N^{\delta/6}$, $G$ as in \eqref{Greenfunction} and $g$ as in \eqref{eq:def_g}. 
		Then  
		\begin{align} 
			\chi_\# \Lambda(w, \eta) & \leq \mathcal C\Gamma_\#^2 \psi_w(\eta) \label{eq:local_law_Lambda} \\ 
			\chi_\#  \absb{g(w,\ii \eta) - \ii v(w,\eta)} & \leq \mathcal C \psi_w(\eta)	\label{eq:loclawStieljes2} 
		\end{align}
		with very high probability uniformly for all $w \in \mathrm{S}_\#$ and $\eta \in [N^{-1 + \delta}, L]$.
	\end{thm}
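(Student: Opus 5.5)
The proof will run as a bootstrap (continuity) argument in $\eta$, from $\eta = L$ down to $\eta = N^{-1+\delta}$, on a fine grid with $N^{-3}$-spacing. The Lipschitz continuity of $G$ (constant $\eta^{-2}\le N^2$) and of $v_\beta$, $u_\beta$ from \cref{lem:v_beta_u_beta_Lipschitz} will extend the grid statements to all $\eta$. A union bound over the grid and over a net of $w \in \mathrm{S}_\#$ is admissible because every estimate holds with very high probability.

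\textbf{Initial step.} At $\eta = L$ we have the trivial bound $\norm{G}\le 1/L$, and the error bounds of \cref{lem:roughboundsrest} with $\Gamma = \Gamma_\#$ already give the claim.

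\textbf{Propagation step.} Suppose the bound $\Lambda \le \mathcal C\Gamma_\#^2\psi$ holds at $\eta$. Continuity gives $\max|G_{xy}|\le \mathcal C\Gamma_\#$ at $\eta - N^{-3}$, i.e.\ $\theta = 1$. Here we use that $v_{\beta_x}$ is bounded by a multiple of $\Gamma_\#$: on $\chi_\mathrm{l}$ this follows from $\beta_x^i\ge\delta$ and \cref{eq:encadrementmi}; in the cases $\mathrm{b}$, $\mathrm{e}$ it follows from $|w|\ge\delta$ together with \cref{lem:roughboundonbetaix}. Once $\theta = 1$ is known, \cref{lem:roughboundsGT}, \cref{lem:roughboundsrest} and \cref{prop:mostverticesandneigharetypical} all apply at the new $\eta$.

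\textbf{Improving the bound.} This is the core of the argument.

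First, fix a typical vertex $x \in \mathcal T_\varphi$. Write $\sum_y^{(x)}|X_{xy}|^2 G^{[x]}_{\ubarr y\ubarr y} = \beta_x^1 \cdot \frac1N\sum_y G_{\ubarr y\ubarr y} + O(\varphi + \Gamma^3/d)$, and similarly for the other sum. By \cref{lem:traceg1=g2}, both global averages equal $g$. Feeding this into the $2\times 2$ system of \cref{lem:ASCEq} and inverting yields
\[
G_{xx} \approx \frac{-z + \beta_x^2 g}{(z-\beta_x^1 g)(z - \beta_x^2 g) - |w|^2}.
\]
With $g = \ii v$, $z = \ii\eta$, this is exactly $\ii v_{\beta_x}$, and similarly $G_{x\ubarr x}\approx -w u_{\beta_x}$.

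Second, average the $x = \ubarr x$-type identities over typical $x$. Atypical $x$ are few by \cref{prop:mostverticesandneigharetypical}\ref{mostverticestypical}, and since $|G|\lesssim\Gamma$ they contribute negligibly. Using $\frac1N\sum_x\beta_x^i\approx 1$ with fluctuation $\lesssim \varphi$, this yields an approximate scalar equation $1/g \approx z + g + |w|^2/(z+g)$ up to an error of size $\varphi$; compare \cref{eq:v}.

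Third, use the stability of \cref{eq:v}: its linearization has a derivative of size $\asymp v^2 + \eta/v$, with a cubic degeneracy at the edge. This gives $|g - \ii v|\lesssim \min\{\varphi^{1/3}, \varphi/(v^2+\eta/v)\} = \psi$. To choose the correct root, I will use the continuity bootstrap together with the a priori bound $|g - \ii v| \le \mathcal C\Gamma^2\psi \ll 1$, which separates the branches.

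Fourth, return to an arbitrary $x$, including atypical ones. Insert $g$ into the local sums $\sum_y|X_{xy}|^2 G^{[x]}_{\ubarr y\ubarr y}$. For most neighbours $y$, which are typical by \cref{prop:mostverticesandneigharetypical}\ref{mostneighboursaretypical}, the entries $G^{[x]}_{\ubarr y\ubarr y}$ are close to $\ii v_{\hat\beta_y}$ by \cref{eq:GTandGTuclose}. Here one needs $\sum_y |X_{xy}|^2 v_{\ubarr\beta_y} \approx \beta^1_x v$; I will obtain this by averaging against the typical-vertex fluctuation bounds $\Psi_x^i$ rather than through the $\Phi$'s. Solving the $2\times 2$ system then gives the bounds on $\Lambda_\mathrm d$ and $\Lambda_{\mathrm{do}}$ with error $\Gamma^2\psi$. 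The loss of $\Gamma^2$ comes from the amplification $v_{\beta_x}\lesssim\Gamma$ when inverting the $2\times 2$ matrix, controlled through \cref{lem:properties_v_beta}. Finally, $\Lambda_\mathrm o$ is bounded via \cref{eq:roughboundGd-1/2}, since $\Gamma^2/\sqrt d \le \Gamma^2\psi$ in the regime \cref{eq:regimedlogN2_and_logNalpha}.

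\textbf{Main obstacle.} I expect the hardest part to be the stability of the $2\times2$ inversion for atypical vertices. The quantity $v_{\beta_x}$ may be large when $\beta_x^1$ is small and $\beta_x^2$ is large. The denominator $|w|^2 + (\eta+\beta^1 v)(\eta+\beta^2 v)$ is bounded below by $|w|^2\ge\delta^2$ in the cases $\mathrm b$ and $\mathrm e$, and by $\delta^2 v^2$ on $\chi_\mathrm l$. This lower bound is what keeps the error amplification at most $\Gamma^2$. The edge case additionally requires the cubic-root stability near $|w| = 1$, which is where the stronger condition $d\ge d_\mathrm e$ enters, through the requirement $\Gamma^2\varphi^{1/3}\ll1$.
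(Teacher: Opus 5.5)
Your overall architecture matches the paper's. You run a bootstrap in $\eta$ on the $2\times 2$ Schur-complement system of \cref{lem:ASCEq}, derive a cubic for an average over typical vertices and apply cubic stability, then extend to all vertices through part (ii) of \cref{prop:mostverticesandneigharetypical}. Using $g$ instead of the paper's typical average $s$ is harmless, since $\abs{g-s}\lesssim\varphi$.

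\textbf{The gap.} It sits in the step you yourself single out as the main obstacle. Write $G_{xx}-\ii v_{\beta_x}=\bigl(w r_x^2-\ii(\eta+\beta_x^2 v)r_x^1\bigr)/D_x$ with $D_x=\abs{w}^2+(\eta+\beta_x^1 v)(\eta+\beta_x^2 v)$. For an atypical $x$ the errors grow with its degrees: $\abs{r_x^1}\lesssim\abs{G_{xx}}(\beta_x^1\psi+\varphi\Gamma^{-2})+\dots$ with $\abs{G_{xx}}\lesssim 1+v_{\beta_x}$, and $r_x^1$ carries the prefactor $\eta+\beta_x^2v$.
\begin{itemize}
\item In cases $\mathrm b$, $\mathrm e$, a uniform lower bound on $D_x$ only gives $(1+\beta_x^2)^2\beta_x^1\psi\sim\Gamma^3\psi$ for $\Lambda_{\mathrm d}$, and worse for $\Lambda_{\mathrm{do}}$. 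That is not the claimed bound, and it does not close the bootstrap. The thresholds $d_{\mathrm b}$, $d_{\mathrm e}$ are calibrated exactly so that $\Gamma^2\psi\ll 1$: at $d=\mathcal D d_{\mathrm e}$ one has $\Gamma^2\varphi^{1/3}\asymp\mathcal D^{-25/9}$, whereas $\Gamma^3\varphi^{1/3}$ grows like $(\log N)^{1/25}$.
\item In case $\mathrm l$ your mechanism fails outright. Here $\Gamma_{\mathrm l}=1$, but $\beta_x^i$ is only controlled by \cref{lem:roughboundonbetaix} and may diverge. At $d\asymp\mathcal D d_{\mathrm l}$, $\varphi\asymp\mathcal D^{-2/3}$ is merely a small constant. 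So $D_x\ge\delta^2v^2$ leaves an error $(1+\beta_x^1)(1+\beta_x^2)\varphi$ that is not even $o(1)$.
\end{itemize}
The missing idea is to use the full size of $D_x$ to cancel the degree factors.
\begin{itemize}
\item On $\{\chi_{\mathrm l}=1\}$, $v\asymp 1$ and $\beta_x^i\ge\delta$ give $D_x\gtrsim(1+\beta_x^1)(1+\beta_x^2)$ and $\eta+\beta_x^2 v\gtrsim 1+\beta_x^2$. Both errors are then $O(\varphi)$.
\item In cases $\mathrm b$, $\mathrm e$, distinguish $\beta_x^2 v\le 1$ from $\beta_x^2 v\ge 1$. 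In the latter case use $D_x\ge\beta_x^1\beta_x^2 v^2$, and note that $1/v\le\beta_x^2\lesssim\Gamma$ there. This yields $\abs{G_{xx}-\ii v_{\beta_x}}\lesssim\Gamma\psi$, and only then $\Gamma^2\psi$ for the off-diagonal block via the first equation.
\end{itemize}

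\textbf{Two further steps need repair.}
\begin{itemize}
\item Branch selection cannot rest on $\abs{g-\ii v}\le C\Gamma^2\psi\ll 1$. Near $\abs{w}=1$ (case $\mathrm e$), the spurious roots of the cubic in \cref{lem:3rddegeqonDelta} lie at distance $\asymp\tilde\xi^{1/2}$ with $\tilde\xi=\abs{\abs{w}^2-1}+\eta^{2/3}$, not at distance of order one. An a priori input of size $\Gamma^2\psi$ fails to separate them when $\varphi^{2/3}\ll\tilde\xi\lesssim\Gamma^{4/3}\varphi^{2/3}$. You would then lose up to a factor $\Gamma^2$ in $\abs{g-\ii v}$, which feeds back into $\Lambda$. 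You must carry $\abs{g-\ii v}\le C\psi$ itself in the induction hypothesis, as the paper does with $\Delta(\eta_b)$ in \eqref{eq:proof_locallaw2_main}. Then use the continuity argument of \cref{lem:stability2}, which relies on $\tilde\xi^3/\varphi^2$ being increasing in $\eta$.
\item The initial step does not follow from the error bounds of \cref{lem:roughboundsrest} alone; at the top of the $\eta$-range you still have to run the self-consistent argument. This works once $\eta\ge J$ is large enough that the trivial bound $\abs{g-\ii v}\le 2/\eta$ lies inside the stability neighbourhood of \cref{lem:bulkstab}.
\end{itemize}

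\textbf{Smaller points.} Given the $N^6$ Lipschitz constant of $u_\beta$ in \cref{lem:v_beta_u_beta_Lipschitz}, an $N^{-3}$ grid is too coarse; the paper uses $N^{-7}$. In your fourth step, $\Psi_x^i$ is not controlled for atypical $x$. What you actually need is $\abs{G_{yy}-\ii v}\lesssim\psi$ for typical $y$, which your first three steps give, combined with part (ii) of \cref{prop:mostverticesandneigharetypical}.
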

	Throughout the following, we often drop the index of $\Gamma$ to simplify the notation. In that case, the argument holds for either choice of $\Gamma$.
	
	We notice that owing to \eqref{eq:new_varphi} and the lower bound on $d$ in \eqref{eq:regimedlogN2_and_logNalpha}, $\varphi >0 $ satisfies 
	\begin{equation} \label{eq:d_inverse_square_root_smaller_varphi_a}
		\max\{d^{-1/2}, \Gamma^3 d^{-1}, \Gamma d^{-1}\} \le \varphi \le 1/2.
	\end{equation}
	 Owing to \cref{prop:mostverticesandneigharetypical} \cref{mostverticestypical}, 
	 for any $1 \leq \Gamma \leq 1 + \frac{\log N}{d}$, with very high probability we have
	\begin{equation}\label{eq:number_atypical_vertices} 
	 \Gamma 	\frac{\abs{\mathcal{T}^c_{\varphi}}}{N} \le\Gamma \frac{e^{\frac{\varphi^2d}{2^{10} (e\Gamma)^2}}}{N}  + 4 \Gamma e^{-\frac{\varphi^2d}{2^{9} (e\Gamma)^2}}   \le \mathcal{C}\varphi.
	\end{equation} 
	Throughout this section, the constants $\ccc>0$ and $\tilde{\ccc} >0 $ are 
	chosen as in \cref{lem:roughboundsGT} and \cref{lem:controlatypicalvertices}, respectively; see also \cref{eq:regimedlog2}.

	\subsection{Analysis of $s$} 
	
	We denote the average of the diagonal entries $G_{xx}$ over the typical vertices $x \in \mathcal T_\varphi \cup \widehat{\mathcal T}_\varphi$ by 
	\begin{equation}\label{eq:meanoftypicalvertices}
		s:= \frac{1}{2\abs{\mathcal{T}_{\varphi}}} \sum_{x \in \mathcal{T}_{\varphi}} \left(G_{xx} + G_{\ubarr{x}\ubarr{x}}\right). 
	\end{equation} 
	In this subsection, we will study $s$ instead of $g$ and show that it is close to $\ii v$. 
	Eventually, we will prove that $g$ is well approximated by $s$. 
	The first step is to derive a cubic equation for $s$ from the relations in \cref{lem:ASCEq}.  
	
 Throughout the following, we will often work on the event $\{ \Lambda \leq 1\}$. 
	With appropriately chosen $\Gamma$, we conclude $\{ \Lambda \leq 1\} \subset \{ \theta = 1\}$ from \cref{lem:properties_v_beta} and the definition of $\theta$ from 
	\cref{eq:theta} as well as \cref{lem:roughboundonbetaix} or the definitions of $\chi_\#$ and $\mathrm{S}_\#$. In particular, the results previously shown on the event $\{ \theta =1\}$ are applicable on $\{ \Lambda \leq 1\}$. 
	
	\nc

	\begin{lemma}\label{lem:restmoy-s}
		Let $L \ge 1$ be a constant. Then on the event $\{ \theta =1\}$ we have with very high probability 
		\begin{align*}
			\bigg|	\sum_{y}^{(x)} \abs{X_{xy}}^2 G_{\ubarr{y}\ubarr{y}}^{\bset{x}} -s \bigg|\le \mathcal C \varphi  ~~ \text{and}~~\bigg| \sum_{y}^{(x)} \abs{X_{yx}}^2 G_{yy}^{\bset{x}} -s \bigg| \le  \mathcal{C}\varphi 
		\end{align*}
		uniformly for all $x \in \mathcal T_{\varphi}$ and all $N^{-1+ \delta} \le \eta \le L$ and $ \abs{w} \le 2$. 

	\end{lemma}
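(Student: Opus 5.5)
For $x \in \mathcal T_\varphi$ I will write the local average as a global average plus fluctuation terms and bound each one. For the first sum,
\[
\sum_{y}^{(x)} \abs{X_{xy}}^2 G_{\ubarr{y}\ubarr{y}}^{\bset{x}} = \Psi^1_x + \frac{1}{N}\sum_{y}^{(x)} G_{\ubarr{y}\ubarr{y}}^{\bset{x}} .
\]
Since $x \in \mathcal T_\varphi$, \cref{def:typicalvertices} gives $\abs{\Psi^1_x}\le\varphi$. It then remains to show that $\frac1N \sum_y^{(x)} G^{\bset{x}}_{\ubarr y\ubarr y}$ lies within $\mathcal C\varphi$ of $s$.

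\textbf{Step 1 (remove the minor).} By \cref{eq:roughdifferenceGGTd-1} of \cref{lem:roughboundsrest} on $\{\theta=1\}$, each term satisfies $|G^{\bset{x}}_{\ubarr y\ubarr y}-G_{\ubarr y\ubarr y}|\le \mathcal C\Gamma^3/d$. Adding back the missing term $y=x$ costs $\mathcal C\Gamma/N$ by the definition of $\theta$. Together with \cref{eq:d_inverse_square_root_smaller_varphi_a}, this gives
\[
\frac1N\sum_y^{(x)} G^{\bset{x}}_{\ubarr y\ubarr y} = \frac1N\sum_{y\in[N]}G_{\ubarr y\ubarr y} + O(\mathcal C\varphi).
\]

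\textbf{Step 2 (symmetrize).} \cref{lem:traceg1=g2} gives $\frac1N\sum_y G_{\ubarr y\ubarr y} = \frac1N\sum_y G_{yy} = \frac{1}{2N}\sum_y (G_{yy}+G_{\ubarr y\ubarr y}) = g$. This is why $s$ is defined symmetrically, and it is also why the same argument handles both sums in the statement.

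\textbf{Step 3 (compare $g$ with $s$).} I split $g$ into a typical part and an atypical part:
\[
g - s = \Big(\tfrac{|\mathcal T_\varphi|}{N}-1\Big)s + \frac{1}{2N}\sum_{y\in\mathcal T_\varphi^c}(G_{yy}+G_{\ubarr y\ubarr y}).
\]
On $\{\theta=1\}$ all entries, and hence $s$, are bounded by $\mathcal C\Gamma$, so
\[
|g-s|\le 2\mathcal C\Gamma\,|\mathcal T^c_\varphi|/N .
\]
By \cref{eq:number_atypical_vertices}, which follows from \cref{prop:mostverticesandneigharetypical} \cref{mostverticestypical} and holds with very high probability on $\{\theta=1\}$, this is at most $\mathcal C\varphi$. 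Note that it is precisely the factor $\Gamma$ in \cref{eq:number_atypical_vertices} that absorbs the $\Gamma$ from $\theta$.

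\textbf{Second sum and uniformity.} The second sum is handled identically: $\Psi^2_x$ replaces $\Psi^1_x$, and one uses $\frac1N\sum_y G_{yy}=g$ directly. Uniformity in $w,\eta$ comes from the uniformity statements of \cref{lem:roughboundsrest} and \cref{prop:mostverticesandneigharetypical}, which are stated uniformly over $\Im z\ge N^{-1+\delta}$ and all $w$.

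\textbf{Main obstacle.} The delicate point is Step 3. We need $\Gamma|\mathcal T^c_\varphi|/N\lesssim\varphi$, so the $\Gamma$ loss from the crude entry bound must be compensated by the exponential smallness of the fraction of atypical vertices. This is exactly what \cref{eq:number_atypical_vertices} provides under \eqref{eq:regimedlog2}.
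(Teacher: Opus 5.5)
Your proposal is correct and follows essentially the same route as the paper. The paper writes the local sum as $s+\eps^1_x$, where $\eps^1_x$ consists of exactly your terms: $\Psi^1_x$, the minor-removal error $\frac1N\sum_y^{(x)}(G^{\bset{x}}_{\ubarr y\ubarr y}-G_{\ubarr y\ubarr y})$, the missing $-\frac1N G_{\ubarr x\ubarr x}$, and the typical/atypical split of the global average obtained via \cref{lem:traceg1=g2}. It then bounds these using $\abs{\Psi^1_x}\le\varphi$, \eqref{eq:roughdifferenceGGTd-1}, \eqref{eq:d_inverse_square_root_smaller_varphi_a} and \eqref{eq:number_atypical_vertices}, just as you do; your intermediate identification of the global average with $g$ is only a cosmetic reorganization of the same identity.
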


	\begin{proof}
		For any $x \in \mathcal{T}_\varphi$, we first observe that 
		\begin{equation*}
			\sum_{y}^{(x)} \abs{X_{xy}}^2 G_{\ubarr{y}\ubarr{y}}^{\bset{x}} =s + \eps^1_x, \qquad \qquad \sum_{y}^{(x)} \abs{X_{yx}}^2 G_{yy}^{\bset{x}} =s + \eps^2_x,
		\end{equation*}
		where we used \cref{def:typicalvertices} and \cref{lem:traceg1=g2} and  introduced 
		\begin{align}
			\eps^1_x &:=-\frac{\abs{\mathcal{T}_{\varphi}^c}}{N} s + \frac{1}{2N} \sum_{y \notin \mathcal{T}_{\varphi}}\left( G_{yy} + G_{\ubarr{y}\ubarr{y}} \right) - \frac{1}{N} G_{\ubarr{x} \ubarr{x}} +\frac{1}{N} \sum_{y}^{(x)} \left(G_{\ubarr{y}\ubarr{y}}^{\bset{x}} - G_{\ubarr{y}\ubarr{y}}\right) + \Psi_{x}^1, \label{eq:defi_epsilon_1}\\
			\eps^2_x &:=-\frac{\abs{\mathcal{T}_{\varphi}^c}}{N} s + \frac{1}{2N} \sum_{y \notin \mathcal{T}_{\varphi}}\left( G_{yy} + G_{\ubarr{y}\ubarr{y}} \right) - \frac{1}{N} G_{xx} + \frac{1}{N} \sum_{y}^{(x)}  \left(G_{yy}^{\bset{x}} - G_{yy}\right) + \Psi_{x}^2.\label{eq:defi_epsilon_2}
		\end{align}
		By \cref{def:typicalvertices} we have $\abs{\Psi_x^1}\leq \varphi$ and, thus,  \eqref{eq:roughdifferenceGGTd-1} in \cref{lem:roughboundsrest} yields 
		\[ 
		\absb{\eps_{x}^1} \le \mathcal{C}  \Gamma \nc \frac{\abs{\mathcal{T}_{\varphi}^c}}{N}+ \mathcal{C}\frac{ \Gamma \nc }{N}   +  \max_{i \in [2N] \backslash\{x,\ubarr{x}\}}\absb{G_{ii}^{\bset{x}} - G_{ii}} + \Psi_{x}^1
		\le \mathcal{C} \Gamma \nc  \frac{\abs{\mathcal{T}_{\varphi}^c}}{N} +  \mathcal C \frac{\Gamma^3}{d} \nc + \mathcal{C} \varphi \leq \mathcal C \varphi
		\] 
		with very high probability, 
		where we used $\Gamma^3 d^{-1} \le \varphi$ due to \cref{eq:d_inverse_square_root_smaller_varphi_a} as well as \cref{eq:number_atypical_vertices} 
		in the last inequality.  
		Since the same reasoning applies to $\eps_{x}^2$, we have completed the proof of \cref{lem:restmoy-s}.  
	\end{proof}

	\begin{lemma}\label{lem:meanoftypicalvertices}
		Let $L \geq 1$ be a constant. Then, on the event $\{\Lambda \leq 1\}$ we have with very high probability 
		\begin{align} \label{eq:s_G_xx_relation} 
			\absb{s^2 G_{xx} + 2\ii \eta s G_{xx} + s -(\eta^2 + \abs{w}^2) G_{xx} + \ii \eta} &\le  \mathcal{C} \varphi\\ 
			\label{eq:stabfors}
			\absb{s^3  + 2 \ii \eta s^2 +(1 - \eta^2 - \abs{w}^2)s+ \ii \eta} & \leq \mathcal C   \varphi .
		\end{align}
		uniformly for all $x \in \mathcal T_{\varphi}\cup \ubarr{\mathcal T_{\varphi}}$ and all $N^{-1+ \delta} \le \eta \le L$ and $ \abs{w} \le 2$.
	\end{lemma}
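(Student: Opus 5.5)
The plan is to specialize the four identities of \cref{lem:ASCEq} to $z=\ii\eta$ and a typical vertex $x\in\mathcal T_\varphi$. Each local average of resolvent entries is replaced by $s$ using \cref{lem:restmoy-s}, and $G_{\ubarr{x}x}$ is eliminated by a linear combination. This avoids dividing by the possibly small quantity $\ii\eta+s$. Averaging the resulting relation over $x\in\mathcal T_\varphi$ then produces the cubic equation for $s$.

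\emph{Step 1 (a priori bounds).} On $\{\Lambda\le 1\}\subset\{\theta=1\}$ all entries of $G$ are bounded by $\mathcal C\Gamma$. Moreover, $|s|\le\mathcal C$. Indeed, for $x\in\mathcal T_\varphi$ we have $|\beta^i_x-1|\le\varphi+1/N$, so $v_{\beta_x}$ and $v_{\ubarr\beta_x}$ are bounded uniformly, using \cref{lem:properties_v_beta} and $v\le1$; together with $\Lambda\le1$ this bounds $G_{xx}$ and $G_{\ubarr x\ubarr x}$ for typical $x$.

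By \cref{lem:roughboundsrest}, $|Y^i_x|\le\mathcal C\Gamma/d$ and $|Z^i_x|\le\mathcal C(\Gamma/d+d^{-1/2})$. Hence every error term $Y^i_xG_{\cdot\cdot}$ and $Z^i_xG_{\cdot\cdot}$ is at most $\mathcal C(\Gamma^2/d+\Gamma d^{-1/2})$. This is $\le\mathcal C\varphi$ by \cref{eq:d_inverse_square_root_smaller_varphi_a} together with the check $\Gamma^3d^{-3/2}\le\Gamma^5\log N/d^2$, which holds since $d\le(\log N)^2$; when $\Gamma=1$ it is immediate. Likewise, by \cref{lem:restmoy-s}, replacing $\sum_y^{(x)}|X_{xy}|^2G^{[x]}_{\ubarr y\ubarr y}$ and $\sum_y^{(x)}|X_{yx}|^2G^{[x]}_{yy}$ by $s$ costs $\mathcal C\varphi\cdot\mathcal C\Gamma$. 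I expect this to be the main bookkeeping obstacle: one has to make sure the extra factor $\Gamma$ from the resolvent bound can be absorbed, either via $\Lambda\le1$, which gives $O(1)$ bounds on the relevant diagonal entries for typical $x$, or by adjusting the constant $\tilde{\ccc}$ in $\varphi$. I would first try to use only the $O(1)$ bounds for $G_{xx},G_{\ubarr xx},G_{x\ubarr x},G_{\ubarr x\ubarr x}$ at typical $x$, which follow from $\Lambda\le1$.

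\emph{Step 2 (elimination).} After Step 1, the first and third identities read
\[ 1=-(\ii\eta+s)G_{xx}-wG_{\ubarr xx}+O(\varphi),\qquad 0=-(\ii\eta+s)G_{\ubarr xx}-\barr wG_{xx}+O(\varphi). \]
Multiply the first by $(\ii\eta+s)$ and the second by $w$, then subtract. The $G_{\ubarr xx}$ terms cancel and we obtain
\[ \ii\eta+s=-(\ii\eta+s)^2G_{xx}+|w|^2G_{xx}+O(\varphi). \]
The errors stay $O(\varphi)$ because $|s|$, $\eta\le L$ and $|w|\le2$ are bounded. Expanding $(\ii\eta+s)^2=s^2+2\ii\eta s-\eta^2$ gives exactly \cref{eq:s_G_xx_relation} for $x\in\mathcal T_\varphi$. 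The same manipulation applied to the second and fourth identities, eliminating $G_{x\ubarr x}$, gives \cref{eq:s_G_xx_relation} with $G_{\ubarr x\ubarr x}$ in place of $G_{xx}$, i.e.\ for $x\in\ubarr{\mathcal T_\varphi}$.

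\emph{Step 3 (cubic).} The left side of \cref{eq:s_G_xx_relation} is affine in $G_{xx}$. Averaging it over all $2|\mathcal T_\varphi|$ indices in $\mathcal T_\varphi\cup\ubarr{\mathcal T_\varphi}$ replaces $G_{xx}$ by $s$, by the definition \cref{eq:meanoftypicalvertices}, while the error remains $\le\mathcal C\varphi$. This yields
\[ s^3+2\ii\eta s^2+s-(\eta^2+|w|^2)s+\ii\eta=O(\varphi), \]
which is \cref{eq:stabfors}.

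Uniformity in $w$, $\eta$ and $x$ is inherited from \cref{lem:restmoy-s} and \cref{lem:roughboundsrest}, since all other steps are deterministic algebra.
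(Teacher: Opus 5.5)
Your proposal is correct and is essentially the paper's own proof. Both replace the local averages by $s$ via \cref{lem:restmoy-s}, eliminate $G_{\ubarr{x}x}$ from the first and third identities of \cref{lem:ASCEq}, bound all error terms by the $O(1)$ bounds on $G_{xx}$ and $G_{\ubarr{x}x}$ at typical $x$ that $\Lambda\le 1$ provides (\cref{eq:G_xx_typical}), and then average to obtain the cubic. The only difference is ordering: the paper multiplies by $\ii\eta+s+\eps^2_x$ and keeps the errors exact, whereas you absorb the errors first. Of your two options for the extra factor $\Gamma$, only the $O(1)$-bound route works, since $\Gamma$ may diverge and so cannot be absorbed into the constant $\tilde{\ccc}$.
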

	
	We note that the left-hand side of \eqref{eq:stabfors} vanishes due to \eqref{eq:v}
	if $s$ is replaced by $\ii v$. 
	
	For $y \in \mathcal T_{\varphi} \cup \ubarr{\mathcal T}_{\varphi}$, we now show that  
	\begin{equation}
	0 \leq v_{\beta_y} \lesssim 1, \qquad \qquad 0 \leq u_{\beta_y} \lesssim 1  \label{eq:v_beta_x_typical_bounded}. 
	 \end{equation} 
	 In the case $\abs{w} \leq 1/2$, the bound on $v_{\beta_y}$ in \cref{eq:v_beta_x_typical_bounded} follows from $v \asymp 1$ by \cref{eq:v_asymp_1}, $1/2 \leq \beta_y^i \leq 3/2$ as $y \in \mathcal{T}_{\varphi} \cup \ubarr{\mathcal{T}}_{\varphi}$ and \cref{eq:def_v_beta}. 
	For $\abs{w} \geq 1/2$,  the estimate on $v_{\beta_y}$ in \cref{eq:v_beta_x_typical_bounded} is a consequence of $v \leq 1$ by \cref{eq:v_leq_1}, $\beta_y^2 \leq 3/2$ and \cref{eq:def_v_beta}. 
	The bound on $u_{\beta_y}$ is shown by an analogous argument.  
	
	On the event $\{ \Lambda \leq 1 \}$, we immediate conclude 
	\begin{equation} \label{eq:G_xx_typical} 
	\max_{y \in  \mathcal T_{\varphi} \cup \ubarr{\mathcal T}_{\varphi}} \big( \abs{G_{yy}} + \abs{G_{\ubarr{y}y}} \big) \lesssim 1. 
	\end{equation}
	\begin{proof}
		We prove the inequality for $x \in \mathcal{T}_{\varphi} $ and a similar proof holds for $G_{\ubarr{x}\ubarr{x}}$. We define 
		\begin{align*}
			\eps^3_x := Y_{x}^1 G_{xx} + Z^{1}_{x} G_{\ubarr{x}x}, \qquad \qquad  \eps^4_x := Y_{x}^2 G_{\ubarr{x}x} + Z^{2}_{x} G_{xx}, 
		\end{align*}
		and rewrite the first and third equations of \cref{lem:ASCEq}  to obtain 
		\begin{align*}
			1 &= - (\ii\eta +s+ \eps_{x}^1) G_{xx} - w G_{\ubarr{x}x} + \eps_x^3,\\
			(\ii\eta  + s + \eps^2_{x}) G_{\ubarr{x}x} &= -\barr{w}G_{xx} + \eps_{x}^4,
		\end{align*}
		where $\eps_{x}^1 = \sum_{y}^{(x)} \abs{X_{xy}}^2G_{\ubarr{y}\ubarr{y}}^{\bset{x}} - s$ and $\eps_{x}^2 = \sum_{y}^{(x)} \abs{X_{yx}}^2G_{yy}^{\bset{x}} - s$ are the error terms from \cref{lem:restmoy-s}. Next, we multiply the first line by $(\ii\eta +  s + \eps^2_{x})$ and use the second to get rid 
		of $G_{\ubarr{x}x}$ in the resulting identity. This yields  
		\begin{equation}\label{eq:Giifunctionofs}
			\begin{aligned}
				& s^2 G_{xx} + 2\ii\eta s G_{xx} + s + (\ii\eta ^2 - \abs{w}^2) G_{xx} + \ii\eta  \\ 
				& \qquad \qquad\qquad \qquad \qquad 	=- w \eps_{x}^4 - \eps_{x}^2  + (\ii\eta +s+\eps_{x}^2)(\eps_{x}^3 - \eps_{x}^1G_{xx}) - \eps_{x}^2(\ii\eta +s) G_{xx}.
			\end{aligned} 
		\end{equation}
		Now we work on the event $ \{\Lambda \leq 1\}$. On this event and for $\eta \in [N^{-1+ \delta},L]$ and $\abs{w} \le 2$, we have $\eta\vee \abs{s} \vee \max_{y \in \mathcal T_{\varphi} \cup \widehat{\mathcal T_{\varphi}}} \abs{G_{yy}} \lesssim 1$ due to \cref{eq:G_xx_typical}.  
		From \eqref{eq:boundrest} in \cref{lem:roughboundsrest}, $x \in \mathcal T_{\varphi} \cup \widehat{\mathcal T_{\varphi}}$ and \cref{eq:G_xx_typical}, we conclude $\abs{\eps_{x}^3}+\abs{\eps_{x}^4}= \mathcal{O} (  \frac{\Gamma}{d} )$.
		Noticing that $\varphi \ge \Gamma/d$, it then remains to apply \cref{lem:restmoy-s} to conclude \eqref{eq:s_G_xx_relation}. 
		Averaging \eqref{eq:s_G_xx_relation} over $x \in \mathcal T_{\varphi} \cup \ubarr{\mathcal T}_\varphi$ yields \eqref{eq:stabfors}. 
	\end{proof}
	
	We now translate the cubic equation for $v$, \eqref{eq:v}, and the approximate cubic for $s$, \eqref{eq:stabfors}, into a cubic inequality for
	\begin{equation*}
		\Delta := s- \ii v.
	\end{equation*} 
	\begin{lemma}\label{lem:3rddegeqonDelta}
		Let $\delta \in (0,1/2]$ and $L \geq 1$ be constants. Then there exists a constant $\mathcal C >0$ depending only on $\delta$, $L$ and $\nu$ such that uniformly for all $\eta_*$, $\eta^* \in [N^{-1+\delta},L]$ with $\eta_* < \eta^*$ and all $w \in \mathrm{D}_2$, the 
		event 
		\[ 
		\{ |\Delta^3 - ( 3 \ii v + 2 \ii \eta)\Delta^2 - (4 \eta v + 3 v^2 + \eta^2 + \abs{w}^2 -1) \Delta| \leq \mathcal C {\varphi} \text{ for all } \eta \in [\eta_*,\eta^*] 
		\}
		\] 
		holds with very high probability on the event $\{  \Lambda \leq 1 \nc  \text{ for all } \eta \in [\eta_*, \eta^*]\}$. 
	\end{lemma}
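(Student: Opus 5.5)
The plan is to substitute $s = \ii v + \Delta$ into the approximate cubic \eqref{eq:stabfors} from \cref{lem:meanoftypicalvertices} and use the exact equation \eqref{eq:v} for $v$ to cancel the constant term.

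First I record the exact cubic for $\ii v$. Multiplying \eqref{eq:v} by $v(\eta+v)$ gives
\[
\eta + v = v(\eta+v)^2 + \abs{w}^2 v ,
\]
which can be rewritten as
\[
(\ii v)^3 + 2\ii\eta(\ii v)^2 + (1-\eta^2-\abs{w}^2)(\ii v) + \ii\eta = 0 .
\]
Indeed, the left-hand side equals $\ii\bigl(-v^3 - 2\eta v^2 + (1-\eta^2-\abs{w}^2)v + \eta\bigr)$, and the bracket vanishes by the previous identity. So $P(s) := s^3 + 2\ii\eta s^2 + (1-\eta^2-\abs{w}^2)s + \ii\eta$ satisfies $P(\ii v)=0$.

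Next I Taylor expand $P$ at $\ii v$, which is exact since $P$ is cubic. I need the three derivatives:
\[
P'(\ii v) = 3(\ii v)^2 + 4\ii\eta(\ii v) + 1-\eta^2-\abs{w}^2 = -(3v^2 + 4\eta v + \eta^2 + \abs{w}^2 - 1),
\]
\[
\tfrac12 P''(\ii v) = 3\ii v + 2\ii\eta, \qquad \tfrac16 P'''=1 .
\]
This gives
\[
P(\ii v + \Delta) = \Delta^3 + (3\ii v + 2\ii\eta)\Delta^2 - (4\eta v + 3v^2 + \eta^2 + \abs{w}^2 - 1)\Delta .
\]
The sign of the $\Delta^2$ term must be reconciled with the statement, which writes $-(3\ii v + 2\ii\eta)\Delta^2$. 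This would follow if the paper's convention for $\Delta$ or for $v$ differs by a sign or conjugation. I would check this carefully against \eqref{eq:stabfors}. In any case, the bound to be proved is exactly $\abs{P(s)}\le \mathcal C\varphi$.

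Finally, I control the probability and uniformity. For each fixed $\eta$, \eqref{eq:stabfors} holds with very high probability on $\{\Lambda\le 1\}$. To get the bound simultaneously for all $\eta\in[\eta_*,\eta^*]$, I use a union bound over a grid of mesh $N^{-10}$, which has polynomially many points. Between grid points I use Lipschitz continuity: $v$ and $G$ are $N^{2}$-Lipschitz in $\eta$ on $[N^{-1},\infty)$ (\cref{lem:v_beta_u_beta_Lipschitz} and the resolvent bound $\norm{\partial_\eta G}\le\eta^{-2}$).

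The main obstacle is that $s$ depends on the random set $\mathcal T_\varphi$, which itself depends on $\eta$ through $\Psi_x^i$, so $s$ is not obviously continuous in $\eta$. I would avoid needing this continuity by noting that \cref{prop:mostverticesandneigharetypical} and \cref{lem:meanoftypicalvertices} already hold uniformly in $z$ with very high probability. Then only the deterministic quantity $v$ and the event $\{\Lambda\le1\}$ for all $\eta$ in the interval are needed, and the inequality holds pointwise at every $\eta$ on the intersection of these events. All constants depend only on $\delta$, $L$ and $\nu$, since $\eta\le L$, $\abs{w}\le2$ and $v\le1$ keep every coefficient bounded.
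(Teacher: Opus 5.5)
Your algebra is the paper's argument: substitute $s=\ii v+\Delta$ into \eqref{eq:stabfors} and let \eqref{eq:v} remove the constant term. Your sign is also right. The paper's own proof arrives at $+(3\ii v+2\ii\eta)\Delta^2$. The minus sign in the statement is what one gets by writing $-P(s)$ in terms of $\ii v-s$, the convention used in the proof of \cref{lem:bulkstab}. The discrepancy does no harm downstream, because \cref{lem:stability2} only uses $\abs{\xi_1}\asymp\tilde\xi$ and $\abs{\xi_2}\lesssim\tilde\xi^{1/2}$.

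The step that does not go through as written is the passage to all $\eta\in[\eta_*,\eta^*]$ simultaneously. You are right that the paper's one-line grid argument (``all previous functions are $N^2$-Lipschitz'') does not literally apply to $s$, since $\mathcal T_\varphi$ jumps as $\eta$ varies. Your remedy, however, is that \cref{prop:mostverticesandneigharetypical} and \cref{lem:meanoftypicalvertices} ``already hold uniformly in $z$''. That misreads the paper's convention. Here ``with very high probability, uniformly in $z$'' means a bound at each fixed $z$ with constants independent of $z$. The proofs are fixed-$z$ union bounds over deterministic vertex sets, which is exactly why grid arguments appear here, in \cref{prop:phi_Lambda_small2} and in the proof of \cref{thm:condideloc}. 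So you cannot intersect these events over a continuum of $\eta$.

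The fix is to run the grid over the random inputs of the proofs of \cref{lem:restmoy-s} and \cref{lem:meanoftypicalvertices}, not over $s$. These inputs are of two kinds.

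First, the bounds \eqref{eq:boundrest} and \eqref{eq:roughdifferenceGGTd-1} involve only resolvent entries. They therefore extend from a grid of mesh $N^{-10}$ by Lipschitz continuity.

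Second, there is the bound \eqref{eq:number_atypical_vertices} on $\abs{\mathcal T_\varphi^c}$. Here $\varphi$ and $\Phi^i_x$ do not depend on $\eta$, and for the nearest grid point $\eta_k$ one has $\abs{\Psi^i_x(\eta)-\Psi^i_x(\eta_k)}\le(1+\beta^i_x)N^2\abs{\eta-\eta_k}\le N^{-5}\ll\varphi$. Hence $\mathcal T_\varphi(\eta)^c\subset\mathcal T_{\varphi/2}(\eta_k)^c$, and part (i) of \cref{lem:controlatypicalvertices} with $V=[N]$ bounds the right-hand side at $\eta_k$ by the same quantity.

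Intersect these polynomially many events with $\{\Lambda\le1 \text{ for all } \eta\}$, which gives $\theta=1$ at every $\eta$. On this intersection the rest of the proof of \cref{lem:meanoftypicalvertices} is deterministic at each $\eta$. So \eqref{eq:stabfors}, and with it the cubic inequality, holds for all $\eta\in[\eta_*,\eta^*]$ at once.
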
 
	\begin{proof}
		For all $ N^{-1+ \delta} < \eta \le L$ and $w \in \CC$, on the event $\{  \Lambda(w,\eta) \leq 1  \}$, we expand $s =  \Delta + \ii v $ in the left-hand side of \eqref{eq:stabfors}
		and obtain 
		\[ 
		|\Delta^3 + ( 3 \ii v + 2 \ii \eta)\Delta^2 - (4 \eta v + 3 v^2 + \eta^2 + \abs{w}^2 -1) \Delta| \leq \mathcal C { \varphi}
		\] 
		with very high probability, 
		where the term independent of $\Delta$ vanishes due to \eqref{eq:v}. It remains to use a grid argument for $\eta \in [\eta_\ast, \eta^\ast]$ exploiting that all previous functions are $N^{2}$-Lipschitz continuous in $\eta$ on $[N^{-1},\infty)$ due to \cref{lem:v_beta_u_beta_Lipschitz} and \cref{lem:disttospect} to conclude.
	\end{proof}

	In the following lemma, the $\eta$-dependence of various quantities like $\Delta$ and $\psi_w$ (defined in \eqref{eq:def_up_bound}) is important, which we thus highlight in our notation. 
	\begin{lemma}\label{lem:Deltasmall}
		Let $\delta \in (0,1/2]$ and $L \geq 1$ be constants. Then there exists a constant $\mathcal C >0$ depending only on $\delta$, $L$ and $\nu$ such that the following holds uniformly for all $\eta_*$, $\eta^* \in [N^{-1+\delta},L]$ with $\eta_* < \eta^*$ and all $w \in \mathrm{D}_2$. If 
		$\theta (\eta^\ast) |\Delta(\eta^{\ast})| \leq \mathcal C \psi_w(\eta^\ast)$
		with very high probability, then the event
		\[ 
		\left\{ |\Delta(\eta)| \leq \mathcal C \psi_w(\eta)  \text{ for all } \eta \in [\eta_*,\eta^*] 
		\right\}
		\] 
		holds with very high probability on the event $\{  \Lambda \leq 1 \nc  \text{ for all } \eta \in [\eta_*, \eta^*]\}$. 
	\end{lemma}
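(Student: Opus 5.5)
This is a stability-plus-continuity argument for the cubic from \cref{lem:3rddegeqonDelta}. Write the cubic as $P(\Delta) = \Delta^3 - a\Delta^2 - b\Delta$, with $a = 3\ii v + 2\ii\eta$ and $b = 4\eta v + 3v^2 + \eta^2 + \abs{w}^2 - 1$. On the event $\{\Lambda\le 1 \text{ for all } \eta\in[\eta_*,\eta^*]\}$, \cref{lem:3rddegeqonDelta} gives $\abs{P(\Delta(\eta))}\le \mathcal C\varphi$ for all $\eta$ in the interval, with very high probability. The first task is to bound the coefficients from below in terms of $v$, using \cref{lem:profv} and the defining equation \cref{eq:v}.

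From \cref{eq:v} we have $1 - \abs{w}^2 = (\eta+v)^2 - \abs{w}^2\eta/(v(\eta+v))\cdot$(explicit terms). A direct computation should give
\[ b = 2v^2 + 2\eta v + \frac{\abs{w}^2\eta}{v+\eta}\,\frac{1}{v} \cdot(\ldots), \]
and in particular $\abs{b}\gtrsim v^2 + \eta/v$. This is the key stability input, and it matches the denominator in $\psi_w$. I would verify it by differentiating the cubic $F(s)=s^3+2\ii\eta s^2+(1-\eta^2-\abs{w}^2)s+\ii\eta$ at $s=\ii v$. Since $-b = F'(\ii v)$ up to sign, it suffices to show $\abs{F'(\ii v)}\asymp v^2+\eta/v$ uniformly on $\mathrm{D}_2$ and $\eta\le L$. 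Also $\abs{a}\asymp v+\eta$.

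The dichotomy is then standard. Wherever $\abs{\Delta}\le \abs{b}/(2\abs{a}+2\sqrt{\abs b})$, the quadratic and cubic terms are at most $\abs{b}\abs{\Delta}/2$. Hence $\abs{\Delta}\le 2\mathcal C\varphi/\abs b\lesssim \varphi/(v^2+\eta/v)$. Independently, the cubic always yields $\abs{\Delta}\lesssim \varphi^{1/3}$ or $\abs{\Delta}$ comparable to a root of $\Delta^2-a\Delta-b$. Combining the two gives $\abs\Delta\le \mathcal C\psi_w(\eta)$ on the "good branch", with a gap: the bad branch has $\abs\Delta\gtrsim \min(\abs a,\sqrt{\abs b})$.

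The continuity step runs downward from $\eta^*$. The hypothesis places $\Delta(\eta^*)$ on the good branch; on $\{\Lambda\le1\}$ we have $\theta=1$. Discretize $[\eta_*,\eta^*]$ with mesh $N^{-3}$, using the $N^2$-Lipschitz continuity of $\Delta$ and $\psi_w$ (\cref{lem:v_beta_u_beta_Lipschitz} and the resolvent bound $\norm{\partial_\eta G}\le\eta^{-2}$). Since $\Delta$ moves by at most $N^{-1}$ between grid points and cannot jump across the gap, induction along the grid plus a union bound gives the claim for all $\eta$.

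The main obstacle is checking that the gap between the good bound $\mathcal C\psi_w$ and the bad branch is strictly positive uniformly. This includes near the edge $\abs w\approx1$ with small $\eta$, where $v$, $a$ and $b$ all degenerate. There the $\varphi^{1/3}$ cap in $\psi_w$ is designed to handle the regime where $\abs b\lesssim\varphi^{2/3}$. I would treat this case separately: show that the cubic forces $\abs\Delta\lesssim\varphi^{1/3}$ once $\abs a\lesssim\varphi^{1/3}$, and otherwise use the gap $\sqrt{\abs b}\gg\varphi^{1/3}$.
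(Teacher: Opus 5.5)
Your overall strategy (stability of the cubic at $\Delta=0$, then downward continuity from $\eta^*$) is what sits inside \cref{lem:stability2}. The paper uses that lemma as a black box and only verifies its hypotheses: $b=2\eta v+2v^2+\eta/v\asymp v^2+\eta/v\asymp\tilde\xi:=\abs{\abs{w}^2-1}+\eta^{2/3}$, $\abs{a}\lesssim\tilde\xi^{1/2}$, and $\tilde\xi^3/\varphi^2$ increasing in $\eta$. It then drops the redundant middle term $(\varphi/\tilde\xi)^{1/2}$ of the resulting minimum. Your hand-made version breaks exactly at the point you flag as the main obstacle.

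First, the claim that the cubic forces $\abs\Delta\lesssim\varphi^{1/3}$ once $\abs a\lesssim\varphi^{1/3}$ is false. For fixed $1<\abs w\le 2$ and small $\eta$ one has $v\asymp\eta$. So $\abs a\asymp\eta$ is tiny, while $b\asymp\abs{w}^2-1\asymp 1$, and the nonzero roots of $\Delta^2\pm a\Delta-b$ sit near $\pm\sqrt b$, at distance of order one from $0$.

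The correct split is by $\abs b$ (equivalently $\tilde\xi$) versus $\varphi^{2/3}$. Since $\abs a\lesssim\sqrt{\abs b}$ (this follows from $\eta v\le1$), both nonzero roots have modulus $\asymp\sqrt{\abs b}$. If $\abs b\lesssim\varphi^{2/3}$, every solution of the cubic inequality therefore satisfies $\abs\Delta\lesssim\varphi^{1/3}$. If $\abs b\gg\varphi^{2/3}$, there is a genuine gap between $\varphi/\abs b$ and $\sqrt{\abs b}$.

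Your first threshold $\abs b/(2\abs a+2\sqrt{\abs b})\asymp\sqrt{\abs b}$ is the right one. Your stated bad-branch bound $\min(\abs a,\sqrt{\abs b})$ is not: in the example above it equals $\eta$, which lies below the good bound $\varphi/\abs b\asymp\varphi$ when $\eta\ll\varphi$, so no gap would remain.

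Second, and this is the missing idea, the continuity argument needs the gapped regime to be an upper subinterval $[\eta_0,\eta^*]$ of $[\eta_*,\eta^*]$. In the small-$\abs b$ regime you only know $\abs\Delta\lesssim\varphi^{1/3}$. Near the threshold $\abs b\asymp\varphi^{2/3}$, this is equally consistent with $\Delta$ sitting next to a nonzero root. If, moving downward, $\eta$ could re-enter the gapped regime, $\Delta$ could follow that root and end up on the bad branch.

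This is excluded precisely because $\abs b\asymp\tilde\xi$ with $\tilde\xi$ increasing in $\eta$, which is hypothesis 1 of \cref{lem:stability2}; note that $\abs b$ itself need not be monotone, so the regimes must be defined through $\tilde\xi$. You then propagate the good branch from $\eta^*$ down through $\{\tilde\xi\ge K\varphi^{2/3}\}$, and use the unconditional $\varphi^{1/3}$ bound below it. Your proposal never invokes this monotonicity, so as written the argument does not close.
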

	
	\begin{proof}
		Throughout this proof, we work on the event $\{  \Lambda \leq 1 \nc \text{ for all } \eta \in [ \eta_{\ast}, \eta^{\ast}]\}$. 
		We will apply the stability estimate \cref{lem:stability2} to the cubic equation from \cref{lem:3rddegeqonDelta}. To that end, we check the assumptions of \cref{lem:stability2}. 
		For $\eta >0$ and $w \in \C$, we define 
		\begin{equation*} \xi_1 := 4 \eta v + 3 v^2 + \eta^2 + \abs{w}^2 -1 = 2 \eta v + 2 v^{2} + \frac{\eta}{v}
			~~ \text{ and }~~ \xi_2 := 3\ii v + 2\ii \eta, 
		\end{equation*}
		where the equality follows from \eqref{eq:v}. Since $\eta \leq v^{-1}$ by \eqref{eq:v_leq_1}, 
		we conclude $\eta v \leq \eta^{1/2} v^{-1/2} v \leq \frac{\eta}{v} + v^2$ and, thus, $\xi_1 \asymp v^2 + \frac{\eta}{v}$ as well as  $\abs{\xi_2} \lesssim \xi_1^{1/2}$. Defining $\tilde{\xi}(w,\eta) := \absN{\absN{w}^2 - 1} + \eta^{2/3}$ for $w \in \C$ and $\eta>0$, using \eqref{eq:v_full_scaling}, distinguishing the cases $\abs{w} \leq 1$ and $\abs{w} \geq 1$ and realising that $v^2 \gtrsim \eta/v$ in the former and $v^2 \lesssim \eta/v$ in the latter case yield $\xi_1 \asymp \tilde{\xi}$ and $\abs{\xi_2} \lesssim \tilde{\xi}^{ 1/2 }$. Since $\tilde{\xi}$ is increasing in $\eta$, we have verified the assumptions of \cref{lem:stability2} for the cubic equation from \cref{lem:3rddegeqonDelta}. 
		Therefore, these two lemmas imply that the intersection of the event from \cref{lem:3rddegeqonDelta} and $\{ \Lambda \leq 1 \text{ for all } \eta \in [\eta_*,\eta^*]\}$ is contained in the event
		\[ 
		\biggl\{ |\Delta| \lesssim \min \biggl\{   {\varphi}^{1/3}\nc, \biggl(\frac{ {\varphi} \nc}{v^2 + \frac{\eta}{v}}\biggr)^{1/2} ,\frac{{\varphi}\nc}{v^2 + \frac{\eta}{v}}\biggr\} \text{ for all } \eta \in [\eta_*,\eta^*] 
		\biggr\},
		\] 
		which thus holds with very high probability. We now prove that the middle term in the previous minimum can be omitted. We abbreviate $t:= v^2 + \frac{\eta}{v}$. On the one hand, if $ {{\varphi}}/t<{{\varphi}}^{1/3}$ then we have
		${\varphi}/t \le {\varphi}^{1/2}/t^{1/2}$. 
		On the other hand, if ${\varphi}^{1/3}\le {\varphi}/t$ then we have
 	${\varphi}^{1/2} = \left(\frac{{\varphi}}{t}\right)^{1/2} t^{1/2} \ge {{\varphi}^{1/6}} t^{1/2}\geq  {\varphi}^{1/3}t^{1/2}$, which concludes the proof.
	\end{proof}
	
	\subsection{Analysis of $\Lambda$ and bootstrapping} 
	
	We now deduce from the estimates on $s-\ii v$ in the previous subsection bounds on $\Lambda$. 
	An iterative bootstrapping argument will then prove \cref{thm:locallaw_edge}. 

	\begin{prop}\label{prop:phi_Lambda_small2}
		Let $L \geq 1$ and $\delta \in (0,1/2]$ be constants and $\# \in \{\mathrm{u},\mathrm{l}\}$. Then there exists a constant $\mathcal C >0$ depending only on $\delta$, $L$ and $\nu$ such that the following holds uniformly for all $\eta_*$, $\eta^* \in [N^{-1+\delta},L]$ with $\eta_* < \eta^*$ and all $w \in \mathrm{S}_\#$. If $\Delta(\eta^*) \leq \mathcal C  \psi_w(\eta^*)$ with very high probability, then the event 
		\begin{equation*}
			\left\{ \chi_\# \Lambda (\eta)\le \mathcal{C}\Gamma_\#^2 \psi_w(\eta) \text{ and } \chi_\# \Delta(\eta) \leq \mathcal C \psi_w(\eta) ~~\text{for all}~ \eta \in [ \eta_{\ast}, \eta^{\ast}] \right\}
		\end{equation*}
		holds with very high probability on the event $\{ \Lambda \leq 1 ~\text{for all} ~ \eta \in [\eta_{\ast}, \eta^{\ast}]\}$. 
	\end{prop}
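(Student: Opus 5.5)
The plan is to apply \cref{lem:Deltasmall} first. This gives $|\Delta(\eta)|\le \mathcal C\psi_w(\eta)$ for all $\eta\in[\eta_*,\eta^*]$ with very high probability on $\{\Lambda\le 1 \text{ on } [\eta_*,\eta^*]\}$. Here we use that $\{\Lambda\le 1\}\subset\{\theta=1\}$ for the appropriate $\Gamma_\#$, so the hypothesis of that lemma is met. We then transfer the bound from $s$ to the individual entries in three stages: typical vertices, then all vertices, then the off-diagonal entries. This gives the bound on $\Lambda$.

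\emph{Step 1 (typical vertices).} Fix $x\in\mathcal T_\varphi$. Rewrite the first and third equations of \cref{lem:ASCEq} with $\sum_y^{(x)}|X_{xy}|^2G^{[x]}_{\ubarr y\ubarr y}=s+\eps_x^1$ and $\sum_y^{(x)}|X_{yx}|^2G^{[x]}_{yy}=s+\eps_x^2$. This gives a $2\times2$ linear system for $(G_{xx},G_{\ubarr xx})$ with coefficient matrix $\begin{pmatrix}\ii\eta+s+\eps^1_x & w\\ \barr w & \ii\eta+s+\eps_x^2\end{pmatrix}$. At $s=\ii v$ and $\eps=0$ its inverse is exactly $\ii v$ on the diagonal and $-wu$ off the diagonal, which are the values of $v_{\beta}, u_\beta$ at $\beta=(1,1)$. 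For a typical vertex, $|\beta^i_x-1|\le\varphi$, so \cref{lem:properties_v_beta} shows that $v_{\beta_x}-v$ and $u_{\beta_x}-u$ are $O(\varphi)$. It therefore suffices to bound the perturbation of the inverse. The determinant is $\asymp v^2+\eta/v+|w|^2$ up to lower order terms; in the regimes $\mathrm S_\#$ it is bounded below by a quantity comparable to $\xi_1$ from \cref{lem:Deltasmall}. The errors are $|\Delta|+|\eps^i_x|+\Gamma/d$, controlled by \cref{lem:restmoy-s} and \cref{lem:roughboundsrest}, and dividing by the determinant yields an error of order $\psi_w$.

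\emph{Step 2 (all vertices).} For general $x$, we use \cref{lem:ASCEq} with the exact local average $\sum_y|X_{xy}|^2G^{[x]}_{\ubarr y\ubarr y}$. We split the sum into typical and atypical $y$. By \cref{prop:mostverticesandneigharetypical}\cref{mostneighboursaretypical}, the atypical part has weight $O(\varphi\Gamma^{-3})$ times $\Gamma$. The typical part uses $G^{[x]}_{\ubarr y\ubarr y}=G_{\ubarr y\ubarr y}+O(\Gamma^3/d)$ together with Step 1. This gives
\[
\sum_y|X_{xy}|^2G^{[x]}_{\ubarr y\ubarr y}=\ii\beta^1_x v+O(\Gamma^2\psi).
\]
Solving the same $2\times 2$ system, now with $\ii(\eta+\beta^i_xv)$ on the diagonal, gives exactly $\ii v_{\beta_x}$ and $-wu_{\beta_x}$ by \cref{eq:def_v_beta} and \cref{eq:def_u_beta}.

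\textbf{Main obstacle: stability of this inversion uniformly in $\beta_x$.} The determinant $|w|^2+(\eta+\beta^1v)(\eta+\beta^2v)$ can be small when some $\beta_x^i$ is small; this is where $v_{\beta}$ may diverge, as noted in the introduction. I would handle it by cases. In $\#=\mathrm l$, $\chi_{\mathrm l}$ forces $\beta^i_x\ge\delta$ and $v\gtrsim1$, so the determinant is $\gtrsim1$. In $\mathrm b,\mathrm e$, $|w|\ge\delta$ gives the determinant $\ge\delta^2$. The factors $\Gamma$ coming from the numerators $\eta+\beta^2 v\lesssim\Gamma$ produce the $\Gamma^2$ in the claim. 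The perturbative inversion is legitimate because $\mathcal C\Gamma^2\psi\le1/2$.

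\emph{Step 3 (off-diagonal entries and closing).} For $\Lambda_{\mathrm o}$, we expand $G_{xy}$ with $y\ne x,\ubarr x$ via the Schur complement row expansion: $G_{xy}$ equals the $2\times2$ block at $x$ times the sum of $X_{xa}G^{[x]}_{\ubarr a y}$ and analogous terms. The block is $O(\Gamma)$ by Step 2. The sums are large deviation bounds of size $\Gamma/\sqrt d+\Gamma^3/d\lesssim\Gamma\varphi$, using \cref{lem:roughboundsrest}. Finally, $\Delta$ is already controlled by \cref{lem:Deltasmall}. Uniformity over $\eta\in[\eta_*,\eta^*]$ follows from a grid argument with the Lipschitz bounds of \cref{lem:v_beta_u_beta_Lipschitz}.
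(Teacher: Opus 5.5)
Your proposal is correct and follows the paper's proof essentially step by step. The shared steps are: \cref{lem:Deltasmall} for $\Delta$; $G_{xx}\approx\ii v$ on typical vertices; the typical/atypical split of the local average via \cref{prop:mostverticesandneigharetypical}; the $2\times2$ solve giving $\ii v_{\beta_x}$ and $-\overline{w}u_{\beta_x}$, with the determinant bounded below via $|w|\ge\delta$, or via $\beta_x^i\ge\delta$ and $v\gtrsim1$; the rough bound \eqref{eq:roughboundGd-1/2} for $\Lambda_{\mathrm o}$; and a grid argument. Getting $G_{\ubarr{x}x}$ by Cramer's rule, instead of dividing by $w$ or by $\eta+\beta_x^2v$ as the paper does, is only a cosmetic difference.

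Two bookkeeping points. First, at typical vertices the unperturbed determinant has modulus $(\eta+v)^2+|w|^2=1+\eta/v\ge1$, so the comparison with $\xi_1$ is unnecessary. Second, keep the local-average error in the form $O(\beta_x^1\psi+\varphi\Gamma^{-2})$, which your own ingredients already give. Its proportionality to $\beta_x^1$ is what lets it be absorbed by the factor $\eta+\beta_x^1v$ in the determinant. A blanket $O(\Gamma^2\psi)$, multiplied by $|G_{xx}|\lesssim\Gamma$ and by $\eta+\beta_x^2v\lesssim\Gamma$, would give $\Gamma^4\psi$. That overshoots the $\Gamma^2\psi$ to which the thresholds $d_\#$ are calibrated.
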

	\begin{proof}
		We fix $0 <\delta <1$ and $ L >0$ and consider $w \in \mathrm{S}_\#$ and $\eta_{\ast},\eta^\ast \in \RR$ such that $ N^{-1+\delta} \le \eta_{\ast} < \eta^{\ast} \le L$. We write $ \Omega_\Lambda :=\{ \Lambda \leq 1 \text{ for all } \eta \in [ \eta_{\ast}, \eta^{\ast}]\}$ and $\Omega_{\#} := \{ \chi_\#=1\}$. 	The estimates in this proof will hold uniformly for $w \in \mathrm{S}_\#$ and $\eta \in [\eta_*,\eta^*]$. For the sake of clarity, we avoid repeating this in most instances.
		We start with some preparations. 
		Throughout the proof, we write $\psi$ instead of $\psi_w(\eta)$ except when the arguments are crucial.  
		First we conclude from \cref{lem:Deltasmall} that there is a constant $\mathcal C>0$ depending only on $\delta$, $L$ and $\nu$ such that 
		\begin{equation} \label{eq:proof_bootstrapp_edge_aux2} 
			\abs{s - \ii v} \leq \mathcal C \psi \quad \text{ for all } \eta \in [\eta_\ast, \eta^\ast]
		\end{equation} 
		with very high probability on $\Omega_\Lambda$. In particular, this shows the desired bound on $\Delta(\eta)$. 
		
		As $\varphi   \le 1$ and $v^2 + \eta/v \lesssim_L 1$ due to \eqref{eq:vgeeta} and \eqref{eq:v_leq_1}, we have $\varphi \lesssim_L\psi$, which will be used tacitly throughout this proof. 
		
		We now turn to bounding $\Lambda$. 
		Recall that on the event $\Omega_{\Lambda}$ we have $\Lambda_{\mathrm{o}} \le \mathcal{C} \Gamma^2 d^{-1/2}\le \mathcal C \varphi $ due to \cref{eq:roughboundGd-1/2} in \cref{lem:roughboundsrest}. 
		To estimate $\Lambda_\mathrm{d}$, we deduce from \cref{lem:meanoftypicalvertices} and \eqref{eq:proof_bootstrapp_edge_aux2} that 
		\begin{equation}\label{eq:proof_bootstrapp_edge_aux3} 
			\big|G_{xx} \bigl( v^2  +2 \eta v + \eta^2 + \abs{w}^2 \bigr) -  \ii (\eta + v) \big| \le  \mathcal{C}\psi
		\end{equation}
		with very high probability on $\Omega_\Lambda$ for all $x \in \mathcal{T}_{\varphi}$. 
		By \eqref{eq:v} we have $v^2  +2 \eta v + \eta^2 + \abs{w}^2= \frac{\eta +v}{v}$. 
		Thus, multiplying \eqref{eq:proof_bootstrapp_edge_aux3} by $v/(\eta+ v)$ and using $v/(v+ \eta) \le 1$ yield 
		\begin{equation}\label{eq:Gxx=vnoboundonbeta}
			|G_{xx} - \ii v | \le \mathcal C  \psi
		\end{equation} 
 for all $x \in \mathcal T_\varphi$ with very high probability on $\Omega_{\Lambda}$. We now argue that for each $x \in [N]$, 
		\begin{align}\label{eq:useof_most_vert_typi2}
			\bigg|\sum_{y\in \mathcal{T}_{\varphi}^c} \abs{X_{xy}}^2 G_{\ubarr{y}\ubarr{y}}^{\bset{x}}\bigg| \le  \mathcal{C} \varphi \Gamma^{-2}  \quad \text{ and }\quad \bigg|\sum_{y\in \mathcal{T}_{\varphi}}\abs{X_{xy}}^2 G_{\ubarr{y}\ubarr{y}}^{\bset{x}} -\ii \beta_x^{1}v \bigg| \le  \mathcal{C} ( \beta_x^1 \psi + \varphi \Gamma^{-3}) 
		\end{align}
	with very high probability on the event $\Omega_{\Lambda}$. Indeed, we first note that for all $k,p \in \NN$ fixed 
	\begin{equation}\label{eq:bound_Gamma_non_typic}
	  \Gamma^k  d^p e^{-\frac{\varphi^2d}{2^{13} (e\Gamma)^2}} \lesssim \varphi
	 	\end{equation}
due to the definition of $\varphi$ in \cref{eq:new_varphi} and the upper bound on $d$ from \cref{eq:regimedlogN2_and_logNalpha}. 
		Combining this inequality with 	\cref{prop:mostverticesandneigharetypical} \cref{mostneighboursaretypical} and \cref{lem:roughboundsGT}
		yields the first inequality in \eqref{eq:useof_most_vert_typi2} as well as 
		\begin{equation} \label{eq:proof_bootstrapp_edge_aux5} 
			\bigg|\sum_{y\in \mathcal{T}_{\varphi}} \abs{X_{xy}}^2 - \beta_{x}^1 \bigg|\le  \mathcal{C} \varphi  \Gamma^{-3} 
		\end{equation} 
		with very high probability on $\Omega_\Lambda$. 
		Finally, \eqref{eq:roughdifferenceGGTd-1}, \eqref{eq:Gxx=vnoboundonbeta} and \eqref{eq:proof_bootstrapp_edge_aux5}  imply the last inequality of~\eqref{eq:useof_most_vert_typi2}.  
		
		The next step is to apply the bounds in \eqref{eq:useof_most_vert_typi2} to the approximate self-consistent equations. Indeed for all $x \in [N]$, we rewrite the first and third equations of \cref{lem:ASCEq} as 
		\begin{align}
			1&= - (\ii \eta + \ii \beta_{x}^1 v) G_{xx} - w G_{\ubarr{x}x} + r_{x}^1 \label{eq:proof_bootstrapp_edge_aux1} \\
			0 &= - (\ii \eta + \ii \beta_{x}^2 v) G_{\ubarr{x}x} - \barr{w} G_{xx} + r_{x}^2. 
			\label{eq:proof_bootstrapp_edge_aux4} 
		\end{align}
		Multiplying \eqref{eq:proof_bootstrapp_edge_aux1} by $(\ii \eta + \ii \beta_{x}^2 v)$ and using \eqref{eq:proof_bootstrapp_edge_aux4} yield 
		\begin{equation}\label{eq:equal_Gxx-vbeta}
			G_{xx} - \ii v_{\beta_x} = \frac{ w r_{x}^2 - (\ii\eta +\ii \beta_x^2 v) r_{x}^1}{\abs{w}^2 + (\eta + \beta_{x}^1 v)(\eta + \beta_{x}^2v)}
		\end{equation}
		with $v_{\beta}$ from \eqref{eq:def_v_beta}. 
		
		We now consider the case $\# = \mathrm{u}$. 
		Besides, using that $\abs{G_{\ubarr{x}x}} \lesssim 1$ on $\Omega_{\Lambda}$ and $\mathrm{S}_\mathrm{u}$ due to \eqref{eq:def_u_beta}, 
		we have 
			\begin{equation}\label{eq:estimate_ri_Su}
			r_x^1 = O(\abs{G_{xx}} (\varphi \Gamma^{-2} +  \beta^1_x \psi  + \Gamma d^{-1}) + \Gamma d^{-1} ) 
		, \qquad r_x^2= O(\beta^2_x \psi + \varphi \Gamma^{-2} + \Gamma d^{-1} + \abs{G_{xx}} \Gamma d^{-1} ).
		\end{equation}
	 On $\mathrm{S}_\mathrm{u}$, we conclude that
		\[ 
		G_{xx} - \ii v_{\beta_x} = O ( (1 + \beta_x^2 )\psi) + O( (1 + \beta_x^1 + \beta_x^2 ) \psi) = O( ( 1  + \beta_x^1 + \beta_x^2 ) \psi) = O( (1 + d^{-1} \log N)\psi ) 
		\]
		where we used that $(\eta + \beta_x^2 v) \abs{r_x^1} \lesssim (1 + \beta_x^2 v^2 \beta_x^2) \beta_x^1 \psi + (1 + \beta_x^2) \psi $ as $\abs{G_{xx}} 
		\lesssim 1 + v\beta_x^2$ on $\mathrm{S}_\mathrm{u}$ and $\{ \Lambda \leq 1\}$ as well as distinguished the cases $\beta_x^2 v^2 \beta_x^2 \geq 1$ and $\beta_x^2 v^2 \beta_x^2 \leq 1$. 
		So we get 
		\begin{equation}\label{eq:estimate_Gxx_Su}
\abs{G_{xx} - \ii v_{\beta_x}} \lesssim \Gamma \psi 		    
		\end{equation}
		for all $x$. We now bound $\abs{G_{ \ubarr{x}x} + \barr{w} u_{\beta_{x}}}$ for $x \in [N]$ on the event $\Omega_{\Lambda} \cap \Omega_{\mathrm u} $. To that end, we conclude from \eqref{eq:proof_bootstrapp_edge_aux1} that
		\begin{align*}
			1 + wG_{\ubarr{x}x} &= ( \eta + \beta_{x}^1 v) v_{\beta_x} + (\ii \eta + \ii \beta_{x}^1 v ) (G_{xx} - \ii v_{\beta_x}) + r_{x}^1\\
			&= 1 - \abs{w}^2 u_{\beta_x} + (\ii \eta + \ii \beta_{x}^1 v) (G_{xx} - \ii v_{\beta_x}) + r_{x}^1
		\end{align*}
		where we used again the definition of $u_{\beta}$ in \eqref{eq:def_u_beta} between the first and second line. From \eqref{eq:estimate_ri_Su} and \eqref{eq:estimate_Gxx_Su} and using that $|G_{xx}|\vee \beta^{i}_x \lesssim \Gamma$ we get 
		$\abs{G_{\ubarr{x}x} + \barr{w} u_{\beta_x}}\lesssim \Gamma^2 \psi$. 
		
For $\# = \mathrm{l}$, owing to \eqref{eq:def_v_beta} we get $v_{\beta_x} \lesssim 1$ on $\mathrm{S}_\mathrm{l}$ on the event $\Omega_{\Lambda} \cap \Omega_{\mathrm{l}}$ and therefore $\max_{x,y} \abs{G_{xy}} \lesssim \Gamma_{\mathrm{l}}$. Besides, owing to \cref{eq:v_full_scaling} we also get that $v \asymp 1$ on  $\mathrm{S}_\mathrm{l}$ and therefore $\psi \asymp \varphi $. Thus, 
		\[ r_x^1 = O( (1 + \beta_x^1)\varphi), \qquad r_x^2 = O( (1 + \beta_x^2) \varphi). \] 
		Since $v\asymp 1$ on $\mathrm{S}_\mathrm{l}$,  we conclude from the equality \eqref{eq:equal_Gxx-vbeta} that with very high probability on the event $\Omega_{\Lambda} \cap \Omega_{\mathrm{l}}$
		\[ G_{xx} - \ii v_{\beta_x} = O(\varphi) \] 
		and thus from \cref{eq:proof_bootstrapp_edge_aux4} and \cref{eq:def_u_beta2} that $G_{\ubarr{x}x} + \overline{w} u_{\beta_x} = O(\varphi)$. 
		
		In total, for $\# \in \{ \mathrm{u}, \mathrm{l}\}$, we have shown that
		\[ \chi_\# \Lambda \leq \mathcal C  \Gamma_\#^2 \psi\] 
		with very high probability on the event $\Omega_\theta\cap \Omega_{\#}$ 
		uniformly for $w \in \mathrm{S}_\#$ and $\eta \in [\eta_*, \eta^*]$. Therefore, a grid argument for $\eta \in [\eta_\ast, \eta^\ast]$ employing the $N^6$-Lipschitz continuity in $\eta$ of  $(G_{xy})_{x,y}$ and $(v_{\beta_x}, v_{\ubarr{\beta}_x},u_{\beta_x})_{x}$, see \cref{lem:disttospect} and \cref{lem:v_beta_u_beta_Lipschitz}, respectively, yields the assertion of the proposition.
		
	\end{proof}
	
	We now prove \cref{thm:locallaw_edge} by a bootstrapping procedure starting from large $\eta$ and iteratively decreasing it.

	\begin{proof}[Proof of \cref{thm:locallaw_edge}]
		Let $J:=2\vee (2 \ccc^{-1})$, where $\ccc$ is the constant from \cref{lem:bulkstab}.
		Without loss of generality, we assume that $L \geq J  +1$.  
		Take $w \in \mathrm{S}$ and $N^{-1+ \delta} \le \eta_0 \le L $. 
		We set $\eta_b:= \eta_0 + bN^{-7} $ for $b \in \NN$ and $B:= \min\{ b\in \NN: \eta_b \ge  J \}$.

		Below we will show that, for all $b \in \NN \cap [0,B]$, the event 
		\begin{equation} \label{eq:proof_locallaw2_main} 
			 \Lambda \leq 1 \text{ for all } \eta \in [\eta_b,\eta_{b+1}] \quad \text{ and } \quad 
			\abs{\Delta(\eta_b)} \leq \mathcal C \psi_w(\eta_b) \quad \text{ and } \quad
			\Lambda(w,\eta_b) \leq \mathcal C \Gamma_{\#}^2 \psi_w(\eta_b) 
		\end{equation} 
		holds 
		with very high probability on $\{ \chi_{\#} = 1\}$. 
		Once \eqref{eq:proof_locallaw2_main} is proved for all $b$, the assertion of \cref{eq:local_law_Lambda} follows for $b=0$. 
		
		We now show \eqref{eq:proof_locallaw2_main} by induction on $b$. We start it with $b = B$. 
		From \cref{lem:disttospect} and the definitions \eqref{eq:def_v_beta} and \eqref{eq:def_u_beta}, for $\eta \ge 1$ we have $ \max\{\abs{G_{xy}(w, \ii \eta)}, v_{\beta_x}(w,\eta),\abs{u_{\beta_x}} ; ~x,y \in [N]\}\leq \eta^{-1}$ and, thus, $ \Lambda \leq 1$ for all $\eta \in [\eta_B,\eta_{B+1}]$ due to the definitions of $\Lambda$ and $B$.
		Moreover, together with \eqref{eq:v_leq_1}, we also conclude $\abs{\Delta(\eta)} \leq 2/\eta \leq \ccc$ for all $\eta \in [\eta_B,\eta_{B+1}]$. Therefore, 
		\cref{lem:3rddegeqonDelta} and \cref{lem:bulkstab} imply $\abs{\Delta(\eta_{B}) }\le \mathcal C \varphi \lesssim_L \psi_w(\eta_B)$ and $\abs{\Delta(\eta_{B+1})}\leq \mathcal C \psi_w(\eta_{B+1})$. Owing to \cref{prop:phi_Lambda_small2}, this proves \eqref{eq:proof_locallaw2_main} for $b =B$.

		We now turn to the induction step and assume that \eqref{eq:proof_locallaw2_main} holds for some $b \in \NN\cap [0,B]$. Owing to \cref{lem:disttospect} and \cref{lem:v_beta_u_beta_Lipschitz}, the function $\eta \mapsto \Lambda(w,\eta)$ 
		is $ 2  N^6$-Lipschitz continuous on $[N^{-1},\infty)$. Therefore, 
		\begin{equation} \label{eq:Lambda_small_Lipschitz_2} 
			\Lambda(w,\eta) \le \mathcal{C}\Gamma_{\#}^2\psi_w(\eta_{b}) +  2  N^6 \abs{\eta_{b} - \eta} \le \mathcal{C}\Gamma_{\#}^2\psi_w(\eta_{b}) + 2/N.
		\end{equation}
		for all $\eta \in [\eta_{b-1},\eta_{b}]$ on the event $\{ \Lambda(w,\eta_b) \leq \mathcal C \Gamma_{\#}^2\psi_w(\eta_{b}) \}$. Owing to \eqref{eq:new_varphi} and \eqref{eq:def_up_bound} we have $\Lambda(w,\eta) \leq 1$ for all $\eta \in [\eta_{b-1},\eta_{b}]$ if $\mathcal{D}\geq 1$ in \eqref{eq:regimedlogN2_and_logNalpha} is large enough. Hence, \cref{prop:phi_Lambda_small2} yield the desired bounds on $\abs{\Delta(\eta_{b-1})}$ and $\Lambda(w,\eta_{b-1})$, respectively, which shows \eqref{eq:proof_locallaw2_main} for $b-1$. 
		
		Finally, to show \cref{eq:loclawStieljes2}, we add and subtract $s$ and estimate 
	\begin{align*}
 \abs{g-\ii v} & \leq \abs{g-s} + \abs{s-\ii v} 
  \leq \frac{\abs{{\mathcal T_{\varphi}}^c}}{N} \abs{s} + \frac{1}{2N} \sum_{y \notin \mathcal T_{\varphi} \cup \widehat{\mathcal T_{\varphi}}}  \abs{G_{yy}}  + \abs{s-\ii v} \leq \mathcal C \psi 
 	\end{align*}
 	Here, the last step is obtained as follows. 
	Since $v\leq 1$ by \cref{eq:v_leq_1} and $\psi \leq 1$, we have $\abs{s} \leq 2$. 
	Moreover, $\abs{G_{yy}} \lesssim \Gamma$ for all $y \in [2N]$. 
	Hence, the first and second term are bounded by $\mathcal C \Gamma \abs{{\mathcal T_{\varphi}}^c}/N \lesssim \varphi \leq \psi$ by \cref{eq:number_atypical_vertices}. 
	Owing to \cref{eq:proof_locallaw2_main} and the definition of $\Delta$, we have $\abs{s-\ii v} \lesssim \psi$. 
		Therefore, $\abs{g-\ii v} \leq \mathcal C \psi$ with very high probability, which completes the proof of \cref{thm:locallaw_edge}.
	\end{proof}
	
	\section{Local law in the supercritical regime}\label{section:loc_law_supercritical}
	
	The main result of this section is an analogue of the local law for the Hermitization, \cref{thm:locallaw_edge}, in the regime 
	\begin{equation}\label{regimed3} 
		( \log N)^{3/2} \le  d \le N^{\delta/4}
	\end{equation}
	for a fixed $\delta \in (0,1)$. Throughout this section, we always assume that \eqref{regimed3} holds. 
	We define  
	\[ \widetilde{\Lambda}(w, \eta) := \widetilde{\Lambda}_\mathrm{d}(w,\eta)  \vee \widetilde{\Lambda}_{\mathrm{do}}(w,\eta) \vee {\Lambda}_{\mathrm{o}}(w,\eta) \] 
	with 
	\begin{align*} 
		\widetilde{\Lambda}_\mathrm{d}(w,\eta) & := \max_{x \in [2N]}\abs{G_{xx}(w,\ii \eta) - \ii v(w, \eta)}, 
		\\ 
		\widetilde{\Lambda}_{\mathrm{do}}(w,\eta)  & := \max_{x \in [N]} \abs{G_{x\ubarr{x}}(w,\ii\eta)+w u(w,\eta)} +   
		\max_{x \in [N]}\abs{G_{\ubarr{x}x}(w,\ii\eta)+ \overline{w} u(w,\eta)},  
	\end{align*} 
	which corresponds to the definition of $\Lambda$ in \cref{eq:def_Lambda} with $\beta_x^i \equiv 1$. Here we defined
	$u := u_{(1,1)}$ from \eqref{eq:def_u_beta}. 
	
	For all $w\in \mathrm{D}_2$ and $\eta \in (0, +\infty)$ we set
	\begin{equation*}
		\widetilde{\psi}_w(\eta) := \min \biggl\{ \biggl(\frac{\log N}{d} \biggr)^{1/6} , \frac{1}{v^2 + \eta/v} \biggl(\frac{\log N}{d}\biggr)^{1/2} \biggr\}.
	\end{equation*}

	\begin{thm}[Local law in supercritical regime]\label{thm:locallaw3}
		Fix $ 0 < \delta \le 1/2$ and $L \ge 1$. Let $d$ satisfy \eqref{regimed3}. Let $X$ be a sparse matrix as in \cref{defiX}, define $M$ as in \eqref{eq:def_M_X_plus_f} with some deterministic $0 \le f \le N^{\delta/6}$, $G$ as in \eqref{Greenfunction} and $g$ as in \eqref{eq:def_g}. 
		Then  
		\begin{equation*}
			\widetilde{\Lambda}(w,\eta) \le \mathcal C \widetilde{\psi}_w(\eta)
		\end{equation*}
		with very high probability uniformly for all $w \in \mathrm{D}_2$ and $\eta \in [N^{-1 + \delta}, L]$.
	\end{thm}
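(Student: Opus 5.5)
In the regime $d \ge (\log N)^{3/2}$, I would first show that \emph{every} vertex is typical with a parameter much smaller than the $\varphi$ of \cref{eq:new_varphi}. I would also show that the normalized degrees satisfy $\abs{\beta_x^i - 1} \lesssim (\log N/d)^{1/2}$ for all $x$ and $i$ with very high probability. This replaces the typical/atypical decomposition of \cref{section:proof_local_law_H}.

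\textbf{Concentration.} The degree bound follows from the moment estimate used in \cref{lem:roughboundonbetaix}, applied with $r \asymp \nu \log N$. Since $r/d \le (r/d)^{1/2}$ when $d \gtrsim \log N$, Markov's inequality gives $\abs{\Phi_x^i} \le \mathcal C (\log N/d)^{1/2}$, and a union bound covers all $x$. For $\Psi_x^i$ I would condition on the minor $G^{[x]}$, which is independent of the $x$-th row and column of $X$. The same large-deviation bound for $\sum_y (\abs{X_{xy}}^2 - N^{-1}) a_y$ with $\abs{a_y} \le \mathcal C\Gamma$ then gives $\abs{\Psi_x^i} \le \mathcal C\Gamma (\log N/d)^{1/2}$ on $\{\theta=1\}$. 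The point is that for $d \ge (\log N)^{3/2}$ we have $\Gamma \le 2$ and the bound is $o(1)$, so no vertex is atypical. The statement I need is, I expect, exactly \cref{lem:sumXabGa_eq_sumGa}. With $\varphi' := \mathcal C(\log N/d)^{1/2}$, this yields $\mathcal T_{\varphi'} = [N]$ with very high probability on $\{\theta = 1\}$.

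\textbf{Self-consistent equations.} I would run the argument of \cref{lem:restmoy-s}–\cref{lem:Deltasmall} with $s$ replaced by $g$, which is now the average over all vertices, and with $\varphi$ replaced by $\varphi'$. Three error sources enter. The term $\sum_y^{(x)} \abs{X_{xy}}^2 G^{[x]}_{\ubarr y\ubarr y} - g$ is at most $\varphi' + \mathcal C\Gamma^3/d$, using \cref{lem:traceg1=g2} and \cref{eq:roughdifferenceGGTd-1}. The errors $Y^i_x$ and $Z^i_x$ are $O(d^{-1/2})$ by \cref{lem:roughboundsrest}. Since $\Gamma^3/d + d^{-1/2} \lesssim \varphi'$ here, \cref{lem:meanoftypicalvertices} and \cref{lem:3rddegeqonDelta} give the cubic inequality for $\Delta = g - \ii v$ with error $\mathcal C\varphi'$ on $\{\widetilde\Lambda \le 1\}$. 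The stability \cref{lem:stability2} then gives $\abs{g - \ii v} \lesssim \min\{\varphi'^{1/3}, \varphi'/(v^2 + \eta/v)\} = \widetilde\psi_w(\eta)$, following the same min-manipulation as in \cref{lem:Deltasmall}.

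\textbf{Individual entries and bootstrap.} Next I would insert these bounds into the first and third equations of \cref{lem:ASCEq}, now with coefficients $\ii\eta + \ii v + O(\widetilde\psi)$ in place of $\ii\eta + \ii\beta_x^i v$, since $\beta_x^i = 1 + O(\varphi')$. Solving the $2\times2$ system as in \cref{eq:equal_Gxx-vbeta} with $\beta_x = (1,1)$ gives $\abs{G_{xx} - \ii v} + \abs{G_{\ubarr x x} + \overline w u} \lesssim \widetilde\psi$. The determinant $\abs{w}^2 + (\eta + v)^2 = (\eta + v)/v \ge 1$ by \cref{eq:v}, so dividing by it costs nothing. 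The case $G_{\ubarr x\ubarr x}$, $G_{x\ubarr x}$ is symmetric, and $\Lambda_{\mathrm o} \lesssim d^{-1/2} \le \widetilde\psi$ comes from \cref{eq:roughboundGd-1/2}. Finally, I would bootstrap from $\eta \ge J$ down to $N^{-1+\delta}$ in steps of $N^{-7}$, exactly as in the proof of \cref{thm:locallaw_edge}, using Lipschitz continuity and grid/union bounds over $w \in \mathrm D_2$.

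\textbf{Main obstacle.} I expect the delicate step to be keeping $\Gamma = O(1)$ throughout via $\theta$. This requires $\widetilde\psi$ to be small enough that $\widetilde\Lambda \le 1$ propagates along the bootstrap. It also requires controlling $G_{xx}$ near the edge $\abs w \approx 1$, where $v$ is small and the equations are unstable; there one only has the $\varphi'^{1/3}$ branch of $\widetilde\psi$.
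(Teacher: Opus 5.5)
Your proposal is correct and follows the paper's proof essentially step by step: concentration of $\beta_x^i$ and of the local averages for all vertices (\cref{lem:sumXabGa_eq_sumGa}), the self-consistent relation and the stable cubic for $g$ (\cref{lem:stielt_first_mean} and \cref{lem:3rddegeqonDelta2}), the entrywise bounds (\cref{prop:phi_Lambda_small3new}), and the same bootstrap in $\eta$; the one variation is that you invert the $2\times 2$ system directly using $|w|^2+(\eta+v)^2=(\eta+v)/v\ge 1$, whereas the paper first gets $G_{xx}$ from \cref{lem:stielt_first_mean} and then splits into the cases $|w|\ge 1/2$ and $|w|\le 1/2$ for $G_{\ubarr{x}x}$. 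One slip: your $\varphi'\asymp(\log N/d)^{1/2}$ is larger, not smaller, than the $\varphi\asymp(\log N)^{1/3}d^{-2/3}$ of \eqref{eq:new_varphi}, and this is exactly why every vertex becomes typical; the slip is harmless because your argument never uses that comparison.
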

	
	By averaging over the diagonal, the previous theorem implies 
	$ |g- \ii v | \leq \mathcal C \widetilde{\psi}$.

	In this regime we will not need typical vertices anymore. In particular we have the following concentration lemma. Recall the definition of $\theta$ given in \eqref{eq:theta}.
	\begin{lemma}\label{lem:sumXabGa_eq_sumGa}
		If \eqref{regimed3} holds then,  
		with very high probability,
		\begin{align}
			\label{eq:beta_bounded}
			\max_{x \in [N],\, i \in \{1,2\}}\, |\beta_{x}^i -1| & \le \mathcal C  \sqrt{\frac{\log N}{d} } \\ 
			\label{eq:all_vert_typi}
			\theta\max_{x \in [N]} \, \biggl|\sum_{y}^{(x)} |X_{xy}|^2G_{\ubarr{y}\ubarr{y}}^{\bset{x}} - \frac{1}{N}\sum_{y}^{(x)} G_{\ubarr{y}\ubarr{y}}^{\bset{x}}\biggr| 
			& \vee
			\biggl|\sum_{y}^{(x)}  |X_{yx}|^2G_{yy}^{\bset{x}} - \frac{1}{N} \sum_{y}^{(x)}  G_{yy}^{\bset{x}}\biggr| \le \mathcal{C}  \sqrt{\frac{\log N}{d} }.
		\end{align}
	\end{lemma}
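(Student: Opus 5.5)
Both bounds come from high-moment concentration of sums of independent terms $\abs{X_{xy}}^2 - \frac1N$, weighted by coefficients independent of row $x$ and column $x$ of $X$. We then take a union bound over $x \in [N]$ and $i \in \{1,2\}$, which costs only a factor $N$ and is absorbed by the very-high-probability statement.

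For \eqref{eq:beta_bounded}, we argue as in the proof of \cref{lem:roughboundonbetaix}. Write $\beta_x^1 - \frac{N-1}{N} = \sum_y^{(x)} (\abs{X_{xy}}^2 - \E\abs{X_{xy}}^2)$. The moment bound \cref{eq:sumaiXi2} with coefficients $a_y = 1$ gives
\[
\E\Bigl|\sum_y^{(x)} (\abs{X_{xy}}^2 - \E\abs{X_{xy}}^2)\Bigr|^r \le \Bigl(4\Bigl(\tfrac rd \vee \sqrt{\tfrac rd}\Bigr)\Bigr)^r .
\]
We choose $r \asymp \nu \log N$. Since $d \ge (\log N)^{3/2} \ge \log N$, the term $\sqrt{r/d}$ dominates and is at most $\mathcal C\sqrt{\log N/d}$. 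Markov's inequality with threshold $\mathcal C \ee \sqrt{\log N/d}$ then yields the failure probability $\ee^{-r} \le N^{-\nu-1}$. The shift $1/N$ is negligible, and the same argument applied to columns handles $\beta_x^2$.

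For \eqref{eq:all_vert_typi}, we first note that
\[
\sum_y^{(x)} \abs{X_{xy}}^2 G^{[x]}_{\ubarr y\ubarr y} - \frac1N\sum_y^{(x)} G^{[x]}_{\ubarr y\ubarr y} = \Psi_x^1 .
\]
The coefficients $a_y := G^{[x]}_{\ubarr y \ubarr y}$ are independent of the row and column of $x$, because $H^{[x]}$ does not contain them. We condition on $X^{(x)}$ and apply \cref{eq:sumaiXi2} with these weights. The resulting bound is of order $\max_y\abs{a_y}\,(r/d \vee \sqrt{r/d})$; a weighted bound in terms of $(\frac1N\sum\abs{a_y}^2)^{1/2}$ would be even better but is not needed.

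\textbf{Main obstacle: controlling $\max_y\abs{a_y}$.} The indicator $\theta$ refers to the full $G$, not to $G^{[x]}$, so it cannot directly bound the coefficients. Moreover, $\theta$ is not measurable with respect to $X^{(x)}$, so we cannot condition on it. We handle this with \cref{lem:roughboundsGT} applied with $T=\{x\}$. It gives $\theta \max\abs{G^{[x]}_{ab}} \le 2\mathcal C\Gamma$ with very high probability, and here $\Gamma \lesssim 1$ because $d \ge (\log N)^{3/2}$ makes $\log N/d$ small.

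To decouple $\theta$ from the conditioning, we introduce the event $\{\max_{a,b}\abs{G^{[x]}_{ab}} \le 2\mathcal C\Gamma\}$, which is measurable with respect to $X^{(x)}$. On this event we perform the conditional moment estimate, with $r \asymp \nu\log N$ as before, to obtain the bound $\mathcal C\sqrt{\log N/d}$. By \cref{lem:roughboundsGT}, the event $\{\theta = 1\}$ is contained in this event up to a set of very small probability, which transfers the bound to $\theta\abs{\Psi_x^1}$.

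It remains to make the estimate uniform in $w$ and $z$. We use a grid argument based on the Lipschitz continuity of resolvent entries from \cref{lem:disttospect}: the grid has polynomially many points, so the union bound is absorbed by the very-high-probability statement. The column sum $\Psi_x^2$, with coefficients $G^{[x]}_{yy}$, is treated identically.
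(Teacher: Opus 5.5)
Your proposal is correct and follows essentially the same route as the paper. The paper also combines Markov's inequality with the moment bound \eqref{eq:sumaiXi2} at $r \asymp \nu \log N$. For \eqref{eq:all_vert_typi} it uses weights $G^{[x]}_{\ubarr{y}\ubarr{y}}$ bounded by $\mathcal C\Gamma \lesssim 1$ on $\{\theta = 1\}$, and for \eqref{eq:beta_bounded} it uses unit weights. Your decoupling of $\theta$ through the $H^{[x]}$-measurable event supplied by \cref{lem:roughboundsGT} (as the paper itself does in \cref{lemma:conditionalpbestimates}), together with the union bound over $x$ and the grid argument in $(w,\eta)$, just makes explicit details the paper's short proof leaves implicit.
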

	From \eqref{eq:beta_bounded} in the previous lemma, we conclude that $\chi_{\mathrm u} = 1$, $\chi_\mathrm{l} = 1$ and $\chi = 1$ with very high probability if \eqref{regimed3} holds.
	Before we prove \cref{lem:sumXabGa_eq_sumGa}, we use it to conclude 
	\cref{thm:locallaw_H} in the regime \eqref{regimed3}. 
	
	\begin{proof}[Proof of \cref{thm:locallaw_H} in the regime \eqref{regimed3}] 
	From \eqref{eq:beta_bounded}, \eqref{eq:lipschitz_v_beta} and \eqref{eq:def_u_beta2}, we deduce that 
	\[ v_{\beta_x} = v + O\bigg( \sqrt{\frac{\log N}{d}}\bigg), 
	\qquad u_{\beta_x} = u + O \bigg( \sqrt{\frac{\log N}{d}}\bigg). 
	\]
	Therefore, in the regime \eqref{regimed3}, \cref{thm:locallaw_H} 
	follows directly from \cref{thm:locallaw3}. 
	\end{proof}

	\begin{proof}
		Throughout, we work on the event $\{\theta =1\}$ and give the detailed proof for \eqref{eq:all_vert_typi}. Owing to Markov's inequality and \eqref{eq:sumaiXi2}, we have
		\begin{equation*}
			\mathbb{P} \biggl( \biggl|\sum_{y}^{(x)} \left( |X_{xy}|^2 - 1/N \right) G_{\ubarr{y}\ubarr{y}}^{\bset{x}}\biggr| > \eps \biggr) \le \biggl( \frac{4 \Gamma}{\eps} \biggl(\frac{r}{d} \vee \sqrt{\frac{r}{d}} \biggr) \biggr)^r
		\end{equation*}
		for all $\eps$, $r >0$ and $x \in [N]$. We set $r:= \nu \log N$, observe $r/d =  \nu \frac{\log N}{d}\ll 1$ and choose $\eps := 4 \mathrm{e} \Gamma(\frac{\nu\log N}{d})^{1/2} $ to obtain the desired result. The bound \eqref{eq:beta_bounded} is proved analogously by replacing $\Gamma$ in the definition of $\eps$ with $1$.
	\end{proof}

	From the previous lemma, we deduce directly a self-consistent equation for the normalized trace $g$ of the resolvent in the next lemma. This is the analogue of \cref{lem:meanoftypicalvertices}.  
	\begin{lemma}\label{lem:stielt_first_mean}
		Let $L \geq 1$ be a constant. Then, on the event $\{\theta =1\}$, we have  
		\begin{align*}
			|g^2 G_{xx} + 2\ii \eta g G_{xx} + g -(\eta^2 + \abs{w}^2) G_{xx} + \ii \eta| &\le  \mathcal{C} \sqrt{\frac{\log N}{d}}
		\end{align*}
		with very high probability
		uniformly for all $x\in [2N]$, all $\eta \in [N^{-1+ \delta},L]$ and $w \in \mathrm{D}_2$. 
	\end{lemma}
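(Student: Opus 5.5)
The plan is to mimic the proof of \cref{lem:meanoftypicalvertices}, using the global average $g$ in place of the typical-vertex average $s$. Since \cref{lem:sumXabGa_eq_sumGa} holds for every $x$, no typical-vertex selection is needed. Fix $x \in [N]$; the case $x \in \ubarr{[N]}$ is symmetric.

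\textbf{Step 1: replace the local averages by $g$.} Set
\[
\eps^1_x := \sum_{y}^{(x)} \abs{X_{xy}}^2 G_{\ubarr{y}\ubarr{y}}^{\bset{x}} - g, \qquad \eps^2_x := \sum_{y}^{(x)} \abs{X_{yx}}^2 G_{yy}^{\bset{x}} - g .
\]
By \cref{eq:all_vert_typi}, on $\{\theta=1\}$ the first sum equals $\frac1N\sum_y^{(x)} G^{\bset{x}}_{\ubarr{y}\ubarr{y}}$ up to an error $\mathcal C\sqrt{\log N/d}$.

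Next I replace $G^{\bset{x}}$ by $G$ inside this average. By \cref{eq:roughdifferenceGGTd-1} the cost is at most $\mathcal C\Gamma^3/d$. Adding back the missing term $\frac1N G_{\ubarr{x}\ubarr{x}}$ costs $O(\Gamma/N)$.

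Finally, \cref{lem:traceg1=g2} gives $\frac1N\sum_y G_{\ubarr{y}\ubarr{y}} = g$. Here $\Gamma \le 1+\log N/d \lesssim 1$ in the regime \eqref{regimed3}, and $\Gamma^3/d \le \sqrt{\log N/d}$. Together these give $\abs{\eps^i_x} \le \mathcal C\sqrt{\log N/d}$.

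\textbf{Step 2: eliminate $G_{\ubarr{x}x}$.} Rewrite the first and third equations of \cref{lem:ASCEq} with $z=\ii\eta$:
\[
1 = -(\ii\eta+g+\eps^1_x)G_{xx} - wG_{\ubarr{x}x} + \eps^3_x, \qquad (\ii\eta+g+\eps^2_x)G_{\ubarr{x}x} = -\barr{w}G_{xx} + \eps^4_x,
\]
where $\eps^3_x := Y^1_xG_{xx}+Z^1_xG_{\ubarr{x}x}$ and $\eps^4_x := Y^2_xG_{\ubarr{x}x}+Z^2_xG_{xx}$. Multiplying the first identity by $(\ii\eta+g+\eps^2_x)$ and substituting the second yields the analogue of \eqref{eq:Giifunctionofs} with $s$ replaced by $g$. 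The right-hand side is a combination of $\eps^1_x,\dots,\eps^4_x$ with coefficients polynomial in $\eta$, $\abs{g}$, $\abs{w}$ and $\abs{G_{xx}}$.

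\textbf{Step 3: bound the error.} On $\{\theta=1\}$ all resolvent entries, and hence $g$, are bounded by $\mathcal C\Gamma \lesssim 1$. Also $\eta\le L$ and $\abs w\le 2$, so every coefficient is $O(1)$. By \eqref{eq:boundrest}, $\abs{\eps^3_x}+\abs{\eps^4_x} \lesssim \Gamma/d + d^{-1/2} \le \sqrt{\log N/d}$. Hence the whole error is at most $\mathcal C\sqrt{\log N/d}$. Uniformity in $x$ follows from a union bound, and uniformity in $\eta$ and $w$ from the grid argument via the Lipschitz continuity of \cref{lem:disttospect}.

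\textbf{Main obstacle.} In \cref{lem:meanoftypicalvertices} boundedness came from the event $\{\Lambda\le1\}$; here only $\theta=1$ is available. The point to check is that $\Gamma\lesssim1$ in the regime \eqref{regimed3}, which is what makes the bound $\max\abs{G_{xy}}\le\mathcal C\Gamma$ sufficient.
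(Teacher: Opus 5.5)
Your proposal is correct and follows the paper's proof essentially verbatim. You replace the local averages by $g$ using \eqref{eq:all_vert_typi}, \eqref{eq:roughdifferenceGGTd-1} and \cref{lem:traceg1=g2}, then eliminate $G_{\ubarr{x}x}$ between the first and third equations of \cref{lem:ASCEq}, controlling the errors with \eqref{eq:boundrest}. Your remark that $\Gamma\lesssim 1$ in the regime \eqref{regimed3} spells out a point the paper leaves implicit: it is why $\theta=1$ alone supplies the boundedness that $\{\Lambda\le 1\}$ provided in \cref{lem:meanoftypicalvertices}.
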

	\begin{proof}
		We prove the assertion for $x \in [N]$ and a similar proof holds for $x \in [2N]\setminus [N]$. We work on the event $\{\theta =1\}$. Owing to \eqref{eq:roughdifferenceGGTd-1} in \cref{lem:roughboundsrest} and \eqref{eq:all_vert_typi} in \cref{lem:sumXabGa_eq_sumGa} we have 
		\begin{equation}\label{eq:concentrate_around_Stielt}
			\biggl|\sum_{y}^{(x)} |X_{xy}|^2 G_{\ubarr{y} \ubarr{y}}^{\bset{x}} - g\biggr|\lesssim \sqrt{\frac{\log N}{d}} + \frac{1}{d} \lesssim \sqrt{\frac{\log N}{d}}. 
		\end{equation}
		Therefore, owing to \eqref{eq:boundrest} from \cref{lem:roughboundsrest},  the first and third equations of \cref{lem:ASCEq} yield 
		\begin{align*}
			1 = -(\ii \eta + g ) G_{xx} - w G_{\ubarr{x}x} + \mathcal O \bigg(\sqrt{\frac{\log N}{d}}\bigg), \qquad 
			0 = -(\ii \eta + g) G_{\ubarr{x}x} - \barr{w} G_{xx} + \mathcal O \bigg(\sqrt{\frac{\log N}{d}}\bigg).
		\end{align*}
		It remains to multiply the first equation by $(\ii \eta + g)$ and replace $(\ii \eta + g)G_{\ubarr{x}x}$ using the second equation to conclude.
	\end{proof}
	In the following, we use the abbreviation 
	\begin{equation*}
		\widetilde{\Delta}:= g - \ii v.
	\end{equation*}
	\begin{lemma}\label{lem:3rddegeqonDelta2}
		Let $\delta \in (0,1/2]$ and $L \geq 1$ be constants. Then there exists a constant $\mathcal C >0$ depending only on $\delta$, $L$, $\Gamma$ and $\nu$ such that the following holds uniformly for all $\eta_*$, $\eta^* \in [N^{-1+\delta},L]$ with $\eta_* < \eta^*$ and all $w \in \mathrm{D}_2$. If 
		$\theta (\eta^\ast) |\widetilde{\Delta}(\eta^{\ast})| \leq \mathcal C \widetilde{\psi}_w(\eta^\ast)  $ 
		with very high probability, then the event
		\[ 
		\left\{ |\widetilde{\Delta}(\eta)| \leq \mathcal C \widetilde{\psi}_w(\eta)  \text{ for all } \eta \in [\eta_*,\eta^*] 
		\right\}
		\] 
		holds with very high probability on the event $\{ \theta = 1 \text{ for all } \eta \in [\eta_*, \eta^*]\}$. 
	\end{lemma}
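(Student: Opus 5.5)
The plan is to copy the argument of \cref{lem:3rddegeqonDelta} and \cref{lem:Deltasmall}, with $s$ replaced by $g$, $\mathcal T_\varphi$ by all of $[N]$, and the error $\varphi$ by $\epsilon:=\sqrt{\log N/d}$. We work throughout on $\{\theta=1 \text{ for all } \eta\in[\eta_*,\eta^*]\}$. On this event $\abs{G_{xy}}\lesssim\Gamma$, and in the regime \eqref{regimed3} we have $\Gamma\lesssim 1$. Hence $\abs{g}$, $\abs{G_{xx}}$ and $\eta$ are all $O(1)$.

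\textbf{Step 1: a cubic equation for $g$.} Average the relation of \cref{lem:stielt_first_mean} over all $x\in[2N]$, i.e.\ apply $\frac{1}{2N}\sum_x$. Since $\frac{1}{2N}\sum_x G_{xx}=g$ exactly, this gives
\[ \absb{g^3+2\ii\eta g^2+(1-\eta^2-\abs{w}^2)g+\ii\eta}\le\mathcal C\epsilon \]
with very high probability, uniformly in $\eta\in[N^{-1+\delta},L]$ and $w\in\mathrm{D}_2$. This step is simpler than in \cref{lem:meanoftypicalvertices}: no atypical vertices need to be removed, so no $\abs{\mathcal T^c_\varphi}/N$ error appears.

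\textbf{Step 2: an equation for $\widetilde\Delta$.} Substitute $g=\widetilde\Delta+\ii v$. By \eqref{eq:v}, the term without $\widetilde\Delta$ vanishes, and we obtain
\[ \absb{\widetilde\Delta^3+\xi_2\widetilde\Delta^2-\xi_1\widetilde\Delta}\le\mathcal C\epsilon, \]
with the same coefficients $\xi_1,\xi_2$ as in the proof of \cref{lem:Deltasmall}. That proof already shows $\xi_1\asymp\tilde\xi:=\absN{\absN{w}^2-1}+\eta^{2/3}$, that $\abs{\xi_2}\lesssim\tilde\xi^{1/2}$, and that $\tilde\xi$ is monotone in $\eta$. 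To make the estimate hold simultaneously for all $\eta\in[\eta_*,\eta^*]$, use a grid of mesh $N^{-3}$ together with the $N^2$-Lipschitz continuity of $g$ (\cref{lem:disttospect}) and of $v$ (\cref{lem:v_beta_u_beta_Lipschitz}), and take a union bound over the grid.

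\textbf{Step 3: stability.} Apply \cref{lem:stability2}, using as initial input the hypothesis $\abs{\widetilde\Delta(\eta^*)}\le\mathcal C\widetilde\psi_w(\eta^*)$. This yields, for all $\eta\in[\eta_*,\eta^*]$,
\[ \abs{\widetilde\Delta}\lesssim\min\Bigl\{\epsilon^{1/3},\ (\epsilon/t)^{1/2},\ \epsilon/t\Bigr\},\qquad t=v^2+\eta/v. \]
The middle term can be dropped by the same elementary case distinction used at the end of \cref{lem:Deltasmall}, comparing $\epsilon/t$ with $\epsilon^{1/3}$. The remaining bound is $\min\{\epsilon^{1/3},\epsilon/t\}=\widetilde\psi_w(\eta)$, which is the claim.

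\textbf{Main obstacle.} The delicate point is verifying the hypothesis of \cref{lem:stability2} that rules out a jump to another branch of the cubic as $\eta$ decreases continuously. This needs the small-mesh grid in Step 2 and the continuity of $\widetilde\Delta$ in $\eta$, and it works exactly as in \cref{lem:Deltasmall}. The only genuinely new bookkeeping is that all constants depend on $\Gamma$, through $\theta$ and the bounds of \cref{lem:roughboundsrest}.
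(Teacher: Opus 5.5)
Your proposal is correct and follows essentially the same route as the paper. The paper averages \cref{lem:stielt_first_mean} over $x\in[2N]$ to get the approximate cubic for $g$, and then reruns the stability argument of \cref{lem:Deltasmall} through \cref{lem:stability2} with $\varphi$ replaced by $\sqrt{\log N/d}$. Your additional details are exactly what the paper leaves implicit: the grid argument for uniformity in $\eta$, and the check that the two-term minimum $\widetilde\psi_w(\eta^*)$ is comparable to the three-term minimum required as initial input.
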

	
	\begin{proof}
		Averaging the inequality in \cref{lem:stielt_first_mean} over $x \in [2N]$ yields  
		\begin{equation*}
			g^3  + 2 \ii \eta g^2 +(1 - \eta^2 - \abs{w}^2)g+ \ii \eta  = \mathcal{O} \bigg(\sqrt{\frac{\log N}{d}}\bigg) 
		\end{equation*}
		uniformly for all $N^{-1+ \delta} \le \eta \le L$ and $\abs{w} \le 2$. The rest of the proof is analogous to the proof of \cref{lem:Deltasmall}, i.e apply \cref{lem:stability2} with the same functions $\widetilde{\xi}$, $\xi_1$ and $\xi_2$ and replacing $\varphi$ with $\sqrt{\frac{\log N}{d}}$.
	\end{proof}
	\begin{prop}\label{prop:phi_Lambda_small3new}
		We fix constants $L >0$ and $0 < \delta <1 $. Then there exists a constant $\mathcal{C} >0$ depending only on $L$ and $\delta$ such that for any $N^{-1+ \delta} \le \eta_{\ast} < \eta^{\ast} \le L$, if $\widetilde{\Delta}(\eta^*) \leq \mathcal C \widetilde{\psi}_w(\eta^*)$ with very high probability, then the event 
		\begin{equation*}
			\left\{  \widetilde{\Lambda}(\eta)\le \mathcal{C} \widetilde{\psi}_w(\eta)  ~~\text{for all}~ \eta \in [ \eta_{\ast}, \eta^{\ast}] \right\}
		\end{equation*}
		holds with very high probability on the event $\{ \theta =1 ~\text{for all} ~ \eta \in [\eta_{\ast}, \eta^{\ast}]\}$ 
		uniformly for $w \in \mathrm{D}_2$. 
	\end{prop}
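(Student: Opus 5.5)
Assume the hypothesis $\widetilde{\Delta}(\eta^*) \leq \mathcal C \widetilde{\psi}_w(\eta^*)$. Then \cref{lem:3rddegeqonDelta2} gives $|g - \ii v| \leq \mathcal C \widetilde\psi_w(\eta)$ for all $\eta \in [\eta_*,\eta^*]$, with very high probability on $\{\theta = 1 \text{ for all } \eta \in [\eta_*,\eta^*]\}$. We work on this event throughout and fix $\eta$ first; uniformity in $\eta$ will come from a grid argument at the end.

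\emph{Off-diagonal entries.} By \cref{eq:roughboundGd-1/2} we have $\Lambda_{\mathrm{o}} \leq \mathcal C \Gamma^2 d^{-1/2}$. In the regime \eqref{regimed3} one has $\Gamma \lesssim 1$, and $d^{-1/2} \leq (\log N/d)^{1/2}$. Since $v^2 + \eta/v \lesssim_L 1$ by \eqref{eq:v_leq_1} and \eqref{eq:vgeeta}, and $(\log N/d)^{1/2} \leq (\log N/d)^{1/6}$, we get $(\log N/d)^{1/2} \lesssim \widetilde\psi_w(\eta)$. This settles $\Lambda_{\mathrm{o}}$.

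\emph{Diagonal entries.} Fix $x \in [N]$. From \eqref{eq:concentrate_around_Stielt}, its analogue for $\sum_y |X_{yx}|^2 G^{[x]}_{yy}$ (via \cref{lem:traceg1=g2}), and \eqref{eq:boundrest}, the first and third equations of \cref{lem:ASCEq} become
\[ 1 = -(\ii\eta + \ii v) G_{xx} - w G_{\ubarr{x}x} + r^1_x, \qquad 0 = -(\ii\eta+\ii v) G_{\ubarr{x}x} - \barr{w} G_{xx} + r^2_x, \]
where $r^i_x = O(\widetilde\psi + \sqrt{\log N/d})\cdot(1 + |G_{xx}| + |G_{\ubarr{x}x}|) = O(\widetilde\psi)$. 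The last step uses $|G_{xy}| \leq \mathcal C\Gamma \lesssim 1$ on $\{\theta=1\}$. Eliminating $G_{\ubarr{x}x}$ exactly as in \eqref{eq:equal_Gxx-vbeta}, now with $\beta_x^i$ replaced by $1$, gives
\[ G_{xx} - \ii v = \frac{w r^2_x - \ii(\eta+v) r^1_x}{|w|^2 + (\eta+v)^2}. \]

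The main obstacle is that the denominator $|w|^2 + (\eta+v)^2$ can be small, namely when $|w| \ll 1$ and $v$ is small. Since $v \asymp 1$ for $|w| \leq 1/2$ by \eqref{eq:v_full_scaling}, the denominator is always bounded below by a constant. Hence $|G_{xx} - \ii v| \lesssim \widetilde\psi$, and likewise $|G_{\ubarr{x}\ubarr{x}} - \ii v| \lesssim \widetilde\psi$ by symmetry.

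\emph{Diagonals of the off-diagonal blocks.} Plug the estimate for $G_{xx}$ back into the second equation and use $u = v/(\eta+v)$ from \eqref{eq:def_u_beta2}. This gives $G_{\ubarr{x}x} + \barr{w}u = O(\widetilde\psi)$, using that $(\eta+v)^{-1} \lesssim 1$ on the relevant range. If $(\eta+v)$ is small, we instead use the first equation divided by $w$, exactly as in the proof of \cref{prop:phi_Lambda_small2}. The bound on $G_{x\ubarr{x}}$ follows from the second and fourth equations of \cref{lem:ASCEq} in the same way.

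Finally, combine these bounds over a grid of spacing $N^{-7}$ in $[\eta_*,\eta^*]$, union-bounding over grid points. Extend to all $\eta$ using the $N^6$-Lipschitz continuity from \cref{lem:disttospect} and \cref{lem:v_beta_u_beta_Lipschitz}. The error this introduces is $O(N^{-1}) \ll \widetilde\psi$, which yields the uniform statement.
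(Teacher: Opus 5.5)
Your proposal is correct and follows essentially the same route as the paper. Both arguments obtain $|g-\ii v|\lesssim\widetilde\psi$ from \cref{lem:3rddegeqonDelta2}, solve the first and third self-consistent equations for $G_{xx}$, recover $G_{\ubarr{x}x}$ and $G_{x\ubarr{x}}$ via the case split $|w|\ge 1/2$ versus $|w|\le 1/2$, bound $\Lambda_{\mathrm{o}}$ by \eqref{eq:roughboundGd-1/2}, and close with a grid argument. The one difference is cosmetic: the denominator you flag as the main obstacle is never small, since \eqref{eq:v} gives $|w|^2+(\eta+v)^2=(\eta+v)/v\ge 1$, which is exactly the identity the paper uses when it multiplies by $v/(\eta+v)$.
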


	\begin{proof}
		We consider $\eta \in [\eta_{\ast},\eta^{\ast}]$ for $N^{-1+\delta} \le \eta_{\ast} < \eta^{\ast} \le L$ fixed. The estimates in this part of the proof will hold uniformly for $w \in \mathrm{S}$ and $\eta \in [\eta_*,\eta^*]$. For the sake of clarity, we avoid repeating this in most instances. Using then the same argumentation than in the proof \cref{prop:phi_Lambda_small2}  while using \cref{lem:stielt_first_mean} and \cref{lem:3rddegeqonDelta2} instead of \cref{lem:meanoftypicalvertices} and \cref{lem:Deltasmall}, respectively we have
		\begin{equation} \label{eq:proof_phi_Lambda_small2_aux_1} 
			\abs{g - \ii v } \leq \mathcal C \widetilde{\psi}
		\end{equation} 
		with very high probability on $\Omega_\theta$. We also needed to average the inequality in \cref{lem:stielt_first_mean} to obtain the previous inequality.
		For each $x \in [2N]$, we deduce from \cref{lem:stielt_first_mean} and \eqref{eq:proof_phi_Lambda_small2_aux_1} that \nc 
		\begin{equation}\label{eq:Giieqiv}
			\bigl|G_{xx} \bigl( v^2  +2 \eta v + \eta^2 + \abs{w}^2 \bigr) -  \ii (\eta + v) \bigr| \le  \mathcal{C} \widetilde{\psi}
		\end{equation}
		with very high probability on the event $\Omega_{\theta}$. We multiply \eqref{eq:Giieqiv} by $\frac{v}{\eta + v}$, use $v^2  +2 \eta v + \eta^2 + \abs{w}^2= \frac{\eta + v}{v}$ by \eqref{eq:v} as well as $\frac{v}{\eta + v} \leq 1$ and obtain 
		\begin{equation}\label{eq:proof_phi_Lambda_small2_aux_2} 
			|G_{xx} - \ii v | \le \mathcal C \widetilde{\psi}.
		\end{equation}
		This completes the estimate of $\tilde{\Lambda}_{\mathrm{d}}$.	To bound $ \Lambda_{\mathrm{do}}$, 
		let $x \in [N]$. We start from the first equation of \cref{lem:ASCEq}, use \eqref{eq:boundrest} and \eqref{eq:roughdifferenceGGTd-1} from \cref{lem:roughboundsrest}, \eqref{eq:all_vert_typi} from \cref{lem:sumXabGa_eq_sumGa}, the definition of $u=u_{(1,1)}$ from \eqref{eq:def_u_beta} and \eqref{eq:proof_phi_Lambda_small2_aux_2} to arrive at 
		\begin{equation}
			1 + w G_{\ubarr{x}x} = ( \eta +  v) v + \mathcal O ( \widetilde{\psi}) 
			=1 - \abs{w}^2 u + \mathcal O ( \widetilde{\psi}).\label{eq:Gxubarrx_eq_u}
		\end{equation}
		If $\abs{w} \geq 1/2$ then subtracting $1$ and dividing by $w$ in \eqref{eq:Gxubarrx_eq_u} 
		yields $\abs{G_{\ubarr{x}x} + \bar w u} \leq \mathcal C \widetilde{\psi}$. 
		If $\abs{w} \leq 1/2$ then we start from the third identity in \cref{lem:ASCEq} and reason similarly using $v \gtrsim 1$ by \eqref{eq:v_asymp_1} in that case  to conclude $\abs{G_{\ubarr{x}x} + \bar w u} \leq \mathcal C \widetilde{\psi}$. 
		An analogous reasoning for $G_{x \ubarr{x}}$ completes the estimate of $\widetilde{\Lambda}_{\mathrm{od}}$.
		Since ${\Lambda}_\mathrm{o}\leq \mathcal C d^{-1/2}$ by \eqref{eq:roughboundGd-1/2} in \cref{lem:roughboundsrest}, we have shown that 
		\begin{equation*}
			\Lambda \le \mathcal C \widetilde{\psi}. 
		\end{equation*}
		A grid argument as in the proof of \cref{prop:phi_Lambda_small2} completes the proof.
	\end{proof}
	
	\begin{proof}[Proof of \cref{thm:locallaw3}]
		 Given the previous results in this section, i.e.\ \cref{lem:sumXabGa_eq_sumGa}, \cref{lem:stielt_first_mean}, \cref{lem:3rddegeqonDelta2}
		and \cref{prop:phi_Lambda_small3new}, the proof of \cref{thm:locallaw3} 
		proceeds completely analogously to the one of 
		\cref{thm:locallaw_edge}. We therefore omit it. 
	\end{proof}

	\section{Eigenvalue locations of $M$} \label{sec:largest_eigenvalue}

 The next proposition is the main result of this section. Throughout we always count eigenvalues with their multiplicities. 

\begin{prop}\label{prop:spectrum_M} 
Let $0 < \epsilon < 1$ and $0 <\delta < \frac{1}{3}\epsilon$ be fixed. Let $M$ be as in \eqref{eq:def_M_X_plus_f} for $1 \le d \le N^{\delta/4}$ and $f \in \C$. Then there exists a  constant $C>0$ such that the following holds with probability at least $1- CN^{-\epsilon}$.
If $\abs{f} \geq 1 + 2 C d^{-1/2}$ then 
\begin{enumerate}[label=(\alph*)]
    \item \label{item:other_eigenvalues_disc} All eigenvalues of $M$ except one are contained in the disc of radius $  1 + C d^{-1/2} $.
    \item \label{item:large_eigenvalue} There is exactly one eigenvalue within the disc of radius $N^{-1/2 + 2\epsilon}|f|$ around $f$.
\end{enumerate}
If $\abs{f} \leq 1 + 2 C d^{-1/2}$ then 
\begin{enumerate}[label=(\alph*)]
\setcounter{enumi}{2} 
\item All eigenvalues of $M$ are contained in the disc of radius $  1 + 3C d^{-1/2} $.
\end{enumerate} 
\end{prop}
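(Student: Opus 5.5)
Writing $M = X - f/N + f\mathbf{e}\mathbf{e}^*$ and $\widetilde X := X - f/N$, everything reduces to a spectral-radius bound for $X$ plus a rank-one perturbation argument. The key input is a bound on the spectral radius of the sparse centred matrix $X$. I would import (from the cited works on spectral radii of sparse non-Hermitian matrices, e.g.\ the Bordenave--Chafa\"i--Garc\'ia-Zelada type estimates \cite{MR4206685,MR4649432}) the statement that $\rho(X) \le 1 + C d^{-1/2}$ with probability at least $1 - CN^{-\epsilon}$. More precisely, I would use a bound $\norm{X^k}$ for $k \asymp \log N$, or rather a resolvent-type bound: for $\abs{z} \geq 1 + Cd^{-1/2}$, the resolvent $(X - z)^{-1}$ exists and has controlled quadratic forms, $\absb{\langle \mathbf e, (X-z)^{-1}\mathbf e\rangle + 1/z} \le N^{-1/2+\epsilon}$, uniformly on a grid and then everywhere by Lipschitz continuity. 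This is the main obstacle. It would be proved via moment bounds on $\mathbf e^* X^k \mathbf e$ for $k \le \log N$, expanding $(X-z)^{-1} = -\sum_k X^k z^{-k-1}$, using $\E\abs{\mathbf e^* X^k \mathbf e}^2 \lesssim N^{-1}(1+Cd^{-1/2})^{2k}$ from a path-counting argument together with the tail bound $\norm{X^k} \le (1+Cd^{-1/2})^k N^{\epsilon/2}$. The shift $-f/N$ is negligible since $\abs f/N \le N^{-1+\delta/4}$.

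Given this, the eigenvalues of $M$ outside $\spec \widetilde X$ are exactly the zeros of $1 + f\,\mathbf e^*(\widetilde X - z)^{-1}\mathbf e$, by the matrix determinant lemma $\det(M - z) = \det(\widetilde X - z)\bigl(1 + f\,\mathbf e^*(\widetilde X-z)^{-1}\mathbf e\bigr)$. On $\abs z \ge 1 + Cd^{-1/2}$ this function equals $1 - f/z + O(\abs f N^{-1/2+\epsilon}/\abs z)$. Part \ref{item:other_eigenvalues_disc} and (c) then follow: if $\abs{z} \ge 1+ 3Cd^{-1/2}$ and $z$ is away from $f$, then $\abs{1 - f/z} = \abs{z-f}/\abs z$ dominates the error. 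For (c), $\abs{z-f}\ge Cd^{-1/2} \gg N^{-1/2+\epsilon}\abs f$, so there is no zero.

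For part \ref{item:large_eigenvalue} I would apply Rouch\'e's theorem on the circle $\abs{z - f} = N^{-1/2+2\epsilon}\abs f$, comparing $z\bigl(1 + f\,\mathbf e^*(\widetilde X - z)^{-1}\mathbf e\bigr)$ with $z - f$. The difference is $O(\abs f N^{-1/2+\epsilon})$, which is smaller than $\abs{z-f}$ on that circle, so there is exactly one zero inside. For the remaining annulus $1+Cd^{-1/2} \le \abs z$ away from that disc, the same estimate shows there are no further zeros. Combining the two gives exactly one outlier, and all other eigenvalues lie in $\spec\widetilde X \cup\{\text{zeros}\}\subset \mathrm D_{1+Cd^{-1/2}}$ after possibly enlarging $C$. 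The constraint $\delta < \epsilon/3$ is used to absorb $d^{k}$-type and $\abs f$ factors (with $d \le N^{\delta/4}$) into $N^{\epsilon}$ in the moment bounds.
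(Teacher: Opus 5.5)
\textbf{Gap: the truncation scale $k\asymp\log N$ is too short.} Your architecture matches the paper's: the eigenvalue criterion via the determinant lemma, a Neumann expansion of $\langle\mathbf e,(X-z)^{-1}\mathbf e\rangle$, a head bounded through second moments of $\langle\mathbf e,X^k\mathbf e\rangle$, a tail bounded through $\norm{X^k}$, and then Rouch\'e on $\abs{z-f}=N^{-1/2+2\epsilon}\abs f$. The problem is that you run the moment bounds only for $k\le\log N$ and put everything beyond into the tail.

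Suppose $\norm{X^k}\le N^{a}(1+Cd^{-1/2})^k$ for all $k$, for any fixed $a\ge0$ (you take $a=\epsilon/2$), and take $\abs z\ge1+C'd^{-1/2}$ with constants $C'>C$. The ratio $(1+Cd^{-1/2})/\abs z$ is only $1-\Theta(d^{-1/2})$. Hence $\sum_{k>k_0}\norm{X^k}\abs z^{-k-1}$ is bounded by roughly $N^{a}\sqrt d\,\ee^{-c_1k_0/\sqrt d}$ with $c_1\asymp C'-C$. For $k_0=b\log N$ this is $N^{a-c_1b/\sqrt d}\sqrt d$. That is not of size $N^{-1/2+O(\epsilon)}$ unless $d$ stays bounded, and it is not even $O(1)$ once $d\gtrsim(\log N)^2$.

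Equivalently, a bound on $\norm{X^k}$ at $k\asymp\log N$ with any polynomial prefactor only gives $\rho(X)\le\norm{X^k}^{1/k}\le \ee^{O(a)}(1+Cd^{-1/2})$. That is an $O(1)$ radius, not $1+O(d^{-1/2})$. So your argument closes only for bounded $d$, while the proposition is needed up to $d\le N^{\delta/4}$ and is applied with $d\ge\log N$. Turning a polynomially large bound into radius $1+Cd^{-1/2}$ forces $k_0d^{-1/2}\gtrsim\log N$, i.e.\ $k_0\asymp\sqrt d\log N$.

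\textbf{How the paper handles this.} It works at $m_0=c\sqrt d\log N$.
\begin{itemize}
\item Tail: \cref{lem:norm_X_m} (from \cite[Proposition~6.1]{BBK_spec_radii}) gives $\norm{X^m}\le CN^{(\nu+1)/2}m^2\sqrt d$ for $m\le C^{-1}\sqrt d\log N$, with no exponential growth in $m$. Combined with $\norm{X^{km_0+r}}\le\norm{X^{m_0}}^k\norm{X^r}$, the tail $B_{m_0}$ is at most $N^{3-c\sqrt d\log(1+Cd^{-1/2})}\le N^{-1}$. This works because $(1+Cd^{-1/2})^{-m_0}\approx N^{-cC}$ beats the polynomial prefactor once $C$ is large.
\item Head: the price is that the head bound must hold for every $m\le c\sqrt d\log N$, not only $m\le\log N$. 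The path counting in \cref{prop:bounds_from_radii_meth} gives $N^2\E\abs{\langle\mathbf e,X^m\mathbf e\rangle}^2\le N(Cm^2d)^6$ in exactly this range. The restrictions $m\le c\sqrt d\log N$ and $d\le N^{\delta/4}$ are what make the genus sums controllable ($Cm^6d^3\le N^{2/3}/2$, $3g_{\max}/\sqrt d\le 1/2$). Then $\delta<\epsilon/3$ absorbs $(m^2d)^6$ and the $m_0$ head terms into $N^{-1/2+2\epsilon}$.
\end{itemize}
Your conjectured form $\E\abs{\mathbf e^*X^k\mathbf e}^2\lesssim N^{-1}(1+Cd^{-1/2})^{2k}$ would also suffice at this scale, since it equals $N^{-1+O(c)}$ there. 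But it has to be proved for $k$ up to $c\sqrt d\log N$.

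\textbf{Two smaller remarks.}
\begin{itemize}
\item The prefactor $N^{\epsilon/2}$ in your tail bound is stronger than what the trace and Markov argument yields (about $N^{(1+\epsilon)/2}$). It is also unnecessary once $k_0\asymp\sqrt d\log N$.
\item The works you cite for $\rho(X)$ concern the Erd\H{o}s--R\'enyi digraph. For general $X$ as in \cref{defiX} down to $d\ge1$, the relevant input is \cite{BBK_spec_radii}.
\end{itemize}
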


If $d \asymp N$ and $f \asymp \sqrt{N}$ then a version of \cref{prop:spectrum_M} was shown in  \cite[Theorem~1.9]{zbMATH06141055}. 

In the special case of the adjacency matrix of the Erd{\H o}s-R\'enyi digraph, we now list the 
results similar to \cref{prop:spectrum_M} obtained previously.  
A similar result 
was stated in \cite[Corollary~3.5]{BBK_spec_radii} with a stronger probability estimate. 
For the proof, the authors of \cite{BBK_spec_radii} referred to the version of the Bauer-Fike theorem in 
\cite{BLL2018Nonbacktracking}, see  \cite[Proposition~7]{BLL2018Nonbacktracking}. 
However, it is unclear how to justify the assumptions of \cite[Proposition~7]{BLL2018Nonbacktracking} in this setting. 
If $d \geq C \log N$, \cite[Corollary~1]{MR4206685} proved \cref{prop:spectrum_M} \cref{item:large_eigenvalue}. In \cite[Theorem~4.3]{MR4649432}, \cref{item:other_eigenvalues_disc} and \cref{item:large_eigenvalue} were shown for the Erd{\H o}s-R\'enyi digraph. 
If $d \geq N^\delta$, the essentially optimal rate $N^{-1/2+\eps}$ in \cref{prop:spectrum_M} \cref{item:other_eigenvalues_disc}
was proved in \cite[Theorem~1.2(i)]{HeDigraph} and \cref{item:large_eigenvalue} 
with a radius $O(1)$ in \cite[Lemma~4.5]{HeDigraph}.  

For the proof of \cref{prop:spectrum_M}, we use an eigenvalue criterion for $M$, 
\cref{lem:eigenvalue_criterion}, in terms of the resolvent of $X$ and analyse the resolvent through a Neumann series expansion in the proof of \cref{prop:spectrum_M} below. 
 This criterion was for example also applied \nc in \cite[Section~4.1]{BGN11} and \cite[ Sections~2, 3 and 5]{zbMATH06141055}. Moreover, \cite{zbMATH06141055} employs the Neumann series in simpler setups.

\begin{lemma}[Eigenvalue criterion for $M$] \label{lem:eigenvalue_criterion}
Let $X \in \C^{N\times N}$,  $f \in \C$ and $z \in \C \setminus \spec(X)$. Then 
\[ z \in \spec ( X + f \mathbf e \mathbf e^*) \qquad \iff \qquad 1 + f\langle \mathbf e, (X-z)^{-1} \mathbf e \rangle =0. 
\] 
\end{lemma}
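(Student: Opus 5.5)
The plan is the standard matrix determinant lemma argument for a rank-one perturbation. Since $z \notin \spec(X)$, the matrix $X - z$ is invertible, and we can factor
\[
X + f\mathbf e\mathbf e^* - z = (X-z)\bigl(1 + f (X-z)^{-1}\mathbf e\mathbf e^*\bigr).
\]
So $z \in \spec(X + f\mathbf e\mathbf e^*)$ if and only if the second factor $1 + f(X-z)^{-1}\mathbf e\mathbf e^*$ is singular.

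Next we compute the determinant of this second factor with the identity $\det(1 + \mathbf a\mathbf b^*) = 1 + \mathbf b^*\mathbf a$, valid for any vectors $\mathbf a, \mathbf b \in \C^N$. Taking $\mathbf a = f(X-z)^{-1}\mathbf e$ and $\mathbf b = \mathbf e$ gives
\[
\det\bigl(1 + f(X-z)^{-1}\mathbf e\mathbf e^*\bigr) = 1 + f\langle \mathbf e, (X-z)^{-1}\mathbf e\rangle,
\]
with the inner product antilinear in its first argument.

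The identity $\det(1 + \mathbf a\mathbf b^*) = 1 + \mathbf b^*\mathbf a$ can also be avoided by arguing directly with vectors.
\begin{itemize}
\item[$\Rightarrow$] If $(X + f\mathbf e\mathbf e^*)\mathbf u = z\mathbf u$ with $\mathbf u \neq 0$, then $(X-z)\mathbf u = -f\langle \mathbf e, \mathbf u\rangle \mathbf e$. Because $X - z$ is invertible and $\mathbf u \neq 0$, the scalar $c := f\langle \mathbf e, \mathbf u\rangle$ is nonzero, and $\mathbf u = -c(X-z)^{-1}\mathbf e$. Taking the inner product with $\mathbf e$ and multiplying by $f$ gives $c = -cf\langle \mathbf e, (X-z)^{-1}\mathbf e\rangle$. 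Dividing by $c \neq 0$ yields $1 + f\langle \mathbf e, (X-z)^{-1}\mathbf e\rangle = 0$.
\item[$\Leftarrow$] If $1 + f\langle \mathbf e, (X-z)^{-1}\mathbf e\rangle = 0$, then $f \neq 0$. Set $\mathbf u := (X-z)^{-1}\mathbf e \neq 0$. Then $(X + f\mathbf e\mathbf e^* - z)\mathbf u = \mathbf e + f\langle \mathbf e, \mathbf u\rangle\mathbf e = 0$, so $z$ is an eigenvalue.
\end{itemize}

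There is no real obstacle here. The only points needing care are the inner-product convention and the observation that $c \neq 0$ in the forward direction, which is where the invertibility of $X - z$ is used.
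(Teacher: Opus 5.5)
Your proposal is correct and follows the paper's proof. Both factor $X + f\mathbf e\mathbf e^* - z = (X-z)\bigl(1 + f(X-z)^{-1}\mathbf e\mathbf e^*\bigr)$ and evaluate the determinant of the rank-one perturbation; the paper does this via $\det(1+AB)=\det(1+BA)$, which amounts to your matrix determinant lemma.

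Your additional eigenvector argument is also valid. Note, however, that only the determinant identity $\det(X+f\mathbf e\mathbf e^*-z)=\det(X-z)\bigl(1+f\langle \mathbf e,(X-z)^{-1}\mathbf e\rangle\bigr)$ gives the matching of multiplicities that the paper uses right after the lemma.
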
 

 The previous lemma coincides with \cite[Lemma 2.1]{zbMATH06141055}.
We present its short proof for the reader's convenience.

\begin{proof} 
Since $z \notin \spec(X)$, we observe that $z \in \spec(X + f \mathbf e \mathbf e^*)$ is equivalent to  
\[ 
0 = \det(X + f \mathbf e \mathbf e^*- z) = \det\big( (X-z) \big( 1 + (X-z)^{-1} f \mathbf e \mathbf e^*\big)\big) \iff \det\big( 1 + (X-z)^{-1} f \mathbf e \mathbf e^*\big) = 0.
\] 
This is equivalent to $1 + f\langle \mathbf e, (X-z)^{-1} \mathbf e \rangle =0$ due to 
$\det(1 + AB) = \det(1 + BA)$ and $(\mathbf e \mathbf e^*)^2 =\mathbf e \mathbf e^*$. 
\end{proof} 
 Owing to the previous lemma, the eigenvalues of $M$ outside of $\spec(X)$ coincide exactly with the zeros  of 
\begin{equation} \label{eq:def_spectral_function1}  
F(z) := 1 + f \langle \mathbf e , (X-z)^{-1} \mathbf e \rangle 
\end{equation}
counted with their multiplicities. 
We define $\widehat{F}(z) := 1-\frac{f}{z}$ and formally rewrite 
\begin{equation}\label{eq:def_spectral_funct}
    F(z) = \widehat{F}(z) - \frac{f}{z}\sum_{m=1}^{\infty}\frac{\langle \mathbf{e}, X^m \mathbf{e} \rangle }{z^m }.
\end{equation}
In order to compare the zeros of $F$ and $\widehat{F}$ we need the following two results on the powers of $X$. In the proof of \cref{prop:upper_bdd_spec_radius_X}, we will justify the absolute convergence of the series. The following lemma is a direct consequence of \cite[Proposition 6.1]{BBK_spec_radii}.
\begin{lemma} \label{lem:norm_X_m} 
We consider $1 \le d \le N^{\delta/4}$ with $0<\delta < 1/3$. There exists a constant $C>0$ such that if $m \in \N$ satisfies $m \le  C^{-1} \sqrt{d} \log N$ and $\nu>0$ then 
    \begin{align*}
    \|X^m \| &\le C N^{ (\nu+1)/2} m^2 \sqrt{d}
    \end{align*}
    with probability at least $1 - N^{- \nu}$.
    \end{lemma}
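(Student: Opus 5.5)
The lemma is stated as a direct consequence of \cite[Proposition~6.1]{BBK_spec_radii}, so the plan is to transfer that bound to our $X$ and convert it into an operator-norm estimate. Proposition~6.1 of \cite{BBK_spec_radii}, in our normalization, gives a high-moment bound of the form
\[
\E \norm{X^m}^{2p} \le \E \Tr\bigl( (X^m)(X^m)^*\bigr)^{p} \le N \bigl( C m^{2} \sqrt{d}\bigr)^{2p}
\]
for matrices with independent centred entries satisfying the variance and boundedness conditions of \cref{defiX} (variance $1/N$, entries bounded by $K d^{-1/2}$, i.e.\ $\E\abs{X_{xy}}^k \le C/(N d^{(k-2)/2})$). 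This should hold uniformly in $m \le C^{-1}\sqrt{d}\log N$ and for a moment parameter $p$ of order one. First we check that our $X$ satisfies the hypotheses of that proposition. The only deviations are the diagonal entries, whose variance is only bounded by $K/N$ rather than equal to $1/N$; in the path counting this changes constants only. The moment characterization of \cref{defiX}~\cref{item:def_X_decay} noted after the definition is exactly the input that the combinatorial expansion uses.

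Given this moment bound, Markov's inequality with $p=1$ yields the claim:
\[
\mathbb P\bigl(\norm{X^m} > t\bigr) \le t^{-2}\,\E\Tr\bigl(X^m (X^m)^*\bigr) \le t^{-2} N\, (C m^2\sqrt d)^2 .
\]
Choosing $t = C N^{(\nu+1)/2} m^2 \sqrt d$ makes the right-hand side at most $N^{-\nu}$, which matches the stated form, including the crude exponent $(\nu+1)/2$. This suggests the lemma is indeed meant as a second-moment (trace) estimate followed by Markov, rather than a sharp spectral-radius bound.

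The main obstacle is the moment estimate itself, i.e.\ bounding $\E\Tr(X^m X^{*m})$ by $N(Cm^2\sqrt d)^2$, or even by $N\,C^{m}$-type quantities absorbed into $m^2\sqrt d$, in the range $m\lesssim\sqrt d\log N$. If one had to prove it directly, the natural route is to expand the trace as a sum over pairs of directed paths of length $m$ and use independence and centring to force every edge to be traversed at least twice. One then counts the resulting shapes by their number of distinct vertices and excess, giving a factor $N^{-1}$ per doubly covered edge and $d^{-(k-2)/2}$ for higher multiplicities. The condition $m\le C^{-1}\sqrt d\log N$ is what keeps the high-multiplicity contributions summable: each extra coincidence costs $m^2/\sqrt d$ relative to the leading tree-like term. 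Here we would rely on \cite[Proposition~6.1]{BBK_spec_radii} rather than redo this counting, and the remaining work is to verify the hypotheses and adjust the constant $C$.
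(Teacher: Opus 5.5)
Your proposal is correct and is essentially the paper's proof. You bound $\|X^m\|^2 \le \Tr\bigl(X^m (X^m)^*\bigr)$, apply Markov's inequality at the second moment, and take $\E\Tr\bigl(X^m (X^m)^*\bigr) \lesssim N m^4 d$ from \cite[Proposition~6.1]{BBK_spec_radii}, which is exactly the paper's argument. The one hypothesis check the paper makes explicit is that condition \cite[(5.1)]{BBK_spec_radii} follows from $d \le N^{\delta/4}$ and $m \le C^{-1}\sqrt{d}\log N$; this plays the role of your remark that the bound should hold uniformly in that range of $m$.
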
 

\begin{proof}
    For all positive integer $m$, Markov's inequality implies 
    \begin{align*}
        \mathbb{P} \left( \|X^m\| \ge \Phi \right) \le \mathbb{P} \left( \Tr(X^m {X^\ast}^m ) \ge \Phi^2  \right)   \le \frac{\mathbb{E}\left(\Tr(X^m {X^\ast}^m ) \right)}{\Phi^2}.
    \end{align*}
    It remains to apply \cite[Proposition 6.1]{BBK_spec_radii}
    observing that \cite[(5.1)]{BBK_spec_radii} is satisfied due to 
    the upper bounds on $d$ and $m$.
\end{proof}

The next proposition is a simple consequence of \cite[Theorem~2.11]{BBK_spec_radii}. 
We deduce it here from \cref{lem:norm_X_m} since we will use a similar argument in the proof of \cref{prop:spectrum_M}. 

\begin{prop}[Upper bound on spectral radius of $X$] \label{prop:upper_bdd_spec_radius_X} 
Let $1\le d \le N^{\delta/4}$ with $0<\delta <1/3$ and $X$ be as in \cref{defiX}. There is a constant $\mathcal C>0$ such that 
\[ \max_{\lambda \in \spec(X)} \abs{\lambda} \leq 1 + \mathcal C d^{-1/2} \] 
with very high probability. 
\end{prop}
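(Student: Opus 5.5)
The plan is to deduce the bound from \cref{lem:norm_X_m} via Gelfand's formula in a quantitative form: for any $m \in \N$, $\max_{\lambda \in \spec(X)}\abs{\lambda} \le \norm{X^m}^{1/m}$, since $\lambda^m \in \spec(X^m)$. So it suffices to pick $m$ as large as \cref{lem:norm_X_m} allows and check that the polynomial prefactor there is harmless after taking the $m$-th root.

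Fix $\nu>0$ and let $C$ be the constant from \cref{lem:norm_X_m}. Choose
\[ m := \bigl\lfloor C^{-1}\sqrt{d}\log N \bigr\rfloor , \]
which is admissible in \cref{lem:norm_X_m}. For the degenerate case $m=0$, i.e.\ $\sqrt d \log N < C$, only finitely many $N$ are involved (as $d\ge1$), and these can be absorbed into the constant $\mathcal C_\nu$ of \cref{def:very_high_probab}. Otherwise \cref{lem:norm_X_m} gives, with probability at least $1-N^{-\nu}$,
\[ \norm{X^m} \le C N^{(\nu+1)/2} m^2 \sqrt{d}. \]
Hence $\max_{\lambda}\abs{\lambda} \le \exp\bigl( m^{-1}\log( C N^{(\nu+1)/2} m^2\sqrt d)\bigr)$. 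Since $m\le \sqrt d\log N$ and $d\le N^{\delta/4}$, the logarithm is at most $\frac{\nu+1}{2}\log N + O(\log N) \le C_\nu \log N$. Dividing by $m \gtrsim C^{-1}\sqrt d\log N$ gives an exponent of at most $C'_\nu d^{-1/2}$. For $d$ large enough (so that the exponent is $\le 1$), $e^t\le 1+2t$ yields the claim $\max\abs{\lambda}\le 1+\mathcal C d^{-1/2}$. For bounded $d$ the exponent is $O(1)$, so the bound is still of the form $1+\mathcal C d^{-1/2}$ after enlarging $\mathcal C$, since $d^{-1/2}\gtrsim 1$ there.

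The only delicate point is the bookkeeping of the $\nu$-dependence. The constant $\mathcal C$ depends on $\nu$ through the factor $N^{(\nu+1)/2}$, which is exactly the convention for calligraphic constants in very-high-probability statements. One should also ensure that the lower bound $m\gtrsim\sqrt d\log N$ holds with a constant independent of $N$, which follows from the floor being at least half its argument once that argument is at least $1$. No union bound over $m$ is needed, since a single $m$ suffices.
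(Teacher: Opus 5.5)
Your argument is correct, and it takes a different and shorter route than the paper. You use the elementary bound $\max_{\lambda\in\spec(X)}\abs{\lambda}^m \le \norm{X^m}$ for a single deterministic $m \asymp \sqrt d \log N$. Then one application of \cref{lem:norm_X_m} and an $m$-th root suffice: the prefactor $C N^{(\nu+1)/2} m^2\sqrt d$ has logarithm $O_\nu(\log N)$, and after dividing by $m$ this becomes $O_\nu(d^{-1/2})$. (Your step $m\le\sqrt d\log N$ tacitly takes $C\ge1$ in \cref{lem:norm_X_m}, which is harmless.)

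The paper instead shows that the Neumann series $\sum_m \norm{X^m}\abs{z}^{-m}$ converges absolutely, simultaneously for all $\abs{z}\ge 1+\mathcal C d^{-1/2}$. To do so it:
\begin{itemize}
\item needs the bound of \cref{lem:norm_X_m} for every $m\le m_0$, which costs a union bound;
\item writes $m=km_0+r$ and uses $\norm{X^{km_0+r}}\le\norm{X^{m_0}}^k\norm{X^r}$;
\item shows that the tail beyond $m_0$ is polynomially small in $N$.
\end{itemize}

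For the spectral-radius statement alone, your approach is cleaner: no union bound and no series summation. The paper's approach gives more, namely a quantitative, uniform-in-$z$ control of the resolvent expansion. That control is exactly what is reused for the tail term $B_{m_0}(z)$ in the proof of \cref{prop:spectrum_M}, and the paper names this as its reason for choosing the route. Your single-$m$ bound would not give that tail estimate, because controlling $\norm{X^{km_0+r}}$ requires bounds on $\norm{X^r}$ for all $r<m_0$.
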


\begin{proof} 
Let $\nu>0$. We now show that the Neumann series of $X$ is absolutely convergent  with probability at least $1-N^{-\nu}$ if the spectral parameter $z$ satisfies $\abs{z} \geq 1 + \mathcal C d^{-1/2}$. 
Let $m_0 := C^{-1} \sqrt{d} \log N$ with $C>0$ from \cref{lem:norm_X_m}. 
Then \cref{lem:norm_X_m} implies that the event 
\[ \Omega := \bigl\{ \|{X^m}\| \leq C N^{\nu/2+1}m_0^2 \sqrt{d} \text{ for all } m = 1, \ldots, m_0 \bigr\} \] 
holds with probability at least $1-N^{-\nu}$. Therefore, simultaneously for all $z \in \C$ satisfying $\abs{z} \geq 1 + \mathcal C d^{-1/2}$, we have  
\begin{align} 
\sum_{m=m_0+1}^\infty \frac{\norm{X^m}}{\abs{z}^m} & \leq \sum_{k=1}^\infty \sum_{r=0}^{m_0-1} \bigg( \frac{\norm{X^{m_0}}}{(1 + \mathcal C d^{-1/2})^{m_0}} \bigg)^k \bigg( \frac{\norm{X^r}}{( 1+ \mathcal C d^{-1/2})^{r}}\bigg) \nonumber \\ 
& \leq \frac{2 C N^{\nu/2 +1} m_0^2 \sqrt{d}}{(1 + \mathcal C d^{-1/2})^{m_0}} \sum_{r=0}^{m_0-1} \frac{CN^{\nu/2 + 1} m_0^2 \sqrt{d}}{(1 + \mathcal C d^{-1/2})^{r}} \nonumber \\ 
& \leq N^{\nu + 3 - C^{-1} \sqrt{d} \log (1 + \mathcal C d^{-1/2})} \label{eq:bound_Neumann_series} 
\end{align} 
on the event $\Omega$.
Here, we estimated $\norm{X^{km_0 + r}} \leq \norm{X^{m_0}}^k \norm{X^r}$ in the first step 
and used the definition of $\Omega$ and $C N^{\nu/2 + 1} m_0^2 \sqrt{d} (1 + \mathcal Cd^{-1/2})^{m_0} \leq 1/2$ for large enough $\mathcal C$ in the second step. 

Therefore, $\sum_{m=0}^\infty \norm{X^m} \abs{z}^{-m}$ is finite simultaneously for all $z \in\C$ with $\abs{z} \geq 1 + \mathcal C d^{-1/2}$ on $\Omega$. Thus, $\spec(X) \subset D_{1 + \mathcal C d^{-1/2}}$ on $\Omega$. 
Since $\Omega$ holds with probability at least $1-N^{-\nu}$ and $\nu$ was arbitrary, the claim follows. 
\end{proof} 

\nc 

\begin{prop}[Upper bound on series summands] \label{prop:bounds_from_radii_meth}
Let $ 0 < \epsilon <1$ and $0 < \delta < \frac{1}{4} \epsilon$ be fixed. 
If $d$ satisfies 
\begin{equation}\label{eq:regimed3}
   1 \le d \le N^{\delta/4}
\end{equation}
then there exist universal constants $C$, $c>0$ such that
    \begin{equation*}
       \abs{ \langle X^{m}\mathbf{e}, \mathbf{e}\rangle } \le N^{-1/2 + \epsilon} , ~~ \text{ for all}~ m \le c \sqrt{d}\log N
    \end{equation*}
    with probability at least $1 - CN^{-\epsilon}$.
\end{prop}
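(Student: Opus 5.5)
Since only a probability of order $1-CN^{-\epsilon}$ is required, the plan is a second moment computation combined with Chebyshev's inequality and a union bound over $m$. For each fixed $m\le c\sqrt d\log N$ we bound $\E\abs{\langle X^m\mathbf e,\mathbf e\rangle}^2$ and aim for
\[
\E\abs{\langle X^m\mathbf e,\mathbf e\rangle}^2 \le \frac{C^m m^{C}}{N}\quad\text{or, more precisely,}\quad \E\abs{\langle X^m\mathbf e,\mathbf e\rangle}^2\le \frac{N^{\epsilon/2}}{N}.
\]
Chebyshev's inequality then gives $\mathbb P(\abs{\langle X^m\mathbf e,\mathbf e\rangle}>N^{-1/2+\epsilon})\le N^{-3\epsilon/2}$. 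Since the number of admissible $m$ is at most $c\sqrt d\log N\le N^{\delta/8}\log N\le N^{\epsilon/2}$ by \cref{eq:regimed3} and $\delta<\epsilon/4$, a union bound over $m$ yields the claimed probability $1-CN^{-\epsilon}$.

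For the second moment we expand
\[
\langle X^m\mathbf e,\mathbf e\rangle=N^{-1}\sum_{x_0,\dots,x_m}X_{x_0x_1}\cdots X_{x_{m-1}x_m},
\]
so that $\E\abs{\cdot}^2$ becomes $N^{-2}$ times a sum over pairs of paths $(\patx,\paty)$ of length $m$ with free endpoints. Since the entries are independent and centred, only pairs in which every edge of the union graph is traversed at least twice (counting both paths, with $\overline X$ for the second) contribute. We classify such pairs by the shape of the union graph $\mathcal G$ with $v$ vertices and $e$ distinct edges. Using $\E\abs{X_{xy}}^k\le C N^{-1}d^{-(k-2)/2}$, a contribution with edge multiplicities $k_1,\dots,k_e$ is at most $N^{-e}d^{-\sum(k_i-2)/2}$, and the number of vertex labelings is at most $N^{v}$. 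Each edge appears at least twice among the $2m$ steps, so $e\le m$. The key combinatorial point is that the union of two paths is connected or has two components, hence $v\le e+2$. More carefully: when the two paths are vertex-disjoint, each path separately must have all its edges repeated, so each component is a tree-like or cycle-containing subgraph with $v_i\le e_i$. Indeed, a walk traversing each of its edges at least twice with free endpoints can still be a tree, giving $v_i=e_i+1$. So the crude bound is $N^{-2}N^{v-e}\le 1$, which is not small enough.

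To gain the factor $N^{-1}$, we use the free endpoints through the structure of $\mathbf e$: $\langle X^m\mathbf e,\mathbf e\rangle = N^{-1}\mathbf 1^*X^m\mathbf 1$, and the row sums of $X$ are centred. The refined count distinguishes pairs in which the first and last edges are traversed only once within their own path. In a tree-like walk of length $m$ in which every edge is doubled, the endpoints are forced to coincide with the backtracking structure, and this forces $v\le e$ for each component unless extra coincidences happen. Each such coincidence costs a factor $N^{-1}$ and gains at most a factor $d^{-1/2}$ times a combinatorial factor $m^{C}$. So I would bound the total by $N^{-1}\sum_{\text{shapes}} (Cm)^{C\,(e-v+2)}d^{-\cdots}$. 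This is the genus-expansion style bookkeeping of \cite[Proposition 6.1]{BBK_spec_radii}, and I would reuse their path-counting lemmas, or their \cite[(5.1)]{BBK_spec_radii} condition, which holds here as in \cref{lem:norm_X_m}.

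The main obstacle is this combinatorial step: showing that the leading (tree) configurations sum to $O(N^{-1}\cdot C^{m}\text{-free})$. The concern is that the number of tree shapes grows like Catalan numbers $4^m$. This is harmless only because each doubled edge contributes exactly $N^{-1}$ with no growth, so the Catalan count is compensated by the normalisation $\E\abs{X_{xy}}^2=N^{-1}$, exactly as in the moment bound $\E\Tr X^mX^{*m}$ not growing exponentially. Meanwhile, configurations with higher multiplicities give factors $(m^2/\sqrt d)^{k}$, which is summable only if $m\le c\sqrt d\log N$ is paired with losing at most $N^{\epsilon/2}$. I would first try to obtain the bound $\E\abs{\langle X^m\mathbf e,\mathbf e\rangle}^2\le C m^{C}N^{-1}(1+m^2/\sqrt d)^{C\log N/\log\log N}$. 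I would then check that with $c$ small this is at most $N^{-1+\epsilon/2}$, adjusting constants if necessary.
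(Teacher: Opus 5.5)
Your overall scheme (Markov on the second moment, expansion over pairs of paths, a BBK-style genus count, union bound over $m$) is the paper's. The gap is in the combinatorial core, which you abandon at the wrong moment. Your first claim, $v_i\le e_i$ for vertex-disjoint components, was correct. The correction you then make, that a walk traversing each of its edges at least twice \emph{can still be a tree}, is the Hermitian backtracking picture and is false here.

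Since $X_{ab}$ and $X_{ba}$ are independent, the edges are directed, and $a\to b\to a$ uses two distinct edges. On an oriented tree a directed walk is a simple path and never reuses an edge. Hence any component traversed by a single walk with all edges repeated has genus at least $1$. A connected configuration has genus $0$ only if both walks are the same simple path.

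This is exactly why the paper's per-class bound $N^{r(\patx)-g(\patx)}d^{\abs{E(\patx)}-m}$ always has $r-g\le 1$. That already gives the needed $N^{-2}\cdot N=N^{-1}$ at leading order. Your replacement mechanism (gaining $N^{-1}$ from free endpoints, centred row sums, and endpoints \emph{forced to coincide with the backtracking structure}) never uses directedness, so it cannot work. For a Hermitian Wigner-type $H$ the analogous statement is false: $\langle\mathbf e,H^2\mathbf e\rangle=\norm{H\mathbf e}^2\approx 1$.

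The second missing piece is a workable count of the non-leading shapes. The paper maps $\patx\mapsto(U,\patchi,\vec k)$ by collapsing interior vertices of in- and out-degree one into weighted edges. Because there are no interior leaves in the directed setting, every remaining vertex other than the at most four endpoint images has degree at least $3$. This gives $\abs{E(U)}\le 3g(U)+5$ and $\abs{V(U)}\le 2g(U)+6$, and $g(U)\ge 2$ when $r(U)=2$.

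Consequently the number of shapes with genus $g$ and length $\ell$ is at most $(3g+5)^\ell(2g+6)^{3g+5}$, and the number of weight vectors is polynomial in $m$. Each unit of genus therefore costs a factor $Cm^6d^3/N$, and the double sum over $g,\ell$ closes with $N^2\E\abs{\langle X^m\mathbf e,\mathbf e\rangle}^2\le N(Cm^2d)^6$.

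Your tentative target $Cm^CN^{-1}(1+m^2/\sqrt d)^{C\log N/\log\log N}$ is not a substitute. For $m\asymp\sqrt d\log N$ one has $m^2/\sqrt d\gtrsim(\log N)^2$, so the last factor is essentially $N^{2C}$, and shrinking $c$ does not change this. Likewise your worry about Catalan growth is misplaced. In the directed setting the leading shape is unique (one simple path traversed once by each walk), which is why no $4^m$ appears at all.
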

The proof of \cref{prop:bounds_from_radii_meth} in \cref{section:proofbounds_from_radii_meth} below uses similar arguments as \cite[Sections 5.1 and 6]{BBK_spec_radii}. By estimating higher moments in the proof of \cref{prop:bounds_from_radii_meth}, which result in graphs with more than 2 connected components,  
we expect that the probability estimate on the above bound on $\langle X^{m}\mathbf{e}, \mathbf{e}\rangle$ 
can be substantially improved. 

Next, we use \cref{prop:bounds_from_radii_meth} to prove \cref{prop:spectrum_M}.  
\begin{proof}[Proof of \cref{prop:spectrum_M}]
    We fix $\delta >0$ and set 
    \begin{equation*}
        m_0:= c \sqrt{d} \log N
    \end{equation*}
    with a small constant $c>0$ and $0<\epsilon<1$. Throughout the proof, we work on the event $\{\spec(X) \subset \mathrm{D}_{1+C d^{-1/2}}\}$ up to considering $C> \mathcal{C}_{\nu} $ with $\nu = 2 \epsilon$ which, owing to  \cref{prop:upper_bdd_spec_radius_X}, holds with probability at least $1 - CN^{-\epsilon}$. We rewrite $F$ from \cref{eq:def_spectral_function1} as
    \begin{align}\label{eq:bound_f-g1}
        F(z) = \widehat{F}(z) - \frac{f}{z} A_{m_0}(z) - \frac{f}{z} B_{m_0}(z), 
    \end{align}
    where we used \eqref{eq:def_spectral_funct} and defined
    \begin{align*}
        A_{m_0} (z) := \sum_{m=1}^{m_0} \frac{\langle X^m \mathbf e, \mathbf e\rangle}{z^m} \qquad \text{and} \qquad B_{m_0} (z) := \sum_{m={m_0}+1}^\infty \frac{\langle X^m \mathbf e, \mathbf e\rangle}{z^m}.
    \end{align*}
   From \cref{prop:bounds_from_radii_meth} we directly have $C>0$ a constant such that $|A_{m_0}(z)| <  m_{0}N^{-1/2+\epsilon}$ uniformly in $\abs{z} \ge 1$ with probability at least $1 - CN^{-\epsilon}$. On the other hand, owing to \cref{lem:norm_X_m} and a union bound, considering $\nu = \delta/4 + \epsilon$ we obtain that with probability at least $1- CN^{-\epsilon}$, $\|X^r\| \le CNm_0^2\sqrt{d}$ for all $0 \le r \le m_{0}$. As a consequence we have for all $\abs{z} \ge 1+ C d^{-1/2} $
   \begin{align*}
       \abs{B_{m_0}(z)} &\le \sum_{m=m_0+1}^\infty \frac{\norm{X^m}}{(1+ C d^{-1/2} )^m} \\ 
       & \le \sum_{k=1}^\infty \sum_{r=0}^{m_0-1} \left(\frac{\norm{X^{m_0}}}{{(1+ C d^{-1/2} )^{m_0}}}\right)^k\left(\frac{\norm{X^r}}{(1+ C d^{-1/2} )^r}\right)\\
       &\le 2\frac{CN{m_0}^2\sqrt{d}}{(1+\delta)^{m_0}}\sum_{r=0}^{{m_0}-1}\frac{CNm_0^2\sqrt{d}}{(1+ C d^{-1/2} )^r} \\ 
       & \le  2\frac{CN^2{m_0}^5d}{(1+ C d^{-1/2} )^{m_0}}  \le 2CN^{ 3-  c \sqrt{d}  \log (1+ C d^{-1/2} )} \le \frac{1}{N}
   \end{align*}
where the last inequality holds for all $1 \le d \le N^{\delta/4}$, up to considering $ C >0$ large enough. We also apply \cref{lem:norm_X_m} together with a union bound in the third step and our choice of $m_0$. Inserting the previous bounds on $A_{m_0}$ and $B_{m_0}$ into \eqref{eq:bound_f-g1} and considering $N$ large enough we obtain that for all $  \abs{z} \ge 1+ C d^{-1/2}$ and with probability at least $1-N^{-\epsilon}$
   \begin{align}\label{eq:bound_F_minus_hat_F} 
       \absN{F(z) - \widehat{F}(z)} < N^{-1/2 + 2\epsilon}\abs{\frac{f}{z}}.
   \end{align}
   We directly conclude from  
 \cref{eq:bound_F_minus_hat_F} that $F$ has no zero in    $\C \setminus ( \mathrm{D}_{1+ C d^{-1/2}} \cup \mathrm{D}_{N^{-1/2 + 2\epsilon}|f|}(f))$. First, if  $\abs{f}\le 1+ 2 Cd^{-1/2}$ then  \cref{lem:eigenvalue_criterion} implies that all eigenvalues of $M$ are contained in the disc of radius $1+ 3 Cd^{-1/2} > 1+ 2Cd^{-1/2} + N^{-1/2+ 2 \epsilon}$. 
 Second, if $\abs{f}\ge 1+ 2 Cd^{-1/2}$ then for all $z \in \CC$ such that $\abs{z-f} = N^{-1/2 + 2\epsilon}|f|$
 we obtain from \cref{eq:bound_F_minus_hat_F} that 
    \begin{align*}
       \absN{F(z) - \widehat{F}(z)} <\frac{N^{-1/2 + 2\epsilon}|f|}{\abs{z}} = \absN{\widehat{F}(z)}.
   \end{align*}
   Rouch{\'e}'s Theorem implies that $\widehat{F}$ and $F$ have exactly the same number of zeros in the disc of radius $N^{-1/2 + 2\epsilon}|f|$ and center $f$, that is to say exactly one. 
    By \cref{lem:eigenvalue_criterion}, we conclude that $M$ has 
   exactly one eigenvalue outside $\mathrm{D}_{1 + Cd ^{-1/2}}$. This eigenvalues lies in $\mathrm{D}_{ N^{-1/2 +2\eps}\abs{f}}(f)$.  
\end{proof}

\subsection{Proof of \cref{prop:bounds_from_radii_meth}}\label{section:proofbounds_from_radii_meth}

For all positive integers $m$, Markov's inequality yields 
\begin{equation}\label{eq:bound_pb_Am2}
  \mathbb{P} \bigl( |\langle \mathbf e, X^m \mathbf e \rangle| \ge N^{-1/2+ \epsilon} \bigr) =  \mathbb{P} \bigl( |\langle \mathbf e, X^m \mathbf e \rangle|^2 \ge N^{-1 + 2 \epsilon}\bigr) \le N^{1- 2\epsilon}{\mathbb{E} |\langle \mathbf e, X^m \mathbf e \rangle|^2}
  = N^{-1 -2 \epsilon} \mathrm{B}_m, 
\end{equation}
where we introduced $\mathrm{B}_{m}:= N^2\mathbb{E}[\big|\langle X^m \mathbf e, \mathbf e \rangle \big|^2]$. We compute 
    \begin{align*}
\mathrm{B}_{m} &= \mathbb{E} \sum_{\patx\in \boldsymbol C} X_{x_0^1 x_1^1} \cdots X_{x_{m-1}^{1} x_{m}^{1}} \overline{X}_{x_{0}^{2} x_{1}^{2}} \cdots\overline{X}_{x_{m-1}^2 x_m^2}, 
    \end{align*}
where $\boldsymbol{C}$ is the set of all $\patx = ((x^s_0,...,x^s_m)_{s\in \{1,2\}}) \in ([N]^{m+1})^2$ such that for all $(a,b) \in [N]^2$
    $$\sum_{s=1}^2\sum_{k=1}^m \mathds{1}\left( (x^s_{k-1} , x^s_{k}) = (a,b) \right) \neq 1.$$
   We introduce an equivalence relation on $\boldsymbol C$ by saying that two paths $\patx=(x_0^1,\ldots, x_m^1,x_0^2, \ldots x_m^2)$ and $\boldsymbol y=(y_0^1,\ldots, y_m^1,y_0^2, \ldots y_m^2)$ are equivalent, written $\patx \sim \boldsymbol y$,  if there exists a permutation $\tau$ of $[N]$ such that $\tau(x_k^s) = {y}^s_k$ for all $k \in \{0,...,m\}$ and $s \in \{1,2\}$. We denote by $[\patx] \subset \boldsymbol C$ the equivalence class of $\patx \in \boldsymbol C$. Finally for each path $\patx \in \boldsymbol C$ we set the directed graph $G(\patx)$ defined by
\begin{equation*}
    V(\patx) = \{x_k^s,~k\in \{0,...,m \}, ~ s \in  \{1,2\}\}, ~~ E(\patx):= \{ (x_{k-1}^s, x_k^s),~k \in [m],~ s \in  \{1,2\}\}.
\end{equation*}
 For $\paty \in \boldsymbol C$ we denote by $g(\paty)$ the  sum of the genera of the connected components of $G(\patx)$. To compute the genus of a connected component, we consider it as undirected graph. Recall that for a finite, undirected and connected graph $G=(V,E)$, the genus is given by $g(G):= |E|-|V|+1$. 
We denote by $r(\patx)$ the number of connected components of $G(\patx)$. Note that $ r(\patx) \in \{1 , 2\}$. Since $g$ and $r(\cdot)$ are constant on the equivalence class, for every  path $\patx \in \boldsymbol C$ by translating \cite[proof of Lemma~5.3]{BBK_spec_radii}, we obtain
\begin{align}\label{eq:estimate_x_fixed}
    \absbb{\mathbb{E} \sum_{\paty  \in [ \patx]} X_{y^1_0 y^1_{1}} \cdots  \overline{X}_{y^2_{m-1} y^2_{m}}}\le N^{r(\patx) - g(\patx)} d^{|E(\patx)|-m}.
\end{align}
For $\patx \in \boldsymbol C$ we set
\begin{equation*}
    \mathcal{I}_2(\patx) := \{ x \in V(\patx) \colon   \mathrm{d}^+ (x)= 1 = \mathrm{d}^-(x)\} \backslash \{x_{0}^s, x_{m}^s \colon s\in  \{1,2\}\}. 
\end{equation*}
We define the directed  (multi-)graph $\hat{G}(\patx)=(\hat{V}(\patx),\hat{E}(\patx))$ with vertex set $\hat{V}(\patx) := V(\patx) \backslash \mathcal{I}_{2}(\patx)$ and the edge set $\hat{E}(\patx)$ defined as follows. 
If $a$ and $b$ are in $\hat{V}(\patx)$ and $e=(a,b) \in E(\patx)$ then $e=(a,b) \in \hat{E}(\patx)$ with weight $k_{e}=1$. 
If $a$ and $b$ are in $\hat{V}(\patx)$ and they are connected by a path in $G(\patx)$ with start point $a$ and end point $b$, which only crosses vertices 
in $\mathcal{I}_2(\patx)$ then $e=(a,b) \in \hat{E}(\patx)$. For $e\in \hat{E}(\patx)$ we define its weight $k_{e}$ equals to the number of crossed vertices in $\mathcal{I}_2(\patx)$.
Finally we denote by $\vec{k} = (k_{e})_{e\in \hat{E}(\patx)}$ the vector of weights.

We now introduce the definition of a normal graph, analogous to \cite[Definition 5.4]{BBK_spec_radii}.
\begin{defi}\label{def:normal_graph}
A pair $(U,\patchi)$ consisting of a graph $U$ and a path $\patchi=(\chi_1,e_1,\chi_2,e_2,...,e_{n-1},\chi_n)$ with $\chi_1,...,\chi_n \in V(U)$ and $e_1,...,e_{n-1} \in E(U)$ is \textup{normal} if the following holds.
\begin{enumerate}[label=(\roman*)]
    \item $V(U) = \{\chi_1,...,\chi_n\} = [s] $ where $s = |V(U)|$.
    \item The vertices of $U$ are visited by $\patchi $ in increasing order, i.e if $\chi_{i} \notin \{\chi_1,....,\chi_{i-1}\}$ then $\chi_i > \max_{j \le i-1} \chi_j$.
\end{enumerate}
 We set  $|\patchi| := n-1$ and call it the \textup{length of $\patchi$}. 
 For each $e \in E(U)$, we set $m_{e}(\patchi) := |\{ i \in [n-1] \colon e_i=e\}|$ the \textup{multiplicity} of $e$ in $\patchi$, i.e.\ the number of times the path $\patchi$ 
 traverses the edge $e$.
\end{defi}

A priori, the graph $(\hat{G}(\patx), \patx)$ for $\patx \in \boldsymbol C$ is not normal. Up to relabelling again the vertices of $\hat{G}(\patx)$ with the order of appearance in $\patx$ of each vertex, we obtain a unique $(U,\patchi) $ normal. Some of its properties are listed in the following lemma, which follows directly from the definition of a normal graph. 
\begin{lemma}\label{lem:eq_of_lem5.7}
The mapping $\mathcal G :\patx \mapsto (U, \patchi,\vec{k}) $ satisfies the following properties. 
\begin{enumerate}[label=(\alph*)] 
    \item It is injective on the set of equivalence classes $\boldsymbol C/\sim$.
    \item $g(U) = g(\patx)$, where $g(U)$ is the sum of the connected component genera of $U$.
    \item $\abs{E(\patx)} = \sum_{e\in E(U)} k_{e}$.
    \item $2m= \sum_{e \in E(U)}m_e(\patchi)k_e$ and $m_{e}(\patchi) \ge 2$ for all $e \in E(U)$.
\end{enumerate}

\end{lemma}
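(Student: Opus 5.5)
The plan is to verify the four properties directly from the construction of $\hat G(\patx)$ and its normal relabelling, treating the map $\mathcal G$ as a "contraction of degree-two chains followed by canonical labelling". Throughout, write $G=G(\patx)$, $\hat G=\hat G(\patx)$, and recall that $\mathcal I_2(\patx)$ consists of non-endpoint vertices with in- and outdegree one. The key structural observation is that the edges of $G$ are partitioned into maximal directed chains whose interior vertices lie in $\mathcal I_2(\patx)$ and whose endpoints lie in $\hat V(\patx)$. The one exception is a cycle consisting solely of $\mathcal I_2$-vertices. This cannot occur, since every connected component of $G$ contains some $x_0^s$, which is excluded from $\mathcal I_2$. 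Each such chain is sent to exactly one edge $e\in\hat E$, and $k_e$ records its number of edges. We interpret $k_e$ as the number of edges of $G$ represented by $e$, i.e.\ crossed interior vertices plus one, so that a direct edge has $k_e=1$.

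With this convention, (c) is immediate, since the chains partition $E(\patx)$. For (b), contracting a chain with $k_e$ edges and $k_e-1$ interior vertices into a single edge decreases both $|E|$ and $|V|$ by $k_e-1$. Connectivity of each component is preserved, so $|E|-|V|+1$ is unchanged componentwise. Relabelling does not affect genus.

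For (d), each step of the two walks $x^1,x^2$ traverses one edge of $G$, giving $2m$ traversals in total. Whenever a walk enters a chain at its initial vertex, it must traverse the whole chain, because interior vertices have out-degree one and are not endpoints of the walks. Consequently every edge in a given chain is traversed the same number of times, and this number equals $m_e(\patchi)$. Summing over chains yields $2m=\sum_e m_e k_e$. The condition $\patx\in\boldsymbol C$ says that every edge of $G$ is traversed either zero times or at least twice. Since edges of $E(\patx)$ are traversed at least once, this gives $m_e\ge 2$.

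For (a), we reconstruct the class $[\patx]$ from $(U,\patchi,\vec k)$. Expanding each edge $e$ of $U$ into a directed path with $k_e-1$ fresh interior vertices reproduces $G$ up to relabelling. Lifting $\patchi$ edge by edge then reproduces the concatenated walks, where the junction between $x^1$ and $x^2$ is marked in $\patchi$. Since $\sim$ identifies labellings up to permutation of $[N]$, and the normal labelling is canonical (vertices are numbered in order of first visit), two paths with the same image differ only by a vertex permutation, so they are equivalent.

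The main obstacle is the bookkeeping around the two walks: $\patchi$ must encode both walks, including where $x^2$ starts, which may lie in a different component, and $r(\patx)\in\{1,2\}$. I would handle this by letting $\patchi$ be the concatenation of the two walk images, together with a marked splitting index. This marker keeps injectivity intact and does not affect (b)--(d).
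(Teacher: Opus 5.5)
Your proposal is correct and takes the same approach as the paper, which only asserts that the lemma follows directly from the construction of $(U,\patchi,\vec k)$; your decomposition of $E(\patx)$ into maximal chains through $\mathcal{I}_2(\patx)$ is that argument written out. You read $k_e$ as the number of edges of the contracted chain (crossed $\mathcal{I}_2$-vertices plus one), which is the convention (c) needs, and your splitting marker in $\patchi$ is harmless and in fact redundant, since the split is forced by requiring the $k_e$-weights of the first walk's edges to sum to $m$.
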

We sum over all equivalence classes and obtain from the previous lemma and \eqref{eq:estimate_x_fixed} that 
\begin{align*}
    \mathrm{B}_{m} &\le \sum_{(U,\patchi,\vec{k})} N^{r(U)-g(U)} d^{ \sum_{e \in E(U)} k_e - \frac{1}{2}\sum_{e\in E(U)} m_{e}(\patchi)k_e} \\
    &= \sum_{(U,\patchi,\vec{k})} N^{r(U)-g(U)} d^{- \frac{1}{2}\sum_{e \in E(U)} k_e(  m_{e}(\patchi)-2)}.
\end{align*}
	We use that for all $e \in E(U)$ we have $k_e \ge 1$ and $m_{e}(\patchi) \ge 2$ to obtain that the exponent of  $d^{-1} $ above is lower bounded by $\frac{1}{2}|\patchi| - |E(U)|$ and therefore
	\begin{equation*}
	        \mathrm{B}_{m} \le \sum_{(U,\patchi,\vec{k})} N^{r(U)-g(U)} d^{|E(U)| - \frac{1}{2}|\patchi|} .
	\end{equation*}

	The summands in the previous sum do not depend on $\vec{k}$ and for fixed $(U,\patchi)$, each possible $\vec{k}$ satisfies $\sum_{e\in E(U)} m_{e}(\patchi) k_{e} = 2m$ with $m_e(\patchi) \ge 2$ for all $e \in E(U)$. Therefore, the number of possible $\vec{k}$ is bounded by the number of $(k_{e})_{e \in E(U)}$ satisfying $k_e \ge 1$ for all $e \in E(U)$ and $\sum_{e}k_{e} \le 2m$, which is given by
	\begin{equation*}
	     \sum^{2m}_{m^\prime= \abs{E(U)}} |\{ (k_e)_{e\in E(U)}\in (\mathbb{N}\setminus{\{0\}})^{E(U)} \colon \sum_e k_e =m^\prime\} |=\nc\sum^{2m}_{m^\prime = \abs{E(U)}} \binom{m^\prime-1}{\abs{E(U)}-1} \le 2m \left( \frac{6m}{\abs{E(U)}}\right)^{\abs{E(U)}}.
	\end{equation*}
		We obtain
	\begin{equation*}
	    	      \mathrm{B}_{m} \le \mathrm{B}_{m}^{(1)} + \mathrm{B}_{m}^{(2)}, 
	\end{equation*}
	where, for $r \in \{1, 2\}$, we introduced 
	\begin{align*}
	    	  \mathrm{B}_{m}^{(r)} &= 2m\sum_{\substack{(U,\patchi)\\ r(U)=r}}  N^{r(U)-g(U)} \left(\frac{6m}{|E(U)|}\right)^{|E(U)|}d^{|E(U)| - \frac{1}{2}|\patchi|}.
	\end{align*}
	We first show an upper bound on $\mathrm{B}_{m}^{(2)}$. 
	
	\begin{lemma}\label{lem:bounds_case_ru=r}
	    For all $\patx$ such that $\mathcal G (\patx)= (U,\patchi, \vec{k})$ satisfies $r(U)=2$, every vertex has degree at least $3$ except possibly the images of $\{x_{0}^s,x_m^{s} \colon s\in \{1,2\}\}$ that have degree at least $2$. Furthermore,  
\begin{align*}
    \abs{E(U)} \le 3g(U), ~~ \abs{V(U)} \le 3g(U),~~ \text{and} ~~ g(U)\ge 2.
\end{align*}	

\end{lemma}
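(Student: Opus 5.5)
The plan is a degree count in $U$, followed by the handshake identity applied to each connected component.

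\emph{Degree bound.} First I would establish the degree statement directly from the construction of $\hat G(\patx)$. Fix a vertex $x$ of $G(\patx)$ that is not one of the endpoints $x_0^s, x_m^s$. Each visit of a path $(x^s_0,\ldots,x^s_m)$ to $x$ uses one incoming and one outgoing edge, so as a directed graph $\mathrm{d}^+(x)\ge1$ and $\mathrm{d}^-(x)\ge1$. Since $\patx\in\boldsymbol C$, every directed edge is traversed at least twice in total.
\begin{itemize}
\item If $x$ has exactly one in-edge and one out-edge, then $x\in\mathcal I_2(\patx)$, so it is removed in $\hat V(\patx)$.
\item Every surviving non-endpoint vertex therefore has total (undirected) degree at least $3$.
\end{itemize}
This degree is preserved by the contraction $G(\patx)\to\hat G(\patx)$, because a path through $\mathcal I_2$-vertices is replaced by a single edge and the degrees at its ends do not change. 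The one delicate point is a loop: an $\mathcal I_2$-path returning to its start gives a loop at $a$, and I count it with degree $2$, consistent with the genus formula.

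For an endpoint $x_0^s$ or $x_m^s$, the edge leaving $x_0^s$ (or entering $x_m^s$) must be traversed a second time. The second traversal cannot start the walk there if $r(U)=2$ and the two walks lie in different components, so it comes from another pass through that vertex. That pass contributes a further incident edge, which gives degree at least $2$. I would check the possible coincidence of endpoints, such as $x_0^1=x_m^1$, in a short case distinction. I expect this endpoint bookkeeping to be the main obstacle.

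\emph{Counting edges and vertices.} Now use $r(U)=2$. Since $\patx\in\boldsymbol C$, no edge is shared by the two walks, so when $r=2$ the two components are exactly the images of walk $1$ and walk $2$. Each component $U_s$ contains the two special vertices $x_0^s, x_m^s$ (possibly equal) and no others. Summing degrees in $U_s$ gives
\[
2\abs{E(U_s)}\ \ge\ 3\abs{V(U_s)}-2 .
\]
Combining this with $g(U_s)=\abs{E(U_s)}-\abs{V(U_s)}+1$ yields $\abs{V(U_s)}\le 2g(U_s)$ and $\abs{E(U_s)}\le 3g(U_s)-1$.

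\emph{Lower bound on the genus.} Each component needs $g(U_s)\ge1$: a tree component would have a leaf or an edge traversed only once by a non-backtracking directed walk. More precisely, a directed walk that traverses every edge at least twice in a tree must traverse some edge in both directions, and these two directions are distinct directed edges, each then used only once. This is a contradiction, so $g(U)=g(U_1)+g(U_2)\ge2$.

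Summing over the two components gives $\abs{E(U)}\le 3g(U)-2\le 3g(U)$ and $\abs{V(U)}\le 2g(U)\le 3g(U)$, which proves the lemma.
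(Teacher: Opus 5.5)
Your argument is correct and follows the paper's route: the degree statement comes from the construction of $\hat G(\patx)$, then the handshake bound $2\abs{E(U_s)}\ge 3\abs{V(U_s)}-2$ is applied on each of the two components (one per walk) and summed, and your exact use of $\abs{V(U_s)}=\abs{E(U_s)}-g(U_s)+1$ even gives the sharper $\abs{V(U)}\le 2g(U)$ and $\abs{E(U)}\le 3g(U)-2$. Two justifications need repair: the two walks lie in different components because $r(U)=2$ and each walk's trace is connected, not because $\patx\in\boldsymbol C$ (which allows shared edges); and in the genus step the claim that the two opposite directed edges are each used only once is unjustified (e.g.\ $a\to b\to a\to b\to a$ uses both twice), since the real contradiction is that $(a,b)$ and $(b,a)$ are distinct edges forming a cycle in the undirected multigraph — and in fact $g(U_s)\ge 1$ follows at once from your own bound $1\le\abs{E(U_s)}\le 3g(U_s)-1$.
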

\begin{proof}
In that case it is important to notice that each subpath $(x_{0}^s,...,x_{m}^s)$ for $s = 1$ or $s = 2$ determines one connected component. Thus, the statement about the degrees of the vertices follows immediately. 
For each connected component $\cc \subset U$ we have
\begin{align}\label{eq:boundVcc1}
    2|E_{\cc}| = \sum_{v \in V_{\cc}} \mathrm{d}(v) \ge 4 + 3 (|V_{\cc}| -2 ) = 3|V_{\cc}| -2
\end{align}
and therefore
\begin{align*}
    g_{\cc} = \abs{E_{\cc}} - \abs{V_{\cc}} + 1 \ge \frac{1}{3} \abs{E_{\cc}}.
\end{align*}
Implementing the previous inequality in \eqref{eq:boundVcc1} and using that in this case we always have $g_{\cc} \ge 1$ we obtain the wanted upper bound for $V_{\cc}$. It remains to sum over all connected components.
\end{proof}

Since $g(U) \leq \abs{E(U)} \leq 3 g(U)$ by \cref{lem:bounds_case_ru=r}, we obtain
\begin{equation*}
    \mathrm{B}_{m}^{(2)}\leq 2m N^2\sum_{\substack{(U,\patchi)\\ r(U)=2}} N^{-g(U)} \left(\frac{6m}{g(U)}\right)^{3g(U)}d^{3g(U) - \frac{1}{2}|\patchi|}. 
\end{equation*}

For $e,\ell$ and $v$ fixed integers, the number of $(U,\patchi)$ such that $|E(U)| \le e$, $|\patchi| \le \ell$ and $|V(U)| \le v$ is bounded by $e^\ell v^e$, which can be proved as explained after 
\cite[proof of Lemma~5.8]{BBK_spec_radii}.  Therefore the number of $(U,\patchi)$ such that $g(U)=g$ and $|\patchi|= \ell$ is bounded by $(3g)^\ell (3g)^{3g}$ due to \cref{lem:bounds_case_ru=r}. 
Since $2 \leq \nc g=g(U) \leq \abs{E(U)} \leq 2m$ by \cref{lem:eq_of_lem5.7,lem:bounds_case_ru=r} and $\abs{\patchi} \leq \abs{V(\patx)} \leq 2m$, we obtain 
\begin{equation*}
       \mathrm{B}_{m}^{(2)} \leq 2m N^2\sum_{g=2}^{2m}\sum_{\ell=1}^{2m}\left(\frac{3g}{\sqrt{d}}\right)^\ell \left(\frac{C m^6d^3}{N}\right)^g 
\end{equation*}
 for a universal constant $C>0$. Hence, the bound $\sum_{\ell=1}^{2m} x^\ell \le 2m(1+x^{2m}) $ for all $x>0$ yields 
\begin{equation} \label{eq:proof_sums} 
    \mathrm{B}_{m}^{(2)} \le 4m^2 N^{2} \biggl(   \sum_{g=2}^{2m}  \biggl(\frac{C m^6d^3}{N}\biggr)^g + \sum_{g=2}^{2m}\biggl(\frac{3g}{\sqrt{d}}\biggr)^{2m} \biggl(\frac{C m^6d^3}{N}\biggr)^g \biggr).
\end{equation}
The conditions $\delta < 1/3$ and $1 \le d \le N^{\delta /4}$ imply that, for a small enough constant $c$ in the upper bound on $m$ in \cref{prop:bounds_from_radii_meth},  
\begin{equation}\label{eq:sum1}
    N^{2/3} \ge2 Cm^6d^3.
\end{equation}
Besides in the second sum on the right-hand side of \cref{eq:proof_sums}, the maximal summand is obtained for 
$$g_{\max}:= \frac{2m}{\log \left(\frac{N}{C m^6d^3}\right)}.$$
Hence, \cref{eq:sum1} and possibly shrinking $c$ in the upper bound on $m$ imply 
\begin{equation}\label{eq:max_sum2}
    \frac{3g_{\max}}{\sqrt{d}} \le \frac{1}{2}.
\end{equation}
Finally, we apply \cref{eq:sum1} and \cref{eq:max_sum2} in \cref{eq:proof_sums} and arrive at   
\begin{align}\label{eq:up_bound_for_B^1}
    \mathrm{B}_{m}^{(2)} \le C m^{14} d^{6}.
\end{align}

To bound $\mathrm{B}^{(1)}_{m}$ from above, we use the next lemma. 
\begin{lemma}\label{lem:bound_case_ru<r}
    Let $\patx \in \boldsymbol C$. For $(U, \patchi, \vec{k})=\mathcal G(\patx)$, the following holds. 
    \begin{enumerate}
      \item  
      There are at most four vertices in $V(U)$  with degree one or two and all other vertices have degree at least three. 
      \nc 
        \item $|E(U)| \le 3g(U) + 5$ and $|V(U)| \le 2g(U) + 6$.
    \end{enumerate}
\end{lemma}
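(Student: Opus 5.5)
We argue as in the proof of \cref{lem:bounds_case_ru=r}, but now $U$ may be connected ($r(U)=1$), and the two walks $(x_0^s,\ldots,x_m^s)$, $s\in\{1,2\}$, may share vertices.

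\textbf{Degrees.} Fix $\patx$ and write $(U,\patchi,\vec k)=\mathcal G(\patx)$. The graph $G(\patx)$ is the union of the two directed walks of length $m$. Let $y\in V(\patx)$ be a vertex other than the four endpoints $x_0^s,x_m^s$. Each visit of a walk to $y$ contributes one incoming and one outgoing edge traversal. Every edge is traversed at least twice by the definition of $\boldsymbol C$.

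Now suppose $y$ has total degree $\le 2$ in the underlying undirected graph (after collapsing multi-edges). Then it has exactly one incoming and one outgoing edge, since both in- and out-degree are at least one. So $\mathrm d^+(y)=1=\mathrm d^-(y)$, and $y\in\mathcal I_2(\patx)$ is removed in $\hat G(\patx)$.

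Removing vertices of $\mathcal I_2$ only replaces paths by weighted edges. This does not change the degrees of the surviving vertices, except in one case: a surviving vertex whose only neighbour-paths collapse. This is exactly the situation where loops or multi-edges appear, and those are still counted with multiplicity. Hence every vertex of $U$ that is not the image of one of the four endpoints has degree $\ge 3$. At most four vertices, namely the images of $x_0^1,x_m^1,x_0^2,x_m^2$, can have degree one or two. This proves item 1.

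I expect the main care to be needed in checking that collapsing $\mathcal I_2$ cannot create new low-degree vertices. Isolated cycles consisting only of $\mathcal I_2$ vertices cannot occur, since every component contains an endpoint.

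\textbf{Counting.} We have $r(U)\le 2$. Summing degrees gives
\[ 2|E(U)|\ge 3(|V(U)|-4)+4=3|V(U)|-8, \]
so $|V(U)|\le \tfrac23|E(U)|+\tfrac83$. By definition of the genus summed over components,
\[ g(U)=|E(U)|-|V(U)|+r(U)\ge |E(U)|-|V(U)|+1. \]
Inserting the vertex bound gives
\[ g(U)\ge \tfrac13|E(U)|-\tfrac53, \]
that is, $|E(U)|\le 3g(U)+5$. Finally,
\[ |V(U)|\le |E(U)|-g(U)+r(U)\le 2g(U)+5+2. \]
To reach $2g(U)+6$, we use $r(U)=1$ in the sharp case; if $r(U)=2$, we instead apply the per-component bound from \cref{lem:bounds_case_ru=r}, which is better. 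Alternatively, $|V(U)|\le \tfrac23(3g+5)+\tfrac83=2g+6$ directly. This gives item 2.
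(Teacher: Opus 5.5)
Your proposal is correct and is essentially the paper's proof: the handshake bound $2|E(U)|\ge 3|V(U)|-8$ from the at most four low-degree vertices, then $g(U)=|E(U)|-|V(U)|+r(U)\ge\frac13|E(U)|-\frac53$ and $|V(U)|\le\frac23|E(U)|+\frac83\le 2g(U)+6$, with your degree discussion spelling out what the paper calls a consequence of the construction. Two small repairs are needed: degrees must be $\mathrm d^+(y)+\mathrm d^-(y)$ counted with multiplicity, not degrees after collapsing multi-edges, because this is what $2|E(U)|=\sum_v\mathrm d(v)$ requires and what removing $\mathcal I_2(\patx)$ preserves exactly (a non-endpoint $y$ with edges $(a,y),(y,a),(b,y)$ has collapsed degree $2$ yet $y\notin\mathcal I_2(\patx)$); and you should drop the detour through $r(U)$ for $|V(U)|$, since the stated bound $|V(U)|\le 3g(U)$ of \cref{lem:bounds_case_ru=r} is worse than $2g(U)+6$ when $g(U)>6$, while your direct computation already gives the claim.
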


\begin{proof}
    The first point is a consequence of the construction of $(U, \patchi, \vec{k})$. 
    For the second point, we conclude from the first one that 
    \begin{align*}
        2|E(U)| = \sum_{v \in V(U)} \mathrm{d}(v) 
        \ge 4 + 3(|V(U)| - 4) = 3 |V(U)| - 8. 
    \end{align*}
    Hence, 
    \begin{align*}
        g(U) = |E(U)|- |V(U)| + r(U) \ge \frac{1}{3} |E(U)| - \frac{5}{3}.
    \end{align*}
    Note that $r(U) \in \{1 ,2 \}$. 
\end{proof}
We use $|E(U)| \ge \frac{1}{2}(g(U)+1)$  and \cref{lem:bound_case_ru<r} to obtain 
\begin{align*}
    \mathrm{B}_{m}^{(1)} &\le \sum_{\substack{(U,\patchi)\\ r(U)=1}} N^{r(U)-g(U)} \left(\frac{12m}{g(U)+1}\right)^{3g(U)+5}d^{3g(U) - \frac{1}{2}|\patchi| + 5}.
\end{align*}
 We argue analogously to the estimate of $\mathrm{B}_{m}^{(2)}$, to obtain that there are at most $(3g+ 5)^\ell (2g + 6)^{3g+5}$ possible $(U,\patchi)$ satisfying $g(U)=g$ and $|\patchi|=\ell$. Therefore, we proceed analogously to our bounds on $\mathrm{B}_m^{(2)}$, i.e.\ use  $g(U) \leq \abs{E(U)} \leq 2m$ by \cref{lem:eq_of_lem5.7},
to find a universal constant $C>0$ such that 
\begin{align*}
        \mathrm{B}_{m}^{(1)} &\le N (Cm^2d)^{5}\sum_{g=0}^{2m} \sum_{\ell=1}^{2m}\left(\frac{3g+5}{\sqrt{d}}\right)^\ell \left(\frac{C m^6d^3}{N}\right)^g\\
        &\le  N(Cm^2d)^{5} 2m \biggl(  \sum_{g=0}^{2m}  \left(\frac{Cm^6d^3}{N}\right)^g + \sum_{g=0}^{2m}\left(\frac{3g + 5}{\sqrt{d}}\right)^{2m} \left(\frac{C m^6d^3}{N}\right)^g \biggr)\\
        &\le N (Cm^2d)^{6} 
\end{align*}
where we used that the maximum of the last summand above is reached for the same value $g_{\max} $ as in the estimate of $\mathrm{B}_m^{(2)}$, which satisfies $\frac{3g_{\max}+ 5}{\sqrt{d}} \le \frac{1}{2}$ by \cref{eq:sum1} and the upper bound on $m$.  Up to considering $C>0$ large enough, we have $\mathrm{B}_{m}^{(2)} \le N(Cm ^2d)^6$ and therefore, owing to the conditions on $m$ and $d$ from \cref{prop:bounds_from_radii_meth}, 
\cref{eq:bound_pb_Am2} implies
\begin{align*}
    \mathbb{P} \bigl( |\langle \mathbf e, X^m \mathbf e \rangle| \ge N^{-1/2+ \epsilon} \bigr)\le N^{-1-2\epsilon} {\mathrm{B}_{m}}\le 2N^{-2\epsilon} {(Cm^2 d)^6}  \le C^{\prime}  N^{4\delta - 2 \epsilon} \le C^\prime N^{-\epsilon}.
\end{align*}
This completes the proof of \cref{prop:bounds_from_radii_meth}. 

	\section{Proofs of main results} 
	
	In this section, we prove the remaining main results from \cref{sec:main_results}, i.e.\ 
	\cref{thm:condideloc} and \cref{cor:delocalization_unconditional}. 
	In fact, they are all consequences of the local law for the Hermitization $H(w)$.

	\subsection{Eigenvector Delocalization -- Proof of \cref{thm:condideloc}} \label{sec:proof_delocalization} 

		\begin{proof}[Proof of \cref{thm:condideloc}]
   
		We first prove the delocalization of eigenvectors assertions. For $\# \in \{\mathrm{l},\mathrm{b},\mathrm{e}\}$ and $\eta_{\delta}= N^{-1+ \delta/2}$ with very high probability on the event $\{\chi_{\#}=1\}$, 
		\begin{equation}\label{eq:unifbound}
			\sup_{w \in \mathrm{S}_{\#}} \max_{x \in [N]}\abs{G_{\ubarr{x}\ubarr{x}}(w, \ii \eta_\delta) - \ii v_{\ubarr{\beta}_{x}}(w, \ii \eta_{\delta})} \le \mathcal{C}.
		\end{equation}
	 Indeed, if we suppose that \cref{eq:unifbound} holds true with very high probability then we can conclude the proof of the theorem as follows. Note that if $\mathbf u$ is an eigenvector of $X$ with eigenvalue $w$ then $(0,\ldots, 0,\mathbf u) \in \C^{2N}$ lies in the kernel of $H(w)$. 
		Let $\# \in \{\mathrm{l},\mathrm{b},\mathrm{e}\}$ and $\mathbf{u}$ be a normalized eigenvector of $X$ with eigenvalue $w \in \mathrm{S}_{\#}$. Since $\eta $ is fixed we omit it in the notations. We consider the normalized eigenvector of $H(w)$ with eigenvalue $0$ and defined by $\mathbf{v} = (0,...,0, \mathbf{u}) \in \CC^{2N}$. We complete with an orthonormalized basis of eigenvectors $(\mathbf{v}_{i})_{i \in [2N-1]}$ of $H(w)$ and we have
		\begin{equation*}
			\mathcal{C} + v_{\ubarr{\beta}_x}(w)\ge \Ima G_{\ubarr{x}\ubarr{x}} (w)  = \sum_{i=1}^{2N} \eta_{\delta} \frac{\abs{\langle e_{\ubarr{x}},\mathbf{v}_i\rangle}^2}{\lambda_{i} (H(w))^2 + \eta_{\delta}^2} \ge \frac{\abs{\langle e_{x},\mathbf{u} \rangle}^2}{\eta_{\delta}}. 
		\end{equation*}
		 We conclude from \cref{eq:def_v_beta}, \cref{eq:encadrementmi} and \cref{lem:roughboundonbetaix}  that with very high probability,   $v_{\ubarr{\beta}_{x}}(w)  \lesssim_{\delta} \left( 1 + \frac{\log N}{d} \right)$ uniformly for $x\in [N]$ and $w \in \mathrm{S}_{\#}$ on the event $\{\chi_{\#}=1\}$. Since the previous inequalities are independent of the choice of the coordinate $x$, we obtain $\norm{\mathbf u}^2_\infty \le \mathcal C \left( 1 + \frac{\log N}{d} \right) N^{-1+ \delta/2} \le \mathcal C N^{-1+ \delta/3}$, which concludes the proof.  
		 
For the proof of \cref{eq:unifbound}, we note that without the supremum over $w$ the bound holds uniformly for $w \in \mathrm{S}_\#$ due to \cref{thm:locallaw_H} and \cref{thm:locallaw_edge} if $d \asymp d_\#$. To include the supremum, it suffices to use a grid argument and the Lipschitz-continuity of $G_{\ubarr{x}\ubarr{x}}(\cdot, \ii \eta)$ and $v_{\ubarr{\beta}_x}( \cdot, \eta)$ for $\eta = N^{-1 + \delta/2}$, which we check now. 
		From \cref{lem:disttospect} and the dependence of $H(w)$ on $w$ in \cref{hermitizationofM}, we conclude that 
		$G_{\ubarr{x}\ubarr{x}}(\cdot, \ii \eta)$ is Lipschitz-continuous on $\mathrm{D}_2$ with constant $N^2$. 
		
		For the regularity of $v_{\ubarr{\beta}_x}$ we differentiate \cref{eq:inverse_v_beta} with respect to $w$ and obtain
		\begin{equation}\label{eq:deriv_v_beta}
			\partial v_{\beta} = - \beta^1 v_{\beta}^2 \partial v - \frac{\barr{w}}{\eta + \beta^2 v} v_{\beta}^2 + \frac{\abs{w}^2}{(\eta + \beta^2 v)^2}  \beta^{2} v_{\beta}^2 \partial v.
		\end{equation}
        First, we take $\beta = (1,1)$ in \cref{eq:deriv_v_beta} and deduce 
		\begin{equation*}
			\left( 1  - \abs{w}^2\frac{ v^2 }{(\eta+ v)^2} + v^2 \right) \partial v = -\frac{\barr{w}}{\eta + v} v^2.
		\end{equation*}
		For $w\in \mathrm{D}_2$  and owing to \cref{eq:v} we have $1 - \abs{w}^2\left( v/(v+ \eta)\right)^2 = v^2 + \eta/(v+\eta) $ and therefore
$\abs{ \partial v } \le N$ for all $(w, \eta) \in \mathrm{D}_2\times [N^{-1}, \infty)$. In particular, $w \mapsto  v(w,\eta)$ is $2N$-Lipschitz on $\mathrm{D}_2$ for all $\eta \geq N^{-1}$. Thus, \cref{eq:deriv_v_beta} implies that $w \mapsto v_{\beta_x}(w,\eta)$ is Lipschitz-continuous on $\mathrm{D}_2$ with constant $N^6$ for each $\eta \in [N^{-1},\infty)$ due to the (rough) bounds $v_{\beta}(w,\eta) \leq N$ for $\eta \in [N^{-1},\infty)$ and $\beta_x^i \leq N$. This completes the proof of the Lipschitz-continuity and, hence, the one of 
		\cref{thm:condideloc}. 
	\end{proof}
	
	\subsection{Proof of \cref{cor:delocalization_unconditional}} \label{sec:proof_cor_deloc_ER_graph}

	\begin{proof}[Proof of \cref{cor:delocalization_unconditional}] 
	We note that $\mathrm{Adj}(\mathbb G)/\sqrt{d}$ is of the form \eqref{eq:def_M_X_plus_f} for some matrix $X$ as in \cref{defiX} and $f = \sqrt{d}$. Owing to \cref{thm:condideloc} \cref{item:thm_cond_deloc_large_eigenvec} and the fact that $d \geq (1+\delta) \log N \ge 2$ we have delocalization of eigenvectors with eigenvalue in $\{\sqrt{d}\delta < |\lambda| < 2 \sqrt{d}\} $. Besides, owing to \cref{prop:spectrum_M} with probability $1 - o(1)$ all eigenvalues are within the $\mathrm{D}_{(1+\delta) \sqrt{d}}$. \nc Hence, owing to \cref{thm:condideloc} \cref{item:thm_cond_deloc_small_eigenvec}, 
		it suffices to check 
		that for some small enough $\eps>0$,  the event $\{ \beta_x^i \geq \eps \text{ for all } x,i \}$ occurs with probability at least $1-o(1)$ if $d \geq (1 + \delta) \log N$. In order to use a Poisson approximation on Bernoulli variables, we first compare the variables $(\beta_{x}^i)_{i,x}$ to the normalized in and out degree variables of  $\mathbb{G}$ given by 
		\begin{equation*}
			\alpha_{x}^1 := \frac{1}{d} \sum_{y}^{(x)} \mathrm{Adj}(\mathbb G)_{xy}  ~~\text{and}~~\alpha_{x}^2 := \frac{1}{d} \sum_{y}^{(x)} \mathrm{Adj}(\mathbb G)_{yx}
		\end{equation*}
	 for all $x \in [N]$ and $i\in \{1,2\}$. Note that  
		\begin{equation*}
			\beta_{x}^i = \biggl(1-2 \frac{d}{N}\biggr)\alpha_{x}^i +   O\bigg(\frac{d}{N}\bigg). 
		\end{equation*}
	Therefore, for any $\eps >0$, a union bound and the identical distribution of  $(\beta_x^i)_{i,x}$ yield 
		\begin{align*}
			\mathbb{P}:= \mathbb{P}\big( \exists x \in [N], i \in \{1,2\},~ \beta_{x}^i \le \eps  \big) \le 2N \mathbb{P}\big(\beta_x^1 \le \eps \big) \le 2N\mathbb{P} \big( d \alpha_1^1 \le 2 \eps d\big)
		\end{align*}
		 for all large enough $N$. 
		Note that $d\alpha_x^i$ is a binomial random variable of parameter $d/N$. Owing to \cite[Lemma D.3]{alt2023poisson}, denoting $\mathcal P_{d}$ a Poisson random variable of parameter $d$ we have
		\begin{align*}
			\mathbb{P} \big( d \alpha_1^1 \le 2 \eps d\big) &\le \mathbb{P} \big(\mathcal{P}_d \le 2 \eps d\big) +  O (e^{-N^{1/4}})\\
			&\le e^{2\eps d(1- \log 2\eps) -d} +  O (e^{-N^{1/4}})
		\end{align*}
		Finally considering $d \ge (1+ \delta) \log N$ we obtain 
		\begin{equation*}
			\mathbb{P}\le 2 \exp\big(-\log N \big( (1+ \delta ) (1 - 2 \eps ( 1- \log2\eps)) -1 \big)\big) +  O ( N e^{-N^{1/4}}).
		\end{equation*}
		For some small enough $\eps>0$  we have $\kappa_{\delta} :=(1+ \delta ) (1 - 2 \eps ( 1- \log2\eps)) >1 $ and $\mathbb{P} \le 3N^{-\kappa_{\delta} +1}$.
	\end{proof}

	\section{Proofs of preliminaries} 
	
	\subsection{Proof of statements in \cref{subsection:Approx_sc_eq} }\label{subsection:proofs_approx_sc}
		\begin{proof}[Proof of \cref{lem:roughboundsGT}]
		Throughout the proof we work on the event $\{\theta =1\}$. After relabeling vertices we can suppose that $T= [k]$. For all $k \in [N]$ we set 
		$$\Gamma_k = 1 \vee  \max_{x,y \notin T \cup \ubarr{T}} \abs{G^{\bset{T}}_{xy}}, $$
		and we show by induction on $k$ that there exists $\mathcal{C} >0 $ a constant such that
		\begin{equation}\label{eq:inductionass}
			\Gamma_k \le \Gamma_0 \left( 1 + \frac{ 36 \mathcal{C}\Gamma^2}{d}\right)^k
		\end{equation}
		for all $0 \le k \le \frac{d}{ 72 \mathcal{C}\Gamma^2}$. 
		Once \cref{eq:inductionass} is proved, \cref{eq:boundGTGTu} follows immediately as combined with $\Gamma_0 \le \Gamma$ and $1+ x \le \mathrm{e}^x$, \cref{eq:inductionass} implies $\Gamma_k \le \mathrm{e}^{1/2} \Gamma_0 \le 2 \Gamma$.
		
		We now turn to the proof of \cref{eq:inductionass} and note that the initial step $k=0$ is trivial. For the induction step, let $T = [k]$ and $u =k+1$. 
		We will estimate $G_{xy}^{\bset{Tu}}$ for $x,y \in [2N]\setminus ( T \cup \ubarr{T} \cup \{u,\ubarr{u}\})$ through its representations in \cref{eq:32a} and \cref{eq:A32b}. 
		We start with the former. 
		Since  $0 \le f \le N^{\delta/6}$ and $\Ima z \ge N^{-1 + \delta}$, the Cauchy-Schwarz inequality,   \cref{lemma:wardidentity} and $\Gamma_{k+1} \geq 1$ imply
		\begin{align}
			\frac{f}{N}\biggl|\sum_{a}^{(Tu)} G_{x\ubarr{a}}^{\bset{Tu}} G_{uy}^{\bset{T}}\biggr|&\le N^{\delta/6 -1} \sqrt{N} \biggl( \sum_{a\in [2N] \backslash ( T \cup \ubarr{T} \cup \{u,\ubarr{u}\})}\left|G_{xa}^{\bset{Tu}}\right|^2\biggr)^{1/2}\Gamma_{k}
			\le N^{-\delta/3}  \Gamma_{k+1}\Gamma_k. \label{eq:boundoforf1}
		\end{align}
	 The same argument yields the same bound on the absolute value of $\frac{f}{N} \sum_{a}^{(Tu)} G_{xa}^{\bset{Tu}} G_{\ubarr{u}y}^{\bset{T}}$. Moreover, we apply     \cref{lemma:dev1} to the family $ (G_{x\ubarr{a}}^{\bset{Tu}})_{a}$, which is independent of the family $(X_{au})_{a}$, with the choices $\psi = \frac{\Gamma_{k+1}}{\sqrt{d}}$, $\gamma = \sqrt{\frac{\Gamma_{k+1}}{N\Ima z}}$. 
		Note that $\psi/\gamma \geq N^{\delta/4}$ as $\Gamma_{k+1} \geq 1$ and $d \leq N^{\delta/2}$. 
		Together with \cref{lemma:wardidentity}, this yields
		\begin{align}\label{eq:useofdev11}
			\biggl| \sum_{a}^{(Tu)} G_{x\ubarr{a}}^{\bset{Tu}} \barr{X}_{au} G_{uy}^{\bset{T}} \biggr| \le  \frac{\mathcal{C}}{\sqrt{d}}\Gamma_{k+1} \Gamma_{k}
		\end{align}
		with very high probability. The same upper bound holds for $\sum_{a}^{(Tu)} G_{xa}^{\bset{Tu}} X_{au} G_{\ubarr{u}y}^{\bset{T}}$. 
		Therefore, we deduce from \cref{eq:32a} and $d \leq N^{\delta/2}$ as a first upper bound, holding with very high probability, 
		\begin{equation}\label{eq:upbound1}
			\Gamma_{k+1} \le \Gamma_{k} + \frac{\mathcal{C}}{\sqrt{d}} \Gamma_{k+1}\Gamma_k.
		\end{equation}
		We now use \cref{eq:A32b} to derive a second bound on $\Gamma_{k+1}$. 
		Analogously to \cref{eq:useofdev11}, we show that 
		\begin{equation}\label{eq:useofdev12}
			\biggl| \sum_{a,b}^{(Tu)} G_{x\ubarr{a}}^{\bset{Tu}} \barr{X}_{au} G_{uu}^{\bset{T}} X_{ub} G_{\ubarr{b}y}^{\bset{Tu}}\biggr|\le \frac{\mathcal{C}}{d} \Gamma^{2}_{k+1} \Gamma_{k}
		\end{equation}
		with very high probability. The same upper bound holds for all summands in $\mathrm{R}_2$ from \cref{eq:def_R_2} and, thus, for $\mathrm{R}_2$ itself.  
		Similarly as \cref{eq:boundoforf1} and \cref{eq:useofdev11}, we obtain 
		\begin{equation}\label{eq:upboundforf2}
			\frac{f}{N}\biggl| \sum_{a,b}^{(Tu)} G_{x\ubarr{a}}^{\bset{Tu}} \barr{X}_{au} G_{uu}^{\bset{T}} G_{\ubarr{b}y}^{\bset{Tu}} \biggr| \le N^{- \delta/ 3 }\frac{\mathcal{C}}{\sqrt{d}} \Gamma_k \Gamma_{k+1}^{2}
		\end{equation}
		with very high probability. 
		The previous bound holds for all the terms in $\mathrm{R}_1$ from \cref{eq:def_R_1}. 
		Since $\mathrm{R}_0$ from \cref{eq:defireste} is bounded as \cref{eq:boundoforf1}, 
		we finally arrive at the second bound 
		\begin{equation}\label{eq:upbound2}
			\Gamma_{k+1} \le \Gamma_{k} + \frac{\mathcal{C}}{d} \Gamma_{k+1}^2 \Gamma_k
		\end{equation}
		with very high probability. 
		We will now combine \cref{eq:upbound1} and \cref{eq:upbound2} to obtain \cref{eq:inductionass} for $k + 1$. Let $\mathcal C$ be chosen such that \cref{eq:upbound1} and \cref{eq:upbound2} hold. As explained after \cref{eq:inductionass}, $\Gamma_k \leq  2 \Gamma$ follows from the induction hypothesis \cref{eq:inductionass} since $ k \le \frac{\mathfrak{c} d}{\Gamma^2}$ with $\mathfrak c = \frac{1}{72 \mathcal C}$. 
		Thus, $\mathcal{C}\Gamma_k/\sqrt{d} \le 2\mathcal{C}\Gamma/ \sqrt{d}\le 1/2$ due to the lower bound $d \geq \frac{\Gamma^2}{64\mathfrak c}$. Hence, \cref{eq:upbound1} yields the rough bound 
		\begin{equation}\label{eq:roughaprioriboundGTule2GT}
			\Gamma_{k+1} \le 2 \Gamma_k
		\end{equation}
		with very high probability. We plug \cref{eq:roughaprioriboundGTule2GT} into \cref{eq:upbound2} and conclude 
		\begin{equation*}
			\Gamma_{k+1} \le \Gamma_{k} \left( 1 + \frac{4\mathcal{C}}{d}  \Gamma_{k}^2 \right) \le \Gamma_{k} \left( 1 + \frac{16\mathcal{C}\Gamma^2}{d} \right),
		\end{equation*}
		where the second step is a consequence of $\Gamma_{k} \le 2 \Gamma$.
		This completes the proof of \cref{eq:inductionass} for $k +1$ and, thus, the one of \cref{eq:boundGTGTu}.
		
		To prove \cref{eq:GTandGTuclose}, we start from \cref{eq:A32b} and proceed 
		analogously to the proof of \cref{eq:upbound2} using \cref{eq:boundGTGTu} to conclude. Indeed, owing to \cref{eq:boundoforf1} and \cref{eq:boundGTGTu} we have $\mathrm{R}_0 \vee \mathrm{R}_1 \le \mathcal C \Gamma^ 2 N^{-\delta/4} \le \mathcal C \Gamma^2d^{-1}$ and owing to \cref{eq:upbound1} we have $\mathrm{R}_2 \le \mathcal C \frac{\Gamma^3}{d}$.  
	\end{proof}

\begin{proof}[Proof of \cref{lem:roughboundsrest}]
		We fix $x \in [N]$ and work exclusively on the event $\{\theta=1\}$. Owing to  \cref{eq:boundGTGTu} in \cref{lem:roughboundsGT}, there is a constant $\mathcal{C}>0$ such that 
		\begin{equation*}
			\max_{a,b \in [2N]} \abs{G_{ab}} \le \mathcal{C} \Gamma,~~\text{and}~~\max_{a,b \in [2N]\backslash\{x,\ubarr{x}\}} \absb{G_{ab}^{\bset{x}}} \le \mathcal{C} \Gamma
		\end{equation*}
		with very high probability. 
		We prove the upper bound \cref{eq:boundrest} for $Y^{1}_{x}$ and $Z^{1}_x$. The proof is independent of the choice $x \in [N]$ and for $Y^2_x$ and $Z^{2}_x$ the proofs are similar. We treat the terms appearing in $Y_{x}^1$ in \cref{lem:ASCEq} from the left to the right. For the first term appearing in $Y_{x}^1$ we have
		\begin{equation*}
			\frac{\max_{a,b \in [2N]} \absb{G^{\bset{x}}_{ab}}}{d} \le\mathcal{C} \frac{\Gamma}{d}   =: \psi,
		\end{equation*}
		and applying \cref{lemma:wardidentity}, for all $b\in [N]$ we have
		\begin{equation*}
			\frac{1}{N}\sum_{a}^{(x)}\absb{G_{\ubarr{a}\ubarr{b}}^{\bset{x}}}^2\le \frac{1}{N}\sum_{a \in [2N] \backslash\{x,\ubarr{x}\}}\absb{G_{a\ubarr{b}}^{\bset{x}}}^2= \frac{1}{N\Ima z} \Ima G_{\ubarr{b}\ubarr{b}}^{\bset{x}} \le  \mathcal{C}\frac{\Gamma}{N\Ima z} ,
		\end{equation*}
		and for all $a \in [N]$
		\begin{equation*}
			\frac{1}{N}\sum_{b}^{(x)}\absb{G_{\ubarr{a}\ubarr{b}}^{\bset{x}}}^2\le \frac{1}{N}\sum_{b \in [2N] \backslash\{x,\ubarr{x}\}}\absb{G_{\ubarr{a}b}^{\bset{x}}}^2= \frac{1}{N\Ima z} \Ima G_{\ubarr{a}\ubarr{a}}^{\bset{x}}  \le  \mathcal{C}\frac{\Gamma}{N\Ima z} =: \gamma^2.
		\end{equation*}
		In regard of \cref{defiX} and applying the second estimate of \cref{lemma:dev1} to the variables $(G_{ab}^{\bset{x}})_{a,b}$ independent of $(X_{xa})_{a}$ we have with very high probability 
		\begin{equation}
			\absbb{ \sum_{a\neq b}^{(x)} X_{xa} G_{\ubarr{a}\ubarr{b}}^{\bset{x}} \barr{X}_{xb} } \le \mathcal{C} \frac{\Gamma}{d}.
		\end{equation}
		For the second term appearing in the rest $Y^{1}_x$ we first rewrite it
		\begin{equation*}
			\frac{f}{N} \absbb{ \sum_{a,b}^{(x)} X_{xa} G^{\bset{x}}_{\ubarr{a}\ubarr{b}}} = \frac{f}{N}\absbb{ \sum_{a}^{(x)} X_{xa} \underbrace{\sum_{b}  G^{\bset{x}}_{\ubarr{a}\ubarr{b}}}_{\alpha_a}}
		\end{equation*}
		in order to then apply the first estimate of \cref{lemma:dev1}. Indeed on the one hand we have
		\begin{equation*}
			\frac{\alpha_a}{\sqrt{d}} \le \sqrt{\frac{N}{d}} \biggl( \sum_{b}^{(x)} \absb{G^{\bset{x}}_{\ubarr{a}\ubarr{b}}}^2 \biggr)^{1/2} \le \sqrt{\frac{N}{d}} \biggl( \sum_{b\in [2N] \backslash \{x,\ubarr{x}\}} \absb{G^{\bset{x}}_{\ubarr{a}b}}^2 \biggr)^{1/2} = \sqrt{\frac{N\Ima G^{\bset{x}}_{\ubarr{a}\ubarr{a}}}{d\Ima z}} \le  \mathcal{C}\sqrt{\frac{N \Gamma}{d\Ima z}}=:\psi
		\end{equation*}
		where we used Cauchy-Schwarz for the first inequality, Ward identity \cref{eq:wardidentity} for the equality and finally $\theta=1$ and \cref{eq:boundGTGTu}. On the other hand we denote $ \mathds{1}_{x,\ubarr{N}}:= (\mathds{1}(b \ge 1, b \neq \ubarr{x}))_{b\in [2N]} \in \CC^{2N}$ and we have
		\begin{align*}
			\biggl(\frac{1}{N}\sum_{a}^{(x)} \abs{\alpha_a}^2 \biggr)^{1/2} = \biggl(\frac{1}{N}\underbrace{\sum_{a}^{(x)} \absbb{\sum_{b}^{(x)} G_{\ubarr{a}\ubarr{b}}^{\bset{x}}}^2}_{= \|G^{\bset{x}}\mathds{1}_{x,\ubarr{N}}\|^2} \biggr)^{1/2} \le \frac{1}{\sqrt{N}} \|G^{\bset{x}}\|\|\mathds{1}_{x,\ubarr{N}}\| \le \frac{1}{\Ima z} =: \gamma
		\end{align*}
		where we used the fact that $\|\mathds{1}_{x,\ubarr{N}}\|= \sqrt{N-1}$ and in the last step, \cref{lem:disttospect}. The last step to apply \cref{lemma:dev1} is to upper bound the ratio of $\psi$ and $\gamma$. We have
		\begin{equation*}
			\frac{\psi}{\gamma} = \mathcal{C}\sqrt{\frac{N \Gamma \Ima z }{d}}\ge \mathcal{C}N^{\delta/2}\sqrt{\frac{\Gamma }{d}} \ge N^{\delta/4}
		\end{equation*}
		and therefore
		\begin{equation}\label{eq:bounddoublesum}
			\frac{1}{N} \absbb{ f\sum_{a,b}^{(x)} X_{xa} G^{\bset{x}}_{\ubarr{a}\ubarr{b}}} \le  \mathcal{C}\abs{ f}\sqrt{\frac{\Gamma}{Nd\Ima z}}\le \mathcal{C}N^{-2\delta/3}\Gamma^{1/2}\le \mathcal C \frac{\Gamma}{d}.
		\end{equation}
		The third term $f/N\sum_{a,b}^{(x)} G_{\ubarr{a}\ubarr{b}}^{\bset{x}} \barr{X}_{xb}$ is bounded with exactly the same reasoning. For the fourth term appearing in $Y^{1}_{x}$ we have
		\begin{equation}\label{eq:upboundforfsquare}
			\frac{1}{N^2 }\absbb{f^2\sum_{a,b}^{(x)} G_{\ubarr{a}\ubarr{b}}} \le  \mathcal{C} \abs{f}^2 \sqrt{ \frac{N \Gamma}{ \Ima z}} \le \mathcal{C} N^{-\delta/6} \sqrt{\Gamma} \le \mathcal{C}\frac{\Gamma}{d},
		\end{equation}
		where we apply Cauchy-Schwarz and Ward identity \cref{eq:wardidentity} for the first inequality. This concludes the proof for $Y^1_x$. We now rewrite $Z^1_x$ as follows
		\begin{align}\label{eq:lasttermofY1x}
			Z^1_x = X_{xx} -\sum_{a,b}^{(x)} \biggl(X_{xa}G_{\ubarr{a}b}^{\bset{x}} X_{bx}  +\left(\frac{f}{N}\left[ X_{xa}G_{\ubarr{a}b}^{\bset{x}} + G_{\ubarr{a}b}^{\bset{x}}X_{bx} \right] + \frac{f^2}{N^2} G_{ \ubarr{a}b}^{\bset{x}}\right) \biggr).
		\end{align}
		By definition we have $\abs{X_{xx}} \le \mathcal{C}d^{-1/2}$ for $\mathcal{C} >0 $ large enough. We handle the first term of the sum above applying the third estimate of \cref{lemma:dev1}. For all $a,b\in [N] \backslash \{x\}$ we set $X_{a} = X_{xa}$ and $Y_{b} = X_{bx}$. 
		Besides we have
		\begin{equation*}
			\frac{\max_{a,b} \absb{G_{\ubarr{a}b}^{\bset{x}}}}{d} \le\mathcal{C} \frac{\Gamma}{d} =: \psi,~~\max_{b}\biggl( \frac{1}{N} \sum_{a}^{(x)} \absb{G^{\bset{x}}_{\ubarr{a}b}}^2 \biggr)^{1/2}\vee \max_{a}\biggl( \frac{1}{N}  \sum_{b}^{(x)} \absb{G^{\bset{x}}_{\ubarr{a}b}}^2 \biggr)^{1/2} \le \mathcal{C}\sqrt{\frac{\Gamma}{N\Ima z}}=: \gamma
		\end{equation*}
		where we applied Ward identity \cref{eq:wardidentity} for the second upper bound. Finally owing to the upper bound on $d$ in \cref{eq:regimed} we have
		\begin{equation*}
			 \frac{\psi}{\gamma} \ge \mathcal{C} N^{\delta/2} \sqrt{\Gamma}\ge N^{\delta/4}
		\end{equation*}
		and therefore
		\begin{equation*}
			\absbb{\sum_{a,b}^{(x)} X_{xa} G_{ \ubarr{a}b}^{\bset{x}} X_{bx} }  \le \mathcal{C}\frac{\Gamma}{d}.
		\end{equation*}
		The following two terms in \cref{eq:lasttermofY1x} are upper bounded as we did in \cref{eq:bounddoublesum} and the last term of \cref{eq:lasttermofY1x} is upper bounded as in \cref{eq:upboundforfsquare}. 
		
		To prove \cref{eq:roughboundGd-1/2} we consider $x\in [N]$ and $y \in [2N] \backslash \{x,\ubarr{x} \}$ and obtain from \cref{eq:A29A} the representation  
		\begin{equation*}
			G_{xy} = - \sum_{b}^{(x)} \bigg( G_{xx} {X}_{xb}G_{\ubarr{b}y}^{\bset{x}} + G_{x\ubarr{x}} \barr{X}_{bx}G_{by}^{\bset{x}} + \frac{f}{N} \bigg( G_{xx} G_{\ubarr{b}y}^{\bset{x}} + G_{x\ubarr{x}} G_{by}^{\bset{x}}\bigg) \bigg).
		\end{equation*}
		For the first two terms in the sum above we proceed as in \cref{eq:useofdev11}.  More precisely we set $\psi := \mathcal C \frac{\Gamma}{\sqrt{d}}$ and $\gamma:= \sqrt{\frac{\Gamma}{N \Ima z}}$. We then have $\psi/\gamma \ge N^{\delta/4}$  and therefore applying 
		\cref{lemma:dev1} we obtain that the first and second terms in the sum are bounded by $\mathcal C \Gamma \psi $. For the remaining terms with the factor $\frac{f}{N}$  we proceed as in \cref{eq:boundoforf1}. To bound the entry $G_{xy}$ for $x \in [2N] \setminus [N]$ and $y \in [2N] \setminus \{x,\ubarr{x}\}$, we deduce from \cref{eq:A29B} as 
		similar representation as the one above. Then the same argument applies. 
		
		Finally, \cref{eq:roughdifferenceGGTd-1} is the special case of \cref{eq:GTandGTuclose} in   \cref{lem:roughboundsGT} with $T= \emptyset$.
	\end{proof}
	\subsection{Proof of statements in \cref{sec:typical_vertices}}\label{subsection:proofs_typical_vert}
		This section is devoted to the proof of \cref{prop:mostverticesandneigharetypical}, which follows an analogous  strategy as the proof of \cite[Proposition~4.8]{alt2021delocalization} in   \cite[Section~4.2]{alt2021delocalization}.  
		Throughout we assume that $w \in \C$ and $z \in \C$ with $\Im z \geq N^{-1 + \delta}$. 
	
 For all $T \subset [N]$ we denote
	\begin{equation}\label{thetaT}
		\theta^{[T]} :=\mathds{1} \bigg( \max_{a,b \notin T\cup \ubarr{T}} \absn{G^{\bset{T}}_{ab}} \le 2 \mathcal C \Gamma \bigg).
	\end{equation}

	The following lemma says that a bound on the entries of $G$ implies a bound on the entries of $G^{\bset{T}}$ for $T \subset [N]$ as long as $\abs{T} = o(d)$.

	\begin{lemma}\label{lem:corthetalethetaT}
		Let $\mathfrak c >0$ be as in \cref{lem:roughboundsGT}. Then, for all $T \subset [N]$ satisfying $\abs{T} \leq \mathfrak c d/\Gamma^2$ and $ \frac{\Gamma^2}{64 \mathfrak c} \leq d \leq N^{\delta/2}$, we have $\theta \le \theta^{\bset{T}}$ with very high probability. 
	\end{lemma}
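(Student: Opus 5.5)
This is essentially a reformulation of \cref{eq:boundGTGTu} in \cref{lem:roughboundsGT}, and the plan is to derive it directly from that lemma. The claim $\theta \le \theta^{\bset{T}}$ with very high probability means that the event $\{\theta = 1\} \cap \{\theta^{\bset{T}} = 0\}$ has probability at most $\mathcal C_\nu N^{-\nu}$. Since both indicators take values in $\{0,1\}$, it suffices to show that on $\{\theta=1\}$ we have $\max_{a,b\notin T\cup\ubarr{T}} \absn{G^{\bset{T}}_{ab}} \le 2\mathcal C \Gamma$ with very high probability.

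First I fix the deterministic set $T$ with $\abs{T} \le \mathfrak c d/\Gamma^2$, and $d$ in the range $\frac{\Gamma^2}{64\mathfrak c} \le d \le N^{\delta/2}$. These are exactly the hypotheses of \cref{lem:roughboundsGT}. That lemma gives \cref{eq:boundGTGTu}, namely $\theta \max_{x,y\notin T\cup\ubarr{T}}\absn{G^{\bset{T}}_{xy}} \le 2\mathcal C\Gamma$ with very high probability, uniformly in $w$ and in $z$ with $\Im z \ge N^{-1+\delta}$.

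Next I translate this into the indicator statement. On the high-probability event from \cref{eq:boundGTGTu}, there are two cases. If $\theta = 0$, the inequality $\theta \le \theta^{\bset{T}}$ is trivial. If $\theta = 1$, the bound gives $\max \absn{G^{\bset{T}}_{ab}} \le 2\mathcal C\Gamma$, hence $\theta^{\bset{T}} = 1$ by \cref{thetaT}. So the inequality holds on that event, which completes the argument.

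The only point needing care is consistency of constants: the constant $\mathcal C$ in the definition \cref{thetaT} of $\theta^{[T]}$ must be the same $\mathcal C$ that appears in $\theta$ from \cref{eq:theta}, so that the factor $2\mathcal C\Gamma$ produced by \cref{eq:boundGTGTu} matches exactly. If a different constant is intended, I would enlarge $\mathcal C$ in \cref{thetaT}, which is allowed by the convention that calligraphic constants may depend on $\nu$. Uniformity in $w$ and $z$ is then inherited directly from \cref{lem:roughboundsGT}.
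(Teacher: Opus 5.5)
Your proposal is correct and is the paper's argument: the paper simply notes that \cref{eq:boundGTGTu} in \cref{lem:roughboundsGT} gives $\theta = \theta\,\theta^{\bset{T}}$ with very high probability. Your case split on $\theta \in \{0,1\}$ and your check that the constant $2\mathcal C\Gamma$ in the definition of $\theta^{\bset{T}}$ matches the one produced by that lemma are the only bookkeeping needed.
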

	\begin{proof}
		\cref{lem:corthetalethetaT} is a direct consequence of \cref{eq:boundGTGTu} in \cref{lem:roughboundsGT}. Indeed, it implies that $\theta = \theta \theta^{\bset{T}}$ with very high probability.
	\end{proof}
		\begin{defi}\label{def:typicalverticesdecoupled}
		For $\varphi >0$, $T \subset [N]$ and $x \in [N]$, we define
		\begin{equation*}
			\Phi^{1,\bset{T}}_x:= \sum_{y}^{(Tx)}\bigg(|X_{xy}|^2 - \frac{1}{N}\bigg), \qquad \Psi^{1,\bset{T}}_x:=  \sum_{y}^{(Tx)}\bigg(|X_{xy}|^2 - \frac{1}{N}\bigg) G^{\bset{Tx}}_{\ubarr{y}\ubarr{y}},
		\end{equation*}
		\begin{equation*}
			\Phi^{2,\bset{T}}_x:= \sum_{y}^{(Tx)}\bigg(|X_{yx}|^2 - \frac{1}{N}\bigg),\qquad \Psi^{2,\bset{T}}_x:=  \sum_{y}^{(Tx)}\bigg(|X_{yx}|^2 - \frac{1}{N}\bigg) G^{\bset{Tx}}_{yy}.
		\end{equation*}
		Similarly as in \cref{def:typicalvertices}, we define the set
		\begin{equation*}
			\mathcal{T}^{\bset{T}}_{\varphi} := \Bigl\{ x \in [N]\setminus T :~\max_{i \in \{1,2\}}\absn{\Phi^{i,\bset{T}}_x}\vee \absn{\Psi^{i,\bset{T}}_x }\le \varphi\Bigr\}.
		\end{equation*}
	\end{defi}

	\begin{lemma}\label{lemma:conditionalpbestimates}
		There exists a constant $\mathcal{C}\ge 1$, such that the following holds. Let $\mathfrak{c} = \mathfrak{c}(\nu, \delta)>0$ be as in \cref{lem:roughboundsGT} and $ \frac{\Gamma^2}{64 \mathfrak c} \leq d \leq N^{\delta/2}$. If $x \in T \subset [N]$ are deterministic and $\abs{T} +1\le \mathfrak{c}d/\Gamma^2$ then for any $0 < \epsilon \le 1$ and $i \in \{1,2\}$ we have
		\begin{equation}\label{eq:decoupledpbcondi}
			\theta^{\bset{T}} \mathbb{P} \big( \big |\Phi_x^{i,\bset{T}}\big| > \epsilon \, \big| \, H^{\bset{T}} \big) \le \ee^{-\frac{ \epsilon^2 d}{2^6(\ee\Gamma)^2}}, ~~ \theta^{\bset{T}} \mathbb{P} \big( \big|\Psi_x^{i,\bset{T}}\big| > \epsilon \, \big| \, H^{\bset{T}} \big) \le    \ee^{-\frac{ \epsilon^2 d}{2^6(\ee\Gamma)^2}},
		\end{equation}
	and for any $u \notin T$
		\begin{equation}\label{eq:accroissementPhiPsiT}
			\big| \Phi_x^{i,\bset{Tu}} - \Phi_x^{i,\bset{T}} \big| \le K^2d^{-1},~~~~ \theta^{\bset{T}}\big|\Psi_x^{i,\bset{Tu}} - \Psi_x^{i,\bset{T}}\big| \le \mathcal{C}  \frac{\Gamma + \beta^{i}_x\Gamma^3}{d}
		\end{equation}
		with very high probability.
	\end{lemma}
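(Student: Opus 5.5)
The plan splits into the conditional tail bounds \cref{eq:decoupledpbcondi} and the increment bounds \cref{eq:accroissementPhiPsiT}, which are handled separately.

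\textbf{Tail bounds.} The key observation is about measurability. Since $x \in T$, the row $(X_{xy})_{y}$ and the column $(X_{yx})_{y}$ are independent of $H^{\bset{T}}$. The quantities $G^{\bset{Tx}} = G^{\bset{T}}$ are functions of $H^{\bset{T}}$. Thus, conditionally on $H^{\bset{T}}$, both $\Phi_x^{i,\bset{T}}$ and $\Psi_x^{i,\bset{T}}$ are sums of independent centred variables of the form $\sum_y (\abs{X_{xy}}^2 - 1/N) a_y$. Here $a_y \equiv 1$ for $\Phi$, and $a_y = G^{\bset{T}}_{\ubarr{y}\ubarr{y}}$ (resp.\ $G^{\bset{T}}_{yy}$) for $\Psi$. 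On $\{\theta^{\bset{T}}=1\}$ these coefficients satisfy $\abs{a_y}\le 2\mathcal C\Gamma$.

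I would apply Markov's inequality to the $r$-th moment together with the moment bound \cref{eq:sumaiXi2}, which is already used in \cref{lem:roughboundonbetaix}. This gives the conditional estimate
\[
\mathbb{P}\big(\abs{\textstyle\sum_y (\abs{X_{xy}}^2-1/N)a_y}>\epsilon \,\big|\, H^{\bset{T}}\big) \le \Big(\tfrac{4\max_y\abs{a_y}}{\epsilon}\big(\tfrac rd \vee \sqrt{\tfrac rd}\big)\Big)^r .
\]
Choosing $r\asymp \epsilon^2 d/(\ee\Gamma)^2$ makes $r/d\le 1$, since $\epsilon\le1$ and $\Gamma\ge1$, so the square-root branch is the relevant one. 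The base is then at most $\ee^{-1}$, which yields the bound $\ee^{-\epsilon^2 d/(2^6(\ee\Gamma)^2)}$ after adjusting the numerical constants. The constant $\mathcal C$ in the bound on $a_y$ is absorbed into the choice of constants, or equivalently into the requirement on $\mathfrak c$. If $r$ is not an even integer, rounding it costs only a constant factor.

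\textbf{Increment bounds.} For $\Phi$ the difference is a single term, $\Phi_x^{i,\bset{Tu}}-\Phi_x^{i,\bset{T}} = -(\abs{X_{xu}}^2-1/N)$. This is bounded by $K^2 d^{-1}$ by \cref{defiX}\cref{item:def_X_decay} (the $1/N$ term has the right sign to be harmless).

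For $\Psi$ the difference splits into two parts. The first is the removed summand $(\abs{X_{xu}}^2-1/N)G^{\bset{Tx}}_{\ubarr u\ubarr u}$, which is $O(\Gamma/d)$ on $\theta^{\bset T}$. The second is
\[
\sum_y^{(Tux)}(\abs{X_{xy}}^2-1/N)\big(G^{\bset{Tux}}_{\ubarr y\ubarr y}-G^{\bset{Tx}}_{\ubarr y\ubarr y}\big).
\]
By \cref{eq:GTandGTuclose} in \cref{lem:roughboundsGT}, applied with $T\cup\{x\}$ (admissible because $\abs{T}+1\le\mathfrak c d/\Gamma^2$) and combined with \cref{lem:corthetalethetaT}-type reasoning to pass from $\theta$ to $\theta^{\bset T}$, each difference in the bracket is at most $\mathcal C\Gamma^3/d$ with very high probability. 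Summing $\abs{X_{xy}}^2$ gives at most $\beta_x^i$, and the $1/N$ part adds $O(\Gamma^3/d)$. Together these give $\mathcal C(\Gamma+\beta_x^i\Gamma^3)/d$.

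\textbf{Main obstacle.} The expected difficulty is that \cref{lem:roughboundsGT} is stated on $\theta$ for the full matrix, whereas here we only have $\theta^{\bset T}$. To handle this, I would rerun its induction with $H^{\bset T}$ as the base matrix, starting from $\Gamma_0\le 2\mathcal C\Gamma$. This works because the argument only uses the bound on the starting resolvent, and the constant doubling is absorbed into $\mathcal C$.
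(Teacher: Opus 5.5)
Your proposal is correct and follows the paper's proof essentially step by step: conditional Markov plus \cref{eq:sumaiXi2} with $r\asymp\epsilon^2 d/(\ee\Gamma)^2$ gives \cref{eq:decoupledpbcondi}, and the $\Psi$-increment splits into the removed $u$-summand plus $\sum_y(\abs{X_{xy}}^2-1/N)(G^{\bset{Tu}}_{\ubarr y\ubarr y}-G^{\bset{T}}_{\ubarr y\ubarr y})$, controlled by \cref{eq:boundGTGTu} and \cref{eq:GTandGTuclose}. The one difference is that you explicitly rerun one induction step of \cref{lem:roughboundsGT} with $H^{\bset{T}}$ as base matrix to justify working on $\{\theta^{\bset{T}}=1\}$ rather than $\{\theta=1\}$, whereas the paper simply cites that lemma; your fix is valid, but the factor $\mathcal C$ from $\theta^{\bset{T}}$ ends up in the exponent, not in $\mathfrak c$ as you suggest, a constant the paper also suppresses.
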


	\begin{proof}
From \cref{eq:sumaiXi2} with $r:= \epsilon^2 d/(64 \Gamma^2 \ee^2)$, $\mathbb{E}[\abs{X_{xy}}^2]  = 1/N$, Markov's inequality 
		and $r \leq d$ as $\epsilon \leq 1$ and $\Gamma \geq 1$, 
		we conclude  
		\begin{align*}
			\theta^{\bset{T}} \mathbb{P} \big( \big|\Psi_x^{1,\bset{T}}\big| > \epsilon \big| H^{\bset{T}} \big) &= \mathbb{P} \biggl(\theta^{\bset{T}} \biggl| \sum_{y}^{(T)} \left( \abs{X_{xy}}^2 - \mathbb{E}[\abs{X_{xy}}^2]\right) G_{\ubarr{y}\ubarr{y}}^{\bset{T}}\biggr| > \epsilon \bigg| H^{\bset{T}} \biggr) \\
			&\le \left( \frac{8 \Gamma}{\epsilon} \sqrt{\frac{r}{d}} \right)^r \le e^{-\frac{ \epsilon^2 d}{2^6(e\Gamma)^2}}.
		\end{align*}
		The estimates for $\Psi_x^{2,\bset{T}}$, $\Phi_x^{1,\bset{T}}$ and $\Phi_x^{2,\bset{T}}$ are proved similarly.
		Hence, \cref{eq:decoupledpbcondi} follows. 
		
		We use Markov's inequality with $r:= \epsilon^2d/(16 \ee^2)$ and employ \cref{eq:sumaiXi2} to obtain 
			\begin{align*}
			\mathbb{P} \big( \big|\Phi_x^{1,\bset{T}}\big| > \epsilon \big| H^{\bset{T}} \big) &= \mathbb{P} \biggl(\biggl| \sum_{y}^{(T)} \left( \abs{X_{xy}}^2 - \mathbb{E}[\abs{X_{xy}}^2]\right) \biggr| > \epsilon \bigg| H^{\bset{T}} \biggr) \\
			&\le \left( \frac{4}{\epsilon} \sqrt{\frac{r}{d}} \right)^r \le \exp\bigg(- \frac{\epsilon^2 d}{16\ee^2}\bigg).
		\end{align*}
		We now prove \cref{eq:accroissementPhiPsiT}. In the case $u =x$ the statement is trivial. For $x \neq u$ we have
		\begin{equation*}
			\Phi^{1,\bset{T}}_{x} - \Phi^{1,\bset{Tu}}_x =  \left( \abs{X_{xu}}^2 - \frac{1}{N}\right)
		\end{equation*}
		and then the claim for $\Phi^1$ follows from \cref{defiX} \cref{item:def_X_decay} and holds almost surely. The same computation gives the result for $\Phi^2$. Next we have 
		\begin{equation*}
			\Psi_{x}^{1,\bset{Tu}} - \Psi_{x}^{1,\bset{T}} = \sum_{y}^{(Tu)} \left( \abs{X_{xy}}^2 - \frac{1}{N} \right) (G_{\ubarr{y}\ubarr{y}}^{\bset{Tu}} -G_{\ubarr{y}\ubarr{y}}^{\bset{T}}) - \left( \abs{X_{xu}}^2 - \frac{1}{N} \right)G_{\ubarr{u}\ubarr{u}}^{\bset{T}}.
		\end{equation*}
		From \cref{lem:roughboundsGT}, we apply \cref{eq:GTandGTuclose} to the first sum and \cref{eq:boundGTGTu} to the remaining term and obtain
		\begin{equation*}
			\theta^{\bset{T}} \abs{\Psi_x^{1,\bset{Tu}} - \Psi_x^{1,\bset{T}}} \le \sum_{y}^{(Tu)}\abs{X_{xy}}^2 \mathcal{C}\frac{\Gamma^3}{d} + \mathcal{C}2 \frac{\Gamma}{d}  \le \mathcal{C} \frac{ \beta_{x}^1 \Gamma^3+ \Gamma}{d}
		\end{equation*}
		with very high probability. The proof for $\Psi_x^{2,\bset{T}}$ is analogous.
	\end{proof}
	\begin{lemma}\label{lemma:deterministicsetaretypical} 
	 Let $ \frac{\Gamma^2}{64 \mathfrak c} \leq d \leq N^{\delta/2}$ and $x\in [N]$, $ T \subset[N]$ be deterministic  such that $x \notin T$ and $\abs{T} \le \ccc    d/ \Gamma^2 $, where $\ccc= \ccc(\delta,\nu)>0$ is the constant given by \cref{lem:roughboundsGT}. Then  
		\begin{align}
			\mathbb{P} \left( T \subset \mathcal{T}_{\varphi/2}^c,\theta =1 \right) &\le  4^{|T|} \ee^{-\frac{\varphi^2d}{2^{12} (\ee\Gamma)^2}|T|} + \mathcal{C}N^{- \nu} \label{eq:bound_P_T_subset_atypical} \\
			\mathbb{P} \left( T \subset \left(\mathcal{T}^{\bset{x}}_{ \varphi/2}\right)^c,\theta^{\bset{x}} =1 \right) &\le 4^{|T|} \ee^{-\frac{\varphi^2d}{2^{12} (\ee\Gamma)^2}|T|} + \mathcal{C}N^{- \nu}
			\label{eq:bound_P_T_subset_atypical_without_x}
		\end{align}
	for all $ \varphi \in [\mathcal C \Gamma^3 \abs{T} d^{-1},1]$.
	\end{lemma}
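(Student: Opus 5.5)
We follow the decoupling strategy of \cite[Lemma~4.12]{alt2021delocalization}: the events $\{y \notin \mathcal T_{\varphi/2}\}$ for different $y \in T$ are not independent, because $\Psi^i_y$ involves $G^{[y]}$, which depends on all other rows and columns. We therefore replace each $\Phi^i_y$, $\Psi^i_y$ by its version $\Phi^{i,[T\setminus\{y\}]}_y$, $\Psi^{i,[T\setminus\{y\}]}_y$ from \cref{def:typicalverticesdecoupled}. The version with $T\setminus\{y\}$ removed depends on $H^{[T]}$ and on the entries in row and column $y$ restricted to $[N]\setminus T$. Conditionally on $H^{[T]}$, these families are independent for distinct $y \in T$ (the entries $X_{yy'}$ with $y,y'\in T$ are excluded by the sums $\sum^{(Ty)}$).

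\textbf{Step 1 (perturbation).} We iterate \cref{eq:accroissementPhiPsiT} from \cref{lemma:conditionalpbestimates}, removing the elements of $T\setminus\{y\}$ one at a time. Since $|T|\le \mathfrak c d/\Gamma^2$, \cref{lem:corthetalethetaT} gives $\theta\le\theta^{[S]}$ for every $S\subset T$ with very high probability, so the increments are controlled along the whole chain. Using $\beta_y^i\lesssim\Gamma$ from \cref{lem:roughboundonbetaix}, we obtain
\[ \theta\,\bigl|\Psi^{i,[T\setminus\{y\}]}_y-\Psi^i_y\bigr| + \bigl|\Phi^{i,[T\setminus\{y\}]}_y-\Phi^i_y\bigr| \le \mathcal C\,\Gamma^{3}|T|/d \le \varphi/4 \]
with very high probability. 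The last inequality uses the hypothesis $\varphi\ge \mathcal C\Gamma^3|T|d^{-1}$, possibly after enlarging the constant, since $\Gamma^4\le \Gamma^3\cdot \Gamma$ and the $\beta$ factor can be absorbed. Hence, up to an event of probability $\mathcal C N^{-\nu}$, the event $\{T\subset\mathcal T^c_{\varphi/2},\ \theta=1\}$ is contained in
\[ \bigcap_{y\in T}\Bigl\{\max_i |\Phi^{i,[T\setminus\{y\}]}_y|\vee|\Psi^{i,[T\setminus\{y\}]}_y|>\varphi/4\Bigr\}\cap\{\theta^{[T]}=1\}. \]

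\textbf{Step 2 (conditional independence).} We condition on $H^{[T]}$. The indicator $\theta^{[T]}$ is measurable with respect to $H^{[T]}$, and the events indexed by $y\in T$ are conditionally independent. For each $y$, a union bound over the four quantities $\Phi^{1},\Phi^{2},\Psi^{1},\Psi^{2}$ gives the factor $4$. We bound each quantity by \cref{eq:decoupledpbcondi} with $\epsilon=\varphi/4$. That estimate is stated with $\theta^{[T\setminus\{y\}]}$ and conditioning on $H^{[T\setminus\{y\}]}$. We apply it instead with the removed set $T$: its proof only uses that the weights $G^{[T]}_{\ubarr{z}\ubarr{z}}$ are bounded by $2\mathcal C\Gamma$ and independent of row and column $y$. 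This requires mild care, and running this check carefully is the main technical point. It yields the conditional bound $4\,\ee^{-\varphi^2 d/(2^{10}(\ee\Gamma)^2)}$ per vertex. Taking the product over $y\in T$ and then the expectation gives $4^{|T|}\ee^{-\varphi^2 d|T|/(2^{12}(\ee\Gamma)^2)}$, the constant $2^{12}$ leaving room for slack. This proves \cref{eq:bound_P_T_subset_atypical}.

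\textbf{Step 3.} We prove \cref{eq:bound_P_T_subset_atypical_without_x} identically, with $H$ replaced by $H^{[x]}$, removed sets $T\cup\{x\}$, and $\theta^{[x]}$ in place of $\theta$. The requirement $|T|+1\le \mathfrak c d/\Gamma^2$ holds after shrinking $\mathfrak c$.

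The main obstacle is Step 1, namely ensuring that the accumulated errors $|T|\Gamma^3/d$ together with the $\beta_y^i$ factor stay below $\varphi/4$ on the very-high-probability event. Step 2's bookkeeping of the exponent constant is routine.
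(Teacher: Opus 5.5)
Your architecture matches the paper's. You remove all of $T$; note that your $\Phi^{i,\bset{T\setminus\{y\}}}_y$ coincides with the paper's $\Phi^{i,\bset{T}}_y$ for $y\in T$. You control the replacement error by iterating \cref{eq:accroissementPhiPsiT} along a chain of subsets of $T$ using $\theta\le\theta^{\bset{T_t}}$, and then use conditional independence given $H^{\bset{T}}$ together with \cref{eq:decoupledpbcondi} at $\epsilon=\varphi/4$.

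However, Step 1 as written fails. The $\Psi$-increment in \cref{eq:accroissementPhiPsiT} is $\mathcal C(\Gamma+\beta^i_y\Gamma^3)/d$. After $|T|$ steps the error is therefore of order $(1+\beta^i_y)\Gamma^3|T|/d$, while your only hypothesis is $\varphi\ge\mathcal C\Gamma^3|T|/d$. The bound in \cref{lem:roughboundonbetaix} gives only $\beta^i_y\le\mathcal C(1+\log N/d)$, which is not $\lesssim\Gamma$ in general; take $\Gamma=\Gamma_{\mathrm l}=1$ and $d\asymp(\log N)^{1/2}$. Even when $\Gamma\asymp 1+\log N/d$, you end up with $\Gamma^4|T|/d$. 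Since $\Gamma$ diverges for $d\ll\log N$, the observation $\Gamma^4\le\Gamma^3\cdot\Gamma$ does not let you absorb the extra factor into a constant. So there is no perturbation bound $\theta\,|\Psi^{i,\bset{T}}_y-\Psi^i_y|\le\varphi/4$ that is uniform in the degree of $y$. The power $\Gamma^3$ in the admissible range of $\varphi$ is not cosmetic either: \cref{lem:controlatypicalvertices} applies the lemma with $|T|=\ccc\varphi d\Gamma^{-3}$.

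The missing idea is a case distinction, which is exactly how the paper argues. For each $i$, split the event that $|\Phi^i_y|$ or $|\Psi^i_y|$ exceeds $\varphi/2$ into $\{|\Phi^i_y|>\varphi/2\}$ and $\{|\Phi^i_y|\le\varphi/2,\ |\Psi^i_y|>\varphi/2\}$.

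On the first event, the almost sure bound $|\Phi^i_y-\Phi^{i,\bset{T}}_y|\le K^2|T|/d\le\varphi/4$ already yields $|\Phi^{i,\bset{T}}_y|>\varphi/4$.

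On the second event, $\beta^i_y=\Phi^i_y+(N-1)/N\le 2$ because $\varphi\le 1$. Hence the accumulated $\Psi$-error is at most $\mathcal C\Gamma^3|T|/d\le\varphi/4$ with very high probability, and so $|\Psi^{i,\bset{T}}_y|>\varphi/4$.

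With this split, the containment you state at the end of Step 1 is correct, and your Steps 2 and 3 go through. Incidentally, Step 2 needs no extra check. \cref{eq:decoupledpbcondi} is already stated for $x\in T$, with conditioning on $H^{\bset{T}}$ and the factor $\theta^{\bset{T}}$, so it applies verbatim. It even gives the exponent $2^{10}$, better than the $2^{12}$ required.
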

	\begin{proof}
		The abbreviation $\mathbb{P}_{\theta}(\Theta) := \mathbb{P}(\Theta \cap \{ \theta =1 \})$ and the definition 
		\begin{equation*}
			\Omega_x := \bigcup_{i \in \{1,2\}}\{ \absn{\Phi^{i}_x} > \varphi/2\} \cup \{ \absn{\Psi^{i}_x} > \varphi/2\}
		\end{equation*}
		for $x \in [N]$ are used throughout this proof. We have
		\begin{align*}
			\mathbb{P} \big( T \subset \mathcal{T}_{\varphi/2}^{c}, \theta =1\big) = \mathbb{P}_{\theta} \big( \bigcap_{x \in T}\Omega_x \big).
		\end{align*}
		For all $i \in \{1,2\}$ and all $x \in [N]$ we have the identity 
		\begin{equation*}
			\big\{ \bigl|\Phi^{i}_x\bigr| > \varphi/2\big\} \cup \big\{ \bigl|\Psi^{i}_x\bigr| > \varphi/2\big\} = \big\{ \bigl|\Phi^{i}_x\bigr|> \varphi/2\big\} \cup \big\{ \bigl|\Phi^{i}_x\bigr|\le \varphi/2,~\bigl|\Psi^{i}_x\big| > \varphi/2\big\}
		\end{equation*}
		and the inclusions 
		\begin{align*}
			\big\{ \bigl|\Phi^{i}_x\bigr| > \varphi/2\big\}  &\subset \{\absn{\Phi^{i,\bset{T}}_x} > \varphi/4\big\} \cup   \event{\absn{\Phi^{i}_x-\Phi^{i,\bset{T}}_x} > \varphi/4},\\
			\event{ \absn{\Phi^{i}_x} \le \varphi/2,~\abs{\Psi^{i}_x} > \varphi/2} & \subset \left\{\absn{\Psi^{i,\bset{T}}_x} > \varphi/4\right\}\cup\event{ \absn{\Phi^{i}_x} \le \varphi/2, \absn{\Psi^{i}_x - \Psi^{i, \bset{T}}_x} > \varphi/4}.
		\end{align*}
		We define the events
		\begin{equation*}
			\Omega_x^{\bset{T}} := \bigcup_{i \in \{1,2\}} \bigl\{ \absn{\Phi^{i,\bset{T}}_x} > \varphi/4\bigr\} \cup \bigl\{ \absn{\Psi^{i,\bset{T}}_x} > \varphi/4\bigr\}
		\end{equation*}
		for $x \in [N]$ and obtain 
		\begin{equation} \label{eq:pbintersectionOmegax}
			\begin{aligned}
				\mathbb{P}_{\theta} \big( \bigcap_{x\in T} \Omega_x \big) \le \mathbb{P}_{\theta} \big( \bigcap_{x\in T} \Omega^{\bset{T}}_x \big) 
				& + \sum_{\substack{ i \in \left\{1,2\right\} \\ x\in T}} \mathbb{P}_{\theta} \big( \absn{\Phi^{i}_x-\Phi^{i,\bset{T}}_x} > \varphi/4\big)\\
				&+ \sum_{\substack{ i \in \left\{1,2\right\} \\ x\in T}} \mathbb{P}_{\theta} \big( \absn{\Phi^{i}_x} \le \varphi/2, \absn{\Psi^{i}_x - \Psi^{i, \bset{T}}_x} > \varphi/4\big). 
			\end{aligned}
		\end{equation}
		We now bound the first probability on the right-hand side of \cref{eq:pbintersectionOmegax}. 
		\cref{lem:corthetalethetaT} yields 
		\begin{equation*}
			\mathbb{P}_{\theta}\big( \bigcap_{x \in T} \Omega_x^{\bset{T}} \big) \le \mathbb{P}\big( \{ \theta^{\bset{T}} = 1 \} \cap  \bigcap_{x \in T} \Omega_x^{\bset{T}}\big) + \mathcal{C}N^{-2\nu}.
		\end{equation*}
		Since the variables $(\mathbb{E}\big[ \mathds{1} ( \Omega_{x}^{\bset{T}} ) \big| H^{\bset{T}}\big])_{x \in T}$ are independent by construction, we conclude
		\begin{equation*}
			\mathbb{P}_{\theta}\big( \bigcap_{x \in T} \Omega_x^{\bset{T}} \big) \le \mathbb{E}\big[ \prod_{x \in T} \theta^{\bset{T}} \mathbb{P} \big( \Omega_{x}^{\bset{T}}\big| H^{\bset{T}}\big)\big] + \mathcal{C}N^{-2\nu}.
		\end{equation*}
		Now for all $x \in T$, considering $\epsilon = \varphi/4$ in \cref{lemma:conditionalpbestimates} we have
		\begin{align*}
			\theta^{\bset{T}} \mathbb{P} \big( \Omega_{x}^{\bset{T}} \big| H^{\bset{T}}\big) &\le \sum_{i \in \{1,2\}}\theta^{\bset{T}} \left( \mathbb{P} \big(  \absn{\Phi^{i,\bset{T}}_x} > \varphi/4 \big| H^{\bset{T}} \big)+ \mathbb{P} \big(  \absn{\Psi^{i,\bset{T}}_x} > \varphi/4\big| H^{\bset{T}}  \big) \right) \le 4  e^{-\frac{\varphi^2d}{2^{12} (e\Gamma)^2}}. 
		\end{align*}
	 We conclude that
		\begin{align*}
			\mathbb{P}_{\theta}\big( \bigcap_{x \in T} \Omega_x^{\bset{T}} \big) \le 4^{|T|} e^{-\frac{\varphi^2d}{2^{12} (e\Gamma)^2}|T|} + \mathcal{C}N^{-2\nu} .
		\end{align*}
		To estimate the sums on the right-hand side of \cref{eq:pbintersectionOmegax} we set $k= \abs{T}$ and, for $T= \{ x_1,\ldots,x_k\}$, $T_{t}:= \{ x_1,\ldots,x_t\}$ for $t =1, \ldots, k$ with $T_0 = \emptyset$. From the first estimate in \cref{eq:accroissementPhiPsiT} of \cref{lemma:conditionalpbestimates}, we conclude 
		\begin{align*}
			\bigl|\Phi_{x}^{i} - \Phi_{x}^{i,\bset{T}}\bigr| \le \sum_{t=0}^{k-1} \bigl| \Phi_{x}^{i, \bset{T_{t}}} - \Phi_{x}^{i,\bset{T_{t+1}}}\bigr| \le  \mathcal C \frac{\abs{T}}{d}
		\end{align*}
		with very high probability. Since $\varphi> 4 \mathcal C \abs{T}d^{-1}$ by assumption as $\Gamma \geq 1$, this implies that the second term on the right-hand side of \cref{eq:pbintersectionOmegax} is $O(N^{-2\nu})$.
		
 To estimate the last term on the right-hand side of \cref{eq:pbintersectionOmegax}, we note that $\beta^{i}_x\le 2$ on the event $\left\{ \abs{\Phi^{i}_x} \le \varphi/2 \right\}$ as $\varphi \le 1$. Thus, the second bound in \cref{eq:accroissementPhiPsiT} of \cref{lemma:conditionalpbestimates} implies
		\begin{align*}
			\theta \bigl| \Psi_{x}^{i} - \Psi_{x}^{i,\bset{T}}\bigr| \le \sum_{t=0}^{k-1} \theta^{\bset{T_t}}\bigl| \Psi_{x}^{i, \bset{T_{t}}} - \Psi_{x}^{i,\bset{T_{t+1}}}\bigr| \le \mathcal{C} \frac{\Gamma^3}{d} \abs{T} 
		\end{align*}
		with very high probability, as 
		$\theta \le \theta^{\bset{T_t}}  $ with very high probability for all $ 1 \le t \le k $ by  \cref{lem:corthetalethetaT}.
 Therefore, as $\varphi \geq 4 \abs{T} \mathcal C \Gamma^{3} d^{-1}$ by assumption,
 after possibly increasing  $\mathcal{C}>0$, we obtain 
		\begin{equation*}
			\mathbb{P}_{\theta} \big( \abs{\Phi^{i}_x} \le \varphi/2,\bigl| \Psi_{x}^{i} - \Psi_{x}^{i,\bset{T}}\bigr| > \varphi/4 \big) \le \mathcal{C} N^{-2\nu}.
		\end{equation*}
		By inserting the previous bounds into \cref{eq:pbintersectionOmegax} we have
		\begin{equation*}
			\mathbb{P}_{\theta}\left( \bigcap_{x \in T}\Omega_x\right) \le   4^{|T|} e^{-\frac{\varphi^2d}{2^{12} (e\Gamma)^2}|T|}  +  \mathcal{C}N^{-2\nu}+  4 \frac{\ccc d}{\Gamma^2} N^{-2 \nu} \le 4^{|T|} e^{-\frac{\varphi^2d}{2^{12} (e\Gamma)^2}|T|} \nc+ \mathcal{C}^{\prime\prime}N^{-\nu}. 
		\end{equation*}
		The proof of \cref{eq:bound_P_T_subset_atypical_without_x} is analogous. In fact, it essentially corresponds to \cref{eq:bound_P_T_subset_atypical} with the vertex set $[N]\backslash\{x\}$ up to a slightly different scaling.   
	\end{proof}
		
For the remainder of this section we need to restrict to \cref{eq:regimedlog2}.
\begin{lemma}\label{lem:controlatypicalvertices}
 Let $d$ satisfy \cref{eq:regimedlog2}.
		Then there exist constants $0 < \ccc$, $\tilde{\ccc}$ depending only on $\nu$ and $\delta$  such that the following holds with very high probability on the event $\{\theta =1\}$. Suppose $\varphi$ is defined as in \cref{eq:new_varphi},
	 then for each deterministic $V \subset [N]$, 
		 we have 
		\begin{enumerate}[label=(\roman*)]
			\item \label{(i)}$ \absn{V\cap \mathcal{T}_{\varphi/2}^{c}} \le  \ee^{\frac{\varphi^2d}{2^{14} (\ee\Gamma)^2}}  + 4\abs{V} \ee^{-\frac{\varphi^2d}{2^{13} (\ee\Gamma)^2}} $
			\item \label{(ii)} If $\absn{V} \le \frac{1}{4} \ee^{\frac{\varphi^2 d }{2^{13} (\ee\Gamma)^2}} $ then $\absn{V \cap \mathcal{T}_{\varphi/2}^c} \le \ccc\varphi d/ \Gamma^3$. 
		
		\end{enumerate}
		For any deterministic $x \in [N]$, the same estimates hold for $\bigl(\mathcal{T}_{\varphi/2}^{\bset{x}}\bigr)^c$ instead of $\mathcal{T}_{\varphi/2}^{c}$ and a random set $V \subset [N] \backslash \{x\}$ independent of $H^{\bset{x}}$.
	\end{lemma}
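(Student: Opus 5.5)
We deduce both parts from \cref{lemma:deterministicsetaretypical} by a union bound over subsets, following the strategy of \cite[Lemma~4.9]{alt2021delocalization}. Write $p := \ee^{-\frac{\varphi^2 d}{2^{12}(\ee\Gamma)^2}}$ and fix an integer $k$, to be chosen below, with $k \le \ccc d/\Gamma^2$. We also need $\mathcal C \Gamma^3 k d^{-1} \le \varphi$ so that \cref{lemma:deterministicsetaretypical} applies to all $T$ with $\abs{T} = k$.

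If $\absn{V \cap \mathcal{T}^c_{\varphi/2}} \ge k$, then some $T \subset V$ with $\abs{T} = k$ satisfies $T \subset \mathcal T^c_{\varphi/2}$. A union bound over such $T$ together with \cref{eq:bound_P_T_subset_atypical} gives
\[
\mathbb P\bigl( \absn{V\cap \mathcal T^c_{\varphi/2}} \ge k,\ \theta = 1\bigr) \le \binom{\abs{V}}{k} \bigl(4^k p^k + \mathcal C N^{-\nu}\bigr) \le \Bigl(\frac{4 \ee \abs{V} p}{k}\Bigr)^k + \abs{V}^k \mathcal C N^{-\nu}.
\]
The second term is the delicate one. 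Since $\abs{V}^k \le N^k$, we apply \cref{lemma:deterministicsetaretypical} with $\nu$ replaced by $\nu + k$. This is legitimate only if $k$ is bounded by a constant, or if the constant in the lemma is allowed to depend on $k$. I would therefore take $k$ of order at most a $\nu$-dependent constant times $\log N / \log\log N$, and check that the lemma's proof (a union bound over at most $k$ removal steps, each with very high probability) gives error $\mathcal C_{\nu'} N^{-\nu'}$ with $\nu' = \nu + k$. This step is where the hypothesis $d \le (\log N)^2/(\log\log N)^3$ enters: it ensures that $k \asymp \ccc \varphi d / \Gamma^3$ is at most of order $\log N/\log\log N$, and it is the main obstacle.

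For \cref{(ii)}, take $k = \lceil \ccc\varphi d/\Gamma^3 \rceil$. Then $\mathcal C \Gamma^3 k/d \le \varphi$ once $\ccc$ is small, and $k \le \ccc d/\Gamma^2$. Using $\abs{V} \le \frac14 \ee^{\varphi^2 d/(2^{13}(\ee\Gamma)^2)}$, we get $4\ee\abs{V}p \le \ee^{-\varphi^2 d/(2^{13}(\ee\Gamma)^2)}$. Since $\varphi^3 = \tilde\ccc\, \Gamma^5 \log N/d^2$, we have
\[
\frac{\varphi^2 d}{\Gamma^2} \cdot k \gtrsim \frac{\ccc\, \varphi^3 d^2}{\Gamma^5} = \ccc\,\tilde\ccc \log N.
\]
Hence the first term is at most $N^{-\nu}$ after choosing $\tilde\ccc$ large relative to $\ccc$ and $\nu$.

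For \cref{(i)}, split into two cases. If $\abs{V} \le \frac14 \ee^{\varphi^2 d/(2^{13}(\ee\Gamma)^2)}$, then \cref{(ii)} already gives the bound, since $\ccc\varphi d/\Gamma^3 \le \ee^{\varphi^2 d/(2^{14}(\ee\Gamma)^2)}$. Otherwise, take $k$ equal to the stated bound, namely $k = \ee^{\varphi^2 d/(2^{14}(\ee\Gamma)^2)} + 4\abs{V}\ee^{-\varphi^2 d/(2^{13}(\ee\Gamma)^2)}$. If this $k$ exceeds the admissible range, partition $V$ into deterministic blocks of admissible size, apply the bound to each block, and sum the results. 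For each block, $4\ee\abs{V}p/k \le \ee\, \ee^{-\varphi^2 d/(2^{13}(\ee\Gamma)^2)} \le 1/\ee$, and $k \ge \ee^{\varphi^2 d/(2^{14}(\ee\Gamma)^2)} \gtrsim \nu\log N$ by the choice of $\varphi$. This makes the first term polynomially small.

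The statements for $\mathcal T^{[x]}_{\varphi/2}$ follow in the same way from \cref{eq:bound_P_T_subset_atypical_without_x}. There we condition on the random set $V$, which is independent of $H^{[x]}$. The quantities $\Phi^{i,[x]}$, $\Psi^{i,[x]}$ and $\theta^{[x]}$ depend only on $H^{[x]}$, so the same union bound applies conditionally. Finally, \cref{lem:corthetalethetaT} gives $\theta \le \theta^{[x]}$ with very high probability.
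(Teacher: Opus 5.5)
Your argument for part (ii) is essentially the paper's union bound, but part (i) has a genuine gap. Write $A := \frac{\varphi^2 d}{2^{13}(\ee\Gamma)^2}$ and $\ell := \ccc\varphi d/\Gamma^3$.

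\textbf{Where (i) fails.} \cref{lemma:deterministicsetaretypical} can only be applied to sets $T$ with $\abs{T}\le \ccc d/\Gamma^2$ and $\mathcal C\Gamma^3\abs{T}/d\le\varphi$. That is, essentially $\abs{T}\lesssim \varphi d/\Gamma^3\lesssim \log N/\log\log N$.

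In the case $\abs{V}>\frac14\ee^{A}$ you union bound over subsets of size $k=t:=\ee^{A/2}+4\abs{V}\ee^{-A}$. For $\abs{V}\asymp N$ this is $t\ge N^{1-o(1)}$, since $A=o(\log N)$. So the lemma does not apply to such sets. Even if it did, the additive error $\binom{\abs{V}}{k}\mathcal C N^{-\nu}$ would be astronomically large.

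Splitting $V$ into blocks does not repair this as you describe it:
\begin{itemize}
\item any per-block threshold of the same form is again at least $\ee^{A/2}$, hence still outside the admissible range;
\item summing such thresholds over $m$ blocks gives $m\,\ee^{A/2}$ rather than a single $\ee^{A/2}$;
\item applying (ii) to blocks of size $\frac14\ee^{A}$ instead loses a factor $\ell$ in front of $4\abs{V}\ee^{-A}$.
\end{itemize}

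\textbf{The missing idea.} Apply \cref{lemma:deterministicsetaretypical} only to sets of size at most $\ell$, and let $\abs{V}$ enter only through counting. The paper does this with the $\ell$-th moment. By Markov,
\[
\mathbb P_\theta\bigl(\abs{V\cap\mathcal T^c_{\varphi/2}}\ge t\bigr)\le t^{-\ell}\sum_{x_1,\dots,x_\ell\in V}\mathbb P_\theta\bigl(x_1,\dots,x_\ell\in\mathcal T^c_{\varphi/2}\bigr).
\]
Group the tuples by the partition $\pi$ of $[\ell]$ they induce. Then:
\begin{itemize}
\item each term involves a set of size $\abs{\pi}\le\ell$, so the lemma applies;
\item there are at most $\abs{V}^{\abs{\pi}}$ tuples per partition, and at most $\binom{\ell}{k}\ell^{\ell-k}$ partitions with $k$ blocks;
\item the choice of $t$ gives $(4\abs{V}/t)^\ell\le \ee^{A\ell}=N^{\nu}$ (with $\tilde\ccc$ chosen so that $A\ell=\nu\log N$), which is absorbed by the $N^{-2\nu}$ error term.
\end{itemize}
The hypothesis $d\le(\log N)^2/(\log\log N)^3$ enters exactly here. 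It gives $\ell\lesssim\log N/\log\log N$, so the combinatorial factor $\ell^\ell$ is at most a small power of $N$ after shrinking $\ccc$.

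Your block idea can also be made to work. You would need blocks of size about $\frac14\ell\,\ee^{A}$ with threshold $\ell$ per block. This again requires the same control of $\ell^\ell$, and gives total $\le 4\abs{V}\ee^{-A}+\ell$.

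\textbf{The second term in (ii).} Your treatment of this term should be discarded. Replacing $\nu$ by $\nu+k$ with $k\to\infty$ is not legitimate. The constants in the very-high-probability statements are not uniform in $\nu$; examples are $\ccc(\nu,\delta)$ in \cref{lem:roughboundsGT} and the $\nu$-dependent threshold in $\theta^{[T]}$.

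The device is also unnecessary. The assumption $\abs{V}\le\frac14\ee^{A}$ already gives $\abs{V}^k\le \ee^{Ak}=N^{\nu}$. So invoking \cref{lemma:deterministicsetaretypical} with $2\nu$ suffices. Contrary to your remark, the upper bound on $d$ plays no role in (ii).
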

	
\begin{proof}
		Throughout this proof we abbreviate $\mathbb{P}_{\theta}(\Theta) := \mathbb{P}(\Theta \cap \{ \theta =1 \})$. We fix $\nu >0$ and let $\ccc>0$ be the constant from \cref{lemma:deterministicsetaretypical}. Throughout the following, when applying \cref{lemma:deterministicsetaretypical}, we use that $\min\{ d \Gamma^{-2},  d \varphi \Gamma^{-3}\}= d \varphi \Gamma^{-3}$ since $\Gamma \geq 1$ and $\varphi \leq 1$.
		To prove \cref{(ii)}, 
		 we fix  $k = \ccc \varphi d\Gamma^{-3}$, choose $\tilde{\ccc}>0$ 
		 in \cref{eq:new_varphi} such that $\frac{\varphi^2d}{2^{13} (e\Gamma)^2}k = \nu \log N $, and estimate
		\begin{align*}
			\mathbb{P}_{\theta}\big( \absn{V \cap \mathcal{T}_{\varphi/2}^c} \ge k \big) &\le \sum_{W \subset V:\abs{W}=k}\mathbb{P}_{\theta}\big( W \subset \mathcal{T}_{\varphi/2}^c \big) \le \binom{\abs{V}}{k} \Big( 4^{k} e^{-\frac{\varphi^2d}{2^{12} (e\Gamma)^2}k} + \mathcal{C}N^{-2 \nu}\Big)\\
			&\le \Big(4\abs{V}e^{-\frac{\varphi^2d}{2^{12}(e\Gamma)^2}}\Big)^k + \mathcal{C} \abs{V}^{k}N^{-2\nu}\le  e^{-\frac{\varphi^2d}{2^{13} (e\Gamma)^2}k}+ \mathcal{C}e^{\frac{\varphi^2d}{2^{13} (e\Gamma)^2}k} N^{-2\nu} \\
			&\le \mathcal{C^{\prime}} N^{-\nu}
		\end{align*}
where we applied \cref{lemma:deterministicsetaretypical} for all $ W \subset V$ of cardinality $k$ in the second step. In the third step, we used the upper bound on $\abs{V}$ and, in the fourth one, 
that our choice of $\tilde{\ccc}$ implies 
\begin{equation} \label{eq:choice_varphi_conclusion} 
\ee^{\frac{\varphi^2d}{2^{13} (e\Gamma)^2}k}= \ee^{\frac{\varphi^2d}{2^{13} (e\Gamma)^2} \frac{\ccc \varphi d}{\Gamma^3}} = N^{\nu}. 
\end{equation} 
To prove \cref{(i)}, for any $t>0$ and $\ell \in \N$, we conclude from Markov's inequality that 
\begin{align*}
			\mathbb{P}_{\theta} \big( \absn{V \cap \mathcal{T}_{\varphi/2}^c} \ge t \big) &\le \frac{1}{t^{\ell}} \mathbb{E} \biggl[ \biggl( \sum_{x \in V} \mathds{1}\big( x \in \mathcal{T}_{\varphi/2}^c \big) \theta \biggl)^{\ell}\biggr] = \frac{1}{t^\ell}\sum_{x_1,...,x_{\ell} \in V}\mathbb{P}_{\theta}\biggl( \bigcap_{j=1}^{\ell} \big\{x_j \in \mathcal{T}_{\varphi/2}^c \big\} \biggr).
		\end{align*}
	We denote by $\mathcal{P}_{\ell}$ the set of partitions of $[\ell]$, choose  $\ell =\ccc d\varphi/\Gamma^3$, apply \cref{eq:bound_P_T_subset_atypical} from \cref{lemma:deterministicsetaretypical} and obtain
		\begin{align*}
			\mathbb{P}_{\theta} \big( \absn{V\cap \mathcal{T}_{\varphi/2}^c} \ge t \big) &\le \frac{1}{t^{\ell}} \sum_{\pi \in \mathcal{P}_{\ell}}  (4\abs{V})^{\abs{\pi}} \left(  e^{-\frac{\varphi^2d}{2^{12} (e\Gamma)^2}|\pi|} + \mathcal{C}N^{-2 \nu}\right) \\
		&\le  \frac{1}{t^{\ell}} \sum_{ k=0}^\ell \binom{\ell}{k} \ell^{\ell-k}(4\abs{V})^{k} \left(  e^{-\frac{\varphi^2d}{2^{12} (e\Gamma)^2}k}  + \mathcal{C}N^{- 2\nu}\right)\\
			&\le \frac{ \left( \ell +  4\abs{V} e^{-\frac{\varphi^2d}{2^{12} (e\Gamma)^2}}  \right)^\ell + \mathcal{C} N^{-2\nu}\left(\ell +  4 \abs{V} \right)^\ell}{t^\ell}
		\end{align*}
		where we used that the number of partition of $[\ell]$ consisting of $k$ block is bounded by $\binom{\ell}{k}\ell^{\ell-k}$. We now take $t = e^{\frac{\varphi^2d}{2^{14} (e\Gamma)^2}}  + 4\abs{V} e^{-\frac{\varphi^2d}{2^{13} (e\Gamma)^2}} $ and obtain
		\begin{align*}
			\left( \frac{\ell}{t} \right)^{\ell} \le \left(\frac{\ccc d\varphi}{\Gamma^3}\right)^\ell e^{-\frac{\varphi^2d }{2^{14}(e\Gamma)^2} \ell } = \left(\frac{\ccc d\varphi}{\Gamma^3}\right)^\ell N^{-\nu/2} \le \mathcal C N^{-\nu/4}
		\end{align*}
		 from \cref{eq:new_varphi},  \cref{eq:choice_varphi_conclusion} 
		 and the upper bound in \cref{eq:regimedlog2} as well as possibly shrinking $\ccc>0$. Moreover, our  definitions of $t$ and $\ell$ as well as \cref{eq:choice_varphi_conclusion} yield \begin{align*}
		\biggl(	\frac{ 4\abs{V} e^{-\frac{\varphi^2d}{2^{12} (e\Gamma)^2}} }{t} \biggr)^\ell \le \Bigl(e^{-\frac{\varphi^2d}{2^{13} (e\Gamma)^2}}\Bigr)^\ell = N^{-\nu}.
		\end{align*}
		By combining these estimates, we obtain 
		\begin{equation*}
			\frac{ \big( \ell + 4\abs{V} e^{-\frac{\varphi^2d}{2^{12} (e\Gamma)^2}}  \big)^\ell}{t^\ell} \le 2^\ell \max \bigg\{ \bigg(\frac{\ell}{t}\bigg)^\ell, \bigg( \frac{ 4\abs{V} e^{-\frac{\varphi^2d}{2^{12} (e\Gamma)^2}} }{t}\bigg)^{\ell}\bigg\}  \le N^{ \log 2 - \nu/2}.
		\end{equation*}
		On the other hand, from \cref{eq:choice_varphi_conclusion} and our definition of $\ell$, we conclude 
		\begin{equation*}
			\left( \frac{ 4 \nc \abs{V}}{t} \right)^{\ell} \le e^{\frac{\varphi^2d \ell}{2^{14} (e\Gamma)^2}}  \le N^{\nu/2}
		\end{equation*}
		and therefore
		\begin{equation*}
			\mathcal{C} N^{-2\nu} \left( \frac{\ell + 4\abs{V}}{t}\right)^{\ell}\le \mathcal{C} N^{\log 2 - \nu}.
		\end{equation*}
		By a-priori choosing $\nu>0$ large enough, the claim follows. 
	 
	 Following the same argument and conditioning on the random set $V$ yield the statement about $\mathcal T^{[x]}$.
	\end{proof}

	\begin{lemma}\label{lem:boundthetabsetxvarphi}
		With very high probability, we have
		\begin{equation}\label{eq:accroissementvarphia}
			\abs{\Phi_{y}^{i} - \Phi_{y}^{i,\bset{x}}} \le \varphi/2, \qquad  \theta\abs{\Psi_{y}^{i} - \Psi_{y}^{i,\bset{x}}} \le \varphi/2
		\end{equation}
		for all $i \in \{1,2\}$ and all $x,y \in [N]$.
	\end{lemma}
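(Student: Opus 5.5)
The plan is to obtain \cref{eq:accroissementvarphia} as the special case $T=\emptyset$, $u=x$ of the increment bounds \cref{eq:accroissementPhiPsiT} from \cref{lemma:conditionalpbestimates}. The only additional work is to check that the resulting error is at most $\varphi/2$, and to control $\beta_y^i$, which enters the bound for the $\Psi$-increment.

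For $x=y$ there is nothing to prove: by \cref{def:typicalverticesdecoupled} the sum defining $\Phi_y^{i,\bset{x}}$ runs over $y\in[N]\setminus\{x\}$, the same range as for $\Phi_y^i$, so the two quantities coincide (and likewise for $\Psi$). So assume $x\neq y$ and look at $\Phi$ first. Then
\[
\Phi_y^{1}-\Phi_y^{1,\bset{x}}=|X_{yx}|^2-1/N,
\]
and the analogous identity holds for $\Phi^2$ with $|X_{xy}|^2-1/N$. By \cref{defiX} \cref{item:def_X_decay} the absolute value of each is almost surely at most $K^2 d^{-1}$. Since $\varphi\ge \Gamma d^{-1}\ge d^{-1}$ by \cref{eq:d_inverse_square_root_smaller_varphi_a}, this is at most $\varphi/2$ once the constant $\tilde{\ccc}$ in \cref{eq:new_varphi} is large enough. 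Indeed $\varphi d\ge(\tilde{\ccc}\log N)^{1/3}\Gamma^{5/3}d^{1/3}$, which is far larger than $2K^2$.

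For $\Psi$, I apply the second bound in \cref{eq:accroissementPhiPsiT} with $T=\emptyset$ and $u=x$. Here $\theta^{\bset{\emptyset}}=\mathds 1(\max|G_{ab}|\le 2\mathcal C\Gamma)\ge\theta$, so the statement holds on $\{\theta=1\}$. It gives
\[
\theta\,\bigl|\Psi_y^{i,\bset{x}}-\Psi_y^{i}\bigr|\le \mathcal C\,\frac{\Gamma+\beta_y^i\Gamma^3}{d}
\]
with very high probability. By \cref{lem:roughboundonbetaix}, $\beta_y^i\le\mathcal C\Gamma'$ with very high probability, where $\Gamma'=1+\log N/d$. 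The bound is therefore at most $\mathcal C\Gamma^3\Gamma'/d$.

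The main obstacle is to check that $\mathcal C\Gamma^3\Gamma'/d\le\varphi/2$. With $\varphi^3=\tilde{\ccc}\,\Gamma^5\log N/d^2$, this amounts to
\[
\mathcal C^3\Gamma^9\Gamma'^3 d^{-3}\le\tfrac18\tilde{\ccc}\,\Gamma^5\log N\,d^{-2},
\qquad\text{that is,}\qquad
\Gamma^4\Gamma'^3\lesssim \tilde{\ccc}\, d\log N .
\]
If $\Gamma=\Gamma'=1+\log N/d$, this needs $(\log N/d)^7\lesssim d\log N$, i.e.\ $d\gtrsim(\log N)^{3/4}$, which holds in the regimes $d_{\mathrm b},d_{\mathrm e}$ for $\mathcal D$ large. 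If $\Gamma=1$, it needs $(\log N/d)^3\lesssim d\log N$, i.e.\ $d\gtrsim(\log N)^{1/2}$, which is exactly the regime $d_{\mathrm l}$. (Alternatively one could use $\beta_y^i\le 2$ for typical $y$, but the lemma is claimed for all $y$, so I keep the crude bound.)

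Finally, I take a union bound over the $N^2$ pairs $(x,y)$ and $i\in\{1,2\}$. This preserves the very-high-probability statement after adjusting $\nu$.
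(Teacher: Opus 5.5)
Your proposal is correct and follows the paper's proof. Both arguments take the two increments from \eqref{eq:accroissementPhiPsiT} with $T=\emptyset$ and $u=x$, bound $\beta_y^i$ via \cref{lem:roughboundonbetaix}, and compare the resulting error $\mathcal C\Gamma^3(1+\log N/d)/d$ with $\varphi$ from \eqref{eq:new_varphi}. Your separate check for $\Gamma=1$, which only needs $d\gtrsim(\log N)^{1/2}$, is slightly more careful than the paper: the paper invokes $d\ge(\log N)^{3/4}$ throughout, a condition not guaranteed in the regime $d_{\mathrm l}$.
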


	\begin{proof}[Proof of \cref{lem:boundthetabsetxvarphi}]
		We fix $i \in \{1,2\}$ and $x,y \in [N]$. 
	 The first estimate easily follows from the first estimate in 
		\cref{eq:accroissementPhiPsiT} with $T= \emptyset$ and $u=x$ and the definition of $\varphi$ in \cref{eq:new_varphi}. 
		
		From the second estimate in \cref{eq:accroissementPhiPsiT} with $T= \emptyset$ and $u=x$ and \cref{lem:roughboundonbetaix}, we conclude that 
		\[ 
		\theta \abs{\Psi_{y}^{i,\bset{x}} - \Psi_{y}^{i}} \le \mathcal{C}  \frac{\Gamma}{d} + \mathcal C  \frac{\Gamma^3}{d} \bigg( 1 + \frac{ \log N}{d} \bigg)  \leq \frac{\varphi}{2}, 		\] 
		where, in the last step, we used \cref{eq:new_varphi}, $1 \leq \Gamma \leq 1 + \frac{\log N}{d}$ and $d \geq (\log N)^{3/4}$.  
	\end{proof}

	\begin{proof}[Proof of \cref{prop:mostverticesandneigharetypical}]
		Item \cref{mostverticestypical} follows immediately from  \cref{lem:controlatypicalvertices} \cref{(i)} with $V= [N]$.
		
		For the proof of  \cref{mostneighboursaretypical}, we only show  
	 the upper bound on $\sum_{y\in \mathcal{T}^c_{\varphi}}^{(x)} \abs{X_{xy}}^2$. The analogous proof shows the same bound for $\sum_{y\in \mathcal{T}^c_{\varphi}}^{(x)} \abs{X_{yx}}^2$. First, \cref{lem:boundthetabsetxvarphi} implies $\mathcal{T}_{\varphi}^c  \setminus \{ x\}  \subset \big(\mathcal{T}^{\bset{x}}_{\varphi/2}\big)^c \setminus \{ x\}  $ with very high probability on the event $\{\theta =1 \}$ and hence
		\begin{equation*}
			\theta \sum_{y \in \mathcal{T}_{\varphi}^c}^{(x)} \abs{X_{xy}}^2 \le \theta \sum_{y \in \big(\mathcal{T}^{\bset{x}}_{\varphi/2}\big)^c }^{(x)} \abs{X_{xy}}^2
		\end{equation*}
		with very high probability. Owing to \cref{defiX} \cref{item:def_X_decay}, $\abs{X_{xy}}^2\le K^2/d $ almost surely. Therefore, 
		\begin{align*}
			\sum_{y \in \big(\mathcal{T}^{\bset{x}}_{\varphi/2}\big)^c }^{(x)} \abs{X_{xy}}^2 &\le \sum_{k=0}^{\log N }\sum_{y \in \big(\mathcal{T}^{\bset{x}}_{\varphi/2}\big)^c }^{(x)} \abs{X_{xy}}^2 \mathds{1}\left( d^{-k-2}\le \abs{X_{xy}}^2 \le d^{-k-1}\right) + Nd^{-\log N }\\
			&\le \sum_{k=0}^{\log N}\sum_{y \in \big(\mathcal{T}^{\bset{x}}_{\varphi/2}\big)^c } d^{-k-1} \mathds{1}\left( d^{-k-2}\le \abs{X_{xy}}^2 \le d^{-k-1}\right) + \frac{1}{N}\\
			&\le  \sum_{k=0}^{\log N } d^{-k-1}\abs{W_k \cap\left(\mathcal{T}^{\bset{x}}_{\varphi/2}\right)^c}  + \frac{1}{N}
		\end{align*}
		where we used $d^{\log N } \ge N^2$ as $d \geq \ee^2$ and defined 
		\begin{equation*}
			V_k := \left\{ y \neq x:~ \abs{X_{xy}}^2\ge d^{-k-2}\right\}.
		\end{equation*}
		Owing to \cref{lem:roughboundonbetaix} and $d^2 \geq \log N$,  we have   $ \sum_{y } \abs{X_{xy}}^2  \le \mathcal{C}d$ and therefore, $\abs{V_{k}}\le \mathcal{C}d^{k+3}$ with very high probability. We set 
		\begin{equation*}
		n:= \max \biggl\{ k \ge 0:~ \mathcal{C}d^{k+3} \le e^{\frac{\varphi^2d}{2^{13} (e\Gamma)^2}} \biggr\}. 
			\end{equation*}
			Since the family $(V_k)_{k}$ and $H^{\bset{x}}$ are independent, we can apply \cref{lem:controlatypicalvertices} and obtain
		\begin{align*}
			\sum_{k=0}^{\log N} d^{-k-1}\abs{V_k \cap\left(\mathcal{T}^{\bset{x}}_{\varphi/2}\right)^c} &= \sum_{k=0}^{n} d^{-k-1}\abs{V_k \cap\left(\mathcal{T}^{\bset{x}}_{\varphi/2}\right)^c}  + \sum_{k=n+1}^{\log N} d^{-k-1}\abs{V_k \cap\left(\mathcal{T}^{\bset{x}}_{\varphi/2}\right)^c}  \\
			&\le \sum_{k=0}^{n} d^{-k-1} \ccc d\varphi\Gamma^{-3}  + \sum_{k=n+1}^{\log N} d^{-k-1}\left( e^{\frac{\varphi^2d}{2^{14} (e\Gamma)^2}}  + 4\mathcal C d^{k+3} e^{-\frac{\varphi^2d}{2^{13} (e\Gamma)^2}}\nc \right)\\
			&\le 2 \ccc \varphi\Gamma^{-3} +  5\mathcal{C}d^2e^{-\frac{\varphi^2d}{2^{14} (e\Gamma)^2}}\log N
		\end{align*}
		with very high probability on the event $\{ \theta =1 \}$. We conclude the claimed estimate as $d^2 \geq \log N$ due to our lower bound on $d$.
	\end{proof}

	\subsection{Proofs of statements in \cref{section:properties_v_v_beta}}\label{subsection:proof_properties_v}
	\begin{proof}[Proof of \cref{lem:profv}]
		
		Equation \cref{eq:v_leq_1} is a direct consequence of \cref{eq:v}. Indeed all the terms on each sides of \cref{eq:v} are non-negative and therefore we have directly $1/v \ge \eta $. Similarly, multiplying the entire equation \cref{eq:v} by $v$ we conclude $1 \ge v^2$.

		We now show \cref{eq:v_full_scaling} and \cref{eq:vgeeta} simultaneously. We conclude $\eta + v = v(\eta + v)^2 + \abs{w}^2 v$ from \cref{eq:v}.  
		First, we assume $\abs{w} \leq 1$. Hence, all summands in 
		\begin{equation} \label{eq:v_full_scaling_proof1} 
			\eta + (1 - \abs{w}^2)v = v(\eta + v)^2 
		\end{equation} 
		are positive, 
		which implies $\eta \lesssim v$, i.e.\ \cref{eq:vgeeta}, as 
		$\eta + v \lesssim 1$ by $\eta \leq L$ and \cref{eq:v_leq_1}. 
		Hence, $v^3 \leq v (\eta +v)^2 \lesssim v^3$ implying $\eta + (1- \abs{w}^2) v \asymp v^3$ by \cref{eq:v_full_scaling_proof1}. By distinguishing which summand is larger on the left-hand side, we see that this is equivalent to $ \eta\asymp v^3$ or $(1- \abs{w}^2) \asymp v^2$. This proves the first case of \cref{eq:v_full_scaling}. 
		
		If $\abs{w} \geq 1$ then all summands of 
		\begin{equation} \label{eq:v_full_scaling_proof2} 
			\eta  = v(\eta + v)^2  + (\abs{w}^2-1)v
		\end{equation} 
		are positive. Since $\eta + v \lesssim 1$ as before and $\abs{w}^2 - 1 \leq 5$, \eqref{eq:v_full_scaling_proof2} 
		implies $\eta \lesssim v$ proving \eqref{eq:vgeeta}. 
		We conclude from $\eta \lesssim v$ and \eqref{eq:v_full_scaling_proof2} that $\eta \asymp v^3 + (\abs{w}^2 - 1)v$. 
		This implies $v \asymp \eta^{1/3}$ or $v \asymp \frac{\eta}{\abs{w}^2 -1}$, which is equivalent to \eqref{eq:v_full_scaling}. 
		
		Given \eqref{eq:v_full_scaling}, the lower bound $v \gtrsim 1$ for $w \in \mathrm{D}_{1-\delta}$, i.e.\ \eqref{eq:v_asymp_1}, is obvious. 
	\end{proof}

	\appendix 
	
	\crefalias{subsection}{appendix}
	\crefalias{section}{appendix}
	
	\section{Appendix} 
	
	\subsection{Stieltjes transforms} 

Motivated by the identity \cref{eq:v} for $v$, we consider the relation 
\begin{equation}\label{eq:deterministselfconseq}
		-\frac{1}{m}= z + m - \frac{|w|^2}{z+m}.
	\end{equation}
	for $z \in \C_+$ and $w \in \C$ and denote by $m(w,z)$ the unique solution of \cref{eq:deterministselfconseq} in $\C_+$.
	As shown in the next lemma, $z\mapsto m(w,z)$ is the Stieltjes transform of a symmetric probability measure on $\R$. In fact, with the methods developed in this paper, one can prove that this measure in the limit of the empirical spectral measure of $H(w)$ as $N$ tends to infinity.
	
	Motivated by the definition \cref{eq:def_v_beta}, for any $\beta = (\beta^1, \beta^2) \in [0,\infty)^2$, we define 
	\begin{equation} \label{eq:def_m_beta}  
		m_\beta(w,z) := \frac{z + \beta^2m(w,z)}{\abs{w}^2 - (z + \beta^1 m(w,z))(z + \beta^2 m(w,z))} 
	\end{equation} 
	for all $w \in \C$ and $z \in \C_+$. 
	In \cref{lem:m_1_Stieltjes} below, we verify the well-definedness of the definition of $m_\beta$ and show that it is the Stieltjes transform of a symmetric measure on $\R$. 
	In particular, if $z = \ii \eta$ then $m=m_{(1,1)}$ and $m_{\beta}$ are purely imaginary  for all $\beta \in [0,\infty)^2$ and we write 
	\begin{equation} \label{eq:v_beta_m_beta} 
		m_{\beta}(w,\ii\eta) = \ii v_{\beta}(w,\eta)
 	\end{equation}
	for $v_{\beta}(w,\eta)>0$ from \eqref{eq:def_v_beta}.

	\begin{lemma}[Stieltjes transform representation of $m_\beta$]
		\label{lem:m_1_Stieltjes} 
		Let $m$ be as in \eqref{eq:deterministselfconseq} and $\beta =(\beta^1, \beta^2) \in [0,\infty)^2$. Then $m_\beta(w,z)$ from \eqref{eq:def_m_beta} is well-defined for all $w \in \C$, $z \in \C_+$. Furthermore, for each $w \in \C$, the map $\C_+ \to \C$, $z \mapsto m_\beta(w,z)$ is the Stieltjes transform of a probability measure $\mu_\beta$ on $\RR$. 
		Moreover, $m_\beta(- \bar z) = - \barr{m_\beta(z)}$ for all $z \in \C_+$, i.e.\ the measure $\mu_\beta$ is symmetric. 
	\end{lemma}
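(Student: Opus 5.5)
The plan is to first establish that $z\mapsto m(w,z)$ is itself the Stieltjes transform of a symmetric probability measure, and then to realise $m_\beta$ as a diagonal entry of the resolvent of a self-adjoint operator. The cleanest route is an operator model. Let $(\mathcal A,\tau)$ be a tracial $W^*$-probability space containing a circular element $c$, and consider the Hermitization
\[
\mathbf H_w=\begin{pmatrix}0 & c-w\\ c^*-\bar w & 0\end{pmatrix}.
\]
By the standard operator-valued semicircular computation (the same algebra as in \cref{lem:ASCEq}, with the error terms $Y^i_x$, $Z^i_x$ set to zero and $\beta^i=1$), the $M_2(\C)$-valued Stieltjes transform of $\mathbf H_w$ has diagonal entries equal to the unique solution in $\C_+$ of \eqref{eq:deterministselfconseq}. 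Hence $m(w,\cdot)$ is the Stieltjes transform of the spectral distribution of $\mathbf H_w$. That distribution is a probability measure, and it is symmetric because conjugating $\mathbf H_w$ with $\mathrm{diag}(1,-1)$ maps it to $-\mathbf H_w$.

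For general $\beta$, I would add a ``root vertex'' and use a finite-rank extension. Consider the self-adjoint operator on $\C^2\oplus(\text{two copies of }L^2(\tau))$ given by
\[
\widetilde{\mathbf H}=\begin{pmatrix}A & B\\ B^* & \mathbf H_w\end{pmatrix},\qquad A=\begin{pmatrix}0&-w\\-\bar w&0\end{pmatrix},
\]
where $B$ couples the first coordinate to $\sqrt{\beta^1}$ times a free unit-type direction in the second block, and the second coordinate to $\sqrt{\beta^2}$ times one in the first block. The construction should be arranged so that the Schur complement (as in the proof of \cref{lem:ASCEq}) gives
\[
\big(\widetilde{\mathbf H}-z\big)^{-1}_{\text{root}}=\begin{pmatrix}-z-\beta^1 m & -w\\ -\bar w & -z-\beta^2 m\end{pmatrix}^{-1}.
\]
The $(1,1)$ entry of this inverse is exactly \eqref{eq:def_m_beta}.

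An alternative that avoids the free probability setup is to take the limit of the actual matrix model. Take $N\to\infty$ with $X$ Ginibre-like, fix row and column $x$ with $\sum_y|X_{xy}|^2=\beta^1$ and $\sum_y|X_{yx}|^2=\beta^2$, and apply \cref{thm:locallaw3} to the minor $H^{[x]}$. Then $G_{xx}(z)$ converges pointwise on $\C_+$ to $m_\beta(z)$. Each $G_{xx}$ is the Stieltjes transform of a probability measure (the spectral measure of $e_x$). Pointwise limits of Stieltjes transforms of probability measures are Stieltjes transforms of sub-probability measures, and tightness follows from $|z\,m_\beta(z)+1|\to0$ as $z=\ii\eta$, $\eta\to\infty$. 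The formula shows this directly, since $m=O(1/\eta)$ forces $m_\beta\sim -1/z$.

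In either approach, well-definedness means the denominator in \eqref{eq:def_m_beta} does not vanish. I would prove it directly. Writing $a=z+\beta^1m$ and $b=z+\beta^2m$, both lie in $\C_+$. Then $\Im\big(m_\beta\big)>0$ follows from the identity $1/m_\beta=-(z+\beta^1 m)+|w|^2/(z+\beta^2m)$, the analogue of \eqref{eq:inverse_v_beta}. Its right-hand side has strictly negative imaginary part, so it is nonzero and finite, and $m_\beta$ is well-defined in $\C_+$. Analyticity and Nevanlinna property, together with $z m_\beta(z)\to-1$ as $\Im z\to\infty$, then give the probability measure via the Nevanlinna representation theorem; this is the cleanest route and I would use it instead of the models above.

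For symmetry, uniqueness of the $\C_+$-solution gives $m(-\bar z)=-\overline{m(z)}$, since the map $m\mapsto-\bar m$ preserves \eqref{eq:deterministselfconseq} under $z\mapsto-\bar z$. Substituting into \eqref{eq:def_m_beta} yields $m_\beta(-\bar z)=-\overline{m_\beta(z)}$. The main obstacle is establishing existence, uniqueness and analyticity of $m$ in $\C_+$. I would obtain these from the cubic: $m$ is a root of a cubic with exactly one root in $\C_+$, using a fixed-point argument or a continuity-in-$z$ argument from large $\Im z$.
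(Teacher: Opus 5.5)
Your final route is exactly the paper's proof: nonvanishing of the denominator and $\Im m_\beta>0$ from $\Im(z+\beta^i m)>0$, the normalization $\ii\eta\, m_\beta(\ii\eta)\to-1$, the Nevanlinna representation theorem, and symmetry via uniqueness of $m$. Your identity $1/m_\beta=-(z+\beta^1 m)+\abs{w}^2/(z+\beta^2 m)$ gives nonvanishing and positivity in one stroke, slightly more cleanly than the paper's real/imaginary-part case analysis followed by an explicit formula for $\Im m_\beta$. The operator-model and matrix-limit detours can be dropped; the matrix-limit one would even be circular here, since the grid arguments behind \cref{thm:locallaw3} use \cref{lem:v_beta_u_beta_Lipschitz}, which is derived from this lemma. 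Also, the existence and uniqueness of $m$ that you flag as the main obstacle are built into the paper's definition of $m$, with holomorphy used tacitly.
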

	
	\begin{proof} 
		We fix $w \in \C$, suppress it from our notation and denote by $m_\beta$ the map $\C_+ \to \C$, $z \mapsto m_\beta(w,z)$.
		For the well-definedness, we note that $\im (z + \beta^1m ) >0$ and $\im (z + \beta^1m ) >0$ for all $z \in \C_+$, $w \in \C$, $\beta^1 \geq 0$ and $\beta^2 \geq 0$ as $\im m >0$ by definition. 
		Moreover, the imaginary part of the denominator vanishes iff $\re(z + \beta^2m) = - \re (z + \beta^1m)\im(z + \beta^2m)/\im (z + \beta^1 m)$. In this case, 
		the real part of the denominator reads as $\abs{w}^2 + \im(z + \beta^1 m) \im (z+ \beta^2m ) + (\re (z + \beta^1 m))^2 \im (z + \beta^2 m)/\im(z + \beta^1 m)$, which is strictly positive. 
		Thus, $m_\beta$ is well-defined and holomorphic as a function of $z$ on $\C_+$. 
		A short computation reveals that 
		\[ \im m_\beta= \frac{\abs{w}^2 \im(z + \beta^2 m) + \abs{z + \beta^2m}^2 \im (z + \beta^1 m)}{\abs{\abs{w}^2 - (z+ \beta^1m) (z + \beta^2m)}^2},  
		\]
		which shows that $\im m_\beta(z) >0$ if $z \in \C_+$. From \eqref{eq:def_m_beta}, we conclude that $\ii \eta m_\beta(\ii \eta) \to -1$ as $\eta \to \infty$ as $m(\ii \eta) \to 0$ when $\eta \to \infty$ by \eqref{eq:deterministselfconseq}. 
		Hence, we have proved that $m_\beta \colon \C_+ \to \C_+$ is a holomorphic map and 
		$\ii \eta m_\beta(\ii \eta) \to -1$ for $\eta \to \infty$. 
		Therefore, the existence of $\mu_\beta$ follows from the well-known representation theorem for Nevanlinna functions \cite[Theorem 11.9]{Rudin1987Analysis}.

		Owing to \eqref{eq:deterministselfconseq} and its uniqueness, it is easy to see that $m(-\bar z) = - \barr{m(z)}$ for all $z \in \C_+$. Thus, \eqref{eq:def_m_beta} implies the same statement for $m_\beta$.
	\end{proof} 
	
	\begin{lemma}\label{lem:Stieltjes_proper}
		If $m: \CC\setminus \R \rightarrow \CC$ is the Stieltjes transform of a probability measure on $\R$ then
		\begin{equation*}
			|m(\ii \eta)| \le \frac{1}{\eta }~ 
			\qquad \text{ and } \qquad ~| m(\ii \eta^\prime ) - m(\ii \eta) | \le \frac{\eta^{\prime} -\eta}{\eta^2}
		\end{equation*}
		for all $0 < \eta \le \eta^{\prime}$. 
	\end{lemma}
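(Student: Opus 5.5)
Both bounds follow from the integral representation of a Stieltjes transform. Let $\mu$ be the probability measure on $\R$ with
\[
m(z) = \int_\R \frac{\mu(\mathrm{d}\lambda)}{\lambda - z}, \qquad z \in \C\setminus\R .
\]
We then estimate the integrand pointwise in $\lambda$ and integrate against $\mu$, using $\mu(\R)=1$.

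For the first bound, note that $\abs{\lambda - \ii\eta} = (\lambda^2+\eta^2)^{1/2} \geq \eta$ for every $\lambda \in \R$. Hence
\[
\abs{m(\ii\eta)} \le \int_\R \frac{\mu(\mathrm{d}\lambda)}{\abs{\lambda - \ii \eta}} \le \frac{1}{\eta}.
\]

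For the second bound, we use the resolvent-type identity
\[
\frac{1}{\lambda - \ii\eta'} - \frac{1}{\lambda - \ii\eta} = \frac{\ii(\eta'-\eta)}{(\lambda-\ii\eta')(\lambda-\ii\eta)} .
\]
Taking absolute values, and using $\abs{\lambda-\ii\eta'}\ge \eta' \ge \eta$ together with $\abs{\lambda - \ii\eta}\ge\eta$, the integrand is bounded by $(\eta'-\eta)/\eta^2$ uniformly in $\lambda$. Integrating against $\mu$ gives $\abs{m(\ii\eta')-m(\ii\eta)} \le (\eta'-\eta)/\eta^2$.

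Neither step poses a real obstacle. The only point to check is that the hypothesis provides a genuine probability measure, so that the total mass is $1$.
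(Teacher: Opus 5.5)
Your proof is correct and takes the same approach as the paper. The paper only remarks that both bounds follow immediately from the Stieltjes representation $m(z)=\int_\R \frac{\mu(\mathrm{d}\lambda)}{\lambda-z}$; you supply the explicit pointwise bound $\abs{\lambda-\ii\eta}\ge\eta$ and the difference identity that this remark leaves implicit.
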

	
	\begin{proof} 
		These estimates follow immediately from the Stieltjes transform representation of $m$.
	\end{proof} 
	
	\subsection{Stability of cubic equations} \label{app:cubic_stability} 
	
	\begin{lemma}[Bulk stability]\label{lem:bulkstab}
		For every $\delta >0$, there exist constants 
		$C$, $\mathfrak{c} >0$ depending only on $\delta$ such that for all $q \in \C$ and $r \in \C$ satisfying 
		\begin{equation}\label{eq:q_r_equation} 
			q^3 + 2\ii\eta q^2 + \left(1-\eta^2 - |w|^2 \right) q +\ii \eta =r~\text{and}~ \left|q - \ii v(w,\eta) \right| \le \mathfrak{c}
		\end{equation}
		for some $(w,\eta) \in (\mathrm{D}_{1-\delta} \times (0,\infty))\cup (\C \times [1,\infty))$ we have 
		\begin{equation*}
			\left|q - \ii v(w,\eta) \right| \le \frac{C|r|}{1 + \eta^2}.
		\end{equation*}
	\end{lemma}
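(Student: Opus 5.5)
Throughout, write $P(q) := q^3 + 2\ii\eta q^2 + (1-\eta^2-|w|^2)q + \ii\eta$. The plan is to Taylor-expand $P$ around its root $q_0:=\ii v(w,\eta)$ and invert the linear term. By \eqref{eq:v}, $P(\ii v)=0$; this is the identity already used in the proof of \cref{lem:3rddegeqonDelta}. Setting $\Delta := q - \ii v$, we get the exact expansion
\[ r = P(q) = \Delta^3 + (3\ii v + 2\ii\eta)\Delta^2 - \xi_1 \Delta, \qquad \xi_1 = 2\eta v + 2v^2 + \frac{\eta}{v}, \]
where the form of $\xi_1$ follows from \eqref{eq:v}, as in the proof of \cref{lem:Deltasmall}. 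Hence
\[ |\xi_1||\Delta| \le |r| + |\Delta|^2\bigl(|\Delta| + 3v + 2\eta\bigr). \]
Since $\xi_1>0$, it suffices to show two things: that $\xi_1 \gtrsim 1+\eta^2$, and that the quadratic remainder can be absorbed, i.e.\ $|\Delta|(|\Delta|+3v+2\eta) \le \xi_1/2$ once $\mathfrak c$ is small. Together these give $|\Delta|\le 2|r|/\xi_1 \le C|r|/(1+\eta^2)$.

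\emph{Case $w\in \mathrm D_{1-\delta}$ and $\eta\le 1$.} By \eqref{eq:v_asymp_1}, $v\asymp_\delta 1$, so $\xi_1 \ge 2v^2 \gtrsim_\delta 1 \asymp 1+\eta^2$. Also $3v+2\eta\le 5$ by \eqref{eq:v_leq_1}. So $|\Delta|\le\mathfrak c$ with $\mathfrak c$ small (depending on $\delta$) gives $|\Delta|(|\Delta|+5)\le 6\mathfrak c \le \xi_1/2$.

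\emph{Case $\eta\ge 1$ (any $w$; this also covers $\eta\ge1$ in $\mathrm D_{1-\delta}$).} Here $\xi_1 \ge \eta/v \ge \eta^2$, using $v\le 1/\eta$ from \eqref{eq:v_leq_1}, so $\xi_1\gtrsim 1+\eta^2$. The absorption is the main obstacle, because the coefficient $2\eta$ of the remainder grows with $\eta$. However, $\xi_1\ge\eta^2$, and $3v+2\eta \le 5\eta$, so
\[ |\Delta|(|\Delta|+3v+2\eta)\le \mathfrak c(\mathfrak c + 5\eta) \le 6\mathfrak c\,\eta^2 \le \xi_1/2 \]
for $\mathfrak c\le 1/12$. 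No dependence on $|w|$ enters here, which matches the statement's allowance of $w\in\C$ for $\eta\ge1$.

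\emph{Case $w\in\mathrm D_{1-\delta}$ and $\eta>1$.} This is covered by the previous case. Combining the cases yields the claim with $C=2\max\{C_\delta,1\}$ and $\mathfrak c=\min\{\mathfrak c_\delta, 1/12\}$. Note that \cref{lem:profv} is stated for $\eta\le L$, but only \eqref{eq:v_asymp_1} with $L=1$ and the global bound \eqref{eq:v_leq_1} are used, so there is no issue.
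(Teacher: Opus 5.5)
Your proof is correct and follows essentially the same route as the paper: expand the cubic around its root $\ii v$, use \eqref{eq:v} to rewrite the linear coefficient as $\xi_1 = 2\eta v + 2v^2 + \eta/v$, and bound $\xi_1 \gtrsim 1+\eta^2$. For this bound you use $v\gtrsim_\delta 1$ when $\eta\le 1$ in the bulk and $\xi_1\ge \eta/v\ge\eta^2$ when $\eta\ge 1$, and then absorb the quadratic and cubic terms for small $\mathfrak c$. The only differences are cosmetic: you use the opposite sign convention for $\Delta$, and you carry out the absorption case by case rather than through one uniform inequality in $\eta$, which also covers the boundary value $\eta=1$ for $w\notin\mathrm{D}_{1-\delta}$ explicitly.
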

	
	\begin{proof}[Proof of \cref{lem:bulkstab}]
		We fix $w$ and $\eta$ as in the lemma and write $v=v(w, \eta)$ and $\Delta :=\ii v - q$ throughout the proof. We expand $q = \ii v - \Delta$ in the left-hand side of \eqref{eq:q_r_equation} and obtain
		\begin{equation}\label{eq:q_cubic_expansion} 
			q^3 + 2\ii\eta q^2 + \left(1-\eta^2 - |w|^2 \right) q +\ii \eta = -\Delta^3 + ( 3 \ii v + 2\ii \eta) \Delta^2 + \bigl( 4 \eta v + 3 v^{2} + \eta^2 + \left|w\right|^2 -1 \bigr) \Delta, 
		\end{equation}
		where the term independent of $\Delta$ vanishes due to \eqref{eq:v}. 
		Using \eqref{eq:v} again, we conclude 
		\begin{equation}\label{eq:gamma_stability_lower_bound} 
			\xi_1(w,\eta):= 4 \eta v + 3 v^{2} + \eta^2 + \left|w\right|^2 -1= 2\eta v + 2v^2 + \frac{\eta}{v} \ge c_{\delta}( 1 + \eta^2)
		\end{equation}
		for some constant $c_{\delta}>0$. Indeed, on the one hand, according to \eqref{eq:v_asymp_1} in \cref{lem:profv} with $L=1$, there exists $c_{\delta,1}>0$ such that for all $w \in \mathrm{D}_{1-\delta}$ and $\eta \in (0,1]$
		\begin{equation}
			\xi_1 \ge 2 v^2 \ge v^2 (1+ \eta^2) \ge c_{\delta,1} (1+ \eta^2).
		\end{equation}
		On the other hand, from \eqref{eq:v_leq_1}, we conclude $\xi_1 \ge {\eta}/{v} \ge \eta^2 \ge \frac{1}{2} (1 + \eta^2)$ for $w \in \CC$ and $\eta >1$. Hence, the inequality in \eqref{eq:gamma_stability_lower_bound} follows by setting
		$c_{\delta}:=c_{\delta,1} \land \frac{1}{2}$.
		Owing to \eqref{eq:v_leq_1} in \cref{lem:profv}, we have $\abs{3\ii v - 2 \ii \eta} \leq 3 + 2\eta$ for all $\eta>0$.  
		Therefore, \eqref{eq:q_r_equation}, \eqref{eq:q_cubic_expansion} and \eqref{eq:gamma_stability_lower_bound} yield 
		\begin{equation}
			|r| \ge \left| \Delta \right| \bigl( \xi_1(w,\eta) - \left|\Delta\right|^2 - |3 \ii v -2 \ii \eta| \left|\Delta\right| \bigr) \ge  \left| \Delta \right| \bigl( c_{\delta}(1 + \eta^2) - \mathfrak{c}^2 - (3 + 2 \eta)\mathfrak{c} \bigr).
		\end{equation}
		We can then choose $\mathfrak{c}>0$ small enough and depending only on $c_{\delta}$ such that for some $c>0$ depending only $\mathfrak{c}$ and $c_{\delta}$ we have $\left( c_{\delta}(1 + \eta^2) - \mathfrak{c}^2 - (3 + 2 \eta)\mathfrak{c} \right) > c(1+ \eta^2)$ for all $\eta >0$. It remains to set $C= 1/c$ to conclude the proof of \cref{lem:bulkstab}.
	\end{proof}
	
	\begin{lemma}[Cubic stability inequality]\label{lem:stability2}
		For $0 < \eta_{\ast} < \eta^{\ast} < \infty$, let $\xi_1, \xi_2 : [\eta_{\ast}, \eta^{\ast}] \rightarrow \CC$ be complex-valued functions and $\tilde{\xi},\varphi: [\eta_{\ast},\eta^{\ast}] \rightarrow \RR_{+}$ be continuous. Suppose that some continuous function $\Delta : [\eta_{\ast}, \eta^{\ast}] \rightarrow \CC$ satisfies the cubic inequality 
		\begin{equation*}
			\big| \Delta^3 + \xi_2 \Delta^2 + \xi_1 \Delta \big| \le \varphi 
		\end{equation*}
		on $[\eta_{\ast}, \eta^{\ast}]$ as well as 
		\begin{equation*}
			\abs{\Delta} \lesssim \min \left\{ \varphi^{1/3}, \frac{\varphi^{1/2}}{\tilde{\xi}^{1/2}}, \frac{\varphi}{\tilde{\xi}}\right\}
		\end{equation*}
		at $\eta^{\ast}$. If the following holds
		\begin{enumerate}
			\item $\tilde{\xi}^3/\varphi^{2}$ is a monotonically increasing function,
			\item $|\xi_1| \asymp \tilde{\xi}$ and $|\xi_2| \lesssim \tilde{\xi}^{1/2}$,
		\end{enumerate}
		then, on $[\eta_{\ast}, \eta^{\ast}]$ we have 
		\begin{equation*}
			\abs{\Delta} \lesssim  \min\left\{ \varphi^{1/3}, \frac{\varphi^{1/2}}{\tilde{\xi}^{1/2}}, \frac{\varphi}{\tilde{\xi}}\right\}.
		\end{equation*}
	\end{lemma}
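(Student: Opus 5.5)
The plan is a continuity (bootstrapping) argument in $\eta$, run downward from $\eta^\ast$. At each $\eta$ the cubic inequality forces $\Delta$ into one of two well-separated regions. The first is a "small root" region where $\abs{\Delta}\lesssim \min\{\varphi^{1/3},\varphi^{1/2}\tilde\xi^{-1/2},\varphi/\tilde\xi\}$. The second is a region where $\abs{\Delta}\gtrsim \tilde\xi^{1/2}$. Continuity of $\Delta$ then rules out jumping from the first region to the second.

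\textbf{Regime $\varphi\gtrsim \tilde\xi^{3/2}$.} Here $\varphi^{1/3}$ is the smallest of the three terms (up to constants), so it suffices to show $\abs{\Delta}\lesssim \varphi^{1/3}$. Suppose instead $\abs{\Delta}\ge K\varphi^{1/3}$ with $K$ large. Then $\abs{\Delta}\gtrsim \tilde\xi^{1/2}$, and
\[
\abs{\Delta^3+\xi_2\Delta^2+\xi_1\Delta}\ge \abs{\Delta}^3 - C\tilde\xi^{1/2}\abs{\Delta}^2 - C\tilde\xi\abs{\Delta}\ge \tfrac12\abs{\Delta}^3 > \varphi ,
\]
provided $K$ is large relative to the implicit constants in $\abs{\xi_1}\asymp\tilde\xi$ and $\abs{\xi_2}\lesssim\tilde\xi^{1/2}$. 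This contradicts the cubic inequality. So in this regime the bound holds pointwise and no continuity is needed.

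\textbf{Regime $\varphi\le \epsilon\,\tilde\xi^{3/2}$.} Here the minimum is $\varphi/\tilde\xi$, and the cubic has a gap. Consider the polynomial $p(t)=t^3-C_2\tilde\xi^{1/2}t^2-C_1'\tilde\xi\,t$ bounding the non-linear terms, and compare with the linear term $\abs{\xi_1\Delta}\ge c\tilde\xi\abs{\Delta}$.
\begin{itemize}
\item If $\abs{\Delta}\le \kappa\tilde\xi^{1/2}$ with $\kappa$ small, then $\abs{\Delta^3+\xi_2\Delta^2}\le \tfrac c2\tilde\xi\abs{\Delta}$. Hence $\tfrac c2\tilde\xi\abs{\Delta}\le\varphi$, i.e.\ $\abs{\Delta}\le 2\varphi/(c\tilde\xi)$.
\item Since $\varphi\le\epsilon\tilde\xi^{3/2}$, this gives $\abs{\Delta}\le (2\epsilon/c)\tilde\xi^{1/2}$, which is strictly smaller than $\kappa\tilde\xi^{1/2}$ for $\epsilon$ small.
\end{itemize}
So the set $\{\abs{\Delta}\in ((2\epsilon/c)\tilde\xi^{1/2},\kappa\tilde\xi^{1/2}]\}$ is forbidden.

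\textbf{Combining via monotonicity.} By hypothesis (1), the set of $\eta$ where $\varphi\le\epsilon\tilde\xi^{3/2}$ (equivalently $\tilde\xi^3/\varphi^2\ge\epsilon^{-2}$) is an upper interval $[\eta_1,\eta^\ast]$. This is the key use of (1): once we leave the gapped regime going downward, we never re-enter it. On $[\eta_\ast,\eta_1)$ the first regime applies, which gives the bound pointwise. On $[\eta_1,\eta^\ast]$ I would run the continuity argument, since $\Delta$, $\tilde\xi$, $\varphi$ are continuous.
\begin{itemize}
\item At $\eta^\ast$ the assumed bound gives $\abs{\Delta}\lesssim\varphi/\tilde\xi\le(\text{const})\,\epsilon\,\tilde\xi^{1/2}$, so $\abs{\Delta}$ lies below the gap (after adjusting constants so that $\kappa$ exceeds the assumed implicit constant times $\epsilon$).
\item Going downward, $\abs{\Delta}/\tilde\xi^{1/2}$ is continuous and cannot cross the forbidden interval. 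Hence $\abs{\Delta}\le\kappa\tilde\xi^{1/2}$ persists, and then $\abs{\Delta}\lesssim\varphi/\tilde\xi$ throughout $[\eta_1,\eta^\ast]$.
\end{itemize}
Finally, $\varphi/\tilde\xi\le\varphi^{1/2}\tilde\xi^{-1/2}$ and $\varphi/\tilde\xi\le\varphi^{1/3}$ in this regime, so the minimum form follows. The same comparisons in the first regime show that all three expressions agree up to constants there.

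The main obstacle is the bookkeeping of constants. The threshold $\epsilon$, the gap parameter $\kappa$, and the implicit constant in the hypothesis at $\eta^\ast$ must be chosen consistently. If the initial constant is large, the point $\eta^\ast$ may not sit inside the gapped regime with a small enough $\epsilon$. I would handle this by choosing $\epsilon$ depending on that initial constant: either $\eta^\ast$ falls in the first regime (pointwise bound, whence the first regime persists downward by monotonicity), or it lies below the gap. Everything else is routine.
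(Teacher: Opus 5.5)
Your dichotomy-plus-continuity argument is sound as far as it goes, and it is a genuinely self-contained proof; the paper only quotes the lemma from the literature. The following pieces are all fine:
\begin{itemize}
\item the pointwise bound $\abs{\Delta}\le K\varphi^{1/3}$ where $\varphi\ge\epsilon\tilde\xi^{3/2}$;
\item the forbidden window $(2\epsilon/c,\kappa]$ for $\abs{\Delta}/\tilde\xi^{1/2}$ where $\varphi\le\epsilon\tilde\xi^{3/2}$;
\item the use of the monotonicity of $\tilde\xi^3/\varphi^2$ to make the gapped set an upper interval $[\eta_1,\eta^\ast]$;
\item the order of constants: $\kappa$ from the constants in (2), then $\epsilon$ from $\kappa$ and the initial constant, then $K$ from $\epsilon$.
\end{itemize}
What this actually proves is $\abs{\Delta}\lesssim\min\{\varphi^{1/3},\varphi/\tilde\xi\}$ on $[\eta_\ast,\eta^\ast]$.

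The gap is the last step, where you identify this with the three-term minimum. Write $t=\varphi/\tilde\xi^{3/2}$. Then $\varphi^{1/3}=t^{1/3}\tilde\xi^{1/2}$ and $\varphi/\tilde\xi=t\,\tilde\xi^{1/2}$, but $(\varphi/\tilde\xi)^{1/2}=t^{1/2}\tilde\xi^{1/4}$ scales differently. Consequently:
\begin{itemize}
\item ``$\varphi^{1/3}$ is the smallest term when $\varphi\gtrsim\tilde\xi^{3/2}$'' actually requires $\varphi\gtrsim\tilde\xi^{3}$;
\item ``$\varphi/\tilde\xi\le\varphi^{1/2}\tilde\xi^{-1/2}$ when $\varphi\le\epsilon\tilde\xi^{3/2}$'' requires $\varphi\lesssim\tilde\xi$.
\end{itemize}
Both fail when $\tilde\xi\gg1$.

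This cannot be repaired by a better argument, because the statement as written is false in that regime. Take $\xi_2\equiv0$ and $\xi_1\equiv\tilde\xi\equiv T$. Let $\varphi$ be continuous and decreasing from $T^{5/4}$ at $\eta_\ast$ to $1$ at $\eta^\ast$, and let $\Delta$ be the real root of $\Delta^3+T\Delta=\varphi$. All hypotheses hold with constant $1$: in particular $\Delta(\eta^\ast)\le 1/T$, which is the minimum at $\eta^\ast$. Yet for large $T$ one has $\Delta(\eta_\ast)> T^{1/4}/2$, while the three-term minimum at $\eta_\ast$ is $T^{1/8}$.

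So you must add, and say that you use, a hypothesis such as $\varphi\lesssim1$ or $\tilde\xi\lesssim1$. Under either one, $\min\{\varphi^{1/3},\varphi/\tilde\xi\}\lesssim(\varphi/\tilde\xi)^{1/2}$, and your conclusion follows. Alternatively, state the result with the homogeneous middle term $\varphi^{1/2}/\tilde\xi^{1/4}$, for which your comparisons are exactly right. Both boundedness conditions hold in the paper's applications, where $\varphi\le1/2$ and $\tilde\xi=\abs{\abs{w}^2-1}+\eta^{2/3}\lesssim1$. Indeed, the proof of \cref{lem:Deltasmall} discards the middle term precisely by using $\varphi\le1$.
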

	
	\begin{proof} 
	This is {\cite[Lemma~10.3]{e643140142d24edcbb5c1a963fc669bf}}. 
	\end{proof} 
	
	\subsection{Concentration bounds for sums of sparse random variables}
	In this appendix, we record a few concentration bounds for sums of sparse random variables, which are used throughout our paper. 
	
	We denote the $L^r$-norm of a random variable $X$ by $\norm{X}_r:= ( \E \abs{X}^r)^{1/r}$ for $r >0$.  
	\begin{prop}[Concentration bounds, $L^p$-norm]\label{prop:lardev}
		Let $r$ be even and $1 \le d \le N$. Let $X_1,...,X_{N}$ and $Y_1,...,Y_N$ be independent random variables satisfying
		\begin{equation*}\label{eq:condiXYconcentration}
			\mathbb{E}Z  =0, ~~~ \mathbb{E}  \abs{Z}^k  \le \frac{1}{N d^{(k-2)/2}},
		\end{equation*}
		for all $Z \in \{X_i,Y_i:~ i \in [N]\}$ and $2 \le k \le r$. Let $(a_{i})_{i \in [N]} \in \CC^N$ and $(b_{ij})_{i,j \in [N]} \in \CC^{N \times N}$ be deterministic sequences. Suppose that
		\begin{equation*}
			\bigg( \frac{1}{N} \sum_{i} \abs{a_i}^2\bigg)^{1/2} \le \gamma,~~ \frac{\max_i \abs{a_i}}{\sqrt{d}} \le \psi
		\end{equation*}
		and 
		\begin{equation*}
			\bigg( \max_{i} \frac{1}{N} \sum_j \left|b_{ij}\right|^2 \bigg)^{1/2} \vee \bigg( \max_{j} \frac{1}{N} \sum_i \left|b_{ij}\right|^2 \bigg)^{1/2} \le \gamma, ~~~ \frac{\max_{i,j} \left|b_{ij}\right|}{d} \le \psi,
		\end{equation*}
		for some $\gamma, \psi \ge 0$. Then
		\begin{align}
			\normbb{\sum_{i}a_i X_i}_{r} &\le \left( \frac{2r}{1+ 2(\log(\psi/\gamma))_{+}} \vee 2 \right) (\gamma \vee \psi),\label{eq:sumaiXi}\\
			\normbb{\sum_{i}a_i \left(\abs{X_i}^2  - \mathbb{E}\abs{X_{i}}^2\right) }_{r} &\le 2\left( 1+  \frac{2d}{N} \right) \max_{i} |a_i|\left( \frac{r}{d} \vee \sqrt{\frac{r}{d}} \right),\label{eq:sumaiXi2}\\
			\normbb{\sum_{i,j} a_{ij}X_i Y_j}_{r} &\le  \left( \frac{2r}{1+ 2(\log(\psi/\gamma))_{+}} \vee 2 \right)^2 (\gamma \vee \psi),\label{eq:sumaijXiYj}\\
			\normbb{\sum_{i\neq j} a_{ij}X_i X_j}_{r} &\le  \left( \frac{4r}{1+ (\log(\psi/\gamma))_{+}} \vee 4 \right)^2 (\gamma \vee \psi).\label{eq:sumaijXiXj}
		\end{align}
	\end{prop}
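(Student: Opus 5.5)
We prove the four moment bounds of \cref{prop:lardev} by expanding the $r$-th moment and counting index partitions, as is standard for sparse concentration estimates; cf.\ the analogous statements in \cite{alt2021delocalization}. Throughout, $r$ is even, so $\norm{S}_r^r = \E\, S^{r/2}\,\overline{S}^{r/2}$ can be written as a sum over $r$-tuples of indices.

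\textbf{Linear sums \cref{eq:sumaiXi}.} Expand $\E\abs{\sum_i a_i X_i}^r$ as a sum over tuples $(i_1,\dots,i_r)$ and group the tuples according to the partition $\pi$ of $[r]$ induced by equal indices. Centering kills every partition containing a singleton block, so each block has size $k\geq 2$. A block of size $k$ carrying index $i$ contributes at most $\abs{a_i}^k N^{-1}d^{-(k-2)/2}$. We bound this by
\[
\Big(\tfrac1N\abs{a_i}^2\Big)\big(\max_j\abs{a_j}/\sqrt d\big)^{k-2}.
\]
Summing over the distinct indices then gives, for each $\pi$, at most $\prod_{\text{blocks}}\gamma^2\psi^{k-2}$. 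Since $\sum_{B\in\pi}(k_B-2)=r-2\abs{\pi}$, this equals $\gamma^{2\abs{\pi}}\psi^{r-2\abs{\pi}}$. The number of partitions of $[r]$ into $j$ blocks is at most $\binom{r}{j}j^{r-j}$, which yields
\[
\norm{\textstyle\sum_i a_iX_i}_r^r\le\sum_{j\le r/2}\binom rj j^{r-j}\gamma^{2j}\psi^{r-2j}.
\]
Optimizing over $j$ with $x=\psi/\gamma$ gives the factor $\frac{2r}{1+2(\log x)_+}\vee 2$. When $\psi\le\gamma$, the $j=r/2$ term dominates and gives a factor of order $\sqrt r$, which is dominated by $2r$. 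When $\psi\gg\gamma$, the factor $j^{r-j}x^{-2j}$ is maximized at $j\approx r/(2\log x)$, which produces the logarithmic gain.

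\textbf{Centered squares \cref{eq:sumaiXi2}.} This is the same computation with $Z_i=\abs{X_i}^2-\E\abs{X_i}^2$. Using $\E\abs{Z_i}^k\le 2^k\E\abs{X_i}^{2k}\le 2^k N^{-1}d^{-(k-1)}$, each block contributes at most $(\max\abs{a})^k\,2^k N^{-1}d^{1-k}$. Summing over indices removes the $N^{-1}$, up to a factor $(1+2d/N)$ coming from the non-uniform variance normalisation. We obtain
\[
\sum_j \binom rj j^{r-j}(2\max\abs a)^r d^{j-r},
\]
and optimizing in $j$ gives $\big(\tfrac rd\vee\sqrt{\tfrac rd}\big)$.

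\textbf{Bilinear forms \cref{eq:sumaijXiYj} and \cref{eq:sumaijXiXj}.} For \cref{eq:sumaijXiYj}, condition on $Y$ and apply \cref{eq:sumaiXi} with coefficients $c_i=\sum_j b_{ij}Y_j$. By \cref{eq:sumaiXi} applied in $Y$ to the rows of $b$, each $c_i$ is controlled, and the pair $(\gamma,\psi)$ attached to the family $(c_i)_i$ is controlled, in $L^r$, by one further factor of the same form. The product of the two factors gives the square. This requires moment bounds on the random quantities $\frac1N\sum_i\abs{c_i}^2$ and $\max_i\abs{c_i}/\sqrt d$ in place of deterministic bounds. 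The cleaner route is a direct partition expansion over index pairs $(i_l,j_l)$ with two independent partitions, which reproduces the squared factor. For \cref{eq:sumaijXiXj}, use decoupling: split $[N]$ randomly into two halves and average, as in the standard decoupling inequality. This reduces the off-diagonal quadratic form to \cref{eq:sumaijXiYj}, at the cost of the constant changes $2\to4$.

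The main obstacle is the combinatorial optimisation giving the sharp factor $r/(1+\log(\psi/\gamma))$ rather than just $r$. This step needs careful bounds on the number of partitions, and for the bilinear case it needs the two-partition counting to be carried out consistently.
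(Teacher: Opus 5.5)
There is a genuine gap. Your treatment of \cref{eq:sumaiXi} and \cref{eq:sumaiXi2} is the standard moment/partition expansion behind the bounds the paper simply imports from \cite{MR4021251}. The gap is \cref{eq:sumaijXiYj}, on which your \cref{eq:sumaijXiXj} also rests: neither of your two routes is actually carried out.

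\emph{Conditioning on $Y$.} This fails for the reason you half-notice. The coefficients $c_i=\sum_j b_{ij}Y_j$ are random, and $\|\max_i\abs{c_i}\|_r$ can only be bounded by $N^{1/r}\max_i\|c_i\|_r$. Moreover $\frac1N\sum_i\abs{c_i}^2=\frac1N\langle Y,b^*bY\rangle$ is itself a quadratic form in $Y$, so controlling it in $L^{r/2}$ is essentially \cref{eq:sumaijXiXj} again.

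\emph{Direct expansion.} Here the whole content is one estimate. Take the bipartite multigraph whose vertices are the blocks of $\pi$ and $\sigma$ (all of degree at least $2$, with $r$ edges). You must bound the label sum of $\prod_l\abs{b_{i_lj_l}}$ against the weight $N^{-\abs{V}}d^{-(r-\abs{V})}$. A spanning-tree bound loses a factor $d$ per connected component. What works is to use one cycle in each component, bounded by $\Tr(AA^{T})^m\le\|A\|_{\mathrm{HS}}^2\|A\|^{2m-2}\le(N\gamma)^{2m}$ with $A=(\abs{b_{ij}})_{i,j}$ (Schur test, since row and column sums of $A$ are at most $N\gamma$). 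The remaining spanning-tree edges are bounded by $N\gamma$ and all surplus edges by $d\psi$. This gives $\gamma^{\abs{\pi}+\abs{\sigma}}\psi^{r-\abs{\pi}-\abs{\sigma}}$ per pair. The total is then the square of the linear partition sum for $(\sqrt\gamma,\sqrt\psi)$, i.e.\ $\|\sum_{i,j}b_{ij}X_iY_j\|_r\le\bigl(\frac{2r}{1+(\log(\psi/\gamma))_+}\vee2\bigr)^2(\gamma\vee\psi)$.

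With your decoupling factor $4$ this is exactly \cref{eq:sumaijXiXj}. For \cref{eq:sumaijXiYj}, however, it misses the stated $1+2(\log(\psi/\gamma))_+$ by a factor up to $4$, which is harmless for the paper's uses. The naive factorization your phrase "two independent partitions reproduce the squared factor" suggests would be a per-pair bound $\psi^r(\gamma/\psi)^{2(\abs{\pi}+\abs{\sigma})}$. That bound is false: for diagonal $b$, the terms with $\pi=\sigma$ are of order $\psi^r(\gamma/\psi)^{2\abs{\pi}}$.

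Smaller points:
\begin{enumerate}
\item In \cref{eq:sumaiXi2}, your bound $\E\abs{Z_i}^k\le 2^k\E\abs{X_i}^{2k}$ needs moments of $X_i$ up to order $2r$, which the stated hypothesis ($k\le r$ only) does not supply. This is unavoidable, so that hypothesis must be read up to order $2r$; in the paper $\abs{X_{xy}}\le Kd^{-1/2}$ anyway.
\item The factor $1+2d/N$ does not come from summing over indices. It comes from the centring: $\E\abs{Z_i}^k\le\E(\abs{X_i}^2+N^{-1})^k\le(1+2d/N)^kN^{-1}d^{1-k}$. Combined with $\sum_{j\le r/2}\binom rj j^{r-j}d^{j-r}\le\bigl(2(\frac rd\vee\sqrt{r/d})\bigr)^r$, this yields exactly the stated constant, whereas your $2^k$ gives $4$.
\item In \cref{eq:sumaiXi}, the optimisation for $\psi>\gamma$ is asserted rather than carried out.
\item Also in \cref{eq:sumaiXi}, for $\psi\le\gamma$ the term $j=r/2$ does not dominate when $\psi\approx\gamma$; the largest term has $j$ of order $r/\log r$. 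The crude bound $\sum_{j\le r/2}\binom rj j^{r-j}\le(1+r/2)^r\le r^r$ still settles that case.
\end{enumerate}
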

	\begin{proof}
		See \cite[Proposition 3.1, Proposition 3.2, Proposition 3.3, Proposition 3.5]{MR4021251}.
	\end{proof}
	\begin{corollary}[Concentration bounds, very high probability] \label{lemma:dev1}
		Fix $\delta \in (0,1)$. Under the assumptions of \cref{prop:lardev}, if furthermore we have $\psi/ \gamma \ge N^{\delta/ 4}$ then with very high probability
		\begin{equation*}
			\absbb{ \sum_{i} a_i X_{i}} \le \mathcal{C} \psi,~~~\absbb{ \sum_{i  \neq  j} b_{ij} X_{i}X_{j}} \le \mathcal{C} \psi ~~ \text{and} ~~  \absbb{\sum_{i,j} b_{ij} X_i Y_j} \leq \mathcal C \psi. 
		\end{equation*}
	\end{corollary}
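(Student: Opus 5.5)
The statement to prove is \cref{lemma:dev1}. It follows from \cref{prop:lardev} by Markov's inequality with a moment $r$ growing like $\log N$. Fix $\nu>0$ and consider the three sums separately: the linear sum $\sum_i a_iX_i$, the off-diagonal quadratic sum $\sum_{i\neq j}b_{ij}X_iX_j$, and the bilinear sum $\sum_{i,j}b_{ij}X_iY_j$. These are controlled by \cref{eq:sumaiXi}, \cref{eq:sumaijXiXj} and \cref{eq:sumaijXiYj}, respectively (with $b$ in place of $a$ in the latter two).

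The key point is that the hypothesis $\psi/\gamma\ge N^{\delta/4}$ gives
\[
1+2(\log(\psi/\gamma))_+ \ge \tfrac{\delta}{2}\log N \qquad\text{and}\qquad \gamma\vee\psi=\psi .
\]
Choose $r$ even with $r\asymp \nu\log N$. Then the prefactor in \cref{eq:sumaiXi} satisfies
\[
\frac{2r}{1+2(\log(\psi/\gamma))_+}\vee 2 \le \frac{4r}{\delta\log N}\vee 2 \le C_{\nu,\delta}.
\]
Hence $\norm{\sum_i a_iX_i}_r\le C_{\nu,\delta}\,\psi$. The same reasoning applied to \cref{eq:sumaijXiXj} and \cref{eq:sumaijXiYj} bounds the squared prefactors by $C_{\nu,\delta}^2$, so all three sums have $L^r$-norm at most $C'_{\nu,\delta}\,\psi$.

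Next apply Markov's inequality with threshold $\mathcal C\psi$, where $\mathcal C:=\ee\, C'_{\nu,\delta}$:
\[
\mathbb P\bigl(|S|>\mathcal C\psi\bigr)\le \Bigl(\frac{C'_{\nu,\delta}\psi}{\mathcal C\psi}\Bigr)^r=\ee^{-r}\le N^{-\nu}
\]
once $r\ge \nu\log N$. This is exactly the very high probability statement of \cref{def:very_high_probab}, with $\mathcal C$ depending on $\nu$ and $\delta$ as permitted by the paper's convention.

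The only point needing care is that \cref{prop:lardev} is stated for a fixed even $r$ under moment bounds up to order $r$. Since $r\asymp\log N$ grows with $N$, we must verify that the moment condition $\E|Z|^k\le 1/(Nd^{(k-2)/2})$ holds for all $k\le r$. For the applications in this paper it holds for every $k$, by the remark after \cref{defiX} following from \cref{defiX} \cref{item:def_X_decay} (after harmless rescaling by $K$). Finally, if $\psi=0$ then $\gamma=0$ and all the sums vanish, so the claim is trivial. No further obstacle is expected.
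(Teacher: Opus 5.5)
Your proposal is correct and follows the paper's argument. The paper proves the bilinear bound exactly this way: it applies Markov's inequality to \cref{eq:sumaijXiYj} with $r=\frac{1}{2}\nu\log N$, using $\psi/\gamma\ge N^{\delta/4}$ to make the prefactor $O_{\nu,\delta}(1)$. For the linear and off-diagonal quadratic sums it cites \cite[Corollary~A.21]{alt2021delocalization}, which is the same Markov argument you carry out via \cref{eq:sumaiXi} and \cref{eq:sumaijXiXj}.

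Your remark that the moment condition must hold for all $k\le r\asymp\log N$ makes explicit an assumption the paper uses tacitly. In the applications this is guaranteed by \cref{defiX} \cref{item:def_X_decay} after rescaling by $K$.
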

	\begin{proof}
		For the first two estimates, see \cite[Corollary A.21]{alt2021delocalization}. For the last estimate we set $r:=\frac{1}{2} \nu \log N$ from \eqref{eq:sumaijXiYj} we have
		\begin{equation*}
			\mathbb{P} \biggl( \absbb{ \sum_{i,j} b_{ij}X_i Y_j} > \mathcal{C} \psi \biggr) \le \left(\frac{ 72 \nu^2}{\delta^2 \mathcal{C}} \right)^r
		\end{equation*}
		where we took $\nu>0$ large enough and used $\psi/ \gamma \ge N^{\delta /4}$. It remains to set $\mathcal{C} = \frac{72 e^2}{\delta^2} \nu^2$.
	\end{proof}
	
   	\subsection{Resolvent identities} \label{sec:resolvent_identities} 
   	
	In this section, for a square matrix $M$, we consider its resolvent or Green function defined by 
	\begin{equation}
		G = G(z) = G(z,M) =  (M - z)^{-1} 
	\end{equation}
	for $z \in\C \setminus \mathrm{spec}(M)$, where $\mathrm{spec}(M)$ denotes the spectrum of $M$, i.e.\ the set of its eigenvalues. 
	\begin{lemma}\label{lem:disttospect}
		Let $M$ and $M^\prime$ be Hermitian matrices. If $z \in \C\setminus \R$ and $0 < \eta \le \eta^{\prime}$ then 
		\begin{equation*}
			\|G(z, M)\| \le \frac{1}{\abs{\Im z}}, \quad 
			\|G(\ii \eta,  M) - G(\ii \eta^\prime, M)\| \le \frac{\eta^{\prime} - \eta}{\eta^2}, \quad  
			\|G(\ii \eta,  M) - G(\ii \eta, M^\prime)\| \le \frac{\|M - M^{\prime}\|}{\eta^2}.
		\end{equation*}
	\end{lemma}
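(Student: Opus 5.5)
The three bounds follow from the spectral theorem together with the resolvent identity; no probabilistic input is needed.

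\emph{First bound.} Diagonalize $M = \sum_i \lambda_i \mathbf v_i \mathbf v_i^*$ with $\lambda_i \in \R$ and an orthonormal basis $(\mathbf v_i)$. Then $G(z,M) = \sum_i (\lambda_i - z)^{-1} \mathbf v_i \mathbf v_i^*$ is normal, so
\[ \norm{G(z,M)} = \max_i \abs{\lambda_i - z}^{-1} \leq \abs{\Im z}^{-1}, \]
because $\abs{\lambda_i - z} \geq \abs{\Im z}$ for real $\lambda_i$.

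\emph{Second bound.} Use the first resolvent identity
\[ G(\ii\eta,M) - G(\ii\eta',M) = (\ii\eta - \ii\eta')\, G(\ii\eta,M)\, G(\ii\eta',M), \]
which holds because $(M-a)^{-1} - (M-b)^{-1} = (a-b)(M-a)^{-1}(M-b)^{-1}$ for commuting resolvents of the same matrix. Taking norms and applying the first bound gives
\[ \norm{G(\ii\eta,M) - G(\ii\eta',M)} \leq \frac{\eta' - \eta}{\eta\,\eta'} \leq \frac{\eta'-\eta}{\eta^2}, \]
where the last step uses $\eta' \geq \eta$.

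\emph{Third bound.} Use the second resolvent identity
\[ G(z,M) - G(z,M') = G(z,M)\,(M' - M)\,G(z,M'), \]
which one checks by multiplying on the left by $M - z$ and on the right by $M' - z$. Submultiplicativity of the operator norm and the first bound applied to both $M$ and $M'$ at $z = \ii\eta$ then yield $\norm{M-M'}/\eta^2$.

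All three steps are routine. The only point to watch is that the first bound uses Hermiticity, so that the eigenvalues are real and the resolvent is normal; this is exactly where the hypothesis on $M$ and $M'$ enters.
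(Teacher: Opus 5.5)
Your proof is correct and follows essentially the same route as the paper: the bound $\|G(z,M)\| \le 1/\mathrm{dist}(z,\spec M) \le 1/\abs{\Im z}$ for Hermitian (hence normal) $M$, combined with resolvent identities. You also make explicit that the second bound comes from the identity in the spectral parameter rather than in the matrix, and your second resolvent identity $G(z,M)-G(z,M') = G(z,M)(M'-M)G(z,M')$ carries the correct sign (the paper writes $M-M'$, which does not affect the norm bound).
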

	\begin{proof} 
		We first note that for normal matrices $M$, the well-known bound 
		\begin{equation*}
			\|G(z)\| \le \frac{1}{\mathrm{dist}(z, \mathrm{spec}(M))} 
		\end{equation*}
		holds. Since $\mathrm{spec}(M) \subset \R$ if $M$ is Hermitian, this immediately implies the first bound. The well-known resolvent identity $G(\ii \eta,  M) - G(\ii \eta, M^\prime) = G(\ii \eta,  M) (M-M^{\prime}) G(\ii \eta, M^\prime)$ and the first bound directly yield the second and third bounds. 
	\end{proof} 
	
	For the next lemma, we recall the notation $M^{(T)}$ for $T \subset [N]$ from \cref{eq:def_M_(_T_)} and note that $G^{(T)}(z):= (M^{(T)} - z)^{-1}$. 
	\begin{lemma}[Ward identity]\label{lemma:wardidentity}
		For a Hermitian $N \times N$-matrix $M=M^*$, $x \notin T \subset [N]$ and $z \in \C \setminus \R$, we have
		\begin{equation}\label{eq:wardidentity}
			\sum_{y}^{(T)} |G_{xy}^{(T)}(z)|^2 = \frac{1}{\Ima z}\Ima G_{xx}^{(T)} (z).
		\end{equation}
	\end{lemma}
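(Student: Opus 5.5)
The identity is a statement about a single Hermitian matrix, so I will prove it by spectral decomposition, with no reference to the random structure. Fix $T\subset[N]$ and $x\notin T$, and write $G:=G^{(T)}(z)=(M^{(T)}-z)^{-1}$. Since $M$ is Hermitian, so is $M^{(T)}$ (zeroing rows and columns indexed by $T$ preserves Hermiticity). Hence $G^*=(M^{(T)}-\bar z)^{-1}$, which is $G(\bar z)$.

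The key step is the algebraic resolvent identity
\[
G(z)-G(\bar z)=(z-\bar z)\,G(z)G(\bar z)=2\ii\,\Ima z\; G G^*,
\]
which follows from $A^{-1}-B^{-1}=A^{-1}(B-A)B^{-1}$. Take the $(x,x)$ entry. On the right this gives $\sum_{y\in[N]}G_{xy}\overline{G_{xy}}=\sum_{y\in[N]}\abs{G_{xy}}^2$. On the left it gives $G_{xx}-\overline{G_{xx}}=2\ii\Ima G_{xx}$. Dividing by $2\ii\Ima z\neq 0$ yields $\sum_{y\in[N]}\abs{G_{xy}}^2=\Ima G_{xx}/\Ima z$.

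It remains to reduce the sum over $[N]$ to $\sum_y^{(T)}$, that is, to show $G_{xy}=0$ for $y\in T$ when $x\notin T$. This is the only point needing care. Since $M^{(T)}$ has zero rows and columns indexed by $T$, in the splitting $\C^N=\C^{T}\oplus\C^{[N]\setminus T}$ the matrix $M^{(T)}-z$ is block diagonal with block $-z$ on $\C^T$. Its inverse is therefore block diagonal, with block $-z^{-1}$ on $T$ and no mixed entries. So $G_{xy}=0$ for $x\notin T$, $y\in T$, and the identity follows.

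If instead one reads $G^{(T)}$ as the resolvent of the genuine minor indexed by $[N]\setminus T$, the same argument applies verbatim to that smaller Hermitian matrix. The sum is then over $[N]\setminus T$ directly.
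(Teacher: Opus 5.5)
Your proof is correct: taking the $(x,x)$ entry of $G-G^*=2\ii\,\Ima z\,GG^*$, and noting that $M^{(T)}-z$ is block diagonal so that $G^{(T)}_{xy}=0$ for $x\notin T$, $y\in T$, gives exactly the claimed identity under the paper's zero-padded convention for $M^{(T)}$. The paper does not prove the lemma itself but cites the standard Ward identity from the Benaych-Georges--Knowles lecture notes, and your argument is essentially the standard proof behind that reference (you announce a spectral decomposition but actually use the equivalent resolvent identity).
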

	
	\begin{proof}
		See e.g.\ \cite[Equation (3.6)]{benaych2016lectures}.
	\end{proof}
	\begin{lemma}
		For all $N\times N$-matrices $M$, $B \subset [N]$ and $z \in B $ and $y \notin B$, we have
		\begin{equation}\label{expansion1}
			G_{zy} = - \sum_{b \in B} \sum_{a}^{(B)} G_{zb} M_{ba} G_{ay}^{(B)}.
		\end{equation}
		The previous equality holds replacing $G$ by $G^{(C)}$ and $G^{(B)}$ by $G^{(C \cup B)}$ for any $C\subset [N]$ such that $C \cap B = \emptyset$ and $y \notin C \cup B$. In particular, if $M \in \C^{2N \times 2N}$, $T \subset [N]$, $x \in [N]\backslash T$ and $y\in [2N]\setminus (T \cup \ubarr{T} \cup \{ x, \ubarr{x}\})$,  we have
		\begin{align}
		G^{\bset{T}}_{xy} &= - \sum_{b \in [2N]\setminus (T \cup \ubarr{T} \cup \{ x, \ubarr{x}\})}  \Big( G_{xx}^{\bset{T}} M_{xb} G^{\bset{Tx}}_{by} + G^{\bset{T}}_{x \ubarr{x}}  M_{\ubarr{x} b} G^{\bset{Tx}}_{by} \Big), \label{eq:A29A}\\ 
			G^{\bset{T}}_{\ubarr{x}y} &= - \sum_{b \in [2N]\setminus (T \cup \ubarr{T} \cup \{ x, \ubarr{x}\})}  \Big( G_{\ubarr{x}\ubarr{x}}^{\bset{T}} M_{\ubarr{x}b} G^{\bset{Tx}}_{by} + G^{\bset{T}}_{\ubarr{x}x }  M_{x b} G^{\bset{Tx}}_{by} \Big). \label{eq:A29B}
		\end{align}
	\end{lemma}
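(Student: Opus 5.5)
This is the Schur-complement expansion in block form. I will derive \cref{expansion1} first and then read off \cref{eq:A29A} and \cref{eq:A29B} as special cases.

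\textbf{Step 1: the identity \cref{expansion1}.} Write $M$ in block form with respect to the decomposition $[N] = B \sqcup B^c$. Let $G^{(B)}$ act on $B^c$; this matches the convention $M^{(B)}$ from \cref{eq:def_M_(_T_)}, up to the zero rows and columns, which play no role for indices outside $B$. Start from $(M - z)G = 1$ (here $z$ is the spectral parameter, while the index in the statement is also called $z$; I will rename the index $u \in B$). Restrict the equation to rows $a \in B^c$ and columns $y \in B^c$, and use $y \neq u$:
\[
\sum_{c \notin B} (M - z)_{ac}\, G_{cy} + \sum_{b \in B} M_{ab}\, G_{by} = \delta_{ay}.
\]
This gives the block relation $G_{B^cB^c} = G^{(B)} - G^{(B)} M_{B^cB} G_{BB^c}$.

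The statement, however, has $G_{zb}$ on the left, so it is more convenient to use $G(M-z)=1$ instead. Take row $u \in B$ and column $a \in B^c$:
\[
\sum_{b \in B} G_{ub} M_{ba} + \sum_{c \notin B} G_{uc}\, (M - z)_{ca} = 0.
\]
Multiply on the right by $G^{(B)}_{ay}$ and sum over $a \notin B$. Since $\sum_{a \notin B} (M - z)_{ca}\, G^{(B)}_{ay} = \delta_{cy}$ for $c, y \notin B$, this yields $\sum_{b \in B}\sum_{a}^{(B)} G_{ub} M_{ba} G^{(B)}_{ay} + G_{uy} = 0$, which is \cref{expansion1}.

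\textbf{Step 2: the minor version.} The variant with $G^{(C)}$ and $G^{(C \cup B)}$ follows by applying Step 1 to the matrix $M^{(C)}$ restricted to $[N] \setminus C$. On those indices its resolvent is $G^{(C)}$, and removing $B$ from it gives exactly $G^{(C \cup B)}$.

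\textbf{Step 3: specialising to the Hermitization.} Apply Step 2 in dimension $2N$ with $C = T \cup \ubarr{T}$ and $B = \{x, \ubarr{x}\}$. Here one must check that $H^{[Tx]}$ corresponds to removing both rows and columns $x$ and $\ubarr{x}$ from $H^{[T]}$ and to nothing more; this holds by \cref{hermitizationofM(T)}. The sum over $b \in B$ then has two terms:
\begin{itemize}
\item taking $u = x$ gives \cref{eq:A29A};
\item taking $u = \ubarr{x}$ gives \cref{eq:A29B}.
\end{itemize}
In both cases the summation index $a$, renamed $b$, ranges over $[2N] \setminus (T \cup \ubarr{T} \cup \{x, \ubarr{x}\})$, as stated.

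The only delicate point is this index bookkeeping: $M^{(T)}$ zeroes rows and columns instead of deleting them, so the zero rows must not contribute. They don't, because the summation ranges exclude them.
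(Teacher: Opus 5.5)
Your proposal is correct and is essentially the paper's argument: the paper reads off the $(z,y)$ entry of the resolvent identity $G = G^{(B)} - G(M-M^{(B)})G^{(B)}$, which is exactly what your entrywise computation from $G(M-z)=1$ and $(M-z)|_{B^c}\,G^{(B)}|_{B^c}=1$ derives. It then specializes via $\{G^{(C)}\}^{(B)}=G^{(C\cup B)}$ with $C=T\cup\ubarr{T}$, $B=\{x,\ubarr{x}\}$, just as you do. One wording point only: the rows of $H^{[T]}$ indexed by $T\cup\ubarr{T}$ are not zero, since they carry $-w$ and $-\overline{w}$ at the positions $(t,\ubarr{t})$ and $(\ubarr{t},t)$; however, they are decoupled from the remaining indices, which is all your argument actually uses.
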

	\begin{proof}
		To generalize the first statement of the previous lemma to $G^{(C)}$ and $G^{(C\cup B)}$ one only has to notice that for any $B,C \subset [N]$ such that $B \cap C = \emptyset$ we have:
		$$\{G^{(C)}\}^{(B)}= G^{(C\cup B)}.$$
		To obtain \eqref{expansion1}, we consider the $(x,y)$-entry of $G$ with $x \in B$ and $y \notin B$ in the following decomposition
		\begin{equation}\label{eq:resolv_identity}
			G= G^{(B)} - G \left(M - M^{(B)}\right) G^{(B)}.
		\end{equation}
		To obtain the second statement we consider  $C = T \cup \ubarr{T}$ for some $T \subset [N] $, $x \notin T$ and $B = \{x, \ubarr{x}\}$.
	\end{proof}
	\begin{lemma}
		For all $N \times N$-matrices $M$, $B \subset [N]$ and $i,j \notin B$, we have
		\begin{equation}\label{expansion2}
			G^{(B)}_{ij} = G_{ij} + \sum_{b \in B }\sum_{a}^{(B)}G^{(B)}_{ia}M_{ab}G_{bj}.
		\end{equation}
		The previous equality holds replacing $G$ by $G^{(C)}$ and $G^{(B)}$ by $G^{(C \cup B)}$ for any $C\subset [N]$ such that $C \cap B = \emptyset$ and $ i,j \notin C \cup B$. In particular, if $M \in \C^{2N\times 2N}$, $T \subset [N]$ and $i,j \in [2N]\setminus (T \cup \ubarr{T} \cup \{ x, \ubarr{x}\})$, 
		we have
		\begin{equation}\label{eq:almost32a}
			G^{\bset{Tx}}_{ij} = G_{ij}^{\bset{T}} + \sum_{ a \in [2N]\setminus (T \cup \ubarr{T} \cup \{ x, \ubarr{x}\})\nc} \big(  G_{ia}^{\bset{Tx}} M_{ax} G_{xj}^{\bset{T}} +  G_{ia}^{\bset{Tx}} M_{a\ubarr{x}} G_{\ubarr{x}j}^{\bset{T}}\big) .
		\end{equation}
		
	\end{lemma}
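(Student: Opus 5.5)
This is a purely algebraic identity, so I would derive it from a decomposition analogous to \eqref{eq:resolv_identity}, taken in the opposite order. For $B\subset[N]$, let $G=(M-z)^{-1}$ and $G^{(B)}=(M^{(B)}-z)^{-1}$, where the inverse of $M^{(B)}-z$ is understood on the coordinates outside $B$, so that $G^{(B)}_{ij}$ is only meaningful for $i,j\notin B$. The starting point is the second resolvent identity
\[
G^{(B)} - G = G^{(B)}\bigl(M - M^{(B)}\bigr) G ,
\]
restricted to entries $i,j\notin B$. I would state it in this form rather than as \eqref{eq:resolv_identity}. The reason is that $G^{(B)}$ has nonzero entries only in the rows and columns outside $B$, so the restriction to $i,j\notin B$ is what makes the identity valid.

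The first step is to evaluate the $(i,j)$-entry of $G^{(B)}(M-M^{(B)})G$. Its summation index $a$ in $G^{(B)}_{ia}$ must lie outside $B$, since otherwise the entry vanishes. Next, $(M-M^{(B)})_{ab}$ is nonzero only if $a\in B$ or $b\in B$. Combined with $a\notin B$, this forces $b\in B$, and then $(M-M^{(B)})_{ab}=M_{ab}$. This gives
\[
G^{(B)}_{ij} - G_{ij}=\sum_{b\in B}\sum_{a}^{(B)} G^{(B)}_{ia}M_{ab}G_{bj},
\]
which is \eqref{expansion2}.

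For the generalized version, I would replace $M$ by $M^{(C)}$ and use $\{G^{(C)}\}^{(B)}=G^{(C\cup B)}$, exactly as in the previous lemma. For $a\notin C\cup B$ one has $(M^{(C)})_{ab}=M_{ab}$, and the restriction to $a\notin C$ appears automatically because $G^{(C\cup B)}_{ia}=0$ for $a\in C$.

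For \eqref{eq:almost32a}, I would apply this to the $2N\times 2N$ Hermitization $H$ with $C=T\cup\ubarr{T}$ and $B=\{x,\ubarr{x}\}$. The sum over $b\in B$ then produces the two terms with $M_{ax}G^{[T]}_{xj}$ and $M_{a\ubarr{x}}G^{[T]}_{\ubarr{x}j}$, and $a$ ranges over $[2N]\setminus(T\cup\ubarr{T}\cup\{x,\ubarr{x}\})$. The identification $H^{[T]}=(H)^{(T\cup\ubarr T)}$ holds by definition \eqref{hermitizationofM(T)}, apart from the diagonal $-w$ entries, which sit in the off-diagonal blocks and are removed together with their rows and columns.

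I expect no step to be a real obstacle. The only place needing care is this bookkeeping that $G^{(C)}$ vanishes outside its block, so that the index restrictions come out exactly as stated.
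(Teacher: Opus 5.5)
Your argument is correct and matches the paper's proof: you read off the $(i,j)$-entry, $i,j\notin B$, of the second resolvent identity $G^{(B)}=G+G^{(B)}(M-M^{(B)})G$, use the block structure to force $a\notin B$ and $b\in B$, and then specialize to $C=T\cup\ubarr{T}$, $B=\{x,\ubarr{x}\}$. Your sign is the right one: the identity displayed in the paper's proof, $G^{(B)}=G-G^{(B)}(M-M^{(B)})G$, contains a sign typo, and only the plus sign is consistent with \eqref{expansion2}.
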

	\begin{proof}
		To obtain \eqref{expansion2}, we consider the coordinate $(i,j)$ for $i,j \notin B$ in the following writing of $G^{(B)}$:
		\begin{equation*}
			G^{(B)}= G - G^{(B)} \left(M - M^{(B)}\right) G.
		\end{equation*}
		To obtain the second statement we consider again  $C = T \cup \ubarr{T}$ for some $T \subset [N] $, $x \notin T$ and $B = \{x, \ubarr{x}\}$.
	\end{proof}
	We now apply the previous lemmas to the Green function $G= (H-z)^{-1} \in \Mat_{2N}(\CC)$ defined by \cref{Greenfunction} and where $H= H(w)$ is defined by \cref{hermitizationofM} from the matrix $M = X + f(\mathbf{e}\mathbf{e}^\ast - 1/N)$ given in \cref{defiX} and \cref{eq:def_M_X_plus_f}. We consider $T\subset[N]$, $x \in [N]\backslash T$ and $i,j \in [2N]\backslash ( T \cup \ubarr{T} \cup \{x,\ubarr{x}\} )$, apply \cref{eq:almost32a} and obtain
	\begin{equation}\label{eq:32a}
		G_{ij}^{\bset{Tx}} = G_{ij}^{\bset{T}} + \sum_{a}^{(Tx)}\left\{ G_{i\ubarr{a}}^{\bset{Tx}} \barr{X}_{ax}G_{xj}^{\bset{T}}+ G_{ia}^{\bset{Tx}} X_{ax} G_{\ubarr{x}j}^{\bset{T}} + \frac{f}{N} \left(G_{i \ubarr{a}}^{\bset{Tx}} G^{\bset{T}}_{xj} + G_{ia}^{\bset{Tx}} G_{\ubarr{x}j}^{\bset{T}} \right)\right\}.
	\end{equation}
	We introduce the short-hands 
	\begin{align}
		\mathrm{R}_0(T,i,j) & = \frac{f}{N} \sum_{a}^{(Tx)}  \biggl( G_{i \ubarr{a}}^{\bset{Tx}} G^{\bset{T}}_{xj} + G_{ia}^{\bset{Tx}} G_{\ubarr{x}j}^{\bset{T}} \biggr) \label{eq:defireste}\\
		\mathrm{R}_1(T,i,j)& = \frac{f}{N} \sum_{a,b}^{(Tx)} \biggl(G_{i\ubarr{a}}^{\bset{Tx}} \barr{X}_{ax} \left\{ G_{xx}^{\bset{T}}G_{\ubarr{b}j}^{\bset{Tx}} + G_{x\ubarr{x}}^{\bset{T}} G_{bj}^{\bset{Tx}} \right\} + G_{ia}^{\bset{Tx}} X_{ax} \left\{ G_{\ubarr{x}x}^{\bset{T}} G_{\ubarr{b}j}^{\bset{Tx}} + G_{\ubarr{x}\ubarr{x}}^{\bset{T}}  G_{bj}^{\bset{Tx}} \right\}\biggr) 
		\label{eq:def_R_1}\\ 
		\mathrm{R}_2(T,i,j) & = \sum_{a,b}^{(Tx)} \bigg\{G_{i\ubarr{a}}^{\bset{Tx}} \barr{X}_{ax} \left\{ G_{xx}^{\bset{T}} X_{xb} G_{\ubarr{b}j}^{\bset{Tx}} + G_{x\ubarr{x}}^{\bset{T}} \barr{X}_{xb}G_{bj}^{\bset{Tx}} \right\} 
		\notag \\ 
		& \qquad \qquad \qquad \qquad \qquad \qquad \qquad  +  G_{ia}^{\bset{Tx}} X_{ax} \left\{ G_{\ubarr{x}x}^{\bset{T}} X_{xb} G_{\ubarr{b}j}^{\bset{Tx}} + G_{\ubarr{x}\ubarr{x}}^{\bset{T}} \barr{X}_{xb} G_{bj}^{\bset{Tx}} \right\} \bigg\}\label{eq:def_R_2} 
	\end{align}
	and apply \cref{eq:A29A} and \cref{eq:A29B} to the terms $G_{xj}^{\bset{T}}$ and $G_{\ubarr{x}j}^{\bset{T}}$ in \cref{eq:32a}, respectively, to obtain 
	\begin{align}
		G_{ij}^{\bset{Tx}} = G_{ij}^{\bset{T}} - \mathrm{R}_2(T,i,j)
		+ \mathrm{R}_0(T,i,j)+ \mathrm{R}_1(T,i,j). \label{eq:A32b}
	\end{align}

	\phantomsection
	
	\addcontentsline{toc}{section}{References}
	
	\bibliographystyle{amsalpha-nodash}
	\bibliography{biblio.bib}
\end{document}